\theoremstyle{plain}
   \newtheorem{theorem}{Theorem}[section]
   \newtheorem{proposition}[theorem]{Proposition}
   \newtheorem{lemma}[theorem]{Lemma}
   \newtheorem{corollary}[theorem]{Corollary}
   \newtheorem{conjecture}[theorem]{Conjecture}
   \newtheorem*{theorem*}{Theorem}
\theoremstyle{definition}
   \newtheorem{definition}[theorem]{Definition}
   \newtheorem{example}[theorem]{Example}
   \newtheorem{question}[theorem]{Question}
   \newtheorem{remark}[theorem]{Remark}
\numberwithin{equation}{section}
\DeclareDocumentCommand \ltr { o } {%
  \IfNoValueTF {#1} {%
    \ell_W^{\mathrm{full}} %
  }{%
    \ell_{#1}^{\mathrm{full}}%
  }%
}
\DeclareDocumentCommand \lR { o } {%
  \IfNoValueTF {#1} {%
    \ell_W^{\mathrm{red}} %
  }{%
    \ell_{#1}^{\mathrm{red}}%
  }%
}
\DeclareDocumentCommand \Fred { o } {%
  \IfNoValueTF {#1} {%
    F_W^{\mathrm{red}} %
  }{%
    F_{#1}^{\mathrm{red}}%
  }%
}
\newcommand{\op}[1]{\operatorname{#1}}
\newcommand{\Ftr}{F^\mathrm{full}}
\newcommand{\RGS}{\op{RGS}}
\newcommand\Symm{\mathfrak{S}}
\newcommand{\ttt}{\mathbf{t}}
\newcommand\rank{\operatorname{rank}}
\newcommand\codim{\operatorname{codim}}
\newcommand{\Mov}{\operatorname{Mov}}
\newcommand\wt{\col}
\newcommand{\col}{\operatorname{col}}
\newcommand{\RRR}{\mathcal{R}}
\newcommand{\defn}[1]{{\color{blue} \it {#1}}}
\newcommand{\BBB}{\mathcal{B}}
\newcommand\CC{{\mathbb{C}}}
\newcommand\ZZ{{\mathbb{Z}}}
\newcommand\GL{{\mathrm{GL}}}
\newcommand{\id}{\op{id}}
\begin{document}

\title[W-Hurwitz numbers: Part~II]{Hurwitz numbers for reflection groups II:\\ Parabolic quasi-Coxeter elements}
\author{Theo Douvropoulos, Joel Brewster Lewis, Alejandro H. Morales}

\maketitle

\begin{abstract}
We define parabolic quasi-Coxeter elements in well generated complex reflection groups. We characterize them in multiple natural ways, and we study two combinatorial objects associated with them: the collections $\op{Red}_W(g)$ of reduced reflection factorizations of $g$ and $\RGS(W,g)$ of relative generating sets of $g$. We compute the cardinalities of these sets for large families of parabolic quasi-Coxeter elements and, in particular, we relate the size $\#\op{Red}_W(g)$ with geometric invariants of Frobenius manifolds. This paper is second in a series of three; we will rely on many of its results in part III to prove uniform formulas that enumerate full reflection factorizations of parabolic quasi-Coxeter elements, generalizing the genus-$0$ Hurwitz numbers. \end{abstract}

\setcounter{tocdepth}{1}
\tableofcontents

\section{Introduction}

In the symmetric group $\Symm_n$, a factorization of a permutation $w$ as a product $w = t_1 \cdots t_r$ of transpositions is said to be \emph{transitive} if the factors $t_1, \ldots, t_r$ act transitively on the set of numbers $\{1, \ldots, n\}$. In the late 19th century, Hurwitz \cite{Hurwitz} used such combinatorial structures in his study of branched Riemann surfaces. In particular, he gave the following formula for the number \defn{$H_0(\lambda)$} of transitive factorizations of minimum length,\footnote{The index $0$ of the notation $H_0(\lambda)$ here refers to the \emph{genus} of the related Riemann surface and corresponds to the factorization being of minimum length. We do not deal with higher genus factorizations here, but the interested reader may consult \cite{DLM1}.} where $\lambda=(\lambda_1,\ldots,\lambda_k)$ is the  cycle type of $w$:
\begin{equation} \label{eq: formula H_0}
H_0(\lambda) =  n^{k-3} (n+k-2)! \prod_{i=1}^k \frac{\lambda_i^{\lambda_i}}{(\lambda_i-1)!}.
\end{equation}

Special cases of this formula have meaningful analogues in complex reflection groups, replacing the set of transpositions by the set of reflections \cite{chapuy_Stump,michel-deligne-lusztig-derivation, LM, chapuy_theo_lapl}.  This raises the question whether a full generalization may be possible. This paper is the second in a series of three (preceded by \cite{DLM1} and followed by \cite{DLM3}) that will culminate in such a generalization. 

One of the first difficulties in defining Hurwitz numbers for reflection groups $W$ is to find an analogue of the notion of transitivity of factorizations: there is no natural generalization of the set $\{1,\ldots,n\}$ in $\Symm_n$ for arbitrary $W$. In the first part of the series \cite{DLM1}, we addressed this by defining \defn{full} reflection factorizations as those factorization $g = t_1 \cdots t_k$ where the reflections $\{t_1,\ldots, t_k\}$ generate the \emph{full} group $W$. In the third part \cite{DLM3}, we will give uniform formulas for the number of minimum-length full reflection factorizations of a wide class of elements (the \emph{parabolic quasi-Coxeter elements}) in well generated complex reflection groups. It turns out that all elements in the symmetric group $\Symm_n$ are parabolic quasi-Coxeter, so that the formulas in \cite{DLM3} will be strict generalizations of \eqref{eq: formula H_0} to reflection groups. 

In the rest of the introduction, we present the main contributions of this paper. They are broadly divided in two parts, presented in \S\ref{sec:intro:struct} and \S\ref{sec:intro:enum}. First, we define our main combinatorial objects (the parabolic quasi-Coxeter elements and the relative generating sets) and discuss the structural results we prove for them, including multiple characterizations. Then, we give our main enumerative results: formulas that count the number of reduced factorizations and the number of relative generating sets for various families of parabolic quasi-Coxeter elements. Apart from their independent interest, the majority of these findings will be necessary in \cite{DLM3} to prove the uniform formula counting full reflection factorizations. In \S\ref{sec:intro:towards_uniform_formulas}, we briefly present (without proof) the main theorem of \cite{DLM3} for Weyl groups to motivate this connection. In \S\ref{sec:intro:summary}, we give a summary of the structure of the paper.

\subsection{Structural results}
\label{sec:intro:struct}

In a well generated complex reflection group $W$ of rank $n$, an element $g\in W$ is a \defn{quasi-Coxeter element} if there exists a length-$n$ reflection factorization $t_1\cdots t_n=g$ whose factors $t_i$ generate $W$. A quasi-Coxeter element of a parabolic subgroup of $W$ is a \defn{parabolic quasi-Coxeter element} of $W$. We introduce this definition in \S\ref{sec.parabolic qce}, extending the work of \cite{BGRW} on real groups.

These (parabolic) quasi-Coxeter elements generalize the (parabolic) Coxeter elements and share many of their properties. They are important in various mathematical areas ranging from singularity theory to combinatorics, particularly in the case that $W$ is a Weyl group; see \S\ref{sec.origins}. The reduced (i.e., minimum-length) reflection factorizations of parabolic quasi-Coxeter elements have rigid geometric properties. Particularly in the case of Weyl groups, this allows us to give a characterization for them (which includes earlier work of Baumeister--Wegener \cite{BW}) in terms of natural invariants of the group. (See \S\ref{Sec: preliminaries} for definitions of technical terms.) 

\begin{restatable*}[]{theorem}{pqcoxWeylchar}
\label{Prop: pqCox characterization for Weyl W}

Let $W \leq \GL(V)$ be a Weyl group of rank $n$, $g$ an element of $W$, and $W_g$ the parabolic closure of $g$. Then the following statements are equivalent. 
\begin{enumerate}[(i)]
    \item $g$ is a quasi-Coxeter element (respectively, parabolic quasi-Coxeter element).
    \item There exists a reduced reflection factorization $g=t_1\cdots t_k$ for which the associated roots $\rho_{t_i}$ and coroots $\widecheck{\rho}_{t_i}$ form $\ZZ$-bases of the root and coroot lattices of $W$ (resp., of $W_g$).
    \item $g$ satisfies $|\det(g - I_V)|=I(W)$, where $I_V$ is the identity on $V$ and $I(W)$ the connection index of $W$ (resp., $|\op{pdet}(g - I_V)|=I(W_g)$ where $\op{pdet}$ denotes the pseudo-determinant).
    \item $g$ does not belong to any proper reflection subgroup of $W$ (resp., of $W_g$).
\end{enumerate}

\end{restatable*}

Moving back to the context of parabolic quasi-Coxeter elements in an arbitrary well generated group, we discuss in \S\ref{ssec.pcqe.hurw.action} their interaction with the Hurwitz action, and we prove an analogue of the unique cycle decomposition for them in Proposition~\ref{Prop: gob-cycle-decomp}. 

The second combinatorial object we introduce in this paper is the collection \defn{$\RGS(w)$} of \defn{good generating sets} for a complex reflection group $W$; these are the sets of $\rank(W)$-many reflections that generate the full group $W$. Only well generated groups possess good generating sets. We also consider a generalization of this notion: we say that a set of $\big(\rank(W)-k\big)$-many reflections that generate the full group $W$ \emph{when combined with a reduced reflection factorization of $g$} is a \defn{relative generating set} for the element $g \in W$, and we denote by \defn{$\RGS(W, g)$} the collection of relative generating sets for $g$.

Remarkably, the existence of relative generating sets is tied to the parabolic quasi-Coxeter property.  Our main structural theorem gives three uniform characterizations of parabolic quasi-Coxeter elements in well generated groups, in terms of relative generating sets, in terms of their \defn{full reflection length} (the minimum length of a full factorization), and in terms of the smaller collection of quasi-Coxeter elements.

\begin{restatable*}[characterization of parabolic quasi-Coxeter elements]{theorem}{pqCoxChar} 
\label{Prop: characterization of pqCox}
For a well generated group $W$ of rank $n$ and an element $g\in W$, the following are equivalent.
\begin{enumerate}[(i)]
    \item The element $g$ is a parabolic quasi-Coxeter element of $W$.
    \item The collection $\RGS(W,g)$ of relative generating sets with respect to $g$ is nonempty.
    \item There exists a quasi-Coxeter element $w\in W$ such that $g\leq_{\RRR} w$.
    \item The full reflection length of $g$ satisfies $\ltr(g)=2n-\lR(g)$.
\end{enumerate}

\end{restatable*}

\noindent The relationship between relative generating sets and parabolic quasi-Coxeter elements is reflected in the uniform formulas of \cite{DLM3} (see \S\ref{sec:intro:towards_uniform_formulas} below).

In the infinite families $W=G(m,1,n)$ and $W=G(m,m,n)$ of ``combinatorial'' well generated groups, the relative generating sets $\RGS(W,g)$ correspond to tree-like structures where the vertices are the (generalized) cycles of the corresponding element $g\in W$. This is developed in detail in \S\ref{sec:characterize RGS}.

\subsection{Enumerative results}
\label{sec:intro:enum}

The second family of results in this paper involves enumerative questions around parabolic quasi-Coxeter elements and their reduced factorizations and relative generating sets. It is a striking phenomenon, for which we do not have a complete explanation, that for all parabolic quasi-Coxeter elements $g$ in a well generated group $W$, the number $\Fred(g)$ of reduced reflection factorizations of $g$ is always a product of small primes. In Section~\ref{sec: Fred numbers of qCox}, we develop a (partially conjectural) framework to explain this phenomenon. We also give explicit formulas for the numbers $\Fred(g)$ in many cases; in particular, in \S\S\ref{ssec: Fred of qcox_infinite family}--\ref{ssec:reduction from pqc to qc} we compute the numbers $\Fred(g)$ for all (parabolic) quasi-Coxeter elements in the infinite families of the well generated groups $G(m,1,n)$ and $G(m,m,n)$.

When $c\in W$ is a Coxeter element, the the number $\Fred(c)$ is well understood and given by an explicit formula: when $W$ is irreducible of rank $n$ and with Coxeter number $h$, the Arnold--Bessis--Chapoton formula states that
\begin{equation}\label{eq:intro:Arnold_Bessis_Chapoton}
    \Fred(c)=\dfrac{h^n \cdot n!}{\#W}.
\end{equation}
There are many proofs of this formula; in \S\ref{ssec: Fred of qcox_Coxeter case} we review a geometric approach that relates the count \eqref{eq:intro:Arnold_Bessis_Chapoton} with the degree of the Lyashko--Looijenga ($LL$) morphism associated to the braid monodromy of the discriminant hypersurface $\mathcal{H}$ of $W$. In \S\ref{Sec: Frobenius mflds} we explain how most of this theory has analogues for arbitrary quasi-Coxeter elements, at least for real reflection groups, and we conjecture that the number $\Fred(g)$ always agrees with the degree of an associated quasi-homogeneous $LL$ map (Conjecture~\ref{conj alg frob mfld}). This conjecture would explain the phenomenon that these counts are products of small primes.

The context of Conjecture~\ref{conj alg frob mfld} is the geometry of algebraic Frobenius manifolds, whose differential and analytic structure can be encoded via the reduced factorizations of quasi-Coxeter elements. Even though we do not prove the conjecture, we verify it for many cases (see Remark~\ref{Rem. numbers dg are hard} and \S\ref{sec:8:wdvv}).  Moreover, it motivates an almost explicit, uniform generalization of \eqref{eq:intro:Arnold_Bessis_Chapoton} for the class of regular quasi-Coxeter elements in Weyl groups. The following proposition is proved case-by-case; the non-explicit part is the interpretation of the numbers $\delta_g$ (see Remark~\ref{Rem. numbers dg are hard}). The \emph{exponents} of an element $g\in W\leq \GL(V)$ are certain positive integers $e_i(g)$ that encode the eigenvalues of $g$ in $V$ (see \S\ref{sssec.reg_qcox_freds}).

\begin{restatable*}[]{proposition}{FrobFredNums}
\label{Obs. Fred regular qCox}

For a regular quasi-Coxeter element $g$ in an irreducible Weyl group $W$, we have 
\begin{equation}
\label{Eqn: Red_w(g) Frob.} 
\Fred(g) = \frac{|g|^n \cdot n!}{\prod_{j=1}^n(e_j(g)+1)}\cdot \delta_g,
\end{equation} where the numbers $e_j(g)$ are the exponents of $g$ and $\delta_g$ is a small integer. When $g$ is a Coxeter element, $\delta_g$ is equal to $1$, while the rest of the cases are given in Table~\ref{Table: the numbers d_g}.

\end{restatable*}

The other major enumerative question we ask in this work is for the number of relative generating sets associated to a given parabolic quasi-Coxeter element $g\in W$. We show in Proposition~\ref{Prop: RGS(W,g)=RGS(W,W_g)} that the set $\RGS(W,g)$ only depends on the parabolic closure $W_g$ of $g$ (in which $g$ is a quasi-Coxeter element).  In the case of the infinite families $W=G(m,1,n)$ and $W=G(m,m,n)$, the enumeration reduces to counting certain weighted trees or unicycles whose vertices are the (generalized) cycles of $g$. In \S\ref{sec: counting rrgs} we prove the following result by making repeated use of the weighted Cayley theorem (Theorem~\ref{thm: weighted cayley}). 

\begin{restatable*}[]{theorem}{thmcountrgs}

\label{thm:count rgs}
Suppose that $W$ is either $G(m, 1, n)$ or $G(m, m, n)$ and that $g$ is a parabolic quasi-Coxeter element for $W$.
\begin{enumerate}[(i)]
\item If either $W = G(m, 1, n)$ and $g$ is a quasi-Coxeter element for the subgroup $G(m, 1, \lambda_0) \times \Symm_{\lambda_1} \times \cdots \times \Symm_{\lambda_k}$, or $W = G(m, m, n)$ and $g$ is a quasi-Coxeter element for the subgroup $G(m, m, \lambda_0) \times \Symm_{\lambda_1} \times \cdots \times \Symm_{\lambda_k}$, then 
\begin{equation} \label{eq: RGS first family}
\# \RGS(W, g) = m^k \cdot n^{k - 1} \cdot \prod_{i = 0}^k \lambda_i.
\end{equation}
\item If $W = G(m, 1, n)$ and $g$ is a quasi-Coxeter element for the subgroup $\Symm_{\lambda_1} \times \cdots \times \Symm_{\lambda_k}$, then 
\begin{equation}
\# \RGS(W, g) = \varphi(m) \cdot m^{k - 1} \cdot n^{k - 1} \cdot \prod_{i = 1}^k \lambda_i,
\end{equation}
where $\varphi$ denotes Euler's totient function. 
\item  If $W = G(m, m, n)$ and $g$ is a quasi-Coxeter element for the subgroup $\Symm_{\lambda_1} \times \cdots \times \Symm_{\lambda_k}$, then 
\begin{equation}
\#\RGS(W, g) =  \frac{\varphi(m) \cdot m^{k - 1}}{2}  \cdot
\Big(n^k - n^{k-1} -\sum_{j=2}^k  (j-2)! \cdot n^{k-j} e_j(\lambda)\Big) \cdot  \prod_{i = 1}^k \lambda_i,
\end{equation}
where $e_i(\lambda)$ denotes the $i$th elementary symmetric function in the variables $\lambda=(\lambda_1,\ldots,\lambda_k)$.
\end{enumerate}

\end{restatable*}

The case of the symmetric group $W=\Symm_n$ is discussed in further detail in \S\ref{sec: comparison DPS cacti}, where we relate the enumeration of our sets $\RGS(W,g)$ with that of \emph{Cayley cacti}, which were used by Duchi--Poulalhon--Shaeffer \cite{DPS_II} to give a bijective proof of the Hurwitz formula \eqref{eq: formula H_0}. 

\subsection{Towards counting full factorizations of parabolic quasi-Coxeter elements}\label{sec:intro:towards_uniform_formulas}

Apart from their own individual interest, the results of this paper will be crucial components of the proofs for the uniform formulas in the third part \cite{DLM3} of this series. To motivate them further, we give here without proof (but see \cite{DLM-extended-abstract} for a sketch) the main theorem of \cite{DLM3} for Weyl groups.

\begin{theorem*}
For any Weyl group $W$ and any parabolic quasi-Coxeter element $g\in W$
with generalized cycle decomposition $g=g_1\cdot g_2\cdots g_m$, the number $\Ftr_W(g)$
of minimum-length full reflection factorizations of $g$ is given by the formula  
\begin{equation}
\Ftr_W(g)=\ltr(g)!\cdot
\#\RGS(W,g)\cdot\dfrac{I(W_g)}{I(W)}\cdot\prod_{i=1}^m\dfrac{\Fred(g_i)}{\ell_R(g_i)!}
\, ,
\label{EQ: F_W(g)}
\end{equation}
where $\ltr(g)$ is the full reflection length of $g$, $W_g$ is the parabolic closure of $g$, and $I(W)$ is the connection index of $W$.
\end{theorem*}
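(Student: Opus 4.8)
The plan is to reduce the identity to a structural decomposition of minimum-length full factorizations and then assemble the count from the enumerative inputs of this paper. First I would put the right-hand side in a cleaner form. By the generalized cycle decomposition (Proposition~\ref{Prop: gob-cycle-decomp}) the factors $g_i$ lie in independent parabolic subgroups, reduced length is additive, and every reduced factorization of $g$ is a shuffle of reduced factorizations of the $g_i$; hence
\begin{equation*}
\Fred(g)=\binom{\lR(g)}{\lR(g_1),\dots,\lR(g_m)}\prod_{i=1}^m\Fred(g_i),
\qquad\text{so}\qquad
\prod_{i=1}^m\frac{\Fred(g_i)}{\lR(g_i)!}=\frac{\Fred(g)}{\lR(g)!}.
\end{equation*}
Combining this with $\ltr(g)=2n-\lR(g)$ from Theorem~\ref{Prop: characterization of pqCox}, the target formula is equivalent to
\begin{equation*}
\Ftr_W(g)=\frac{\ltr(g)!}{\lR(g)!}\cdot\#\RGS(W,g)\cdot\frac{I(W_g)}{I(W)}\cdot\Fred(g).
\end{equation*}
This form isolates the content: a minimum-length full factorization should be built from a reduced factorization of $g$, a relative generating set, and connecting/ordering data, with the connection-index ratio as the only nonobvious weight.

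I would prove the corank-zero base case first, where $g$ is a genuine quasi-Coxeter element of $W$ (so $W_g=W$, $\lR(g)=n$). Then $\ltr(g)=n$, the set $\RGS(W,g)$ consists of the empty set alone, $I(W_g)/I(W)=1$, and the claim collapses to $\Ftr_W(g)=\Fred(g)$. To establish this I would invoke transitivity of the Hurwitz action on the reduced factorizations of a quasi-Coxeter element (a property available for Weyl groups and discussed in \S\ref{ssec.pcqe.hurw.action}): since Hurwitz moves preserve the generated subgroup and, by definition, some reduced factorization generates $W$, every reduced factorization generates $W$. As a length-$n$ full factorization is necessarily reduced, this identifies minimum-length full factorizations with reduced factorizations in the top-rank case.

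For the general case I would construct a map sending a minimum-length full factorization of $g$ to the data of a reduced factorization of $g$, a relative generating set $R\in\RGS(W,g)$, and the combinatorial bookkeeping of the positions $1,\dots,\ltr(g)$. The structural heart is that the $2\big(n-\lR(g)\big)$ reflections of a full factorization lying beyond a reduced core organize, in $n-\lR(g)$-many linked pieces, into a relative generating set together with its ``cancelling'' partners, leaving a reduced factorization of $g$ inside $W_g$; here Theorem~\ref{Prop: pqCox characterization for Weyl W}(ii), that the roots of a reduced factorization of a quasi-Coxeter element form a $\ZZ$-basis of the root lattice, controls exactly which reflections can occur and how their root sublattices nest. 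Because $\RGS(W,g)$ depends only on $W_g$ (Proposition~\ref{Prop: RGS(W,g)=RGS(W,W_g)}), this is compatible with the cycle decomposition, and I would organize the resulting enumeration as a weighted count over the same tree-like structures that govern $\RGS$ in \S\ref{sec:characterize RGS}, using the weighted Cayley theorem (Theorem~\ref{thm: weighted cayley}) with weights enriched by the cycle factorizations $\Fred(g_i)$.

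The main obstacle, as I see it, is the connection-index ratio $I(W_g)/I(W)$, which does not distribute as a literal constant fiber size and must instead emerge as an aggregate lattice weight. I would derive it from the index of the sublattice spanned by a reduced factorization together with a relative generating set inside the full root lattice of $W$, evaluated through the determinant characterization of Theorem~\ref{Prop: pqCox characterization for Weyl W}(iii), namely $|\det(g-I_V)|=I(W)$ for quasi-Coxeter $g$. Proving this index identity \emph{uniformly} across all Weyl groups is the genuinely delicate point; I expect that the practical route is to verify the combined count directly for the infinite families $G(m,1,n)$ and $G(m,m,n)$ using the explicit values of $\#\RGS(W,g)$ from Theorem~\ref{thm:count rgs} and the known $\Fred$ formulas---cross-checking against the Hurwitz formula~\eqref{eq: formula H_0} in type $A$ and the Arnold--Bessis--Chapoton formula~\eqref{eq:intro:Arnold_Bessis_Chapoton} for Coxeter elements---and to dispatch the exceptional Weyl groups by a separate finite case analysis. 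A fully classification-free proof would instead require establishing both the fiber structure of the decomposition map and the lattice-index evaluation intrinsically, perhaps via a Frobenius-character computation with M\"obius inversion over the reflection subgroups, which seems substantially harder.
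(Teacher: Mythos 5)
First, a point of comparison: this theorem is stated in the present paper \emph{without proof} --- it is the main theorem of the sequel \cite{DLM3}, and \S\ref{sec:intro:towards_uniform_formulas} only sketches the intended strategy (compute the left-hand side via \cite{DLM1}, compute the right-hand side via this paper's enumerative results for the infinite families, and run an inductive argument for $E_7$ and $E_8$). Your preparatory reductions are correct and match the paper's toolkit exactly: the identity $\prod_{i}\Fred(g_i)/\lR(g_i)!=\Fred(g)/\lR(g)!$ is Corollary~\ref{Prop: Fred of pqcox to Fred of qcox}, the substitution $\ltr(g)=2n-\lR(g)$ is Theorem~\ref{Prop: characterization of pqCox}(iv), and your corank-zero base case $\Ftr_W(g)=\Fred(g)$ is precisely Proposition~\ref{Prop: Fred=Ffull for qCox}, with your Hurwitz-transitivity argument being a legitimate real-type substitute for the appeal to Proposition~\ref{Prop. strong=weak}.

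The genuine gap is the central counting step, and your own hedging locates it accurately but does not repair it. The claim that the $2\bigl(n-\lR(g)\bigr)$ reflections beyond a reduced core ``organize into a relative generating set together with its cancelling partners'' is not literally true of a minimum-length full factorization: that normal form exists only somewhere in its Hurwitz orbit (Lemma~\ref{lem: LR lemma}), and the passage to normal form is massively non-injective, with orbit and fiber sizes that are exactly the quantity to be computed. This is where both the precise factor $\ltr(g)!/\lR(g)!$ (rather than a naive positional count with symmetry factors of the shape $2^{n-k}(n-k)!$) and, above all, the weight $I(W_g)/I(W)$ must be produced, and you offer no mechanism beyond the unproven hope that the connection-index ratio emerges as an aggregate lattice index via Theorem~\ref{Prop: pqCox characterization for Weyl W}(iii); the remark that it ``does not distribute as a literal constant fiber size'' concedes that the proposed decomposition map does not have the structure your count requires. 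Your fallback --- verify the identity for the infinite families using Theorem~\ref{thm:count rgs} against the full-factorization counts of \cite{DLM1} (as in Theorem~\ref{thm:main lead coeff part 1}, but needed for general parabolic quasi-Coxeter elements, not just the quasi-Coxeter case reproduced here), and treat the exceptional groups separately --- is indeed the route the series takes, but you execute neither computation, and a brute-force ``finite case analysis'' is explicitly stated in \S\ref{sec:intro:towards_uniform_formulas} to be computationally infeasible for $E_7$ and $E_8$, which is precisely why \cite{DLM3} needs an inductive argument built on the structural results of this paper. As it stands, the proposal is a correct framing with correct reductions, but the decisive step --- either the fiber analysis of the decomposition map or the case-by-case verification --- is missing.
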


The enumerative results of this paper, presented in \S\ref{sec:intro:enum}, will be used in \cite{DLM3} to calculate the right-hand side of \eqref{EQ: F_W(g)} for the infinite families $A_n$, $B_n$, $D_n$ (and their generalization in the complex types), while the left-hand side has been calculated in \cite{DLM1}. The structural results presented in \S\ref{sec:intro:struct} will be used in an inductive argument that helps prove \eqref{EQ: F_W(g)} for the  exceptional Weyl groups $E_7$ and $E_8$, where the calculation of the numbers $\Ftr_W(g)$ with the techniques of \cite{DLM1} is not computationally feasible.
Moreover, we hope that the structural results presented in \S\ref{sec:intro:struct} support our case -- reinforced by \eqref{EQ: F_W(g)} -- that the collection $\RGS(W,g)$ of relative generating sets is a natural combinatorial object in the study of reflection factorizations.

\subsection{Summary}
\label{sec:intro:summary}

The structure of the present paper is as follows.  In Section~\ref{Sec: preliminaries}, we give background, introduce the supporting objects of our study, and include some preliminary properties. In Section~\ref{sec.parabolic qce}, we introduce the main objects we study in this paper: the family of elements  called parabolic quasi-Coxeter elements. In the next Section~\ref{Sec: relative gen sets}, we characterize such elements in terms of certain sets of reflections that we call relative reflection generating sets, which will be fundamental in the uniform formulas for $W$-Hurwitz numbers in the sequel \cite{DLM3}. In Section~\ref{Sec: full factorizations and p-q-Cox}, we give two other characterizations of parabolic quasi-Coxeter elements, in terms of absolute order and in terms of full reflection length. After this, in Section~\ref{sec: Fred numbers of qCox}, we study the number of reduced reflection factorizations of parabolic quasi-Coxeter elements in well generated complex reflection groups. In Section~\ref{sec: counting rrgs}, we give nice formulas for the numbers of (relative) generating sets in the combinatorial subfamily of well generated complex reflection groups. We wrap up this paper in Section~\ref{sec: gen_quest} with some final remarks and open questions. 

\section{Reflection groups, good generating sets, and parabolic subgroups}
\label{Sec: preliminaries}

In this section, we introduce the main supporting objects of our study and develop some preliminary properties.  Standard background material for real reflections groups may be found in \cite{Kane, Humphreys}, and for complex reflection groups in \cite{Kane, LehrerTaylor,broue-book}.

\subsection{Real reflection groups}
\label{Sec: real refl grps}

Given a finite-dimensional real inner product space $V$, a \defn{reflection} $t$ is an orthogonal map whose \defn{fixed space} $\defn{V^{t}} := \{v \in V \colon t(v) = v\}$ is a hyperplane, that is, $\codim(V^{t})=1$. Equivalently, an orthogonal map is a reflection if it is diagonalizable, with one eigenvalue equal to $-1$ and all others equal to $1$.  A finite subgroup $W\leq \GL(V)$ is called a \defn{real reflection group} if it is generated by reflections. 

The real reflection groups are precisely the finite \defn{Coxeter groups}, those generated by a set $S = \{s_1, \ldots, s_n\}$ of reflections subject to relations $s_i^2 = 1$ and $(s_i s_j)^{m_{ij}} = 1$ for some integers $m_{ij}>1$. 
The generating set $S$ is called a set of \defn{simple reflections} for $W$, and the pair $(W, S)$ of a Coxeter group together with such a set of simple reflections is called a \defn{simple system}. The size $n$ of the set $S$ is called the \defn{rank} of $W$.

We say that a real reflection group $W$ is \defn{irreducible} if there is no nontrivial subspace of $V$ stabilized by its action. It is easy to see that if $W_1$ acts on $V_1$, with reflections $\RRR_1$, and $W_2$ acts on $V_2$, with reflections $\RRR_2$, then $W = W_1 \times W_2$ acts on $V = V_1 \oplus V_2$, with reflections $\RRR = (\RRR_1 \times \{\id_2\}) \cup (\{\id_1\} \times \RRR_2)$.  In this case, $W$ fails to be irreducible (it stabilizes the subspace $V_1$ of $V$). Coxeter \cite{coxeter_annals} classified real reflection groups, as follows: every real reflection group is a product of irreducibles, and the irreducibles belong to four infinite families $A_n$ (the symmetric groups), $B_n$ (the hyperoctahedral groups of signed permutations), $D_n$ (index-$2$ subgroups of the hyperoctahedral groups), and $I_2(m)$ (the dihedral groups), and six exceptional types $H_3$, $H_4$, $F_4$, $E_6$, $E_7$, and $E_8$, where in all cases the indices correspond to ranks.

A real reflection group $W$ is completely determined by its set \defn{$\RRR$} of reflections.  Alternatively, it is determined by the associated \defn{reflection arrangement} \defn{$\mathcal{A}_W$}, namely, the arrangement of fixed hyperplanes of the reflections in $\RRR$. The reflection arrangement of $W$ is often encoded as part of a structure known as a \emph{root system}, which we define now.  For any reflection $t\in \RRR$, we may choose two vectors \defn{$\rho_t$} and \defn{$\widecheck{\rho}_t$}, both orthogonal to the fixed hyperplane $V^t$, that satisfy $\langle \rho_t,\widecheck{\rho}_t\rangle=2$.  They are known as the \defn{root} and \defn{coroot} associated to the reflection $t$, which they determine via the relation
\begin{equation}
t(v)=v-\langle v,\widecheck{\rho}_t\rangle\cdot \rho_t\label{eq: defn refln real}
\end{equation}
for all $v$ in $V$.  The collection $\defn{\mathrm{\Phi}}:=\{\pm \rho_t \colon t\in\RRR\}$ of roots and their negatives is the \defn{root system} of $W$; we choose the lengths of the roots $\rho_t$ so that $\Phi$ is $W$-invariant.  The \defn{simple (co)roots} $\defn{\alpha_i} := \rho_{s_i}$ and \defn{$\widecheck{\alpha}_i$}$\, := \widecheck{\rho}_{s_i}$ are the (co)roots associated to the simple reflections $s_i$.

\subsubsection*{Weyl groups} Although root systems may be defined for all real reflection groups, they behave particularly well for a subclass known as \defn{crystallographic} or \defn{Weyl} groups. Those are characterized by the existence of an essential, $W$-invariant \defn{root lattice} \defn{$\mathcal{Q}$}, the integer span of the root system $\Phi$.  When this exists, the coroots also span a lattice, the \defn{coroot lattice} \defn{$\widecheck{\mathcal{Q}}$}.  The simple roots and coroots form a lattice basis for $\mathcal{Q}$ and $\widecheck{\mathcal{Q}}$, respectively.

Weyl groups again decompose as products of irreducibles; the irreducible Weyl groups are the infinite families $A_n$, $B_n$, and $D_n$, and the five exceptional types $E_6$, $E_7$, $E_8$, $F_4$, and $G_2=I_2(6)$. In terms of the Coxeter presentation, these are exactly the finite Coxeter groups for which $m_{ij}\in\{2,3,4,6\}$ for all $i$ and $j$.

For any Weyl group, there is a natural inclusion of the root lattice $\mathcal{Q}$ inside the dual lattice $\mathcal{P}$ of the coroot lattice $\widecheck{\mathcal{Q}}$.  (The lattice $\mathcal{P}$, which will not play a major role in this paper, is called the \emph{weight lattice}.)  The \defn{connection index} $\defn{I(W)}:=[\mathcal{P}:\mathcal{Q}]$ is an important invariant of $W$. It is equal to the determinant of the \defn{Cartan matrix} $\defn{K}:=\big( \langle\alpha_i,\widecheck{\alpha}_j\rangle\big)_{i,j=1}^n$ of the Weyl group (where $\alpha_i$ and $\widecheck{\alpha}_j$ are the simple roots and coroots; see \cite[\S9-4]{Kane}). We give in Table~\ref{Table: I(W)} the values $I(W)$ for the irreducible Weyl types; for a reducible group $W=W_1\times\cdots\times W_r$, we have that $I(W)=\prod_{i=1}^r I(W_i)$.

\begin{table}[H]
\[
\begin{array}{|c|| c| c| c| c| c| c| c| c |c|} 
 \hline
 W & \Symm_{n+1}=A_n & B_n & D_n & E_6 & E_7 & E_8 & F_4 & I_2(6)=G_2 \\ 
 \hline
 I(W) & n+1 & 2 & 4 & 3 & 2 & 1 & 1 & 1 \\ 
 \hline
\end{array}
\]
\caption{The connection indices for irreducible Weyl groups.}
\label{Table: I(W)}
\end{table}

\subsubsection*{Parabolic Coxeter elements} In a real reflection group $W$, the product of the simple generators $s_i\in S$ of $W$, in any order, is called a Coxeter element.  They were introduced in \cite{coxeter_duke}, where it was shown that all such elements are conjugate to one another. We call \emph{any} element $c\in W$ that belongs to this conjugacy class a \defn{Coxeter element}; their common order $\defn{h} :=|c|$ is the \defn{Coxeter number} of $W$.  There are many choices of sets $S'$ of reflections that turn $(W,S')$ into a simple system. When $W$ is not crystallographic, different choices of $S$ may lead to different conjugacy classes of Coxeter elements.\footnote{For example, in the dihedral group $I_2(5)$, any pair of reflections determines a simple system for the group; but the product of a pair whose hyperplanes make an angle of $36^{\circ}$ (a rotation by $72^\circ$) is not conjugate to the product of a pair whose hyperplanes make an angle of $72^{\circ}$ (a rotation by $144^{\circ}$).} Following \cite{RRS}, the elements of $W$ that are Coxeter elements for \emph{some} choice of simple system are called \defn{generalized Coxeter elements}; they have the same order $h$ as the Coxeter elements.

For any subset $I\subseteq S$ of simple reflections, the subgroup $\defn{W_I}:=\langle I\rangle$ of $W$ generated by $I$ is called a \defn{standard parabolic subgroup}, and any subgroup conjugate to one of the $W_I$ is called a \defn{parabolic subgroup}. Each parabolic subgroup inherits a natural simple system from $W$.  The Coxeter elements in any parabolic subgroup are the \defn{parabolic Coxeter elements} of $W$.

In the following sections we introduce analogues of all the concepts we discussed here in the setting of complex reflection groups.

\subsection{Complex reflection groups}
\label{Sec: CX refl grps}

Let $V$ be a finite-dimensional complex vector space with a fixed Hermitian inner product.  A \defn{(unitary) reflection} $ t$ on $V$ is a unitary map whose fixed space $V^{t}$ is a hyperplane, that is, $\codim(V^{ t})=1$.  A finite subgroup $W\leq \GL(V)$ is called a \defn{complex reflection group} if it is generated by reflections.  As in the real case, we denote the subset of reflections by $\RRR$ and the associated reflection arrangement (that consists of all the fixed hyperplanes of reflections $t \in \RRR$) by $\mathcal{A}_W$, and we say that $W$ is irreducible if there is no nontrivial subspace of $V$ stabilized by its action.  Shephard and Todd \cite{ShephardTodd} classified the complex reflection groups, as follows: every complex reflection group is a product of irreducibles, and every irreducible either belongs to an infinite three-parameter family $G(m,p,n)$, described below, or is one of 34 exceptional cases, numbered $G_4$ to $G_{37}$.  

By extending scalars, every real reflection group may be viewed as a complex reflection group.  This operation preserves irreducibility, with all four infinite families of real reflection groups subsumed into to the infinite family of complex reflection groups.

\subsubsection*{The combinatorial family}

Let $m$, $p$, and $n$ be positive integers such that $p$ divides $m$.  Such a triple indexes a member $G(m, p, n)$ of the infinite family of complex reflection groups, which may be concretely described as 
\[
\defn{G(m, p, n)} := \left\{ \begin{array}{c} n \times n \textrm{ monomial matrices whose nonzero entries are} \\
\textrm{$m$th roots of unity with product an $\frac{m}{p}$th root of unity} \end{array} \right\}.
\]

It is natural to represent such groups combinatorially.
We may encode each element $w$ of $G(m, 1, n)$ by a pair $[u; a]$ with $u \in \Symm_n$ and $a = (a_1, \ldots, a_n) \in (\ZZ/m\ZZ)^n$, as follows: for $k = 1, \ldots, n$, the nonzero entry in column $k$ of $w$ is in row $u(k)$, and the value of the entry is $\exp(2\pi i a_k/m)$.  With this encoding, it's easy to check that
\[
[u; a] \cdot [v; b] = [uv; v(a) + b], \quad \textrm{ where } \quad v(a) := \left( a_{v(1)}, \ldots, a_{v(n)} \right).
\]
This shows that the group $G(m, 1, n)$ is isomorphic to the \defn{wreath product} $\ZZ/m\ZZ \wr \Symm_n$ of a cyclic group with the symmetric group $\Symm_n$.

The reflections in $G(m, p, n)$ come in two families: for $1 \leq i < j \leq n$ and $k \in \ZZ/m\ZZ$ there is the \defn{transposition-like reflection}
\[
\defn{[(i j); k]} := [(i j); k e_i - k e_j] = [(i j); (0, \ldots, 0, k, 0, \ldots, 0, -k, 0, \ldots, 0)],
\]
which fixes the hyperplane $x_j = \exp(2\pi i k/m) x_i$, and if $p < m$ then for $i = 1, \ldots, n$ and $k = 1, \ldots, \frac{m}{p} - 1$ there is the \defn{diagonal reflection}
\[
[\id; kp e_i] = [\id; (0, \ldots, 0, kp, 0, \ldots, 0)],
\]
which fixes the hyperplane $x_i = 0$.  Observe that with this notation, $[(i j); k] = [(j i); -k]$.

Given an element $w = [u; a] \in G(m, p, n)$ and a subset $S \subseteq \{1, \ldots, n\}$, we say that $\sum_{k \in S} a_k$ is the \defn{color} of $S$; this notion will come up particularly when the elements of $S$ form a cycle in $u$.  When $S = \{1, \ldots, n\}$, we call $a_1 + \ldots + a_n$ the color of the element $w$, and we denote it $\defn{\wt(w)}$.  In this terminology, $G(m, p, n)$ is the subgroup of $G(m, 1, n)$ containing exactly those elements whose color is a multiple of $p$, and $\wt$ is a surjective group homomorphism from $G(m, p, n)$ to $p\ZZ/m\ZZ \cong G(m, p, 1) \cong \ZZ / (m/p)\ZZ$.  Meanwhile the map $[u; a] \mapsto u$ that sends an element of $G(m, p, n)$ to its \defn{underlying permutation} is a surjective group homomorphism onto $\Symm_n$.

\subsection{Well generated groups and good generating sets}
\label{Sec: well-gend grps}

If $W$ is a complex reflection group acting on $V$, the \defn{rank} of $W$ is the codimension $\codim(V^W)$ of the fixed space $\defn{V^W} := \bigcap_{w \in W} V^w$ of $W$ (or, equivalently, the dimension of the orthogonal complement $(V^W)^\perp$).  Every irreducible complex reflection group of rank $n$ can be generated by either $n$ or $n+1$ reflections. The groups in the first category are called \defn{well generated}; they are better understood and share many properties with the subclass of real reflection groups. 

\begin{definition}[good generating sets]\label{Defn: good gen'g sets}
For a complex reflection group $W$ and a subset $S:=\{t_1,\ldots,t_n\}$ of reflections in $W$, we say that $S$ is a \defn{good generating set} for $W$ if $\langle t_i\rangle=W$ and the cardinality $n$ of $S$ equals the rank of $W$. In particular, it is precisely the well generated groups that have good generating sets.
\end{definition}

When $W$ is a \emph{real} reflection group, any set $S$ of simple generators is a good generating set for $W$, but there are many more good generating sets than simple systems.  For Weyl groups, good generating sets have a particularly nice characterization in terms of the root and coroot lattices.

\begin{proposition}[{\cite[Cor.~1.2]{BW}}]
\label{Prop: BW charn of gen sets}
For a Weyl group $W$ of rank $n$, a set $\{t_i\}_{i=1}^n$ of reflections generates $W$ if and only if the roots $\rho_{t_i}$ and coroots $\widecheck{\rho}_{t_i}$ form $\ZZ$-bases of the root and coroot lattices $\mathcal{Q}$ and $\widecheck{\mathcal{Q}}$ of $W$, respectively.
\end{proposition}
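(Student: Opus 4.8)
The plan is to pass from the group-theoretic condition to a lattice condition through a single determinant identity, prove the two directions separately, and isolate a rigidity statement about reflection subgroups as the one real difficulty. Throughout, write $W'=\langle t_1,\dots,t_n\rangle$ for the reflection subgroup generated by the $t_i$, let $L:=\ZZ\langle\rho_{t_1},\dots,\rho_{t_n}\rangle\subseteq\mathcal{Q}$ and $\widecheck L:=\ZZ\langle\widecheck\rho_{t_1},\dots,\widecheck\rho_{t_n}\rangle\subseteq\widecheck{\mathcal{Q}}$ be the lattices spanned by the chosen (co)roots, and set $C:=\big(\langle\rho_{t_i},\widecheck\rho_{t_j}\rangle\big)_{i,j=1}^n$. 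If the $\rho_{t_i}$ are linearly dependent then they span a proper subspace, the common fixed space $\bigcap_i V^{t_i}$ is nonzero, and $W'\neq W$; since in this case neither $L$ nor $\widecheck L$ can contain a basis, both sides of the claimed equivalence fail, and we may assume the $\rho_{t_i}$ (hence the $\widecheck\rho_{t_i}$) are independent. Expanding each $\rho_{t_i}$ and $\widecheck\rho_{t_i}$ in the simple (co)roots and using that the Gram matrix of $\langle\alpha_k,\widecheck\alpha_l\rangle$ is the Cartan matrix $K$ with $\det K=I(W)$, a change-of-basis computation gives the key identity
\[
|\det C|=[\mathcal{Q}:L]\cdot[\widecheck{\mathcal{Q}}:\widecheck L]\cdot I(W).
\]
Since both indices are at least $1$, this shows that the right-hand condition of the proposition---namely that $\{\rho_{t_i}\}$ and $\{\widecheck\rho_{t_i}\}$ are $\ZZ$-bases of $\mathcal{Q}$ and $\widecheck{\mathcal{Q}}$---is equivalent to $[\mathcal{Q}:L]=[\widecheck{\mathcal{Q}}:\widecheck L]=1$, i.e.\ to $|\det C|=I(W)$. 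The whole proposition is thus reduced to the equivalence $\langle t_i\rangle=W\iff|\det C|=I(W)$.

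For the forward direction I would argue that $L$ and $\widecheck L$ are $W$-stable. Indeed $t_i(\rho_{t_j})=\rho_{t_j}-\langle\rho_{t_j},\widecheck\rho_{t_i}\rangle\rho_{t_i}\in L$, the coefficient lying in $\ZZ$ by the crystallographic condition, so each generator preserves $L$; dually, under the contragredient action $t_i(\widecheck\rho_{t_j})=\widecheck\rho_{t_j}-\langle\rho_{t_i},\widecheck\rho_{t_j}\rangle\widecheck\rho_{t_i}\in\widecheck L$, so each generator preserves $\widecheck L$. Hence $W=\langle t_i\rangle$ stabilizes $L$, and $L\supseteq\ZZ\big(W\cdot\{\rho_{t_i}\}\big)$. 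It remains to see that the $W$-orbit of the generating roots is all of $\Phi$, so that $L\supseteq\ZZ\Phi=\mathcal{Q}$ and therefore $L=\mathcal{Q}$. Reducing to the irreducible case (the $t_i$ lying in a given irreducible factor must generate that factor, and there are exactly $\rank$-many of them), this is immediate for simply-laced types, where $W$ is transitive on $\Phi$; for the non-simply-laced types one uses that the reflections whose roots lie in a single $W$-orbit generate a proper subgroup, so a generating set must meet both root-length classes and hence its orbit is all of $\Phi$. The identical argument for coroots gives $\widecheck L=\widecheck{\mathcal{Q}}$, so $|\det C|=I(W)$.

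For the reverse direction, suppose $|\det C|=I(W)$, equivalently $L=\mathcal{Q}$ and $\widecheck L=\widecheck{\mathcal{Q}}$. Since $L\subseteq\mathcal{Q}'\subseteq\mathcal{Q}$ (the chosen roots are among the roots $\Phi'$ of $W'$), the squeeze forces the root lattice $\mathcal{Q}'$ of $W'$ to equal $\mathcal{Q}$, and likewise $\widecheck{\mathcal{Q}}'=\widecheck{\mathcal{Q}}$. What must be shown is then the following rigidity statement: a full-rank reflection subgroup $W'\le W$ with $\mathcal{Q}'=\mathcal{Q}$ and $\widecheck{\mathcal{Q}}'=\widecheck{\mathcal{Q}}$ must equal $W$. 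Applying the determinant identity to a simple system of $W'$ in place of the $t_i$ gives $I(W')=[\mathcal{Q}:\mathcal{Q}']\cdot[\widecheck{\mathcal{Q}}:\widecheck{\mathcal{Q}}']\cdot I(W)$, so the rigidity statement is equivalent to the assertion that $W$ strictly minimizes the connection index among its full-rank reflection subgroups: every proper such subgroup has $I(W')>I(W)$.

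This last assertion is the main obstacle, and it is where the global structure of $\Phi$ genuinely enters---it is false that the two lattices determine an arbitrary sublattice spanned by roots, and only the interplay of the root and coroot lattices detects a missing reflection (as already the two non-conjugate copies of $A_1\times A_1$ in $B_2$ show, where one copy has the correct root lattice and the other the correct coroot lattice, but neither has both). I would establish it by reducing to maximal proper full-rank reflection subgroups: any proper $W'$ is contained in a maximal one $M$, and $I(W')\ge I(M)$ by the same index identity, so it suffices to check that every maximal full-rank reflection subgroup $M\subsetneq W$ satisfies $I(M)>I(W)$. By the Borel--de Siebenthal description, such $M$ are obtained by deleting a node from the extended Dynkin diagram of each irreducible factor, giving a short, explicit finite list on which the inequality $I(M)>I(W)$ can be verified directly from the marks of the extended diagram (equivalently, from the tables of connection indices). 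Once this is in hand, the reverse direction, and with it the proposition, follows.
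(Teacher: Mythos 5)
Your argument cannot be checked against an in-paper proof, because the paper does not prove this proposition: it is imported directly from \cite{BW} (their Cor.~1.2). What can be compared is the surrounding machinery, and there your key identity $|\det C|=[\mathcal{Q}:L]\cdot[\widecheck{\mathcal{Q}}:\widecheck L]\cdot I(W)$ is exactly the paper's change-of-basis computation \eqref{eq:Cartan change of basis} --- but you run it in the opposite logical direction. The paper takes the proposition as given and deduces the determinant criterion (Corollary~\ref{Corol: gen set gdet=I(W)}) and the inequality $I(W')>I(W)$ for proper full-rank reflection subgroups (Corollary~\ref{Cor: W'<W means I(W')>I(W)}); you prove the identity directly and are therefore forced to supply two independent inputs. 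Your forward direction is correct and standard: integrality of the Cartan pairings makes $L$ and $\widecheck L$ stable under each $t_i$, hence under $W$, and transitivity of $W$ on each root-length class, together with the fact that the reflections of a single length class form a reflection-closed proper subsystem (so a generating set must meet both classes), gives $L=\mathcal{Q}$ and $\widecheck L=\widecheck{\mathcal{Q}}$. Your reverse direction correctly isolates the real content as the rigidity statement, correctly identifies it with the strict inequality of Corollary~\ref{Cor: W'<W means I(W')>I(W)}, and correctly recognizes that you must prove that inequality independently of the proposition to avoid circularity (the paper's own derivation of it \emph{uses} the proposition). The architecture is sound and all the reductions are valid.

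The one genuine flaw is the completeness claim in your final case-check. Borel--de Siebenthal node-deletion in extended Dynkin diagrams classifies the maximal \emph{additively closed} subsystems, whereas reflection subgroups of a Weyl group correspond to \emph{reflection-closed} subsystems, which in the non-simply-laced types include non-closed ones. Your own $B_2$ example turns against you here: the short $A_1\times A_1=\{\pm e_1,\pm e_2\}$ is a maximal full-rank reflection subgroup of $B_2$ (it has index $2$), but $e_1+e_2$ is a root, so this subsystem is not closed and never arises by deleting a node from the extended diagram of $B_2$ --- that operation produces only the long $A_1\times A_1$. The same phenomenon occurs for the short $A_1^n$ in $B_n$, the short $A_2$ in $G_2$, and the short-root $D_4$ in $F_4$. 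The target inequality does hold for these missed subgroups (e.g.\ $I=2^n>2=I(B_n)$ for the short $A_1^n$, and $I=3>1$ for the short $A_2$ in $G_2$), so your conclusion is true, but your enumeration as stated is incomplete and the proof does not close without a patch: either also run the extended-diagram procedure on the dual system $\Phi^\vee$ (whose closed subsystems capture the short-root subsystems of $\Phi$, and $W(\Phi)=W(\Phi^\vee)$), or invoke the known classification of reflection subgroups of finite Coxeter groups, e.g.\ the tables of \cite{DPR} that the paper itself uses in \S\ref{sec: counting RGS with laplacian}. With that correction the finite verification of $I(M)>I(W)$ goes through and the proposition follows as you outline.
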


Next, we rephrase the criterion of Proposition~\ref{Prop: BW charn of gen sets} in terms of the connection index $I(W)$ of the Weyl group; this gives a computationally fast way to check whether a set of reflections is a good generating set or not.  To begin, suppose that we have a set $\{t_i\}_{i=1}^n$ of $n$ reflections in a Weyl group $W$, with associated roots $\rho_{t_i}$ and coroots $\widecheck{\rho}_{t_j}$.  Let $\{\alpha_i\}_{i=1}^n$ be a set of simple roots for $W$, with associated coroots $\widecheck{\alpha}_i$. If we denote by $U$ the matrix that expresses the $\rho_{t_i}$ as $\ZZ$-linear combinations of the $\alpha_i$ and similarly we write $\widecheck{U}$ for the matrix that expresses the $\widecheck{\rho}_{t_j}$ in terms of the $\widecheck{\alpha}_j$, then it follows easily that
\begin{equation}
    \label{eq:Cartan change of basis}
\det\big(\langle\rho_{t_i},\widecheck{\rho}_{t_j}\rangle\big)= 
\det(U) \cdot \det\big(\langle \alpha_i, \widecheck{\rho}_{t_j}\rangle\big) = 
\det(U) \cdot \det\big(\langle \alpha_i, \widecheck{\alpha}_j\rangle\big) \cdot \det\big(\widecheck{U}^\top\big) = \det(U) \cdot I(W) \cdot \det\big(\widecheck{U}^\top\big)
\end{equation}
(where the last equality uses the fact that $\big(\langle \alpha_i, \widecheck{\alpha}_j\rangle\big)$ is the Cartan matrix of $W$).  Moreover, since each coroot $\widecheck{\rho}_{t_i}$ is a positive multiple of the corresponding root $\rho_{t_i}$, and likewise for the $\alpha_i$, the two determinants $\det(U)$ and $\det(\widecheck{U})$ have the same sign.

\begin{corollary}
\label{Corol: gen set gdet=I(W)}
In a Weyl group $W$ of rank $n$, a set of reflections $\{t_i\}_{i=1}^n$ is a good generating set for $W$ if and only if
\[
\det \big( \langle \rho_{t_i},\widecheck{\rho}_{t_j}\rangle\big)_{i,j=1}^n=I(W),
\]
where $\rho_{t}$ and $\widecheck{\rho}_{t}$ are the root and coroot associated to $t$ and $I(W)$ is the connection index of $W$.
\end{corollary}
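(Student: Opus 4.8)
The plan is to deduce Corollary~\ref{Corol: gen set gdet=I(W)} from the characterization in Proposition~\ref{Prop: BW charn of gen sets} together with the change-of-basis computation recorded in \eqref{eq:Cartan change of basis}. The matrix of interest is the \emph{mixed Gram matrix} $M := \big(\langle \rho_{t_i}, \widecheck{\rho}_{t_j}\rangle\big)_{i,j=1}^n$, and the key identity \eqref{eq:Cartan change of basis} factors its determinant as $\det(M) = \det(U)\cdot I(W)\cdot\det(\widecheck U^\top)$, where $U$ and $\widecheck U$ are the matrices expressing the $\rho_{t_i}$ in the simple roots $\alpha_i$ and the $\widecheck\rho_{t_j}$ in the simple coroots $\widecheck\alpha_j$.

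First I would invoke Proposition~\ref{Prop: BW charn of gen sets}: the set $\{t_i\}$ is a good generating set exactly when the roots $\rho_{t_i}$ form a $\ZZ$-basis of $\mathcal{Q}$ \emph{and} the coroots $\widecheck\rho_{t_j}$ form a $\ZZ$-basis of $\widecheck{\mathcal Q}$. Since the $\alpha_i$ and $\widecheck\alpha_j$ are themselves lattice bases of $\mathcal Q$ and $\widecheck{\mathcal Q}$, each $\rho_{t_i}$ and $\widecheck\rho_{t_j}$ lies in the respective lattice, so $U$ and $\widecheck U$ are automatically integer matrices. A change of lattice basis is precisely a unimodular transformation, so the generating condition becomes $\det(U) = \pm 1$ and $\det(\widecheck U) = \pm 1$. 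Feeding this into \eqref{eq:Cartan change of basis} gives $\det(M) = \pm I(W)$ in the good-generating case.

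To pin down the sign, I would use the last observation in the paragraph preceding the corollary: each coroot $\widecheck\rho_{t_i}$ is a positive scalar multiple of the root $\rho_{t_i}$, and likewise each $\widecheck\alpha_j$ is a positive multiple of $\alpha_j$. This means $\widecheck U$ and $U$ express, up to positive diagonal rescaling, the same linear relations among parallel vectors, so $\det(U)$ and $\det(\widecheck U)$ carry the same sign. Hence $\det(U)\cdot\det(\widecheck U^\top) = \det(U)\det(\widecheck U) > 0$, and in the unimodular case this product equals $+1$, yielding $\det(M) = I(W)$ exactly. Conversely, if $\det(M) = I(W)$, then \eqref{eq:Cartan change of basis} forces $\det(U)\det(\widecheck U) = 1$ with both integers of equal sign, so each has absolute value $1$; thus both $U$ and $\widecheck U$ are unimodular, the $\rho_{t_i}$ and $\widecheck\rho_{t_j}$ are lattice bases, and Proposition~\ref{Prop: BW charn of gen sets} certifies that $\{t_i\}$ generates $W$.

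The main obstacle, and really the only subtle point, is the sign bookkeeping: one must be careful that the hypotheses guarantee $U$ and $\widecheck U$ are genuinely \emph{integer} matrices (so that ``determinant $\pm 1$'' is equivalent to ``unimodular / lattice basis'') and that their determinants share a sign. The first follows because $\Phi\subseteq\mathcal Q$ and the coroots lie in $\widecheck{\mathcal Q}$, and the second is exactly the positive-multiple remark already in the text. Everything else is a direct substitution into the established determinant factorization, so the argument is short once these two points are secured.
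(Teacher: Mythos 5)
Your proposal is correct and follows essentially the same route as the paper's own proof: both reduce to Proposition~\ref{Prop: BW charn of gen sets}, use the integrality of $U$ and $\widecheck{U}$ to translate the lattice-basis condition into $\det(U)=\pm 1$ and $\det(\widecheck{U})=\pm 1$, and then conclude via the factorization \eqref{eq:Cartan change of basis} together with the equal-sign observation (which the paper establishes in the paragraph preceding the corollary, exactly as you re-derive it). Your sign bookkeeping, including the remark that in the converse direction integrality alone forces $|\det(U)|=|\det(\widecheck{U})|=1$ from $\det(U)\det(\widecheck{U})=1$, matches the paper's argument.
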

\begin{proof}
Since the roots $\{\rho_{t_i}\}$ (respectively, coroots $\{\widecheck{\rho}_{t_i}\}$) belong to the the lattice $\mathcal{Q}$ (resp., $\widecheck{Q}$) with $\ZZ$-basis $\{\alpha_i\}$ (resp., $\{\widecheck{\alpha}_i\}$), the matrix $U$ (resp., $\widecheck{U}$) is an integer matrix, with integer determinant.  Moreover, $\{\rho_{t_i}\}$ (resp., $\{\widecheck{\rho}_{t_i}\}$) forms a $\ZZ$-basis for $\mathcal{Q}$ (resp., $\widecheck{Q}$) if and only if $U$ (resp., $\widecheck{U}$) is invertible, i.e., if and only if it has determinant $\pm 1$.  By Proposition~\ref{Prop: BW charn of gen sets}, it follows that $\{t_i\}_{i=1}^n$ is a good generating set if and only if $\det(U)$ and $\det(\widecheck{U})$ are both $\pm 1$.  Then the result follows by \eqref{eq:Cartan change of basis}, after noting that $\det(U)$ and $\det(\widecheck{U})$ have the same sign.
\end{proof}

By the same methods, we can prove the following result (which we believe must be known, but for which we were not able to find a direct reference in the literature).

\begin{corollary}\label{Cor: W'<W means I(W')>I(W)}
If $W$ is a Weyl group and $W'\lneq W$ is a proper reflection subgroup of the same rank, then $I(W')>I(W)$.
\end{corollary}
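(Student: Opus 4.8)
The plan is to apply the criterion of Corollary~\ref{Corol: gen set gdet=I(W)} twice — once inside $W'$ and once inside $W$ — to a single, well-chosen set of reflections. First I would note that $W'$, being a reflection subgroup of the Weyl group $W$, is itself a Weyl group: the roots of its reflections form a subset of the crystallographic root system $\Phi$ of $W$, so $W'$ inherits a root lattice $\mathcal{Q}'$ and coroot lattice $\widecheck{\mathcal{Q}}'$, and in particular its connection index $I(W')$ is defined. As a real reflection group of rank $n$, $W'$ is well generated, so it admits a good generating set $\{t_i\}_{i=1}^n$.

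Applying Corollary~\ref{Corol: gen set gdet=I(W)} to $W'$ and this generating set gives $\det\big(\langle \rho_{t_i}, \widecheck{\rho}_{t_j}\rangle\big) = I(W')$. The point I would emphasize is that this Gram-type determinant depends only on the reflections $t_i$ as elements of $\GL(V)$, not on which ambient group we regard them in. Since the roots $\rho_{t_i}$ and coroots $\widecheck{\rho}_{t_i}$ of reflections lying in $W'$ are in particular roots and coroots of $W$, they belong to the lattices $\mathcal{Q}$ and $\widecheck{\mathcal{Q}}$; hence I may form the integer change-of-basis matrices $U$ and $\widecheck{U}$ that express them in terms of the simple (co)roots of $W$, exactly as in the setup of \eqref{eq:Cartan change of basis}. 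Because $W'$ has the same rank $n$ as $W$, the $\rho_{t_i}$ span the essential space, so $U$ and $\widecheck{U}$ are invertible over $\QQ$ and $\det(U),\det(\widecheck{U})$ are nonzero integers. Feeding this into \eqref{eq:Cartan change of basis} yields $I(W') = \det(U)\,\det(\widecheck{U})\cdot I(W)$.

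It then remains to show that the integer factor $\det(U)\det(\widecheck{U})$ is at least $2$. As recorded just after \eqref{eq:Cartan change of basis}, the determinants $\det(U)$ and $\det(\widecheck{U})$ always have the same sign, so their product is positive. On the other hand, the reflections $\{t_i\}$ generate $W'\lneq W$ and therefore do not generate $W$, so they are not a good generating set for $W$; by the proof of Corollary~\ref{Corol: gen set gdet=I(W)}, this forces $\det(U)$ and $\det(\widecheck{U})$ to not both equal $\pm 1$. Thus at least one of $|\det(U)|,|\det(\widecheck{U})|$ is $\geq 2$ and the other is $\geq 1$, giving $\det(U)\det(\widecheck{U})\geq 2$ and hence $I(W')\geq 2\,I(W) > I(W)$. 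I expect the only delicate point to be the bookkeeping in the first two steps: confirming that a reflection subgroup of a Weyl group is again crystallographic (so $I(W')$ is defined and $W'$ is well generated), that full rank of $W'$ makes $U$ and $\widecheck{U}$ nonsingular, and that the roots and coroots of $W'$ are normalized consistently with those of $W$, so that the single determinant $\det\big(\langle\rho_{t_i},\widecheck{\rho}_{t_j}\rangle\big)$ can legitimately be read both as $I(W')$ and through the $W$-lattice factorization; once these are in place, the inequality is immediate.
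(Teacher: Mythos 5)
Your proposal is correct and follows essentially the same route as the paper: take a good generating set for $W'$, read the determinant $\det\big(\langle\rho_{t_i},\widecheck{\rho}_{t_j}\rangle\big)$ as $I(W')$ via Corollary~\ref{Corol: gen set gdet=I(W)}, factor it through the $W$-lattices via \eqref{eq:Cartan change of basis}, and use Proposition~\ref{Prop: BW charn of gen sets} to force $|\det(U)\det(\widecheck{U})|\geq 2$. The extra bookkeeping you flag (crystallographity and well generation of $W'$, full rank making $U,\widecheck{U}$ nonsingular, the same-sign observation) is sound and merely makes explicit what the paper's shorter proof leaves implicit.
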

\begin{proof}
Let $n$ be the common rank of $W$ and $W'$ and $\{t_i\}_{i=1}^n$ be a (good) generating set of reflections for $W'$, and let $\rho_{t_i}$ and $\widecheck{\rho}_{t_j}$ denote the associated roots and coroots. By Corollary~\ref{Corol: gen set gdet=I(W)} and \eqref{eq:Cartan change of basis}, we have that
\[
I(W')=\det\big(\langle\rho_{t_i},\widecheck{\rho}_{t_j}\rangle\big) = |\det(U)| \cdot I(W) \cdot |\det\big(\widecheck{U}^\top\big)|.
\]
Since the $t_i$ do not generate all of $W$, we have by Proposition~\ref{Prop: BW charn of gen sets} that at least one of $U$, $\widecheck{U}$ will have (integer) determinant different from $\pm 1$. This completes the proof.
\end{proof}

Unlike the case of simple systems, the product of the elements of a good generating set need not be a (generalized) Coxeter element.  For example, in $W := D_4 = G(2, 2, 4)$, the reflections $s_1 = [(12); 0]$, $s_2 = [(23); 0]$, $s_3 = [(34); 0]$ and $t= [(13); 1]$ form a good generating set for $W$ (since if $\widetilde{s_1}=[(12);1]$ is the fourth simple generator, we have $s_2ts_2 = \widetilde{s_1}$).  However, they have product $s_1s_2s_3 t = [(14)(23); (1, 0, 1, 0)]$, which is not a (generalized) Coxeter element (all generalized Coxeter elements in $D_4$ are of order $6$ and have as underlying permutation a $3$-cycle). We meet the products of arbitrary good generating sets again in \S\ref{sec.parabolic qce}, where they are called \emph{quasi-Coxeter} elements; they are the main objects of study in this work.

\subsubsection*{The combinatorial family}
In the infinite family, the groups $G(m, 1, n)$ and $G(m, m, n)$ are well generated, but the intermediate groups $G(m, p, n)$ for $1 < p < m$ are not \cite[\S 2.7]{LehrerTaylor}.  We now describe the good generating sets in $G(m, 1, n)$ and $G(m, m, n)$.

It is natural to represent collections of reflections in $G(m, p, n)$ as graphs on the vertex set $\{1, \ldots, n\}$: a transposition-like reflection whose underlying permutation is $(i j)$ can be represented by an edge joining $i$ to $j$, while a diagonal reflection whose matrix entry of nonzero color occurs at position $(i, i)$ can be represented by a loop at vertex $i$.  
We say that a collection of reflections in $G(m, p, n)$ is \defn{connected} if the associated graph is connected.  It is a basic fact of graph theory that any connected graph contains a spanning tree; consequently, any connected set of reflections in $G(m, p, n)$ contains a connected subset of $n - 1$ transposition-like reflections.  Such a subset always generates a subgroup of $G(m, p, n)$ isomorphic to the symmetric group $\Symm_n$ \cite[Lem.~2.7]{Shi2005}.  In particular, when $m = p = 1$ (so $G(m, p, n) = G(1, 1, n) = \Symm_n$), the good generating sets correspond to the trees on $\{1, \ldots, n\}$.  In order to characterize good generating sets when $m > 1$, we introduce some additional terminology.

Every connected graph on $n$ vertices with $n$ edges contains a unique cycle.  In the case that the cycle is a loop, we say that the graph is a \defn{rooted tree}.  Otherwise, the cycle contains at least two vertices, and in this case we call a graph a \defn{unicycle}.  Given a set $S$ of $n$ reflections whose associated graph is a unicycle, define a quantity $\delta(S)$, as follows: if the unique (graph) cycle has vertices $i_0, i_1, \ldots, i_{k - 1}, i_k = i_0$ in order,\footnote{
There are two possible cyclic orders; for each (graph) cycle, we fix an arbitrary choice of the order in which to read its vertices.} so that the associated reflections are $[(i_j \; i_{j + 1}); a_j]$ for some $a_0, \ldots, a_{k - 1} \in \ZZ/m\ZZ$, we set
\[
\delta(S) = a_0 + \ldots + a_{k - 1}.
\]
Shi showed that good generating sets in the combinatorial family can be characterized in terms of the objects just defined.

\begin{lemma}[{\cite[Lem.~2.1, Thm.~2.8, and Thm.~2.19]{Shi2005}}] 
\label{lem: good generating set G(m,1,n) and G(m,m,n)}
\

\begin{enumerate}[(i)]
    \item A set of $n$ reflections in $G(m, 1, n)$ generates the full group if and only if its graph is rooted tree and the color of the diagonal factor is a generator of $\ZZ/m\ZZ$.
    \item A set $S$ of $n$ reflections in $G(m, m, n)$ generates the full group if and only if its graph is a unicycle and $\delta(S)$ is a generator of $\ZZ/m\ZZ$.
\end{enumerate}
\end{lemma}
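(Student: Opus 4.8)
The plan is to exploit the two canonical surjective homomorphisms out of these groups: the underlying-permutation map $\pi\colon [u;a]\mapsto u$ onto $\Symm_n$, and (for $G(m,1,n)$) the color map $\wt\colon [u;a]\mapsto \sum_i a_i$ onto $\ZZ/m\ZZ$. Writing $D:=\{[\id;a]\}\cong(\ZZ/m\ZZ)^n$ for the diagonal subgroup and $D_0:=\{[\id;a]: \sum_i a_i=0\}$, I would use the wreath-product structure $G(m,1,n)=D\rtimes\Symm_n$ and its restriction $G(m,m,n)=\ker(\wt)=D_0\rtimes\Symm_n$, where $\Symm_n$ sits inside as the permutation matrices $[u;0]$. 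Since a generating set must surject onto each quotient, deciding whether $S$ generates reduces to two independent questions: whether $\pi(S)$ generates $\Symm_n$ (a purely graph-theoretic condition, controlled by the cited \cite[Lem.~2.7]{Shi2005} that a connected family of $n-1$ transposition-like reflections generates a copy of $\Symm_n$), and whether the diagonal part $\langle S\rangle\cap D$ fills up all of $D$ (resp.\ $D_0$).

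The central tool I would establish first is a normalization by \emph{diagonal conjugation}. A direct computation gives $[\id;b]\,[(ij);k]\,[\id;-b]=[(ij);\,k+b_j-b_i]$, while $[\id;b]$ commutes with every diagonal reflection. Conjugation by $[\id;b]\in G(m,1,n)$ is an automorphism preserving both $G(m,m,n)$ (a normal subgroup) and the set of reflections, and it fixes the underlying graph as well as the colors of all loops; moreover the telescoping sum around a cycle shows it preserves $\delta(S)$ up to the orientation sign, which is immaterial since we only test whether $\delta(S)$ generates $\ZZ/m\ZZ$. Choosing $b$ to propagate along any spanning tree, I may therefore assume without loss of generality that all tree edges have color $0$, i.e.\ are the honest transpositions $[(ij);0]$, which generate the standard $\Symm_n$.

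For part (i), necessity follows by counting: $\pi(S)$ must generate $\Symm_n$, so the transposition-like reflections form a connected graph and number at least $n-1$; since $|S|=n$, at most one generator is diagonal, and since $\wt$ must be surjected onto, exactly one diagonal reflection is present, with color generating $\ZZ/m\ZZ$, while the remaining $n-1$ transposition-like reflections form a spanning tree — precisely a rooted tree. For sufficiency, after normalization $S$ consists of the standard $\Symm_n$ together with a single loop $[\id;c\,e_r]$ with $\gcd(c,m)=1$; conjugating the loop by permutations yields all $[\id;c\,e_i]$, which generate $D$ because $c$ is a unit, and hence $\langle S\rangle\supseteq D\rtimes\Symm_n=G(m,1,n)$. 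For part (ii), necessity of the unicycle shape is again the graph condition ($n$ edges on $n$ connected vertices force a unique cycle, of length $\geq 2$ as there are no loops). The heart of the argument is the torus computation: after normalization the unique non-tree edge is $[(pq);d]$ with $d=\delta(S)$, and multiplying it by the standard transposition $s_{pq}$ produces the diagonal element $[\id;\,d(e_q-e_p)]$; its $\Symm_n$-conjugates are exactly the $[\id;\,d(e_j-e_i)]$, which generate the subgroup $d\cdot D_0$. Since $D_0$ is generated by the differences $e_j-e_i$, this equals $D_0$ precisely when $d$ is a unit modulo $m$, giving both directions of (ii) at once.

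I expect the main obstacle to be making the torus computation in part (ii) airtight in \emph{both} directions simultaneously: it is easy to show that $\delta(S)$ being a unit suffices, but the converse requires knowing that $\langle S\rangle\cap D$ is \emph{exactly} $\delta(S)\cdot D_0$ and nothing larger. This rests on identifying $D_0$ with the mod-$m$ reduction of the $A_{n-1}$ root lattice and on the fact that the $\Symm_n$-submodule of $D$ generated by a single $[\id;e_q-e_p]$ is all of $D_0$; scaling by $d$ then controls the generated subgroup precisely. Verifying that no extra diagonal elements sneak in from products of the generators (equivalently, that the normalized generating set lies in $D_0\rtimes\Symm_n$ with diagonal content confined to $d\cdot D_0$) is the step demanding the most care.
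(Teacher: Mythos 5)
Your proposal is correct, but note that the paper does not actually prove this lemma at all: it is imported wholesale from Shi \cite[Lem.~2.1, Thm.~2.8, Thm.~2.19]{Shi2005}, whose own arguments are comparatively computational and inductive. Your self-contained argument via the wreath-product decompositions $G(m,1,n)=D\rtimes\Symm_n$ and $G(m,m,n)=D_0\rtimes\Symm_n$ is therefore a genuinely different (and more conceptual) route; interestingly, your key normalization step --- diagonal conjugation to make all spanning-tree edges true transpositions --- is exactly the trick the paper itself gestures at in Remark~\ref{rem: Shi's delta}, quoting \cite[Prop.~3.30]{LW}, so your proof fits the paper's toolkit better than Shi's original does. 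What your approach buys is a clean separation of the two obstructions (connectivity of the permutation image, and the diagonal content measured by the loop color or $\delta(S)$), at the cost of having to verify the semidirect-product bookkeeping; Shi's statements, by contrast, come packaged with the parallel results for all $G(m,p,n)$. The one step you flag as delicate --- that $\langle S\rangle\cap D$ is \emph{exactly} $\delta(S)\cdot D_0$ --- is in fact immediate once the normalization is done: since $d\cdot D_0$ is a $\Symm_n$-stable subgroup of $D$, the set $(d\cdot D_0)\rtimes\Symm_n$ is a subgroup of $G(m,m,n)$, and it visibly contains every normalized generator (the tree transpositions and $[(pq);d]=[\id;d(e_q-e_p)]\cdot[(pq);0]$), so $\langle S\rangle\subseteq (d\cdot D_0)\rtimes\Symm_n$ with equality by your torus computation; no extra diagonal elements can appear, and both directions of (ii) follow since $d\cdot D_0=D_0$ if and only if $\gcd(d,m)=1$. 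With that one sentence added, your proof is complete (modulo the degenerate case $m=1$, where the statement is vacuous or trivial and your counting argument for the unique diagonal factor should be restricted to $m>1$).
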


\begin{remark}
\label{rem: Shi's delta}
It is not difficult to see (for example, by following the argument in \cite[Prop.~3.30]{LW}, conjugating by a diagonal element of $G(m, 1, n)$ to send all but one of the elements of $S$ to true transpositions) that if $S$ is a unicycle then multiplying the factors of $S$ together in any order produces an element of $G(m, m, n)$ with exactly two (permutation) cycles, one of color $\delta(S)$ and the other of color $-\delta(S)$. This is compatible with the arbitrary choice in the definition of $\delta$: choosing the other cyclic order would replace $\delta$ with $-\delta$.
\end{remark}

\subsubsection*{Coxeter elements}

One of the most important characteristics of well generated groups is that they are precisely the reflection groups that have analogues of Coxeter elements. For an arbitrary complex reflection group $W$ of rank $n$, Gordon and Griffeth \cite{GG} defined the \defn{Coxeter number} $h$ as 
\[
\defn{h} := \dfrac{\#\RRR+\#\mathcal{A}_W}{n}.
\]
Following Springer \cite{springer}, we say that a vector $v\in V$ is a \defn{regular vector} if it does not belong to any reflection hyperplane $H\in\mathcal{A}_W$, and that an element $g\in W$ is a \defn{$\zeta$-regular element} if it has a regular $\zeta$-eigenvector. If we do not need to emphasize the value of $\zeta$ we may simply call $g\in W$ a \emph{regular element}. For a given $\zeta\in\CC^\times$, all $\zeta$-regular elements are conjugate and their order equals the multiplicative order of $\zeta$. 
A \defn{Coxeter element} is defined as an $e^{2\pi i/h}$-regular element of $W$ and a \defn{generalized Coxeter element} as any regular element of order $h$ \cite{RRS}. In the case of real reflection groups, this definition agrees with the one we gave in \S\ref{Sec: real refl grps} (see the discussion above \cite[Def.~3.1]{chapuy_theo}). It is an easy consequence of \cite[Prop.~4.2]{bessis-zariski} and standard properties of regular numbers that Coxeter elements exist solely for well generated groups.

\subsection{Reflection factorizations and reflection length}
\label{sec:reflen}

Starting with an arbitrary group $G$ and some generating set $S\subseteq G$,  the Cayley graph of $G$ with respect to $S$ determines a natural length function on the elements of the group: we may define the length of an element $g\in G$ as the size of the shortest path in the Cayley graph from the identity element to $g$.
In the case of a complex reflection group $W$, we pick as the distinguished generating set the set $\RRR$ of reflections.  Then the \defn{reflection length} \defn{$\lR(g)$} of an element $g\in W$ is the smallest number $k$ such that there exist reflections $t_1, \ldots, t_k$ whose product $t_1 \cdots t_k$ is equal to $g$. 

\begin{definition}\label{Defn: Red_W(g) and Fred(g)}
For an arbitrary number $N$, if $t_1,\ldots,t_N$ are reflections such that $t_1\cdots t_N=g$, we say that the tuple $( t_1,\ldots, t_N)$ is a \defn{reflection factorization} of $g$ of length $N$. If moreover $N=\lR(g)$, we say that the factorization is \defn{reduced}. We denote the set of reduced reflection factorizations of $g\in W$ by \defn{$\op{Red}_W(g)$}, i.e.,
\[
\op{Red}_W(g):=\left\{( t_1,\ldots, t_{\lR(g)})\in\RRR^{\lR(g)} \colon  t_1\cdots t_{\lR(g)}=g\right\},
\]
and we denote by \defn{$\Fred(g)$} their number (i.e., the cardinality of the set $\op{Red}_W(g)$).
\end{definition}

The reflection length is subadditive over products: for any $a, b$ one has $\lR(a b) \leq \lR(a) + \lR(b)$.  Thus, it determines a partial order $\defn{\leq_{\RRR}}$ (the \defn{absolute order}) on the elements of $W$ via
\[
u \leq_{\RRR} v \qquad \iff \qquad \lR(u)+\lR(u^{-1}v)=\lR(v).
\]
Equivalently, $u \leq_{\RRR} v$ if and only if every reduced reflection factorization of $u$ can be extended to give a reduced reflection factorization of $v$.

If $W$ is either a real reflection group or the wreath product $G(m, 1, n)$, there is a simple geometric formula for reflection length: one has in these cases that $\lR(w) = \codim(V^w)$ for all $w$ in $W$ \cite[Lem.~2]{carter}, \cite[Rem.~2.3]{Shi2007}.  For the other complex reflection groups, the situation is more complicated: it is always the case that $\lR(w) \geq \codim(V^w)$, but there exist elements where equality is not achieved \cite{FosterGreenwood}.  

The subgroups $W'\leq W$ that are generated by the reflections in a reduced reflection factorization of some element $g\in W$ are of special interest. For a Weyl group $W$ and a fixed $g\in W$, all such subgroups $W'$ share an important invariant: they have the same connection index.\footnote{There is a notion of connection index for general complex reflection groups (see \S\ref{sec:8:4}), but this statement is no longer valid in that generality and it is unclear how to extend it.} This follows from the next proposition, which appears in Wegener's thesis as part of the proof of \cite[Thm.~4.2.15]{Wegener_thesis}. Recall that the \defn{pseudo-determinant $\op{pdet}(A)$} of a linear transformation $A$ is defined as the product of the non-zero eigenvalues of $A$. 

\begin{proposition}[{see proof of Thm.~4.2.15 in \cite{Wegener_thesis}}]\label{Prop: pdet(g-id)=I(W')}
Let $W \subset \GL(V)$ be a Weyl group of rank $n$, $g\in W$ an element, and $(t_1,\ldots, t_k)\in \op{Red}_W(g)$ a reduced reflection factorization of $g$. If $W'$ is the reflection subgroup of $W$ generated by the factors $t_i$, then
\[
I(W') = \big|\op{pdet}(g-I_V)\big|,
\]
where $I_V$ is the identity on $V$ and $I(W')$ denotes the connection index of $W'$.
\end{proposition}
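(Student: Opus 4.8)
The plan is to reduce the claim to the characterization of good generating sets from Corollary~\ref{Corol: gen set gdet=I(W)} together with a direct computation of $\op{pdet}(g - I_V)$ in terms of the roots and coroots of the factors $t_i$. First I would reduce to the essential case. Let $W'$ be the reflection subgroup generated by the factors $t_1,\ldots,t_k$, and let $U := \big(\Span_{\RR}\{\rho_{t_1},\ldots,\rho_{t_k}\}\big)$ be its (essential) support. Since $(t_1,\ldots,t_k)$ is a \emph{reduced} factorization of $g$, the codimension count forces $k = \lR(g) = \codim(V^g)$ (recall $\lR = \codim V^{(\cdot)}$ for Weyl groups), and moreover $U = (V^g)^\perp$, so that $g$ acts as the identity on $U^\perp = V^g$ and the nonzero eigenvalues of $g - I_V$ are exactly the eigenvalues of $(g - I_V)|_U$. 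Consequently $\op{pdet}(g - I_V) = \det\big((g - I_V)|_U\big)$, and the roots $\rho_{t_i}$ form an $\RR$-basis of $U$.

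The heart of the argument is the identity
\begin{equation}
\label{eq:pdet-gram-plan}
\det\big((g - I_V)|_U\big) = \pm \det\big(\langle \rho_{t_i}, \widecheck{\rho}_{t_j}\rangle\big)_{i,j=1}^k .
\end{equation}
To prove \eqref{eq:pdet-gram-plan} I would expand $g = t_1 \cdots t_k$ and apply the reflection formula \eqref{eq: defn refln real} repeatedly. Telescoping the product $g - I_V = t_1\cdots t_k - I_V$ shows that, in the basis $\{\rho_{t_1},\ldots,\rho_{t_k}\}$ of $U$, the operator $g - I_V$ is represented by a matrix of the form $-M D$, where $D = \diag(\langle \rho_{t_1},\widecheck{\rho}_{t_1}\rangle,\ldots) = 2I$ records the normalization and $M$ is upper-unitriangular with entries built from the pairings $\langle \rho_{t_i},\widecheck{\rho}_{t_j}\rangle$; the precise bookkeeping is a standard Cayley-type computation for a product of reflections acting on the span of their roots. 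The upshot is that $\det\big((g - I_V)|_U\big)$ equals, up to sign, the Gram-type determinant $\det\big(\langle \rho_{t_i},\widecheck{\rho}_{t_j}\rangle\big)$, which is the quantity appearing in Corollary~\ref{Corol: gen set gdet=I(W)}. Since $\{t_i\}$ is a good generating set for $W'$ by construction (the $t_i$ generate $W'$ and $k = \rank(W')$), that corollary identifies this determinant with $I(W')$, and taking absolute values gives $I(W') = |\op{pdet}(g - I_V)|$.

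The main obstacle I anticipate is making the telescoping computation behind \eqref{eq:pdet-gram-plan} clean and sign-correct when the $\rho_{t_i}$ are merely a basis (not orthonormal) for $U$: one must carefully track the matrix of $g - I_V$ in the $\rho$-basis versus its Gram determinant with respect to the coroots, and verify that the accumulated factor matches $\det\big(\langle \rho_{t_i},\widecheck{\rho}_{t_j}\rangle\big)$ rather than some other pairing. A convenient way to sidestep sign and normalization headaches is to argue by induction on $k$: strip off the last reflection $t_k$, write $g = g' t_k$ with $g' = t_1\cdots t_{k-1}$, and use the rank-one update $\langle \rho_{t_i},\widecheck{\rho}_{t_k}\rangle$ to relate the determinant for $g$ to that for $g'$ via a cofactor expansion, matching it against the recursive Gram determinant. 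This localizes the computation to a single rank-one correction at each step and is where the bulk of the verification lies; once \eqref{eq:pdet-gram-plan} is established, the conclusion is immediate from Corollary~\ref{Corol: gen set gdet=I(W)}.
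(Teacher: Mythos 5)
Your proposal is correct, and there is no in-paper proof to compare it against line by line: the paper does not prove Proposition~\ref{Prop: pdet(g-id)=I(W')} itself but cites the proof of Thm.~4.2.15 in Wegener's thesis, whose argument runs along the same lines as yours --- restrict to the move space, identify $\op{pdet}(g-I_V)$ with the determinant of the root--coroot pairing matrix of the factors, and invoke the lattice-basis characterization of generating sets (Proposition~\ref{Prop: BW charn of gen sets} via Corollary~\ref{Corol: gen set gdet=I(W)}). Your reduction step is sound: reducedness plus $\lR=\codim V^{(\cdot)}$ in real groups gives $\bigcap_i V^{t_i}=V^g$ of codimension $k$, hence $U=(V^g)^\perp$, the roots $\rho_{t_i}$ form a basis of $U$, and $\op{pdet}(g-I_V)=\det\big((g-I_V)|_U\big)$ since $g$ preserves the orthogonal decomposition $V=V^g\oplus U$.

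Two points need repair or explicitness. First, your intermediate description of the matrix as $-MD$ with $M$ unitriangular and $D=2I$ cannot be right as stated: it would give determinant $(-2)^k$ independently of $g$. The correct bookkeeping from the telescoping identity $g-I_V=\sum_{i=1}^{k} t_1\cdots t_{i-1}(t_i-I_V)$ is this: set $u_i:=t_1\cdots t_{i-1}(\rho_{t_i})$, so that by \eqref{eq: defn refln real} one has $u_i=\rho_{t_i}+(\text{a combination of }\rho_{t_1},\ldots,\rho_{t_{i-1}})$, making the change of basis $P$ expressing the $u_i$ in the $\rho_{t_l}$ unitriangular, while $(g-I_V)\,\rho_{t_j}=-\sum_{i}\langle\rho_{t_j},\widecheck{\rho}_{t_i}\rangle\, u_i$. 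Hence in the basis $\{\rho_{t_i}\}$ of $U$ the operator $(g-I_V)|_U$ is represented by $-PA$ with $A=\big(\langle\rho_{t_j},\widecheck{\rho}_{t_i}\rangle\big)$ the \emph{full} pairing matrix (not a diagonal normalization), giving $\det\big((g-I_V)|_U\big)=(-1)^k\det\big(\langle\rho_{t_i},\widecheck{\rho}_{t_j}\rangle\big)$ --- your key identity, with the sign pinned down, so the inductive fallback you propose is unnecessary. Second, you apply Corollary~\ref{Corol: gen set gdet=I(W)} with $W'$ as the ambient group; this is legitimate but deserves a sentence: $\rank(W')=k$ because $V^{W'}=\bigcap_i V^{t_i}=V^g$, and $W'$ is itself a Weyl group with root system $\{\rho\in\Phi:\ s_\rho\in W'\}$, so the roots and coroots of the $t_i$ --- and therefore the pairing determinant --- are the same whether computed relative to $W'$ or to $W$. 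With these two items filled in, your argument is complete and matches the standard (Carter-style) proof that the cited source follows.
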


\subsubsection*{The combinatorial family}

In the infinite families $W=G(m,1,n)$ and $W=G(m,m,n)$, one may also give combinatorial formulas for reflection length.
\begin{theorem}[{\cite[Thm.~4.4]{Shi2007}}]
\label{thm:reflection length in G(m, 1, n) and G(m, p, n)} \;
\begin{enumerate}[(i)]
    \item Given an element $w$ in $G(m,1,n)$, its reflection length is $$\lR(w) = n-c_0(w),$$ where $c_0(w)$ is the number of cycles in $w$ of color $0$.
    \item Given an element $w$ in $G(m, m, n)$, its reflection length is $$\lR(w)=n + c(w) - 2v_m(w),$$ where $c(w)$ is the number of cycles in $w$ and $v_m(w)$ is the largest number of parts into which it is possible to partition the cycles of $w$ such that all parts have total color $0$.
\end{enumerate}
\end{theorem}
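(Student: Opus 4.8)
The plan is to prove the two parts separately, with part~(i) reducing quickly to a fixed-space computation and part~(ii) requiring a genuine combinatorial argument (note that, unlike $G(m,1,n)$, the groups $G(m,m,n)$ with $m\geq 3$ are among those for which $\lR(w)$ can strictly exceed $\codim(V^w)$, so the formula in (ii) cannot be read off from the fixed space alone).

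For part~(i), I would invoke the cited equality $\lR(w)=\codim(V^w)$ for $w\in G(m,1,n)$ and simply identify the fixed space. Writing $w=[u;a]$ and $\zeta=\exp(2\pi i/m)$, each cycle $(i_1\,\cdots\,i_\ell)$ of $u$ spans a $w$-invariant subspace on which $w^\ell$ acts by the scalar $\zeta^{c}$, where $c$ is the color of the cycle; hence the eigenvalues of $w$ on that block are the $\ell$-th roots of $\zeta^{c}$, and the block contributes a fixed vector exactly when $\zeta^{c}=1$, that is, when the cycle has color $0$. Thus $\dim V^w=c_0(w)$ and $\lR(w)=n-c_0(w)$. (One can avoid citing the equality: the same eigenvalue count gives $\lR(w)\ge\codim(V^w)=n-c_0(w)$, while an explicit factorization---a color-$0$ cycle of length $\ell$ as $\ell-1$ transposition-like reflections, a nonzero-color cycle of length $\ell$ as $\ell-1$ transposition-like reflections together with one diagonal reflection, available since $p=1<m$---gives the matching upper bound.)

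For part~(ii), only the transposition-like reflections $[(ij);k]$ are available, and the strategy is to set $d(w):=n+c(w)-2v_m(w)$ and prove both inequalities $\lR(w)\ge d(w)$ and $\lR(w)\le d(w)$. The lower bound rests on the claim that right-multiplication by a single transposition-like reflection changes $d$ by exactly $\pm 1$. Multiplying $w=[u;a]$ by $t=[(ij);k]$ either merges two cycles of $u$ (if $i,j$ lie in different cycles, decreasing $c(w)$ by $1$) or splits one cycle into two (if they lie in the same cycle, increasing $c(w)$ by $1$); using $[u;a][v;b]=[uv;v(a)+b]$ one checks that a merge of cycles of colors $b_1,b_2$ produces a cycle of color $b_1+b_2$, and a split distributes the color. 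The heart of the matter is a purely combinatorial lemma on $v_m$: viewing the cycles of $w$ as a multiset of colors in $\ZZ/m\ZZ$ summing to $0$ and $v_m(w)$ as the maximal number of blocks in a partition of this multiset into zero-sum blocks, I would show that replacing two colors $b_1,b_2$ by $b_1+b_2$ changes $v_m$ by $0$ or $-1$. The inequality $v_m(\text{after})\le v_m(\text{before})$ follows by splitting $b_1+b_2$ back into $b_1,b_2$ inside its block, and $v_m(\text{after})\ge v_m(\text{before})-1$ follows by fusing the two (zero-sum) blocks containing $b_1$ and $b_2$ into one when they are distinct. Combining $c\mapsto c-1$ with $v_m\mapsto v_m$ or $v_m-1$ gives $\Delta d=\pm1$ for a merge, and the split is symmetric. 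Since the identity has $c=v_m=n$ and hence $d(\id)=0$, telescoping along any factorization $w=t_1\cdots t_N$ yields $N\ge d(w)$.

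For the matching upper bound I would reduce $w$ to the identity block by block. Fix an optimal partition of the cycles into $v_m(w)$ zero-sum blocks; reflections supported inside one block leave the others untouched. Within a block of $s$ cycles of total length $L$ and total color $0$, first merge the $s$ cycles into a single length-$L$ cycle of color $0$ using $s-1$ transposition-like reflections, then peel off fixed points one at a time: a color-$0$ cycle of length $j$ can, by a suitable choice of the parameter $k$, be split into a color-$0$ fixed point and a color-$0$ cycle of length $j-1$, costing $L-1$ reflections in all. This writes the block as a product of $(s-1)+(L-1)=L+s-2$ transposition-like reflections; summing over the $v_m(w)$ blocks gives $\sum(L+s-2)=n+c(w)-2v_m(w)=d(w)$ reflections, all lying in $G(m,m,n)$, and reading the reduction backwards exhibits $w$ as their product, so $\lR(w)\le d(w)$.

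The main obstacle is the lower bound in part~(ii), and specifically the combinatorial lemma controlling $v_m$: one must verify that a single merge or split cannot change the optimal zero-sum block count by more than one, which is exactly what forces $d$ to move in unit steps and thereby pins the reflection length down precisely. Everything else---the eigenvalue count in part~(i) and the block-by-block construction for the upper bound---is comparatively routine bookkeeping.
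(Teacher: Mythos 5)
Your proposal is correct, but there is nothing in the paper to compare it against line by line: the paper does not prove this statement, it imports it wholesale from Shi \cite{Shi2007}, so yours is a genuine reconstruction rather than a variant of an in-paper argument. Part~(i) is fine either way you present it: the paper has already recorded (in \S\ref{sec:reflen}) that $\lR(w)=\codim(V^w)$ holds in $G(m,1,n)$, and your block-diagonal eigenvalue count correctly identifies $\dim V^w=c_0(w)$, with the explicit factorizations (a nonzero-color $\ell$-cycle as $\ell-1$ transposition-like reflections plus one diagonal reflection, legal since $p=1$) giving an independent upper bound. Part~(ii) is the real content, and your argument is sound: setting $d(w)=n+c(w)-2v_m(w)$, the key combinatorial lemma -- that replacing two colors $b_1,b_2$ by $b_1+b_2$ in the zero-sum multiset changes $v_m$ by $0$ or $-1$, proved in one direction by re-splitting $b_1+b_2$ inside its block and in the other by fusing the two blocks containing $b_1$ and $b_2$ -- does yield $\Delta d=\pm 1$ in both the merge case ($\Delta d=-1-2\Delta v_m$ with $\Delta v_m\in\{-1,0\}$) and the split case ($\Delta d=1-2\Delta v_m$ with $\Delta v_m\in\{0,1\}$), and telescoping from $d(\id)=0$ gives the lower bound; the upper bound works because in $G(m,m,n)$ the parameter $k$ of a transposition-like reflection is unconstrained, so a color-$0$ cycle can always shed a color-$0$ fixed point, and the count $\sum_b\bigl(L_b+s_b-2\bigr)=n+c(w)-2v_m(w)$ over an optimal block partition is exact. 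This semi-invariant-plus-construction strategy is essentially Shi's original one, so what your writeup buys is self-containedness: the paper's reliance on the external citation is replaced by an elementary argument using only the multiplication rule $[u;a][v;b]=[uv;v(a)+b]$ already set up in \S\ref{Sec: CX refl grps}.
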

In the case that $m = 1$, so that $W = \Symm_n$, both formulas reduce to the usual formula $\lR(w) = n - c(w)$ for reflection length in the symmetric group, where $c(w)$ is the number of cycles of the permutation $w$.

\subsection{Parabolic subgroups}
\label{Sec: Par. subgrps}

As in the real case, the reflection arrangement $\mathcal{A}_W$ of a complex reflection group $W$ determines much of its combinatorics and group theory. Its \defn{flats} (arbitrary intersections of its hyperplanes) are linear subspaces; ordered by reverse-inclusion, they form the \defn{intersection lattice} of $W$. 

The pointwise stabilizer $W_U$ of any set $U\subseteq V$ will be called a \defn{parabolic subgroup} of $W$.  (For real reflection groups, this definition is a priori different than the one we gave in \S\ref{Sec: real refl grps}, but it is a standard theorem that they agree, see \cite[\S 5-2]{Kane}.) It is an easy observation, but so far based on the classification, that parabolic subgroups of well generated groups are themselves well generated. This means that (as discussed in \S\ref{Sec: well-gend grps}) they will have their own Coxeter elements; we call such elements \defn{parabolic Coxeter elements}. Arbitrary (i.e., not necessarily well generated) reflection groups may have well generated parabolic subgroups, but in this paper we only discuss parabolic Coxeter elements in well generated groups. The following theorem of Steinberg characterizes parabolic subgroups. 

\begin{theorem}[{Steinberg's theorem \cite[Thm.~1.5]{Steinberg}}]\label{Thm: steinberg}
Let $W$ be a complex reflection group acting on $V$ and $U\subseteq V$ a subset. Then the pointwise stabilizer of $U$ is a reflection group, generated by those reflections in $W$ whose reflection hyperplanes contain $U$.
\end{theorem}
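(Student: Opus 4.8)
The plan is to reduce the statement to the stabilizer of a single point and then to prove that a point stabilizer is generated by the reflections it contains. First I would note that, since $W$ acts linearly, the pointwise stabilizer depends only on the linear span $L := \langle U\rangle$, so we may assume $U = L$ is a subspace. Because $\mathcal{A}_W$ is finite and $\CC$ is infinite, I can choose a point $p \in L$ avoiding the finitely many proper subspaces $H \cap L$ coming from hyperplanes $H \in \mathcal{A}_W$ with $L \not\subseteq H$. For such a generic $p$, a reflection $t \in \RRR$ satisfies $p \in V^t$ if and only if $L \subseteq V^t$; that is, the reflections through $p$ are exactly the reflections fixing $U$ pointwise. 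Granting the single-point statement that $G := \op{Stab}_W(p)$ is generated by $G^* := \langle t \in \RRR : p \in V^t\rangle$, these generators all fix $L$ pointwise, so $G = G^* \leq W_L \leq G$, whence $W_U = W_L = G = \langle t \in \RRR : U \subseteq V^t\rangle$, which is the theorem. The inclusion $G^* \leq G$ is automatic, since each reflection fixing $U$ stabilizes $U$.

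It then remains to establish the core claim that $G$ is generated by $G^*$, and the nontrivial direction is $G \leq G^*$. I would first dispatch the \emph{geometric} case, in which reflection length agrees with fixed-space codimension (this covers all real reflection groups and the wreath products $G(m,1,n)$, by the formulas recalled above). Here the argument is clean: take $w \in G$ and any reduced factorization $w = t_1 \cdots t_k$, so that $k = \lR(w) = \codim(V^w) = \dim V - \dim V^w$. Each $V^{t_i}$ has codimension one, so $\dim\bigl(\bigcap_i V^{t_i}\bigr) \geq \dim V - k = \dim V^w$; since $\bigcap_i V^{t_i} \subseteq V^w$ holds for any factorization of $w$, the two subspaces coincide. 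Hence $p \in V^w = \bigcap_i V^{t_i} \subseteq V^{t_j}$ for every $j$, so each $t_j$ fixes $p$, lies in $G^*$, and therefore $w \in G^*$.

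The main obstacle is exactly the general complex case, where one may have $\lR(w) > \codim(V^w)$: then $\dim V - k < \dim V^w$, the dimension count collapses, and a reduced factorization of $w \in G$ need no longer consist of reflections through $p$. To attack this uniformly I would use that $G$, fixing $p$, preserves the orthogonal decomposition $V = \CC p \oplus p^{\perp}$ and hence acts unitarily on the hyperplane $p^{\perp}$, where each reflection of $G$ restricts to a genuine reflection; the difficulty is precisely to show that $G$ acts on $p^{\perp}$ as a reflection group, which is information not visible in a single reduced factorization. I expect this to be the crux of the proof. I would try to resolve it by Steinberg's original local argument — exploiting that near $p$ the only relevant hyperplanes are those through $p$, combined with an induction on $\dim V$ and the connectedness of the regular locus of a complex arrangement — and, failing a uniform argument, fall back on the classification, verifying the claim directly from explicit reflection data for the exceptional groups $G_4, \dots, G_{37}$ and from the combinatorial description of reflections for the remaining infinite-family groups $G(m,m,n)$.
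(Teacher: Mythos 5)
The paper does not prove this statement at all: it is Steinberg's fixed-point theorem, imported verbatim with the citation \cite[Thm.~1.5]{Steinberg}, so there is no internal argument to compare against and your proposal must stand on its own. Its first two steps do stand. The reduction from an arbitrary subset $U$ to a single generic point $p$ in the span $L$ of $U$ is correct: the traces $H\cap L$ with $L\not\subseteq H$ are finitely many proper subspaces of $L$, and over $\CC$ one can avoid them, after which ``reflections through $p$'' and ``reflections fixing $L$ pointwise'' coincide and the sandwich $G = G^* \leq W_L \leq G$ closes. Your dimension count in the geometric case is also exactly right, and is in fact the same computation this paper performs in the proof of Proposition~\ref{Prop: Red_W=Red_(W_g)}: when $\lR(w)=\codim(V^w)$, the fixed hyperplanes of any reduced factorization of $w$ intersect precisely in $V^w\ni p$, so every factor fixes $p$.

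The genuine gap is exactly where you place the crux, and it is not closed: for general complex reflection groups, where $\lR(w)$ can exceed $\codim(V^w)$, the claim that $\op{Stab}_W(p)$ is generated by the reflections through $p$ \emph{is} Steinberg's theorem, and your proposal only names two hoped-for strategies without executing either. The sketched uniform argument (``near $p$ only hyperplanes through $p$ matter, plus induction on $\dim V$ and connectedness of the regular locus'') has no engine: connectedness of the arrangement complement by itself yields no control on the stabilizer, and the local statement you would feed the induction --- that the stabilizer acts on $V$ (or on $p^{\perp}$) as a reflection group --- is equivalent to the theorem itself. The known uniform proofs use genuinely different inputs: Steinberg's original invariant-theoretic/differential argument; Lehrer's short counting proof comparing the Shephard--Todd/Solomon identity $\sum_{w\in W} t^{\dim V^w}=\prod_{i=1}^n (t+m_i)$ for $W$ and for the subgroup generated by the reflections through $p$; or the algebro-geometric route via the Shephard--Todd--Chevalley theorem in both directions (smoothness of $V/W$ forces the local quotient at $p$ to be smooth, whence the stabilizer is generated by pseudo-reflections, whose hyperplanes necessarily pass through $p$). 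Your classification fallback, besides being uncarried-out, is incomplete as stated: it omits the groups $G(m,p,n)$ with $1<p<m$, which are complex reflection groups covered neither by your geometric case (reflection length can exceed codimension outside the real groups and $G(m,1,n)$, cf.\ the examples of \cite{FosterGreenwood}) nor by your list ``exceptional groups and $G(m,m,n)$''; in the infinite family such a check essentially amounts to re-deriving Theorem~\ref{thm:parabolic subgroups of G(m, p, n)}. As it stands, you have proved the theorem only for real groups and $G(m,1,n)$.
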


It follows from Steinberg's theorem that, for any $U \subseteq V$, $W_U$ is the pointwise stabilizer of the (unique) minimal flat $X$ that contains $U$.  Thus, we index parabolic subgroups by their corresponding flat $X$.  For a flat $X$, the parabolic subgroup $W_X$ acts naturally as a complex reflection group on the orthogonal complement $X^{\perp}$, with rank equal to the codimension of $X$.

As a particular example, consider the case that $W_1$ and $W_2$ are irreducible complex reflection groups acting on $V_1$ and $V_2$, respectively, where then the product $W := W_1 \times W_2$ acts on $V := V_1 \oplus V_2$.  In this case the two factor subgroups $W_1 \cong W_1 \times \{\id_{W_2}\}  = W_{V_2} $ and $W_2 \cong \{\id_{W_1}\} \times W_2 = W_{V_1}$ are parabolic subgroups of $W$. Consequently, if $W$ is any complex reflection group that decomposes into irreducibles as $W_1 \times \cdots \times W_k$, then each of the $W_i$ is a parabolic for $W$ in a natural way. Furthermore, if $W_X\leq W$ is some parabolic subgroup, then its decomposition into irreducibles will be given as $W_X=W_{X_1}\times\cdots\times W_{X_k}$, where the $W_{X_i}$ are parabolic subgroups of $W$ and satisfy $\codim(X)=\sum_{i=1}^k\codim(X_i)$.

It follows from Steinberg's theorem that the collection of parabolic subgroups is closed under intersection. For any element $g\in W$ we define the \defn{parabolic closure} $W_g$ of $g$ to be the intersection of all parabolic subgroups that contain $g$. In this case the flat $X$ indexing $W_g$ is the fixed space $X=V^g$, and in particular we have $\rank(W_g)=\codim(V^g)$. The orthogonal complement $X^{\perp} = (V^g)^{\perp}$ is then known as the \defn{move space} of $g$ and is equal to $\defn{\Mov(g)}:=\op{Im}(g-\id)$.

In the real case, Bessis proved \cite[above Lem.~1.4.2]{bessis-dual-braid} that if some element $g\in W$ belongs to a parabolic subgroup $W_X\leq W$, then all its reduced reflection factorizations are realizable in $W_X$ (that is, all the reflections in these factorizations belong to $W_X$ as well). We extend this statement to complex groups $W$, but only for the family of elements $g\in W$ that satisfy $\lR(g)=\codim(V^g)$ (for a discussion of the general case, see \S \ref{sec: gen_quest}). This case is precisely what we will need later on (for instance for Proposition~\ref{Prop: gob-cycle-decomp}).

\begin{proposition} \label{Prop: Red_W=Red_(W_g)}
Let $W$ be a complex reflection group and $g\in W$ an element for which $\lR(g)=\codim(V^g)$. The following hold:
\begin{enumerate}
\item Every reflection $t$ that satisfies $t\leq_{\RRR} g$ belongs to the parabolic closure $W_g$ of $g$.
\item For any parabolic subgroup $W_X\leq W$ that contains $g$, we have the following equality of sets:
\[
\op{Red}_W(g)=\op{Red}_{W_X}(g).
\]
\end{enumerate}
\end{proposition}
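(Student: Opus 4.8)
The plan is to reduce both parts to a single geometric observation: under the hypothesis $\lR(g)=\codim(V^g)$, every reflection occurring in a reduced factorization of $g$ fixes a hyperplane containing the flat $V^g$. First I would establish this. Suppose $g=t_1\cdots t_k$ is any reduced factorization, so $k=\lR(g)=\codim(V^g)=\dim\Mov(g)$, using that $\Mov(g)=(V^g)^\perp$ for the unitary operator $g$ (and likewise $\Mov(t_i)=(V^{t_i})^\perp$ is a line). Any vector fixed by all the $t_i$ is fixed by their product $g$, so $\bigcap_i V^{t_i}\subseteq V^g$; taking orthogonal complements yields $\Mov(g)\subseteq\sum_{i=1}^k\Mov(t_i)$. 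The right-hand side is a sum of $k$ lines, so it has dimension at most $k=\dim\Mov(g)$; hence the inclusion is an equality, $\Mov(g)=\sum_i\Mov(t_i)$. In particular $\Mov(t_i)\subseteq\Mov(g)$, equivalently $V^g\subseteq V^{t_i}$, for every $i$.

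For part (1), a reflection $t$ with $t\leq_{\RRR}g$ satisfies $\lR(t^{-1}g)=\lR(g)-1$, so prepending $t$ to a reduced factorization of $t^{-1}g$ produces a reduced factorization of $g$ having $t$ as its first factor. By the previous paragraph, $V^g\subseteq V^{t}$. Since the parabolic closure $W_g$ equals the pointwise stabilizer $W_{V^g}$ of the flat $V^g$, Steinberg's theorem (Theorem~\ref{Thm: steinberg}) identifies $W_g$ as the subgroup generated by exactly those reflections whose hyperplane contains $V^g$; therefore $t\in W_g$.

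For part (2), I would first note that $W_g\subseteq W_X$, since $W_g$ is the intersection of all parabolic subgroups containing $g$ and $W_X$ is one of them. Given $(t_1,\ldots,t_k)\in\op{Red}_W(g)$, the opening observation combined with part (1) shows each $t_i\in W_g\subseteq W_X$, so this is a factorization of $g$ of length $k=\lR(g)$ using only reflections of $W_X$; consequently $\lR[W_X](g)\le k=\lR(g)$. Conversely, every reflection of $W_X$ is a reflection of $W$, which forces $\lR(g)\le\lR[W_X](g)$. Hence the two reflection lengths agree, and every element of $\op{Red}_W(g)$ is reduced in $W_X$, giving $\op{Red}_W(g)\subseteq\op{Red}_{W_X}(g)$. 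The reverse inclusion is then immediate: a factorization of $g$ by reflections of $W_X$ of length $\lR[W_X](g)=\lR(g)$ is a factorization by reflections of $W$ of minimal length, hence reduced in $W$.

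The main subtlety is the role of the hypothesis $\lR(g)=\codim(V^g)$. For general complex reflection groups one only has $\lR(g)\ge\codim(V^g)$, and when this is strict the dimension count collapses, so factors of a reduced factorization need not lie in $W_g$; the whole argument rests on the equality case. Within part (2), the step deserving the most care is the identification $\lR[W_X](g)=\lR(g)$, since reflection length is computed relative to different reflection sets in $W$ and in $W_X$; this is precisely what the two-sided comparison settles once the containment of factors in $W_X$ is established, so I expect no genuine obstacle beyond keeping the two length functions carefully distinguished.
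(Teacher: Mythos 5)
Your proof is correct and takes essentially the same route as the paper: your move-space dimension count is just the orthogonal-complement rephrasing of the paper's observation that $Y=\bigcap_i V^{t_i}\subseteq V^g$ has $\codim(Y)\leq k=\codim(V^g)$, forcing $Y=V^g$ and hence $V^g\subseteq V^{t_i}$, after which Steinberg's theorem places every factor in $W_g$. Your careful two-sided comparison of $\lR(g)$ and $\lR[W_X](g)$ in part (2) merely makes explicit what the paper leaves implicit in its one-line deduction from $W_g\subseteq W_X$.
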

\begin{proof}
Assume that $k=\lR(g)$ and that $t_1\cdots t_k=g$ is a reduced reflection factorization of $g$ in $W$. If we write $Y := \bigcap_i V^{t_i}$ for the common intersection of the fixed hyperplanes $V^{t_i}$, it is clear that $g$ fixes it pointwise; that is, $V^g\supset Y$. On the other hand, since $Y$ is the intersection of $k$ hyperplanes, we have that $\codim(Y)\leq k=\codim(V^g)$.  Thus $Y = V^g$.

Since $V^g = \bigcap_i V^{t_i}$, we have $V^{t_i}\supset V^g$ for all factors $t_i$. By Steinberg's theorem (Thm.~\ref{Thm: steinberg}) we must then have that $t_i\in W_g$. This means that all factors in every reduced reflection factorization of $g$ must belong to $W_g$, and this completes the proof of part (1). Since all parabolic subgroups $W_X$ that contain $g$ must, by definition, contain $W_g$ as well, the second part is also proven.
\end{proof}

Proposition~\ref{Prop: Red_W=Red_(W_g)} allows us to give a finer description of the reduced reflection factorizations of such elements in terms of their parabolic closures.

\begin{corollary}
\label{cor:finer description reduced refl fact}
Let $W$ be a complex reflection group and $g\in W$ an element for which $\lR(g)=\codim(V^g)$.  Let $W_g=W_1\times\cdots\times W_s$ be the decomposition of the parabolic closure $W_g$ into irreducibles and let $g=g_1\cdots g_s$ be the corresponding expression for $g$ with $g_i\in W_i$. Then $W_i=W_{g_i}$ and $\lR[W_i](g_i)=\lR(g_i)=\codim(V^{g_i})=\rank(W_i)$ for $i = 1, \ldots, s$.
\end{corollary}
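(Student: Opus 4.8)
The plan is to reduce everything to two ``additivity'' identities coming from the orthogonal decomposition of the parabolic closure, and then extract all four conclusions by squeezing chains of inequalities until they collapse. First I would fix the geometric setup. Writing $X := V^g = V^{W_g}$, the decomposition $W_g = W_1 \times \cdots \times W_s$ into irreducibles corresponds to an orthogonal decomposition $X^\perp = V_1 \oplus \cdots \oplus V_s$, where $W_i$ acts irreducibly on $V_i$ and trivially on the other summands and on $X$, and $g_i \in W_i$ acts on $V_i$. Since the $g_i$ commute and act on orthogonal subspaces, the fixed space of $g$ is block-diagonal, $V^g = X \oplus \bigoplus_i V_i^{g_i}$, which yields the first identity $\codim(V^g) = \sum_{i=1}^s \codim(V^{g_i})$. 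On the other hand, the parabolic decomposition recorded in \S\ref{Sec: Par. subgrps} gives $\rank(W_g) = \sum_i \rank(W_i)$, and since $\rank(W_g) = \codim(V^g)$ this produces the second identity $\sum_i \rank(W_i) = \codim(V^g) = \sum_i \codim(V^{g_i})$.

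For the statements $W_i = W_{g_i}$ and $\codim(V^{g_i}) = \rank(W_i)$ — which, I expect, do not even require the hypothesis $\lR(g) = \codim(V^g)$ — I would combine the second identity with the term-wise bounds $\codim(V^{g_i}) = \rank(W_{g_i}) \le \rank(W_i)$, valid because $g_i \in W_i$ forces $W_{g_i} \le W_i$ and hence $V^{W_i} \subseteq V^{g_i}$. Equality of the two sums then forces each summand inequality to be an equality, so $\rank(W_{g_i}) = \rank(W_i)$ and $\codim(V^{g_i}) = \rank(W_i)$. Since $V^{W_i} \subseteq V^{g_i}$ are flats of equal codimension they coincide, whence $W_{g_i} = W_{V^{g_i}} = W_{V^{W_i}} = W_i$. (Equivalently, one can argue by minimality of the parabolic closure: if some $W_{g_i} \lneq W_i$ then $W_{g_1} \times \cdots \times W_{g_s}$ would be a proper sub-parabolic of $W_g$ still containing $g$.)

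The reflection-length equalities are where the hypothesis enters, through Proposition~\ref{Prop: Red_W=Red_(W_g)}. I would take a reduced factorization $g = t_1 \cdots t_k$ with $k = \lR(g)$; by that proposition all $t_j$ lie in $W_g = W_1 \times \cdots \times W_s$, and each reflection of a direct product lies in a single factor, so I can set $k_i := \#\{j : t_j \in W_i\}$ with $\sum_i k_i = k$. Because reflections in distinct factors commute, I may sort the product by factor without changing it; grouping yields $g = h_1 \cdots h_s$ with $h_i \in W_i$ a product of $k_i$ reflections, and uniqueness of the direct-product decomposition identifies $h_i = g_i$, so $\lR[W_i](g_i) \le k_i$. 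Then the chain
\[
\sum_i \codim(V^{g_i}) \le \sum_i \lR(g_i) \le \sum_i \lR[W_i](g_i) \le \sum_i k_i = \lR(g) = \codim(V^g) = \sum_i \codim(V^{g_i})
\]
(using $\lR(g_i) \ge \codim(V^{g_i})$ and $\lR(g_i) \le \lR[W_i](g_i)$ term-wise, then the hypothesis and the first identity) begins and ends at the same value, so every term-wise inequality is an equality: $\lR[W_i](g_i) = \lR(g_i) = \codim(V^{g_i})$, which by the previous paragraph equals $\rank(W_i)$.

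The main obstacle — really the only delicate point — is the sorting step: one must verify that the reflections of the reduced factorization genuinely distribute among the irreducible factors (no reflection straddling two factors), that the resulting factor-blocks commute, and that regrouping reproduces exactly the canonical components $g_i$ via uniqueness of the product decomposition. This is precisely where Proposition~\ref{Prop: Red_W=Red_(W_g)}, and hence the hypothesis $\lR(g) = \codim(V^g)$, is indispensable: without it the factors $t_j$ need not lie in $W_g$, the bookkeeping of the $k_i$ breaks down, and the squeeze has no bottom to rest on.
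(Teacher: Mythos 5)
Your proposal is correct and takes essentially the same approach as the paper: both hinge on Proposition~\ref{Prop: Red_W=Red_(W_g)} (exactly where the hypothesis $\lR(g)=\codim(V^g)$ enters) together with rank-additivity of the parabolic decomposition, followed by the same collapsing chain of inequalities $\lR(g)=\sum_i\lR[W_i](g_i)\geq\sum_i\lR(g_i)\geq\sum_i\codim(V^{g_i})=\rank(W_g)=\lR(g)$. The only cosmetic differences are that the paper simply cites additivity of reflection length over direct products where you reprove it by hand via the sorting/regrouping of a reduced factorization, and that it obtains $W_i=W_{g_i}$ by minimality of the parabolic closure --- the alternative you mention parenthetically --- rather than by your codimension count.
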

\begin{proof}
For each $i$, $W_i$ is a parabolic subgroup of $W$ that contains $g_i$.  If there were a smaller parabolic subgroup $W'_i$ of $W$ that contained $g_i$, then $W_1 \times \cdots \times W'_i \times \cdots\times W_s$ would be a parabolic subgroup of $W$ containing $g$ that is smaller than $W_g$, a contradiction.  Consequently, $W_i=W_{g_i}$ and so also $\codim(V^{g_i})=\rank(W_i)$. 
Combining Proposition~\ref{Prop: Red_W=Red_(W_g)}~(2) with the fact that reflection length is additive over direct product decompositions gives
\begin{equation} \label{eq: sum of lengths}
\lR(g) = \lR[W_g](g) = \sum_{i=1}^s \lR[W_i](g_i),
\end{equation}
and therefore
\[
\lR(g) = \sum_{i=1}^s \lR[W_i](g_i) \geq \sum_{i=1}^s \lR(g_i) \geq \sum_{i=1}^s \codim(V^{g_i}) = \sum_{i=1}^s \rank(W_i) = \rank(W_g) = \lR(g),
\]
where the first inequality is direct from the definition of reflection length, the second inequality is discussed in \S \ref{sec:reflen}, rank-additivity comes from the discussion preceding Proposition~\ref{Prop: Red_W=Red_(W_g)}, and the last equality is the hypothesis on $g$. This completes the proof
.
\end{proof}

\begin{remark}\label{rem_par_1}
Proposition~\ref{Prop: Red_W=Red_(W_g)}~(2) is no longer true if we replace $W_X$ with an arbitrary reflection subgroup $W'$ (i.e., not necessarily parabolic), even in the real case.  For example, inside $G(2, 1, 2) = B_2$, consider the element $g = [\id;(1,1)]$ in $G(2, 1, 2)$ and the subgroup $W'$ of diagonal matrices, generated by the reflections $[\id;(1,0)]$ and $[\id;(0,1)]$.  Then $\op{Red}_{W'}(g)$ fails to contain the factors in the reduced $W$-reflection factorization $g=[(12);(0,0)]\cdot [(12);(1,1)]$.
\end{remark}

\begin{remark}\label{rem_par_2}
Another implication of Proposition~\ref{Prop: Red_W=Red_(W_g)}~(2) is that the reflection length of $g$ with respect to a parabolic subgroup $W_X$ is the same as with respect to $W$. In general, if we replace $W_X$ by an arbitrary reflection subgroup $W'$, this is no longer true. Unlike the situation of the Remark~\ref{rem_par_1}, however, this is solely a phenomenon of the complex types, since in the real case we always have $\lR(g)=\codim(V^g)$. For example, by Theorem~\ref{thm:reflection length in G(m, 1, n) and G(m, p, n)}, the element $[\id; (1, 1, 1)]$ has reflection length $3$ as an element of $G(3, 1, 3)$ (it is a product of three diagonal reflections), but it cannot be written as a product of fewer than four reflections in the non-parabolic subgroup $G(3, 3, 3)$. 
\end{remark}

\subsubsection*{Parabolic subgroups of the combinatorial family}

The parabolic subgroups of the group $G(m, p, n)$ are easy to describe.

\begin{theorem}[{essentially \cite[Thm.~3.11]{taylor}}]
\label{thm:parabolic subgroups of G(m, p, n)}
Every parabolic subgroup of $G(m, p, n)$ is either conjugate to a subgroup of the form 
\[
G(m, p, \lambda_0) \times \Symm_{\lambda_1} \times \cdots \times \Symm_{\lambda_k}
\]
for some partition\footnote{Note that $\lambda_0$ could be out of order in $\lambda$.} $(\lambda_0, \ldots, \lambda_k)$ of $n$ or conjugate by an element of $G(m, 1, n)$ to a subgroup of the form
\[
\Symm_{\lambda_1} \times \cdots \times \Symm_{\lambda_k}
\]
for some partition $(\lambda_1, \ldots, \lambda_k)$ of $n$.
\end{theorem}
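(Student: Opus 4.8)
The plan is to invoke Steinberg's theorem (Theorem~\ref{Thm: steinberg}) to reduce everything to a single computation: that of the pointwise stabilizer $W_v$ of an arbitrary vector $v=(v_1,\ldots,v_n)\in\CC^n$. Indeed, every parabolic subgroup of $W:=G(m,p,n)$ is by definition the pointwise stabilizer $W_U$ of some subset $U\subseteq V$, and since $W_U=W_X=W_v$ for the minimal flat $X\supseteq U$ and any $v\in X$ with minimal flat exactly $X$, it suffices to understand $W_v$ as $v$ ranges over $\CC^n$. By Steinberg, $W_v$ is generated precisely by the reflections of $W$ whose fixed hyperplane contains $v$. Reading off the explicit list of reflections of $G(m,p,n)$, and writing $\zeta:=\exp(2\pi i/m)$, a transposition-like reflection $[(ij);k]$ lies in $W_v$ if and only if $v_j=\zeta^k v_i$, while a diagonal reflection $[\id;\kappa p\,e_i]$ lies in $W_v$ if and only if $v_i=0$. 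First I would record these two incidence conditions.

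Next I would partition the index set $\{1,\ldots,n\}$ according to $v$: let $Z=\{i:v_i=0\}$, and partition the nonzero indices into classes under the equivalence $i\sim j\iff v_j/v_i$ is an $m$th root of unity (this is visibly reflexive, symmetric, and transitive). The incidence conditions show that no reflection of $W_v$ connects two distinct blocks of this partition: a transposition-like reflection of $W_v$ forces its two indices to be equivalent or both to lie in $Z$, and a diagonal reflection of $W_v$ is supported on $Z$. Since reflections supported on disjoint coordinate sets commute, it follows that $W_v$ is the internal direct product of the subgroups generated by the reflections within each block. I would then identify the factors. On $Z$ the transposition-like reflections of all colors generate $G(m,m,|Z|)$, and adjoining the diagonal reflections (present when $p<m$) yields $G(m,p,|Z|)$; in all cases the $Z$-factor is $G(m,p,|Z|)$. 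On a nonzero class $C$, once the coordinates $v_i$ ($i\in C$) are all equal the relevant reflections are the honest transpositions $[(ij);0]$, which generate $\Symm_{|C|}$.

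To make the normalization precise and extract the stated conjugacy, I would realize the coordinate rescalings as conjugation by a diagonal element $d=[\id;b]\in G(m,1,n)$, using $d\,W_v\,d^{-1}=W_{dv}$ together with $(dv)_k=\zeta^{b_k}v_k$. Choosing $b$ so that on each nonzero class the coordinates of $dv$ become constant, and then conjugating further by a coordinate permutation (which has color $0$ and so lies in $W$) to collect the blocks into consecutive intervals, exhibits $W_v$ as conjugate to $G(m,p,\lambda_0)\times\Symm_{\lambda_1}\times\cdots\times\Symm_{\lambda_k}$ with $\lambda_0=|Z|$ and the $\lambda_j=|C_j|$.

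The one genuinely delicate point, which I expect to be the main obstacle, is whether the conjugator $d$ can be chosen inside $G(m,p,n)$ or only inside $G(m,1,n)$. The color of $d$ is $\sum_k b_k$; within each nonzero class the entries $b_k$ are pinned down up to a single additive constant, whereas on $Z$ the entries $b_k$ are completely free, as they do not affect $dv$. Hence when $Z\neq\emptyset$ I can use one free entry $b_{k_0}$ with $k_0\in Z$ to adjust $\sum_k b_k$ to any residue modulo $p$ (here using $p\mid m$), so $d$ may be taken in $G(m,p,n)$, and we land in the first case with $\lambda_0=|Z|\geq 1$. When $Z=\emptyset$ this freedom need not be available, and I would only guarantee $d\in G(m,1,n)$, giving the second case $\Symm_{\lambda_1}\times\cdots\times\Symm_{\lambda_k}$. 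This color-balancing argument, which distinguishes conjugacy in $G(m,p,n)$ from conjugacy merely in $G(m,1,n)$, is the crux; the remaining identifications of the block factors are routine, relying only on the facts that $G(m,m,\ell)$ is generated by its transposition-like reflections and that adjoining a single diagonal reflection of color $p$ recovers $G(m,p,\ell)$.
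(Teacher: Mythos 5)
Your proof is correct, but it cannot be ``the same as the paper's'': the paper gives no proof at all for this statement, simply citing Taylor's Theorem~3.11 (with the qualifier ``essentially''). Your argument is a complete, self-contained derivation, and it is the natural one: reduce via Steinberg's theorem to the pointwise stabilizer $W_v$ of a single generic vector $v$ in the minimal flat (such $v$ exists since the flat is not covered by the finitely many hyperplanes not containing it), read off the two incidence conditions for the two families of reflections, decompose along the partition into $Z=\{i: v_i=0\}$ and the classes of $i\sim j \iff v_j/v_i\in\mu_m$, identify the block factors, and normalize by a diagonal conjugation. You correctly isolate the one genuinely delicate point: the color of the diagonal conjugator $d$ is constrained modulo $p$, a free coordinate $b_{k_0}$ with $k_0\in Z$ lets you balance it inside $G(m,p,n)$ when $Z\neq\emptyset$, and when $Z=\emptyset$ the per-class additive constants only move the total color within the subgroup generated by the $\lambda_j$, so in general only conjugacy in $G(m,1,n)$ is available --- this is exactly the dichotomy in the statement, and your proof makes the obstruction explicit where the paper's citation leaves it opaque. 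Two small points you should make explicit in a final write-up: first, since $W_v$ denotes the stabilizer in $W=G(m,p,n)$ but your conjugator $d$ may lie only in $G(m,1,n)$, the identity $d\,W_v\,d^{-1}=W_{dv}$ (as stabilizers \emph{in} $W$) uses that $G(m,p,n)$ is normal in $G(m,1,n)$, which holds because $\col$ is a group homomorphism $G(m,1,n)\to\ZZ/m\ZZ$ and $G(m,p,n)$ is the preimage of $p\ZZ/m\ZZ$; second, the degenerate blocks ($|Z|=1$ with $p=m$, or $|Z|\in\{0,1\}$ generally) should be noted as consistent with the convention $G(m,m,1)=\{\id\}$ and $G(m,p,1)\cong\ZZ/(m/p)\ZZ$. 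Neither is a gap, just bookkeeping.
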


\subsection{The Hurwitz action}
\label{sec: hurwitz action}

For any group $G$, there is a natural action of the $k$-strand braid group  
\[
\defn{\BBB_{k}} := \left\langle \sigma_1, \ldots, \sigma_{k - 1} \; \middle| \begin{array}{ll}
\sigma_i \sigma_{i + 1} \sigma_i = \sigma_{i + 1} \sigma_i \sigma_{i + 1} & \text{for } i = 1, \ldots, k - 2 \\
\sigma_i \sigma_j = \sigma_j \sigma_i & \text{if } |i - j| > 1
\end{array}
\right\rangle
\] 
on the set of $k$-tuples of elements of $G$. The generator $\sigma_i$ acts via
\[
\sigma_i(g_1,\cdots, \quad g_i, \quad g_{i+1}, \quad \cdots,g_k)=(g_1,\cdots, \quad g_{i+1}, \quad g^{-1}_{i+1}g_ig_{i+1}, \quad \cdots,g_k),
\]
swapping two adjacent elements and conjugating one by the other, preserving the product of the tuple. We call this the \defn{Hurwitz action} of $\BBB_k$ on $G^k$.

In addition to the product $g_1 \cdots g_k$, the Hurwitz action respects the subgroup $
\langle g_1,\ldots,g_k\rangle$ that is generated by the elements $g_i$ and the multiset of conjugacy classes they determine. This means that for a reflection group $W$, there is a well defined Hurwitz action on the set of length-$k$ reflection factorizations of an element $g\in W$.

The orbit structure of the Hurwitz action has interesting connections with other areas of mathematics.
In the symmetric group $\Symm_n$, for example, orbits often correspond to path-connected components of spaces of polynomials or holomorphic maps \cite[\S 5.4]{zdkh, LandoZvonkin}. In well generated complex reflection groups, the transitivity of the Hurwitz action in the following theorem was an important ingredient in Bessis' proof \cite{Bessis_Annals} of the $K(\pi,1)$ conjecture.

\begin{theorem}[case-free for real $W$ {\cite[Prop.~1.6.1]{bessis-dual-braid}}, case-by-case in general {\cite[Prop.~7.6]{Bessis_Annals}}]\label{Thm: Bessis Hurwitz trans pCox}
For a well generated group $W$ and a parabolic Coxeter element $g\in W$ of length $\lR(g)=k$, the Hurwitz
action of $\mathcal{B}_k$ on $\op{Red}_W(g)$ is transitive.\end{theorem}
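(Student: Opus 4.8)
\emph{Reduction to Coxeter elements.} The statement is really one about honest Coxeter elements. If $g$ is a parabolic Coxeter element, then by definition it is a Coxeter element of its parabolic closure $W_g$, so that $\lR(g)=\rank(W_g)=\codim(V^g)$; hence Proposition~\ref{Prop: Red_W=Red_(W_g)}(2), applied with $W_X=W_g$, gives $\op{Red}_W(g)=\op{Red}_{W_g}(g)$. Since the Hurwitz action merely reorders and conjugates the reflections occurring in a factorization---all of which lie in $W_g$ by Proposition~\ref{Prop: Red_W=Red_(W_g)}(1)---the action of $\BBB_k$ on $\op{Red}_W(g)$ agrees with its action on $\op{Red}_{W_g}(g)$. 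So I may assume $W=W_g$ and that $g=c$ is a Coxeter element of $W$, and then argue by induction on $n=k=\lR(c)$, the case $n\le 1$ being trivial (there $\op{Red}_W(c)$ is a singleton).

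\emph{The inductive step.} Take two reduced factorizations $(t_1,\ldots,t_n)$ and $(t_1',\ldots,t_n')$ of $c$. The plan is to first bring the former, by Hurwitz moves, to a factorization with initial entry $t_1'$, and then to finish on the tails by induction. For the tails I use the structural fact that prefixes of $c$ are again parabolic Coxeter: for any reflection $s\leq_{\RRR}c$ the element $s^{-1}c$ has length $n-1$, satisfies $s^{-1}c\leq_{\RRR}c$ (since $(s^{-1}c)^{-1}c=c^{-1}sc$ is a reflection), and is a Coxeter element of its parabolic closure, a corank-$1$ parabolic of $W$. Granting this, once both factorizations share the initial entry $s:=t_1'$, their tails are reduced factorizations of the rank-$(n-1)$ parabolic Coxeter element $s^{-1}c$, on which $\langle\sigma_2,\ldots,\sigma_{n-1}\rangle\cong\BBB_{n-1}$ acts exactly as in the inductive hypothesis; this connects the tails, hence the two factorizations.

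\emph{First-letter transitivity.} The heart of the argument is thus to show that from any reduced factorization of $c$ one can reach, by Hurwitz moves, a factorization whose initial entry is an arbitrary prescribed reflection $s\leq_{\RRR}c$. I would build this from the rank-$2$ case together with a connectivity argument. Acting by $\sigma_1^{\pm1}$ on the leading pair $(a,b)$ alters it only within $\op{Red}_W(ab)$, and $ab$ is a parabolic Coxeter element of a rank-$2$ parabolic; the rank-$2$ instance (the base case of the induction, checked directly) shows that the initial entries so reachable are precisely the reflections $\leq_{\RRR}ab$, and that the Hurwitz action cycles through all of them. Interleaving these local rotations with moves that change which length-$2$ subword occupies the front, one reaches $s$ as initial entry provided the graph on $\{t\in\RRR:t\leq_{\RRR}c\}$, with an edge $\{s_i,s_j\}$ whenever $s_is_j\leq_{\RRR}c$, is connected.

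\emph{The main obstacle} is exactly this connectivity, together with the claim that prefixes are parabolic Coxeter; both are governed by the fine structure of the interval below $c$ in absolute order (the lattice of $c$-noncrossing partitions). For real $W$ these facts follow uniformly from the lattice property of that interval and the associated dual braid monoid, which yields the case-free proof cited from \cite{bessis-dual-braid}. For the remaining well generated complex groups no such uniform structure theory is available, and the connectivity and prefix properties are instead verified type by type, which is why in general the result is established case by case, as in \cite{Bessis_Annals}.
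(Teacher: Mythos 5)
The paper offers no proof of this statement at all: it is imported verbatim from Bessis, case-free for real $W$ \cite[Prop.~1.6.1]{bessis-dual-braid} and case-by-case in general \cite[Prop.~7.6]{Bessis_Annals}, so the only fair comparison is with the cited arguments. Your reduction step is correct and matches how one would set this up: since $g$ is a Coxeter element of the well generated parabolic $W_X$, one has $\lR(g)=\rank(W_X)=\codim(V^g)$ and $W_X=W_g$, so Proposition~\ref{Prop: Red_W=Red_(W_g)}~(2) identifies $\op{Red}_W(g)$ with $\op{Red}_{W_g}(g)$ compatibly with the Hurwitz action. Your induction scheme (force an arbitrary prescribed first letter via rank-$2$ rotations and a connectivity argument, then recurse on the tail inside a corank-$1$ parabolic) is a faithful reconstruction of the case-free real argument of \cite{bessis-dual-braid}, where the prefix lemma (his Lem.~1.4.3) and the dihedral first-letter analysis are available as genuinely independent inputs.

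The problem is the complex case, and it is a circularity rather than a missing detail. The two facts you defer --- that every prefix $s^{-1}c\leq_{\RRR}c$ is a parabolic Coxeter element of a parabolic of the expected rank, and first-letter connectivity including the rank-$2$ base case --- are, for non-real well generated groups, known only through the classification and in part \emph{downstream} of the very transitivity you are proving. Bessis's proof of \cite[Prop.~7.6]{Bessis_Annals} does not run your induction at all: it goes through the Lyashko--Looijenga covering (cf.\ \S\ref{ssec: Fred of qcox_Coxeter case} of this paper), with the case-by-case content entering through properties of the $LL$ map verified type by type. Within this paper, the prefix-parabolicity you invoke rests on Taylor's Proposition~\ref{Prop: Taylor_subset_good} (case-by-case) together with Proposition~\ref{Prop. strong=weak}, whose proof cites \cite[Thm.~5.6]{LW} --- itself established by Hurwitz-orbit computations --- so feeding these into a proof of Hurwitz transitivity begs the question. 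Your base case is also not a triviality to be ``checked directly'': the rank-$2$ well generated complex groups include a long list of exceptional Shephard--Todd groups, and verifying that the $\sigma_1$-orbit exhausts $\op{Red}_W(c)$ there is precisely one of the type-by-type verifications. In short, your write-up is a sound sketch of the real case, but for general well generated $W$ your final paragraph amounts to citing the theorem rather than proving it --- which, to be fair, is also all the paper does.
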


The following result shows that every Hurwitz orbit of reflection factorizations in a real reflection group contains factorizations in a particularly simple form; it will be very useful in \S\ref{Sec: full factorizations and p-q-Cox} below.

\begin{lemma}[{\cite[Cor.~1.4]{LR}}]
\label{lem: LR lemma}
Let $W$ be a real reflection group and $g \in W$ an element of reflection length $\lR(w)=k$. Then any factorization of $g$ into $N$ reflections (with $N\geq k$) lies in the Hurwitz orbit of some tuple $( t_1,\ldots, t_N)$ such that 
\[
 t_1 = t_2, \qquad
 t_3 = t_4, \qquad
\ldots, \qquad
 t_{N-k-1}  = t_{N-k},
\]
and $( t_{N-k+1}, \ldots, t_N)$ is a reduced factorization of $g$.
\end{lemma}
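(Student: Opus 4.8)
The plan is to induct on the excess length $N-k$, peeling off one canceling pair at a time from the front by means of the Hurwitz action, so that the whole statement reduces to a single claim about producing a repeated factor. Two preliminary observations make the induction run. First, a parity remark: each reflection has determinant $-1$, so $\det(g)=(-1)^N=(-1)^k$ forces $N\equiv k\pmod 2$; thus $N-k$ is even and the prefix of the asserted normal form genuinely splits into $\tfrac{N-k}{2}$ adjacent equal pairs. Second, the organizing reduction is the following \emph{repeat claim}: any non-reduced reflection factorization in a real reflection group is Hurwitz-equivalent to one containing two equal reflections in adjacent positions. The base case $N=k$ of the induction is immediate, since then the factorization is already reduced and the (empty) list of prefix pairs imposes no condition.

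Granting the repeat claim, the inductive step rests on an explicit \emph{sliding} computation showing that an adjacent equal pair can be moved to the very front. If three consecutive entries read $(r,t,t)$, then applying $\sigma_{a-1}$ and then $\sigma_{a}$ sends $(r,t,t)\mapsto(t,\,t^{-1}rt,\,t)\mapsto(t,t,r)$, where the last step uses $t^2=\id$; that is, the equal pair moves one step to the left while the reflection $r$ reappears, unchanged, to its right. Iterating, any adjacent equal pair is carried to positions $1,2$, with all the reflections it passes left in their original order. Now if $N>k$ the factorization is non-reduced, so by the repeat claim together with sliding we may assume $t_1=t_2=t$. Then $(t_3,\dots,t_N)$ is a reflection factorization of $g$ of length $N-2\ge k$, and since it lies in the same real reflection group $W$ we may apply the induction hypothesis to it; the braids $\sigma_3,\dots,\sigma_{N-1}$ used to normalize the tail fix positions $1,2$, so they do not disturb the front pair. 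Concatenating the front pair with the normalized tail produces exactly the claimed shape, with pairs at positions $1,2;\,3,4;\dots;N-k-1,N-k$ and a reduced factorization of $g$ in the final $k$ positions.

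It remains to prove the repeat claim, which is the heart of the argument. By Carter's lemma, a reflection factorization in a real reflection group is reduced if and only if its roots $\rho_{t_1},\dots,\rho_{t_N}$ are linearly independent; since our factorization is non-reduced, these roots are linearly dependent. The strategy, following the circuit analysis of Lewis--Reiner, is to select a minimal linearly dependent subset (a circuit) among the $\rho_{t_i}$ and to gather the corresponding reflections into a consecutive block using Hurwitz moves. The point that makes this controllable is that a single move $\sigma_i$ replaces $\rho_{t_i}$ by $t_{i+1}(\rho_{t_i})\in\Span(\rho_{t_i},\rho_{t_{i+1}})$, so the span of any consecutive block, and hence the pattern of dependencies, transforms predictably. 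This reduces to the case where the entire root set is one circuit of size $p$: the $t_i$ then generate a rank-$(p-1)$ real reflection group, and the single defining relation of the circuit must be used to collapse two of the reflections into an adjacent equal pair, the base instance $p=2$ being the trivial one where $\rho_{t_1}=\pm\rho_{t_2}$ forces $t_1=t_2$.

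The main obstacle is precisely this last step: showing that Hurwitz moves can transport a circuit of roots into adjacent positions and then collapse it to a repeated reflection, without the dependency (which guarantees a repeat must eventually appear) escaping in the process. Once that transport-and-collapse lemma is in hand, the remainder is bookkeeping governed by the outer induction on $N-k$, the explicit sliding move, and the determinant parity that guarantees the pairs tile the prefix exactly.
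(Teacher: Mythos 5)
Your outer framework is sound and in fact mirrors the structure of the actual proof in \cite{LR} (which is where the paper gets this statement --- it offers no proof of its own, citing \cite[Cor.~1.4]{LR}): the determinant parity argument is correct, the sliding computation $(r,t,t)\mapsto(t,t^{-1}rt,t)\mapsto(t,t,r)$ is verified correctly using $t^2=\id$, and the induction on $N-k$, peeling a front pair and applying the hypothesis to the tail via braids fixing positions $1,2$, is valid bookkeeping. Carter's lemma (reduced $\iff$ roots linearly independent) is also the right entry point for the repeat claim.

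But the proposal has a genuine gap, and you name it yourself: the ``transport-and-collapse lemma'' --- that a tuple of reflections whose roots form a circuit is Hurwitz-equivalent to a tuple containing an adjacent repeated reflection --- is asserted, not proved, beyond the trivial case $p=2$. This is not a routine missing detail; it is the entire mathematical content of the result. Already for a circuit of three roots in a dihedral group $I_2(m)$ one must actually exhibit a repeat in the Hurwitz orbit, and nothing in your sketch (``the single defining relation of the circuit must be used to collapse two of the reflections'') indicates how. In \cite{LR} this step is handled by classifying the possible circuits in finite root systems and verifying the collapse type by type --- exactly the case-by-case analysis the paper flags in Remark~\ref{Rem: WY lemma} --- while Wegener and Yahiatene \cite{WY2} later supplied a case-free argument by entirely different methods. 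There is also a secondary soft spot you wave at but do not resolve: Hurwitz moves used to gather a circuit into consecutive positions replace roots by reflected images, so one must show the existence (not just the ``predictable transformation'') of a dependent subset survives the transport; \cite{LR} organizes the induction precisely to control this. As it stands, your argument is a correct reduction of the lemma to its hard kernel, not a proof of it.
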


\begin{remark} \label{Rem: WY lemma}
The proof of Lemma~\ref{lem: LR lemma} given in \cite{LR} relies on case-by-case calculations. Recently, Wegener and Yahiatene followed a different approach \cite{WY2} that furnishes a case-free proof for Lemma~\ref{lem: LR lemma}. This make the proofs of some of our statements below also case-free; see Remarks~\ref{Rem: uniform proof of lfull(id)} and~\ref{Rem: uniform proof lfull(g)}.
\end{remark}

\section{Parabolic quasi-Coxeter elements} \label{sec.parabolic qce}

In this section we introduce a family of elements in (well generated) complex reflection groups that we call \emph{parabolic quasi-Coxeter elements}, and that are the main objects of study in this paper.  In real reflection groups, these elements have appeared in multiple guises in the literature, with various definitions and names -- see Remark~\ref{rem:ToB}, Theorem~\ref{Prop: pqCox characterization for Weyl W}, and Corollary~\ref{Cor: pqCox characterization for real W}.  We begin this section with a historical review of these various appearances, which we hope will convince the reader that we are discussing natural objects that are of interest in a variety of mathematical areas.  After the overview, in \S\ref{ssec.expl.defn}, we give the formal definition that we use in the rest of the paper, and establish its equivalence with the earlier definitions in the real case.  We then establish a number of structural properties of parabolic quasi-Coxeter elements, including an analogue of the cycle decomposition of permutations (\S\ref{sec: cycle decomposition}) and the transitivity of the Hurwitz action on their reduced factorizations (\S\ref{ssec.pcqe.hurw.action}). We rely on the results we prove here as we present various characterizations for the parabolic quasi-Coxeter elements in the following \S\S\ref{Sec: relative gen sets} and~\ref{Sec: full factorizations and p-q-Cox}.

\subsection{Origins of (parabolic) quasi-Coxeter elements}
\label{sec.origins}

In a real reflection group $W$, the easiest way to describe the family of \emph{quasi-Coxeter} elements is as the elements that do not belong to any proper reflection subgroup of $W$. One could then define \emph{parabolic} quasi-Coxeter elements as the quasi-Coxeter elements of parabolic subgroups of $W$. This is the context underlying the earliest appearance of quasi-Coxeter elements, in Carter's classification of the conjugacy classes of Weyl groups.

\subsubsection{Conjugacy classes of Weyl groups}
\label{sec:origins_con_cl}
At the time of Carter's work \cite{carter}, the conjugacy classes of Weyl groups had been already described for all cases, but there was no case-free construction for them. The Borel--de Siebenthal algorithm (\cite{borel_de-sieb}, see also \cite[\S 12.1]{Kane}) gave a case-free way to build up all reflection subgroups of $W$, and Carter used it to reduce the description of all conjugacy classes of $W$ to the description of those that are not contained in any proper reflection subgroup. He didn't give a name for these latter classes, but he showed that they always contain the class of Coxeter elements and he described the rest via certain Dynkin-like graphs that he called \emph{admissible diagrams}. 

Carter's admissible diagrams can be constructed for any element $g$ in a Weyl group $W$. They encode the commutativity relations among reflections that form certain special reduced factorizations of $g$. Carter showed that such diagrams are sufficient to distinguish the quasi-Coxeter conjugacy classes and gave a complete list of them (see Table~\ref{Table: qCox elts Carter}). 

\begin{table}[ht]
\[
\begin{array}{|c|| c|} 
 \hline
 \text{Weyl group} & \text{list of quasi-Coxeter conjugacy classes} \\ 
 \hline
 \Symm_{n+1}=A_n & A_n \\
 B_n & B_n \\
 D_n & D_n,\ D_n(a_1),\ D_n(a_2),\ \ldots,\ D_n(a_{\lfloor(n-4)/2\rfloor}),\ D_n(a_{\lfloor(n-2)/2\rfloor})=D_n(b_{\lfloor(n-2)/2\rfloor})\\
 G_2 & G_2\\
 F_4 & F_4,\ F_4(a_1) \\
 E_6 & E_6,\ E_6(a_1),\ E_6(a_2)\\
 E_7 & E_7,\ E_7(a_1),\ E_7(a_2)=E_7(b_2),\ E_7(a_3),\ E_7(a_4)\\
 E_8 & E_8, E_8(a_1), E_8(a_2), E_8(a_3)\!=\!E_8(b_3), E_8(a_4), E_8(a_5)\!=\!E_8(b_5), E_8(a_6), E_8(a_7), E_8(a_8)\\
 \hline
\end{array}
\]
\caption{Carter's notation for conjugacy classes of Weyl groups $W$ that do not live in proper reflection subgroups of $W$ \cite{carter}. Some classes have two distinct associated diagrams, indexed $a_i$ and $b_i$. 
}
\label{Table: qCox elts Carter}
\end{table}

\subsubsection{Distinguished bases in singularity theory}
\label{sec:origins_dist_bas}
In the 1970s, Arnold \cite{Arnold_Congress} and his school developed a very rich mathematical area which studies the geometry and topology of hypersurface singularities. The second (chronologically) appearance of quasi-Coxeter elements took place in this singularity theory. An important object in this setting is the \emph{Milnor lattice} of the singularity; it is defined via the Milnor fibration and deformation theory, and encodes many invariants associated to the singularity. There is a natural geometric construction for certain bases of the Milnor lattice: vaguely stated, the Picard--Lefschetz transformations associated to a collection of paths around the critical values of a deformation of the singularity always determine a basis. In general, such bases are called \emph{weakly distinguished}, while if one further requires that the paths are non self-intersecting, the resulting bases are called \emph{distinguished}.\footnote{Unfortunately, many authors have used different, conflicting names for this pair of properties (distinguished, weakly distinguished). Two common variants are (\emph{geometric}, \emph{distinguished}), used by Voigt \cite{voigt_thesis}, and (\emph{strongly distinguished},  \emph{weakly distinguished}).} There is significant interest in the enumeration and study of structural properties of these bases \cites{zade_milnor,ebeling_milnor,brieskorn_milnor}.

Part of the theory developed by Arnold and his school describes an infinite hierarchy of singularities that starts with the so-called \emph{simple singularities}, which are in natural correspondence with the simply laced Weyl groups $A_n, D_n, E_6, E_7, E_8$. In particular, the Milnor lattice of a singularity is precisely the root lattice of the corresponding Weyl group $W$, and its distinguished bases correspond to the ordered sets of roots appearing in reduced reflection factorizations of some Coxeter element of $W$ (see \cite{brieskorn_milnor}). 

In his thesis \cite{voigt_thesis}, Voigt extends the previous correspondence to weakly distinguished bases. He defines \emph{quasi-Coxeter} elements as those whose reduced reflection factorizations determine a basis of the root lattice and he shows that all such bases are weakly distinguished. A major ingredient in Voigt's proof is that the Hurwitz action is transitive on the reduced factorizations of quasi-Coxeter elements.

\subsubsection{Dual Coxeter systems}
\label{sec:origins_dual_cox}

A third, more recent, appearance of quasi-Coxeter elements is in the context of the \emph{dual approach} to Coxeter systems. Initiated in the seminal work of Bessis \cite{bessis-dual-braid}, the dual approach to a reflection group $W$ and its generalized braid group $B(W)$ involves considering the whole set of reflections $\RRR$ as a generating set of $W$, instead of just the simple reflections, and studying the (more symmetric) presentations that arise in this way. Bessis proved that the Artin and dual presentations of $B(W)$ are equivalent \cite{bessis-dual-braid, Bessis_Annals}; an important component of his proofs is that the Hurwitz action is transitive on the set of reduced reflection factorizations of a Coxeter element (see also the exposition in \cite[\S 6]{chapuy_theo_lapl}).

Bessis' work sparked considerable interest in the Hurwitz action on reflection factorizations (for instance in \cite{michel-hurwitz-tuples} and \cite{BDSW}).  In~\cite{BGRW}, Baumeister et al.\ set out to describe its orbit structure in real reflection groups $W$. They found that the elements $g\in W$ for which the Hurwitz action is transitive on their set of reduced reflection factorizations are precisely the elements whose reduced factorizations form good generating sets for parabolic subgroups of $W$. They called such elements \emph{parabolic quasi-Coxeter} elements.

It is this last definition of the quasi-Coxeter property that is most relevant in our work. In the following section, we extend it to the case of well generated complex reflection groups. Although the result of \cite{BGRW} (involving Hurwitz orbits) does not extend to this setting (see \S\ref{ssec.pcqe.hurw.action} below), it turns out that it is precisely their definition that allows us to relate the quasi-Coxeter property with full reflection factorizations, as we do \S\S\ref{Sec: relative gen sets} and~\ref{Sec: full factorizations and p-q-Cox} below.

\begin{remark}[Tower of Babel]
\label{rem:ToB}

We hope that the preceding discussion makes clear that the class of quasi-Coxeter elements is natural, and in fact prominent, when viewing the theory of reflection groups from several different perspectives. However, because of the distance separating the various relevant research areas, this same class of elements often appears with different names in the literature, and, worse, sometimes the same name has been used to refer to distinct properties.

Carter \cite{carter} did not give a name to the conjugacy classes of Table~\ref{Table: qCox elts Carter} but he called their associated diagrams \emph{admissible diagrams}. In a later paper \cite{carter_semi-cox}, Carter and Elkington gave an explicit definition for a class of elements that they called \emph{semi-Coxeter elements}.  Taken literally, their definition accidentally allows more classes than those of Table~\ref{Table: qCox elts Carter}, but subsequent authors (e.g., \cite{BGRW,enhanced_dynkin}) have used ``semi-Coxeter'' to mean exactly the classes of the table.

As discussed in \S\ref{sec:origins_dist_bas}, Voigt \cite{voigt_thesis} used the term \emph{quasi-Coxeter elements} for the elements with a reduced reflection factorization whose corresponding roots form a basis of the root lattice.  In the simply laced Weyl groups (types $A_n$, $D_n$, $E_n$), this definition agrees with the definition given by Baumeister et al.~\cite{BGRW} (discussed above in \S\ref{sec:origins_dual_cox}; and see Theorem~\ref{Prop: pqCox characterization for Weyl W} below for the proof).  However, in other (non-simply laced) types, the two definitions are not equivalent.  Both versions have appeared in the literature: Hertling and Balnojan \cite{BalnojanHertling} used Voigt's definition unchanged for (non-simply laced) Weyl groups, while Proposition~\ref{Prop: BW charn of gen sets} (of Baumeister and Wegener \cite{BW}) suggests that it is better to add to the definition the condition that the corresponding coroots form a basis of the coroot lattice as well.

In many sources (e.g., \cite{bouwknegt, kac_peterson, delduc-feher}), the conjugacy classes of quasi-Coxeter elements in a Weyl group $W\leq \GL(V)$ are called \emph{primitive classes}. They are usually defined as the conjugacy classes of elements $g\in W$ such that the determinant of $g-I_V$ equals the determinant of the Cartan matrix of $W$ up to sign (where $I_V$ is the identity on $V$). In these references, it is often observed that primitive classes are precisely those that do not live in any proper reflection subgroup of $W$ (again, see Theorem~\ref{Prop: pqCox characterization for Weyl W} for the equivalence of these definitions).

A different source of potential confusion comes from the labels used to denote the quasi-Coxeter conjugacy classes of a Weyl group $W$ (as in Table~\ref{Table: qCox elts Carter}). Carter's notation is similar to the later Bala--Carter notation \cite{BC_I,BC_II}, which encodes conjugacy classes of nilpotent elements of the Lie algebra associated to $W$. This is not an accident; Carter had already observed in \cite[\S 10]{carter} that admissible diagrams may form good combinatorial models also for such Lie algebra classes.
\end{remark}

\subsection{Explicit definition of (parabolic) quasi-Coxeter elements}
\label{ssec.expl.defn}

We are finally ready to define parabolic quasi-Coxeter elements for well generated complex reflection groups, extending the definition of \cite{BGRW}. 

\begin{definition}[parabolic quasi-Coxeter elements]\label{Defn: p-q-Cox elts}
Let $W$ be a well generated complex reflection group. We say that an element $g\in W$ is a \defn{parabolic quasi-Coxeter} element if it has a reduced reflection factorization $g=t_1 \cdots t_{\lR(g)}$ whose factors $\{t_i\}$ form a good generating set (as in Definition~\ref{Defn: good gen'g sets}) for some parabolic subgroup of $W$. We say it is \defn{quasi-Coxeter} if the factors $\{t_i\}$ form a good generating set for $W$.
\end{definition}

Definition~\ref{Defn: p-q-Cox elts} does not specify \emph{which} parabolic subgroup should be generated by the factors $t_i$. In the next proposition we see that, in fact, the subgroup is completely determined by the parabolic quasi-Coxeter element.

\begin{proposition} \label{Prop: X=V^g and l_R(g)=codim(V^g)}
Let $W$ be a complex reflection group and $g\in W$ an arbitrary element. If the factors of a reduced reflection factorization of $g$ form a good generating set for some parabolic subgroup $W_X$, then $X = V^g$ and $W_X$ is the parabolic closure of $g$. Moreover, in this case $\lR(g)=\codim(V^g)$.
\end{proposition}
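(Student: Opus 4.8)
The plan is to pin down $V^g$ exactly, exploiting the interplay between the fixed spaces $V^{t_i}$ of the factors and their move lines $\Mov(t_i)=(V^{t_i})^\perp$. Write $g=t_1\cdots t_k$ for the given reduced factorization, so that $k=\lR(g)$, and recall that because $\{t_i\}$ is a good generating set for $W_X$ we have $k=\rank(W_X)=\codim(X)$. The goal is to show $V^g=X$; the remaining claims then follow formally, since the parabolic closure $W_g$ is by definition the parabolic subgroup indexed by the flat $V^g$, so $X=V^g$ gives $W_X=W_{V^g}=W_g$ and $\lR(g)=k=\codim(X)=\codim(V^g)$.

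First I would record the geometry of the factors. Since each $t_i$ lies in $W_X$ and $W_X$ is the pointwise stabilizer of the flat $X$, every $t_i$ fixes $X$ pointwise, so $X\subseteq V^{t_i}$; conversely any vector fixed by all the $t_i$ is fixed by $\langle t_i\rangle=W_X$, hence lies in $V^{W_X}=X$. Thus $\bigcap_{i=1}^k V^{t_i}=X$. Dualizing, the move lines $\Mov(t_i)$ span $\big(\bigcap_i V^{t_i}\big)^\perp=X^\perp$, a space of dimension $\codim(X)=k$; as there are exactly $k$ of them, the vectors $\rho_{t_i}$ spanning these lines are linearly independent. The inclusion $X\subseteq V^g$ is then immediate, as $g$ is a product of elements fixing $X$ pointwise.

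The heart of the argument is the reverse inclusion $V^g\subseteq X$, which I would prove by a telescoping computation. Fix $v\in V^g$ and set $u_{k+1}:=v$ and $u_j:=t_j t_{j+1}\cdots t_k\, v$ for $j=k,\dots,1$, so that $u_1=gv=v=u_{k+1}$. For each $j$ one has $u_j-u_{j+1}=(t_j-\id)u_{j+1}\in\Mov(t_j)$, say $u_j-u_{j+1}=d_j\rho_{t_j}$ for a scalar $d_j$. Summing the telescoping differences gives $\sum_{j=1}^k d_j\rho_{t_j}=u_1-u_{k+1}=0$, and the linear independence of the $\rho_{t_j}$ forces every $d_j=0$. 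Hence $(t_j-\id)u_{j+1}=0$, i.e.\ $u_{j+1}\in V^{t_j}$, and all the $u_j$ coincide with $v$; in particular $v\in V^{t_j}$ for every $j$, so $v\in\bigcap_i V^{t_i}=X$. Combined with the easy inclusion this yields $V^g=X$.

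The step I expect to be the main obstacle is exactly this reverse inclusion: in a complex reflection group one has only $\lR(g)\ge\codim(V^g)$ in general, and strict inequality genuinely occurs, so real input is needed to rule out fixed directions of $g$ outside $X$. The crucial ingredient is the linear independence of the move vectors $\rho_{t_i}$, which is precisely what the good-generating-set hypothesis supplies (through $\bigcap_i V^{t_i}=X$ together with the count $k=\codim X$); without it the telescoping sum could vanish with nonzero coefficients and $g$ could pick up extra fixed vectors. I would also emphasize that the only properties of complex reflections used are that $\Mov(t)$ is a line and that $(t-\id)$ maps $V$ into it, so the argument is uniform and does not rely on the reflections having order two.
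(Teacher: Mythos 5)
Your proof is correct, and it takes a genuinely different route from the paper's. The paper stays at the level of codimension bookkeeping: it observes that $g\in W_X$ gives $V^g\supseteq X$, that $g$ fixes $\bigcap_i V^{t_i}$, and then concludes $\codim(V^g)=k$ by combining two inequalities, whence $V^g=X$ and $W_X=W_g$. You instead compute $V^g$ exactly, via what is essentially Carter's classical lemma: the good-generating-set hypothesis gives $\bigcap_i V^{t_i}=V^{W_X}=X$ of codimension $k$, so the $k$ move lines $\Mov(t_i)$ span $X^\perp$ and the vectors $\rho_{t_i}$ are linearly independent, and the telescoping identity then kills any fixed vector of $g$ outside $X$. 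Your route buys two things. First, it supplies an honest proof of the only delicate inclusion, $V^g\subseteq X$, equivalently $\codim(V^g)\geq k$: note that the soft facts ($V^g\supseteq X$ and $V^g\supseteq\bigcap_i V^{t_i}$) each yield only $\codim(V^g)\leq k$, and indeed the paper's printed justification of the opposite inequality (``since $g\in W_X$ we have $V^g\supseteq X$, therefore $\codim(V^g)\geq k$'') has the direction of the implication reversed as written --- your linear-independence argument is precisely the ingredient needed to make that step rigorous, so your write-up is, at this point, more complete than the paper's. Second, your argument never actually uses that the factorization is reduced: once $\codim(V^g)=k$ is established, the general bound $\lR(g)\geq\codim(V^g)$ together with the trivial $\lR(g)\leq k$ forces $\lR(g)=k$, so reducedness comes out as a conclusion rather than a hypothesis. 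The one micro-step you assert without proof, $V^{W_X}=X$, is fine but worth a line: by Steinberg's theorem $W_X$ is generated by the reflections whose hyperplanes contain $X$, and a flat equals the intersection of the reflection hyperplanes containing it.
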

\begin{proof}
Assume that $t_1 \cdots t_k=g$ is a reduced reflection factorization of $g$ as in the statement and that $W_X=\langle t_i\rangle$. Since the $t_i$ form a good generating set for $W_X$, we have that $\rank(W_X)=k$, or equivalently that $k=\codim(X)$. 
Consider now the parabolic closure $W_g$ of $g$, which is indexed by the flat $Y=V^g$. Since $g\in W_X$, we have $Y\supseteq X$, and therefore $\codim(Y)\geq k$. On the other hand, $g$ can be written as a product of $k$ reflections and hence has to fix the intersection of their fixed spaces; this forces $\codim(Y)\leq k$. Putting the two inequalities together, we have that $\codim(Y)=k$ and thus $Y=X$ and $W_g = W_Y = W_X$.
\end{proof}

With this preliminary result in hand, we are ready now to prove that the various definitions considered in \S\ref{sec.origins} are equivalent for Weyl groups. The following arguments are case-free and constitute a combination of results from \S\ref{Sec: preliminaries} that are mostly due to \cite{BGRW} and \cite{Wegener_thesis}.

\pqcoxWeylchar

\begin{proof}
It is sufficient to prove the case of parabolic quasi-Coxeter elements. The following relations prove the equivalence of all given statements.
\begin{enumerate}
    \item[$(i)\Rightarrow (ii)$:] For $g$ parabolic quasi-Coxeter, we have by definition that there is a reduced factorization $g=t_1\cdots t_k$ whose factors form a good generating set for some parabolic subgroup $W_X$. By Proposition~\ref{Prop: X=V^g and l_R(g)=codim(V^g)} this subgroup is $W_g$, and the conclusion follows by Proposition~\ref{Prop: BW charn of gen sets}.
    \item[$(ii)\Rightarrow (i)$:] Take a reduced reflection factorization $g=t_1\cdots t_k$ as in $(ii)$. Since $W$ is a real reflection group, we have $k=\codim(V^g)$, and since $\codim(V^g)=\rank(W_g)$, Proposition~\ref{Prop: BW charn of gen sets} implies that the factors $t_i$ generate $W_g$. Since $W_g$ is a parabolic subgroup of $W$, this means that $(i)$ holds. 
    \item[$(i)\Rightarrow (iii)$:] Pick a reduced reflection factorization $t_1\cdots t_k=g$ for which the factors $t_i$ generate $W_g$ as in the step $[(i)\Rightarrow (ii)]$. The required property $(iii)$ follows by Proposition~\ref{Prop: pdet(g-id)=I(W')}.
    \item[$(iii)\Rightarrow (iv)$:] Choose a reflection group $W'$ such that  $g \in W'\leq W_g$, and consider a $W'$-reduced reflection factorization $t_1\cdots t_k=g$ and the rank-$k$ group $W''=\langle t_i\rangle\leq W'$ generated by its factors. Since $W$ is a real reflection group, this factorization is also $W$-reduced and we have that that $k=\lR(g)=\codim(V^g)=\rank(W_g)$; in other words, the group $W''$ is a \emph{maximal rank} reflection subgroup of $W_g$. By Proposition~\ref{Prop: pdet(g-id)=I(W')}, we must have that $|\op{pdet}(g - I_V)|=I(W'')$, which forces by the assumption in $(iii)$ that $I(W'')=I(W_g)$. Since $W''$ is a maximal rank reflection subgroup of $W_g$, it follows from Corollary~\ref{Cor: W'<W means I(W')>I(W)} that $W''=W_g$, and therefore that $W' = W_g$.  Thus $g$ satisfies $(iv)$. 
    \item[$(iv)\Rightarrow (i)$:] Take an arbitrary reduced reflection factorization $t_1\cdots t_k=g$ and consider the group $W'=\langle t_i\rangle$ generated by its factors. Since $W$ is a real reflection group, Proposition~\ref{Prop: Red_W=Red_(W_g)} implies that all reflections $t_i$ belong to the parabolic closure $W_g$. Thus $W'\leq W_g$.  Since clearly $g$ belongs to $W'$, the assumption of $(iv)$ implies that $W'=W_g$. This proves $(i)$.  \qedhere
\end{enumerate}
\end{proof}

We end this section with one further preliminary result about the complex case.  The collection of reduced reflection factorizations of an element $g$ depends on the ambient group $W \ni g$.  This suggests that whether $g$ is parabolic quasi-Coxeter should depend on $W$. In the next statement we prove that, to the contrary, the property is hereditary inside the family of parabolic subgroups of $W$.

\begin{corollary}\label{cor: p-q-Cox in W_X implies in W}
Let $W$ be a well generated group and $W_X\leq W$ a parabolic subgroup. Then an element $g\in W_X$ is parabolic quasi-Coxeter in $W_X$ if and only if it is parabolic quasi-Coxeter in $W$.
\end{corollary}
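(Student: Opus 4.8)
The strategy is to reduce both implications to the same intrinsic statement. Writing $W_g$ for the parabolic closure of $g$, I will show that $g$ is parabolic quasi-Coxeter in $W$ if and only if $g$ is a quasi-Coxeter element of $W_g$, and likewise with $W$ replaced by $W_X$; since the latter condition -- that $g$ is a quasi-Coxeter element of the \emph{fixed} subgroup $W_g$ -- makes no reference to the ambient group, the two equivalences immediately yield the corollary. For this to make sense I must also verify that the parabolic closure of $g$ is the same subgroup whether it is formed inside $W_X$ or inside $W$; I take care of that first.

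Because $g\in W_X$ fixes $X$ pointwise, we have $X\subseteq V^g$, so the parabolic closure $W_g=W_{V^g}$ (the pointwise stabilizer of $V^g$ in $W$) is contained in $W_X$. Using the definition of parabolic subgroups as pointwise stabilizers together with Steinberg's theorem (Theorem~\ref{Thm: steinberg}), the parabolic subgroups of $W_X$ are exactly the pointwise stabilizers in $W_X$ of subsets $U\subseteq V$; each such stabilizer coincides with the pointwise stabilizer in $W$ of $X\cup U$, hence is a parabolic subgroup of $W$ contained in $W_X$, and conversely every parabolic subgroup $W_U$ of $W$ lying inside $W_X$ equals the pointwise stabilizer in $W_X$ of $U$ and so is parabolic in $W_X$. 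Thus the parabolic subgroups of $W_X$ are precisely the parabolic subgroups of $W$ contained in $W_X$. Intersecting those that contain $g$, and recalling that $W_g$ is itself one of them and lies in every parabolic subgroup of $W$ containing $g$, I conclude that the parabolic closure of $g$ in $W_X$ is again $W_g$.

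For the intrinsic reformulation, let $G$ denote either $W$ or $W_X$. In the direction where $g$ is parabolic quasi-Coxeter in $G$, Proposition~\ref{Prop: X=V^g and l_R(g)=codim(V^g)} tells us that the factors of the witnessing reduced factorization generate precisely $W_g$ and that $\lR[G](g)=\codim(V^g)=\rank(W_g)$; this factorization therefore has $\rank(W_g)$ factors lying in and generating $W_g$, so it is a good generating set for $W_g$, and since $\lR[W_g](g)\ge\codim(V^g)$ while the factorization realizes exactly this many factors, it is moreover $W_g$-reduced, whence $g$ is a quasi-Coxeter element of $W_g$. Conversely, suppose $g=t_1\cdots t_k$ is a $W_g$-reduced factorization whose factors form a good generating set for $W_g$, so $k=\rank(W_g)=\codim(V^g)$ and each $t_i$ is a reflection of $G$; the inequality $\lR[G](g)\ge\codim(V^g)$ together with this length-$k$ factorization forces $\lR[G](g)=k$, so the factorization is $G$-reduced, its factors form a good generating set for the parabolic subgroup $W_g$ of $G$, and $g$ is parabolic quasi-Coxeter in $G$. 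Applying this to $G=W$ and to $G=W_X$, and using the equality of parabolic closures established above, completes the proof.

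The point requiring the most care -- and the main obstacle -- is that the quantity $\lR(g)$ in the definition is the reflection length in the \emph{ambient} group, which in the complex setting can strictly exceed $\codim(V^g)$ and can change between a group and one of its subgroups (cf.\ Remark~\ref{rem_par_2}, where $[\id;(1,1,1)]$ has reflection length $3$ in $G(3,1,3)$ but $4$ in the non-parabolic subgroup $G(3,3,3)$). The reformulation neutralizes this: once either hypothesis supplies a factorization of length $\codim(V^g)$, the fact that reflection length always dominates the codimension of the fixed space pins the reflection length to $\codim(V^g)$ simultaneously in $W$, in $W_X$, and in $W_g$, so a single good generating set for $W_g$ certifies the property on both sides at once. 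One should also note that $W_g$ is well generated -- since parabolic subgroups of well generated groups are well generated -- which is what makes ``good generating set for $W_g$'' meaningful.
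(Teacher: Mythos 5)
Your proof is correct, and all the steps check out: the identification of parabolic subgroups of $W_X$ with parabolics of $W$ contained in $W_X$ follows from Steinberg's theorem exactly as you compute (the pointwise stabilizer in $W_X$ of $U$ is $W_{X\cup U}$), your intrinsic equivalence is sound, and you correctly neutralize the complex-case pitfall of Remark~\ref{rem_par_2} by pinning every relevant reflection length to $\codim(V^g)$ via the general inequality $\lR \geq \codim(V^{\cdot})$. Your route differs from the paper's in organization: the paper proves the two implications directly, transferring reduced factorizations between the two ambient groups -- in the forward direction ($W_X \Rightarrow W$) by the length comparison $\codim(V^g) \le \lR[W](g) \le \lR[W_X](g) = \codim(V^g)$ from Proposition~\ref{Prop: X=V^g and l_R(g)=codim(V^g)}, and in the converse direction by explicitly invoking Proposition~\ref{Prop: Red_W=Red_(W_g)}~(2) to see that a reduced $W$-factorization is also $W_X$-reduced, together with the parabolic-of-parabolic fact from Theorem~\ref{Thm: steinberg}. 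You instead factor the statement through the ambient-independent criterion ``$g$ is quasi-Coxeter in its parabolic closure $W_g$,'' proving the parabolic closure is the same in $W$ and $W_X$, and then apply one equivalence twice. This buys symmetry (both implications of the corollary become formal consequences of a single lemma) and avoids Proposition~\ref{Prop: Red_W=Red_(W_g)} entirely, since a length-$\codim(V^g)$ factorization inside $W_g \le G$ is automatically $G$-reduced; the paper's direct argument is shorter per direction but treats the two implications asymmetrically. One cosmetic remark: in your forward step, the fact that the factor set is a good generating set for $W_g$ already follows from the definition of parabolic quasi-Coxeter plus the identification of the parabolic subgroup with $W_g$ in Proposition~\ref{Prop: X=V^g and l_R(g)=codim(V^g)}, so the counting of factors there is redundant (though harmless, since that identification also forces the $\lR[G](g)$ factors to be distinct).
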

\begin{proof}
First, suppose $g \in W_X \leq W$ is parabolic quasi-Coxeter in $W_X$.  By Proposition~\ref{Prop: X=V^g and l_R(g)=codim(V^g)}, we have that $\lR[W_X](g) = \codim(V^g)$.  Since $W_X \leq W$, we have $\lR(g) \leq \lR[W_X](g)$.  On the other hand, as mentioned in \S\ref{sec:reflen}, we have for all $w \in W$ that $\lR(w) \geq \codim(V^w)$, and thus $\lR(g) = \lR[W_X](g)$.  Therefore, any reduced $W_X$-reflection factorization $g= t_1 \cdots t_k$ is also reduced as a $W$-reflection factorization.  Since $g$ is parabolic quasi-Coxeter in $W_X$, the subgroup $W'$ generated by the $t_i$ is parabolic in $W_X$.  It is easy an easy consequence of Steinberg's theorem (Thm.~\ref{Thm: steinberg}) that a parabolic subgroup of a parabolic subgroup is parabolic in the parent group, and therefore $g$ has a reduced $W$-factorization that generates a parabolic subgroup of $W$, as claimed.

Conversely, assume that $g$ is parabolic quasi-Coxeter in $W$ and $g=t_1\cdots t_k$ is a reduced $W$-reflection factorization of $g$.  We have by Proposition~\ref{Prop: X=V^g and l_R(g)=codim(V^g)} that  
$\lR(g)=\codim(V^g)$. Therefore, by Proposition~\ref{Prop: Red_W=Red_(W_g)}~(2), $g=t_1\cdots t_k$ is also a reduced $W_X$-reflection factorization of $g$.  Again by Proposition~\ref{Prop: X=V^g and l_R(g)=codim(V^g)}, the group $\langle t_i \rangle$ is the parabolic closure in $W$ of $g$, which (by Steinberg's theorem) is also a parabolic subgroup of $W_X$.  Thus $g$ has a reduced $W_X$-reflection factorization whose factors are a good generating set for a parabolic subgroup of $W_X$, as needed.
\end{proof}

\subsection{Interaction with the Hurwitz action.}
\label{ssec.pcqe.hurw.action}

In the real case, parabolic quasi-Coxeter elements may be characterized in terms of the Hurwitz action on their reduced reflection factorizations.

\begin{proposition}[{\cite[Thm.~1.1]{BGRW}}]
\label{Prop: Hurw. trans for par.q-Cox}
Let $W$ be a real reflection group and $g\in W$ an element that has reflection length $\lR(g)=k$. Then the Hurwitz action of the braid group $\BBB_k$ on the set of reduced reflection factorizations of $g$ is transitive if and only if $g$ is a parabolic quasi-Coxeter element.
\end{proposition}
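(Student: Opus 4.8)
The plan is to reduce the statement to the case of a genuine (full-rank) quasi-Coxeter element and then prove the two implications separately, with the forward implication (transitivity $\Rightarrow$ parabolic quasi-Coxeter) being the more direct one and the reverse the substantive part. For the reduction, recall that in a real reflection group one always has $\lR(g)=\codim(V^g)$, so Proposition~\ref{Prop: Red_W=Red_(W_g)} applies and yields $\op{Red}_W(g)=\op{Red}_{W_g}(g)$ as sets, with the Hurwitz action agreeing on both. Hence the transitivity of the Hurwitz action on $\op{Red}_W(g)$ and (via Proposition~\ref{Prop: X=V^g and l_R(g)=codim(V^g)} together with Corollary~\ref{cor: p-q-Cox in W_X implies in W}) the property of being parabolic quasi-Coxeter both depend only on the parabolic closure $W_g$, inside which $g$ has full reflection length. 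I would therefore assume from the outset that $W=W_g$ and $k=n=\rank(W)$, reducing the claim to: \emph{$g$ is a quasi-Coxeter element of $W$ if and only if the Hurwitz action on $\op{Red}_W(g)$ is transitive}.

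For transitivity $\Rightarrow$ quasi-Coxeter I would use that the Hurwitz action preserves the subgroup generated by the factors of a tuple. Transitivity then forces every reduced factorization of $g$ to generate one and the same reflection subgroup $W'\leq W$. Since a reflection $t$ occurs in some reduced factorization of $g$ precisely when $t\leq_{\RRR}g$ (any occurrence can be moved to the first position, unchanged, by Hurwitz moves), this common subgroup is $W'=\langle t : t\leq_{\RRR}g\rangle$. The key input is the structural fact that the reflections lying below $g$ generate its parabolic closure, i.e. $\langle t: t\leq_{\RRR}g\rangle = W_g$; granting this, $W'=W_g=W$, so each reduced factorization generates all of $W$ and $g$ is quasi-Coxeter. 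This lemma uses genuinely more than the fixed-space computation $V^{W'}=V^g$ (which only gives $\rank(W')=\rank(W)$, and proper essential reflection subgroups do exist, e.g.\ $A_1^4\lneq D_4$); it is the ``atoms of the interval generate the parabolic closure'' phenomenon, and I would isolate and prove it first. One can check it directly in illuminating small cases such as $g=-I_V$ in $D_4$ or an order-$3$ rotation in $G_2$, where the factorizations split into several Hurwitz orbits while the full set of reflections below $g$ still recovers $W_g$.

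The reverse implication, quasi-Coxeter $\Rightarrow$ transitivity, is the heart of the matter, and I would attack it by induction on the rank $n$, with the base cases supplied by Bessis' theorem (Theorem~\ref{Thm: Bessis Hurwitz trans pCox}) for Coxeter elements together with direct checks in rank $\leq 2$. In the Weyl case it is convenient to reformulate the problem using Proposition~\ref{Prop: BW charn of gen sets}: reduced factorizations correspond to ordered $\ZZ$-bases of the root and coroot lattices adapted to $g$, and Hurwitz moves correspond to certain simultaneous elementary transformations of these bases, so transitivity becomes the statement that any two adapted bases are connected by such moves. For the inductive step, given two reduced factorizations $F,F'$ of the quasi-Coxeter element $g$, the aim is to apply Hurwitz moves until $F$ and $F'$ share a common final reflection $t$; pulling $t$ out (using $t^2=\id$) leaves reduced factorizations of $gt$, an element of reflection length $n-1$, and one concludes by the inductive hypothesis applied to $gt$ inside its rank-$(n-1)$ parabolic closure.

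The main obstacle is exactly this inductive step, which has two intertwined difficulties. First, one must actually produce a common final reflection; this is the combinatorial core of the argument, and it is here that the hypothesis that $g$ lies in no proper reflection subgroup (the non-containment characterization in Theorem~\ref{Prop: pqCox characterization for Weyl W}, in the Weyl case) must be exploited to prevent the two factorizations from ``spreading'' across incompatible reflection subgroups. Second, one must verify that the quotient $gt$ is again parabolic quasi-Coxeter, so that the inductive hypothesis genuinely applies, and that its reduced factorizations are precisely the tails obtained after removing $t$; controlling the generated subgroup under deletion of a single reflection, and ensuring it does not leak into a proper reflection subgroup, is the delicate point, since being quasi-Coxeter is a statement about generating the whole group rather than a hereditary one. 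I expect that no fully uniform treatment covers all real types, so this step will likely require supplementary case analysis for the exceptional groups and for the non-crystallographic types $H_3$, $H_4$, and $I_2(m)$, where the lattice-theoretic tools of \S\ref{Sec: well-gend grps} are unavailable.
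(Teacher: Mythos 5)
First, note that the paper offers no proof of this statement at all: it is imported wholesale as \cite[Thm.~1.1]{BGRW}, so your attempt has to be measured against a complete proof of that theorem rather than against anything in the text. Your preliminary reduction to the case $W=W_g$ and $k=n=\rank(W)$ is correct, and your forward implication is sound. In fact, the ``atoms generate the parabolic closure'' lemma that you flag as the key unproved input becomes immediate after your own reduction, so you could have closed it on the spot: for any reflection $t\in W_g$, subadditivity gives $\lR(tg)\geq n-1$, while $\lR(tg)=\codim\big(V^{tg}\big)\leq n$ and the parity relation $\det(w)=(-1)^{\lR(w)}$ (valid in real groups, where $\lR(w)=\codim(V^w)$) rules out $\lR(tg)=n$; hence $\lR(tg)=n-1$, i.e.\ \emph{every} reflection of $W_g$ satisfies $t\leq_{\RRR}g$, and under transitivity the common subgroup generated by all reduced factorizations (a Hurwitz invariant) is forced to be all of $W_g$.

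The genuine gap is the reverse implication, which is the entire substance of the theorem: you never prove that a quasi-Coxeter element has Hurwitz-transitive reduced factorizations. What you provide is an outline of an induction on rank whose two decisive steps --- Hurwitz-moving two given reduced factorizations until they share a final reflection $t$, and verifying that $gt$ is again parabolic quasi-Coxeter of length $n-1$ so that the inductive hypothesis applies inside its parabolic closure --- are precisely the points you defer, conceding yourself that they are ``the main obstacle'' and predicting supplementary case analysis that you do not carry out. Invoking Theorem~\ref{Thm: Bessis Hurwitz trans pCox} as a base case does not bridge this, since a quasi-Coxeter element is in general not a Coxeter element of any reflection subgroup, and you propose no mechanism for manufacturing the shared reflection (it is exactly here that the lattice characterization of Proposition~\ref{Prop: BW charn of gen sets} must do real work in the crystallographic case, with the non-crystallographic types $H_3$, $H_4$, $I_2(m)$ handled separately, as in \cite{BGRW}). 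Your outline does parallel the broad shape of the actual argument of \cite{BGRW}, but with its core steps left open the hard direction stands unproved, so the attempt is a plausible strategy memo rather than a proof.
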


This allows us to extend the equivalence $(i)\Leftrightarrow (iv)$ of Theorem~\ref{Prop: pqCox characterization for Weyl W} to the set of all real reflection groups.

\begin{corollary}\label{Cor: pqCox characterization for real W}
Let $W$ be a real reflection group and $g$ an element of $W$. Then the following statements are equivalent. 
\begin{enumerate}[(i)]
    \item $g$ is a quasi-Coxeter element (respectively, parabolic quasi-Coxeter element).
    \item $g$ does not belong to any proper reflection subgroup of $W$ (resp., of $W_g$).
\end{enumerate}
\end{corollary}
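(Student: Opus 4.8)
The plan is to deduce this from Theorem~\ref{Prop: pqCox characterization for Weyl W} together with the Hurwitz-transitivity input of Proposition~\ref{Prop: Hurw. trans for par.q-Cox}, treating only the parabolic quasi-Coxeter case explicitly. The plain quasi-Coxeter statement is the special case $W_g = W$: by Proposition~\ref{Prop: X=V^g and l_R(g)=codim(V^g)} an element is quasi-Coxeter exactly when it is parabolic quasi-Coxeter with $W_g = W$, and in that case ``proper reflection subgroup of $W$'' and ``proper reflection subgroup of $W_g$'' coincide. The first thing I would note is that one implication comes for free: the argument for $(iv) \Rightarrow (i)$ inside the proof of Theorem~\ref{Prop: pqCox characterization for Weyl W} uses only Proposition~\ref{Prop: Red_W=Red_(W_g)} and the identity $\lR(g) = \codim(V^g)$, both of which hold for every real reflection group and not merely the crystallographic ones. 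So $(ii) \Rightarrow (i)$ needs no new idea and transfers verbatim.

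The real work is the forward implication $(i) \Rightarrow (ii)$, and here I would route everything through the Hurwitz action. Assuming $g$ is parabolic quasi-Coxeter with $k = \lR(g)$, Proposition~\ref{Prop: Hurw. trans for par.q-Cox} makes the Hurwitz action of $\BBB_k$ on $\op{Red}_W(g)$ transitive; since this action preserves the subgroup generated by a tuple, and since by definition (together with Proposition~\ref{Prop: X=V^g and l_R(g)=codim(V^g)}) at least one reduced factorization generates $W_g$, it follows that \emph{every} tuple in $\op{Red}_W(g)$ generates $W_g$. To derive $(ii)$ I would argue by contradiction: if $g$ lay in a proper reflection subgroup $W'' \lneq W_g$, then because $W''$ is itself real we have $\lR[W''](g) = \codim(V^g) = k$, so a $W''$-reduced factorization of $g$ would simultaneously be $W$-reduced and thus lie in $\op{Red}_W(g)$, yet its factors would generate a subgroup of $W'' \lneq W_g$, contradicting the conclusion that all such factorizations generate $W_g$.

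I expect the main obstacle to be conceptual rather than computational. For Weyl groups the equivalence was driven by the connection-index machinery (Proposition~\ref{Prop: pdet(g-id)=I(W')} and Corollary~\ref{Cor: W'<W means I(W')>I(W)}), which has no analogue in the non-crystallographic types $H_3$, $H_4$, and $I_2(m)$; the content of the corollary is precisely that transitivity of the Hurwitz action can substitute for that machinery. The one step that must be handled with care is the claim $\lR[W''](g) = \lR(g)$ for the reflection subgroup $W''$, which is what lets a factorization built inside $W''$ count as a genuine reduced factorization in $W$. It holds because reflection length equals fixed-space codimension in any real reflection group (see \S\ref{sec:reflen}), and $V^g$ is intrinsic to the linear map $g$ and hence unchanged on passing between $W$ and $W''$.
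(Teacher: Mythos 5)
Your proposal is correct and follows essentially the same route as the paper: the direction $(ii)\Rightarrow(i)$ is transferred verbatim from the argument in Theorem~\ref{Prop: pqCox characterization for Weyl W}, and the forward direction combines Proposition~\ref{Prop: X=V^g and l_R(g)=codim(V^g)} with Hurwitz transitivity (Proposition~\ref{Prop: Hurw. trans for par.q-Cox}) to show every reduced factorization generates $W_g$, then rules out a proper reflection subgroup $W'' \lneq W_g$ containing $g$ via the equality $\lR[W'']( g) = \codim(V^g) = \lR(g)$. You even correctly identify the one delicate step (that $W''$-reduced factorizations are $W$-reduced, which hinges on reflection length equaling fixed-space codimension in real groups), which is exactly the parenthetical the paper supplies.
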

\begin{proof}
Again, it is sufficient to prove the parabolic case. The direction $(ii)\Rightarrow (i)$ is proven exactly as in Theorem~\ref{Prop: pqCox characterization for Weyl W}.  For the other direction, pick a reduced reflection factorization $g=t_1\cdots t_k$ whose factors generate a parabolic subgroup $W_X$. Since $W$ is a real reflection group, we have $k=\codim(V^g)$.  Then Proposition~\ref{Prop: X=V^g and l_R(g)=codim(V^g)} implies $W_X=W_g$. Since the group generated by the factors is invariant under the Hurwitz action, Proposition~\ref{Prop: Hurw. trans for par.q-Cox} implies that for \emph{every} reduced reflection factorization of $g$, the factors generate $W_g$.  Now suppose that $W'$ is a reflection subgroup such that $g \in W' \leq W_g$.  Since $W$ is a real reflection group, a reduced factorization in $W'$ must also be $W$-reduced (because $\lR[W'](g) = \codim(V^g) = \lR(g)$), and therefore must generate $W_g$ (because every reduced factorization of $g$ does).  Thus $W' = W_g$, and so $g$ does not belong to any proper reflection subgroup of $W_g$.
\end{proof}

In a real reflection group $W$, for every reduced reflection factorization $g=t_1\cdots t_k$, the element $g$ is a quasi-Coxeter element of the reflection subgroup $W'=\langle t_i \rangle$ generated by the factors. This observation, together with Corollary~\ref{Cor: pqCox characterization for real W}, leads to a finer understanding of the whole collection of Hurwitz orbits on the set $\op{Red}_W(g)$ of reduced reflection factorizations of $g$. The following is a slight rephrasing of \cite[Cor.~3.11]{gobet-cycle}. 

\begin{proposition}\label{Prop:gob.char.hurw.orb}
Let $W$ be a real reflection group and $g\in W$ an arbitrary element of $W$.  There is a one-to-one correspondence between the Hurwitz orbits on $\op{Red}_W(g)$ and the minimal (under inclusion) reflection subgroups $W'\leq W$ that contain $g$.
\end{proposition}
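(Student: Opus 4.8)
The plan is to construct an explicit bijection $\Phi$ between the set of Hurwitz orbits on $\op{Red}_W(g)$ and the set of minimal reflection subgroups of $W$ containing $g$. Given a reduced factorization $(t_1,\ldots,t_k)\in\op{Red}_W(g)$, I would let $\Phi$ send its Hurwitz orbit to the subgroup $W' := \langle t_1,\ldots,t_k\rangle$. Because the Hurwitz action preserves the subgroup generated by the factors (as recalled in \S\ref{sec: hurwitz action}), this assignment does not depend on the chosen representative, so $\Phi$ is well defined on orbits. The observation preceding the statement shows that $g$ is a quasi-Coxeter element of $W'$, and hence, by Corollary~\ref{Cor: pqCox characterization for real W} applied to the real reflection group $W'$, the element $g$ lies in no proper reflection subgroup of $W'$. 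Since the reflection subgroups of $W'$ are exactly the reflection subgroups of $W$ contained in $W'$, this says precisely that $W'$ is minimal among reflection subgroups of $W$ containing $g$. Thus $\Phi$ indeed takes values in the claimed target set.

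For surjectivity, I would start from a minimal reflection subgroup $W'\leq W$ containing $g$. As $W'$ is itself a real reflection group, Corollary~\ref{Cor: pqCox characterization for real W} shows that $g$ is a quasi-Coxeter element of $W'$; by definition it therefore admits a reduced $W'$-factorization $g=t_1\cdots t_k$ whose factors form a good generating set for $W'$. Because $W$ is real we have $\lR(g)=\codim(V^g)=\lR[W'](g)$, so this factorization is also $W$-reduced and lies in $\op{Red}_W(g)$; its orbit is clearly sent to $W'$ by $\Phi$.

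The crux is injectivity. Suppose two orbits have the same image $W'$, with representatives $(t_1,\ldots,t_k)$ and $(t_1',\ldots,t_k')$, both generating $W'$. Each is a reflection factorization of $g$ inside $W'$, and each is reduced there (again using $\lR[W'](g)=\lR(g)$), so both belong to $\op{Red}_{W'}(g)$. Since $g$ is a quasi-Coxeter---hence in particular a parabolic quasi-Coxeter---element of the real reflection group $W'$, Proposition~\ref{Prop: Hurw. trans for par.q-Cox} guarantees that the Hurwitz action of $\BBB_k$ is transitive on $\op{Red}_{W'}(g)$. Therefore the two tuples lie in a common Hurwitz orbit, and the two orbits coincide. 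I expect this step to be the main obstacle, not because it is technically deep but because it requires carefully transferring the entire setup into the subgroup $W'$ and invoking the transitivity theorem of \cite{BGRW} there; the delicate points are verifying that $W'$-reducedness and $W$-reducedness agree (which is exactly where the realness of $W$ is used) and that $g$ genuinely satisfies the hypotheses of Proposition~\ref{Prop: Hurw. trans for par.q-Cox} relative to $W'$ rather than $W$.
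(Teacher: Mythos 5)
Your proof is correct and follows essentially the same route as the paper: the paper obtains the orbit--subgroup correspondence by citing Gobet's result that Hurwitz orbits on $\op{Red}_W(g)$ are in bijection with the reflection subgroups of $W$ containing $g$ as a quasi-Coxeter element, and then applies Corollary~\ref{Cor: pqCox characterization for real W} to identify those subgroups with the minimal ones, exactly as you do. The only difference is that you reprove the cited bijection in-line --- via Hurwitz-invariance of the generated subgroup, the equality $\lR[W'](g)=\lR(g)=\codim(V^g)$ valid in real groups, and the transitivity of Proposition~\ref{Prop: Hurw. trans for par.q-Cox} applied inside $W'$ --- which is the same mechanism underlying the result the paper cites, so your write-up is simply a self-contained version of the paper's argument.
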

\begin{proof}
Gobet showed \cite[Cor.~3.11]{gobet-cycle} that the Hurwitz orbits on $\op{Red}_W(g)$ are in bijection with the reflection subgroups $W'$ of $W$ that contain $g$ as a quasi-Coxeter element. The statement now follows from the previous Corollary~\ref{Cor: pqCox characterization for real W}.
\end{proof}

Proposition~\ref{Prop: Hurw. trans for par.q-Cox} is not valid for all complex reflection groups -- for example, \cite[Ex.~4.3]{LW} shows that it fails in the exceptional group $G_{16}$.  However, some weaker forms of the result hold; we will need two of these in what follows.

\begin{proposition}[{\cite[Cor.~5.4]{LW}}]
\label{Prop: LW G(m, p, n) transitivity}
Let $W = G(m, p, n)$ and $g\in W$ an element of reflection length $\lR(g)=k$ with a reduced factorization that generates $W$. Then the Hurwitz action of the braid group $\BBB_k$ on the set of reduced reflection factorizations of $g$ is transitive.
\end{proposition}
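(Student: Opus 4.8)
The plan is to prove transitivity by bringing an arbitrary reduced factorization into a combinatorial normal form through Hurwitz moves, and then showing that all normal forms for a fixed $g$ are Hurwitz-equivalent. Throughout I would exploit that the Hurwitz action is compatible with the two natural homomorphisms out of $G(m,p,n)$: the underlying-permutation map $\pi\colon [u;a]\mapsto u$ onto $\Symm_n$ and the color map $\wt\colon [u;a]\mapsto a_1+\cdots+a_n$. Since the product of a factorization is preserved and $\pi$ is a homomorphism, $\pi$ carries a reduced factorization of $g$ to a factorization of the underlying permutation $u$ into transpositions (images of the transposition-like reflections) and identities (images of the diagonal reflections), intertwining the Hurwitz actions; as in the excerpt, I encode each transposition-like factor $[(ij);c]$ as an edge $\{i,j\}$ and each diagonal factor as a loop.

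First I would set up a graph-theoretic normal form. Using Hurwitz moves to reorder and conjugate, I would show that any reduced factorization can be brought to a form in which the factors are grouped according to the cycles of $u$, the transposition-like factors within each group form a spanning tree of that cycle's support, and the remaining factors (the extra edges or diagonal loops that distinguish $G(m,p,n)$ from its symmetric part) are pushed to the end and carry the excess color. The inputs here are the reflection-length formula (Theorem~\ref{thm:reflection length in G(m, 1, n) and G(m, p, n)}) and the good-generating-set characterization (Lemma~\ref{lem: good generating set G(m,1,n) and G(m,m,n)}), which together pin down how many edges, how many loops, and how much total color a reduced factorization that generates $W$ must carry.

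Next I would handle the purely permutational layer. Once the diagonal factors are collected and the tree skeletons are exposed, the images under $\pi$ are honest reduced transposition factorizations of $u$, and on the support of each cycle $u$ restricts to a Coxeter element of the corresponding symmetric (parabolic) group; hence $u$ is a parabolic Coxeter element of $\Symm_n$, and the Hurwitz action on its reduced factorizations is transitive by Theorem~\ref{Thm: Bessis Hurwitz trans pCox}. Lifting these moves back to $G(m,p,n)$ lets me reduce to the case where the two factorizations being compared have identical underlying transpositions and the same edge/loop skeleton.

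The hard part will be the remaining color-theoretic step: with the skeleton fixed, the only freedom left is the choice of colors $c$ on the edges and loops, and I must show that any two admissible color vectors---those yielding the same $g$ and generating $W$---are connected by Hurwitz moves. The obstacle is that a Hurwitz move conjugates one factor by an adjacent one, which shifts colors in a way that couples the color data to the tree geometry rather than acting on colors coordinate-wise; transporting a color along an edge, around the tree, or through the color-carrying cycle (using the computation recorded in Remark~\ref{rem: Shi's delta}) must be done while preserving the product. I would organize this as an induction on $n$, peeling off a leaf edge of the tree to descend to the analogous problem on one fewer coordinate, so that the base case is a single cycle where the color bookkeeping collapses to a one-parameter computation.
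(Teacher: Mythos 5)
First, a point of comparison: the paper does not prove this proposition at all --- it is imported verbatim as \cite[Cor.~5.4]{LW} --- so the benchmark is the Lewis--Wang proof, whose broad shape (standardize a reduced factorization by Hurwitz moves into a skeleton of tree edges plus color-carrying extras, then connect the remaining colorings) your outline does resemble. The genuine gap is the step you yourself flag as ``the hard part'': showing that any two admissible color vectors on a fixed skeleton are Hurwitz-connected is precisely where all the content of the result lives, and you give only an intention (induction on $n$, peel off a leaf) without exhibiting the mechanism that actually changes colors. A Hurwitz move conjugates one factor by an adjacent one, so transporting a unit of color along an edge, around the unique graph cycle, or through a diagonal loop changes colors only in constrained, coupled ways; connecting two colorings requires a number-theoretic argument that exploits the generating hypothesis (that the relevant $\gcd$ of cycle colors together with $p$ is trivial) to realize arbitrary admissible shifts. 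Without a lemma of this kind, the induction has no engine, and the base case ``where the color bookkeeping collapses to a one-parameter computation'' is asserted rather than done. A secondary wrinkle in the permutation layer: diagonal factors project to the identity under $\pi$, so the projected tuple is not a transposition factorization of $u$, and braid-group elements acting upstairs do not correspond cleanly to braid moves on the subsequence of transpositions; this is repairable but needs to be said.

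More seriously for scope: the two inputs you lean on --- Theorem~\ref{thm:reflection length in G(m, 1, n) and G(m, p, n)} and Lemma~\ref{lem: good generating set G(m,1,n) and G(m,m,n)} --- are stated in the paper only for $G(m,1,n)$ and $G(m,m,n)$, whereas the proposition covers all $G(m,p,n)$, including the intermediate groups $1<p<m$. Those groups are not well generated: a minimal reflection generating set has $n+1$ elements, so a reduced factorization of $g$ that generates $W$ has a different skeleton (a spanning structure plus \emph{two} extra color-carrying pieces, with a generation condition involving both $\wt(g)$ and the cycle-color $\gcd$ relative to $p$), and neither the edge/loop count nor the admissibility of colorings is pinned down by the tools you cite. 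As written, your normal-form step would fail, or at least be unjustified, exactly in the intermediate cases --- which is a substantial part of what \cite{LW} establishes.
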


\begin{proposition}
\label{Prop. strong=weak}
Let $W$ be a complex reflection group and $g \in W$ a parabolic quasi-Coxeter element, and let $g=t_1\cdots t_k$ be \emph{any} reduced reflection factorization of $g$. Then the set $\{ t_1,\ldots,t_k\}$ of factors is a good generating set for the parabolic closure $W_g$ of $g$.
\end{proposition}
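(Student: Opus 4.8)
The plan is to reduce the statement in three stages --- first from $W$ to the parabolic closure $W_g$, then to the irreducible factors of $W_g$, and finally to the known transitivity results for the Hurwitz action. Since $g$ is parabolic quasi-Coxeter, Proposition~\ref{Prop: X=V^g and l_R(g)=codim(V^g)} gives $\lR(g)=\codim(V^g)$ and furnishes \emph{one} reduced factorization whose factors generate $W_g$. The equality $\lR(g)=\codim(V^g)$ is exactly the hypothesis needed to invoke Proposition~\ref{Prop: Red_W=Red_(W_g)}~(2) with $W_X=W_g$, giving $\op{Red}_W(g)=\op{Red}_{W_g}(g)$. Hence the factors of \emph{every} reduced factorization of $g$ already lie in $W_g$, and it suffices to prove the claim inside $W_g$, where $g$ is a quasi-Coxeter element.

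Next I would decompose $W_g=W_1\times\cdots\times W_s$ into irreducibles and write $g=g_1\cdots g_s$ accordingly. By Corollary~\ref{cor:finer description reduced refl fact} we have $W_i=W_{g_i}$ and $\lR[W_i](g_i)=\rank(W_i)$, and splitting the good generating factorization from the first stage shows that each $g_i$ is in fact quasi-Coxeter in the irreducible group $W_i$. Because a reflection of a direct product has one-dimensional move space, it lies in a single factor; thus an arbitrary reduced factorization $g=t_1\cdots t_k$ partitions according to which $W_i$ each $t_j$ belongs to. Length-additivity ($k=\sum_i\lR[W_i](g_i)$) forces each part to be a reduced factorization of the corresponding $g_i$, and $\langle t_1,\ldots,t_k\rangle=\prod_i\langle t_j : t_j\in W_i\rangle$. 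So it is enough to show that each part generates its $W_i$, reducing everything to the irreducible case.

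In the irreducible case I would use that the subgroup generated by the factors is constant along Hurwitz orbits, as recorded in \S\ref{sec: hurwitz action}. Since $g_i$ admits at least one reduced factorization generating $W_i$, transitivity of the Hurwitz action on $\op{Red}_{W_i}(g_i)$ immediately forces every reduced factorization of $g_i$ to generate $W_i$. This transitivity is available when $W_i$ is real (Proposition~\ref{Prop: Hurw. trans for par.q-Cox}) and when $W_i$ belongs to the combinatorial family $G(m,1,n)$ or $G(m,m,n)$ (Proposition~\ref{Prop: LW G(m, p, n) transitivity}); crucially, the irreducible factors of a parabolic subgroup of a combinatorial group remain within the combinatorial family by Theorem~\ref{thm:parabolic subgroups of G(m, p, n)}, so no factor escapes these two regimes in those cases.

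The main obstacle is the remaining exceptional well generated complex groups (for instance $G_{16}$), where the Hurwitz action on reduced factorizations of a quasi-Coxeter element need not be transitive at all, so neither transitivity result applies. The point to emphasize is that full transitivity is stronger than what the statement requires: since the generated subgroup is a Hurwitz invariant, one only needs that \emph{each} Hurwitz orbit generates $W_i$, not that there is a single orbit. As there are finitely many such exceptional groups and finitely many conjugacy classes of quasi-Coxeter elements in each, this weaker per-orbit generation can be confirmed by a finite case-by-case verification, which is how I expect the exceptional complex types to be dispatched.
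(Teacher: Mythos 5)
Your proposal is correct and follows essentially the same route as the paper: after the same preliminary reductions (Proposition~\ref{Prop: X=V^g and l_R(g)=codim(V^g)} identifying $W_g$ and giving $\lR(g)=\rank(W_g)$), the paper simply cites \cite[Thm.~5.6]{LW} applied to $G=W_g$, and that theorem was itself proved exactly as you outline --- transitivity of the Hurwitz action in the combinatorial family (Proposition~\ref{Prop: LW G(m, p, n) transitivity}) plus an exhaustive per-orbit computer check for the exceptional groups, the check you anticipate but could not perform blind. The only cosmetic differences are that the cited theorem covers reducible groups directly, so the paper needs no explicit irreducible decomposition, and your use of Proposition~\ref{Prop: Hurw. trans for par.q-Cox} in the real case is an alternative the paper mentions only in a remark (via Wegener's case-free argument for Weyl groups).
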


(For arbitrary \emph{Coxeter} groups, the analogous statement has been conjectured by Gobet \cite[Conj.~2.4]{gobet-cycle}.)

\begin{proof}
In \cite[Thm.~5.6]{LW}, it is proved that if $G$ is any finite complex reflection group and $h \in G$ satisfies $\lR[G](h) = \rank(G)$ and has a reduced reflection factorization that generates $G$, then \emph{every} reduced reflection factorization of $h$ generates $G$.  Since the given element $g$ is parabolic quasi-Coxeter, by definition, $g$ has a reduced reflection factorization that generates a parabolic subgroup of $W$; by Proposition~\ref{Prop: X=V^g and l_R(g)=codim(V^g)}, this subgroup is its parabolic closure $W_g$, and $g$ has reflection length $\rank(W_g)$.  Applying the former result in the case $G = W_g$ completes the proof.
\end{proof}

\begin{remark}
The cited result \cite[Thm.~5.6]{LW} was proved by combining Proposition~\ref{Prop: LW G(m, p, n) transitivity} and an exhaustive computer check for exceptional groups $W$ that compared all Hurwitz orbits in $\op{Red}_W(g)$ for all conjugacy classes of elements $g\in W$. For Weyl groups, however, Proposition~\ref{Prop. strong=weak} has an elegant case-free proof in \cite[Thm.~4.3.1]{Wegener_thesis}.
\end{remark}

\begin{remark}
Proposition~\ref{Prop:gob.char.hurw.orb} also fails in the case of complex reflection groups, but here it is difficult even to conjecture an appropriate geometric object that indexes the Hurwitz orbits. For the combinatorial family, see \cite[Thm.~3.2 and~\S 6.1]{LW}.  
\end{remark}

\subsection{Unique cycle decomposition}
\label{sec: cycle decomposition}

In \cite[Thm.~1.3]{gobet-cycle}, Gobet describes an extension of the cycle decomposition of permutations in $\Symm_n$ to the collection of parabolic quasi-Coxeter elements in real reflection groups. Much of his work extends essentially verbatim to complex reflection groups, although the proofs of some supporting statements (our Propositions~\ref{Prop: Red_W=Red_(W_g)} and~\ref{Prop: X=V^g and l_R(g)=codim(V^g)}) need to be rewritten for the complex case. We sketch his argument below, with the necessary adjustments.

\begin{proposition}[generalized cycle decomposition]
\label{Prop: gob-cycle-decomp}
Let $W$ be a well generated complex reflection group and $g\in W$ a parabolic quasi-Coxeter element. Then there exists a unique (up to reordering the factors) decomposition $g=g_1\cdots g_r$ with $g_i\in W$ non-identity elements such that
\begin{enumerate}
\item $g_ig_j=g_jg_i$ for all $i$ and $j$,
\item $\lR(g)=\lR(g_1)+\cdots+\lR(g_r)$, and
\item no $g_i$  can be further decomposed; that is, there do not exist elements $a,b\in W$ such that $g_i=ab$ and the factorization $g_i=ab$ satisfies (1) and (2).
\end{enumerate}
Moreover, if $W_g=W_1\times\cdots\times W_s$ is the decomposition of the parabolic closure $W_g$ into irreducibles, then $r=s$ and (after an appropriate reordering) each factor $g_i$ is a quasi-Coxeter element of $W_i$.
\end{proposition}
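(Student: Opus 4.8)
The plan is to reduce everything to the parabolic closure $W_g$, which is itself well generated (being a parabolic subgroup of a well generated group), and to read off the decomposition from its irreducible factorization $W_g = W_1 \times \cdots \times W_s$. For existence I would invoke Corollary~\ref{cor:finer description reduced refl fact} (applicable since $g$ is parabolic quasi-Coxeter, so $\lR(g) = \codim(V^g)$ by Proposition~\ref{Prop: X=V^g and l_R(g)=codim(V^g)}): it produces $g = g_1 \cdots g_s$ with $g_i \in W_i$, $W_{g_i} = W_i$, and $\lR(g_i) = \codim(V^{g_i}) = \rank(W_i)$. Property~(1) is immediate since distinct factors $W_i$ have mutually orthogonal supports, and property~(2) is the reflection-length additivity over direct products recorded in \eqref{eq: sum of lengths}; each $g_i$ is nonidentity because $W_{g_i} = W_i$ is a nontrivial irreducible factor. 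To obtain the ``moreover'' claim that each $g_i$ is quasi-Coxeter in $W_i$, I would concatenate reduced $W_i$-factorizations of the $g_i$ into a factorization of $g$ of length $\sum_i \rank(W_i) = \rank(W_g) = \lR(g)$; this is reduced, so by Proposition~\ref{Prop. strong=weak} its factors generate $W_g = \prod_i W_i$. As the factors coming from $g_i$ lie in $W_i$ and the $W_i$ have orthogonal supports, the group they generate splits as the product of the subgroups generated inside each $W_i$, forcing each $g_i$'s factors to generate all of $W_i$.

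The technical heart of both indecomposability~(3) and uniqueness is a single lemma: if $h$ satisfies $\lR(h) = \codim(V^h)$ and $h = x_1 \cdots x_\ell$ is a product of pairwise commuting nonidentity elements with $\lR(h) = \sum_j \lR(x_j)$, then the move spaces $\Mov(x_j)$ are \emph{pairwise orthogonal}, $\Mov(h) = \bigoplus_j \Mov(x_j)$, each $x_j$ again satisfies $\lR(x_j) = \codim(V^{x_j})$, and $W_h = \prod_j W_{x_j}$ is an internal orthogonal direct product of parabolic subgroups. A dimension count using $\dim\Mov(x) = \codim(V^x) \leq \lR(x)$ together with the hypotheses first yields the direct-sum decomposition $\Mov(h) = \bigoplus_j \Mov(x_j)$ and the equalities $\lR(x_j) = \codim(V^{x_j})$; the upgrade from direct sum to \emph{orthogonal} direct sum is the one genuinely complex-case step, which I would obtain by simultaneously unitarily diagonalizing the commuting family $\{x_j\}$: triviality of the pairwise intersections $\Mov(x_j) \cap \Mov(x_{j'})$ forces at most one $x_j$ to act nontrivially on each joint eigenspace, so the $\Mov(x_j)$ lie in distinct, hence orthogonal, joint eigenspaces. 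Orthogonality of supports then makes the $W_{x_j}$ commute elementwise with trivial pairwise intersections, and they generate $W_h$ because (by Proposition~\ref{Prop: Red_W=Red_(W_g)}(1), valid since each $x_j$ has length equal to codimension) the factors of any reduced factorization of $x_j$ lie in $W_{x_j}$, while their concatenation generates $W_h$ by Proposition~\ref{Prop. strong=weak}.

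Granting the lemma, indecomposability~(3) is immediate: a nontrivial splitting $g_i = ab$ obeying~(1) and~(2) would yield $W_i = W_{g_i} = W_a \times W_b$ with both factors nontrivial, contradicting the irreducibility of $W_i$. For uniqueness I would apply the lemma to an arbitrary decomposition $g = h_1 \cdots h_r$ satisfying (1)--(3): it gives $W_g = \prod_j W_{h_j}$ with pairwise orthogonal supports $\Mov(h_j)$. Each $\Mov(h_j)$ is then $W_g$-stable (the factor $W_{h_j}$ preserves it and every other factor acts trivially on it), so it is a sub-sum of the canonical, pairwise non-isomorphic, orthogonal $W_g$-irreducible constituents $\Mov(g_i)$. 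If some $\Mov(h_j)$ absorbed two or more of the $\Mov(g_i)$, then restricting $g = \prod_i g_i$ to that support would exhibit $h_j$ as a nontrivial commuting, length-additive product, contradicting~(3); hence each $\Mov(h_j)$ equals a single $\Mov(g_i)$. This matches the supports bijectively, giving $r = s$, and comparing the action of $g$ on each orthogonal support forces $h_j = g_i$.

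I expect the main obstacle to be exactly the orthogonality upgrade in the lemma. In a real reflection group the move space is a genuine orthogonal complement and the orthogonality of the $\Mov(x_j)$ can be read off from the inner-product geometry directly, whereas in the complex (unitary) setting I must route through simultaneous diagonalization of commuting unitaries. I also have to keep careful track that all elements appearing (the $x_j$, as well as $a$, $b$, and the $h_j$) genuinely satisfy $\lR = \codim$, so that Propositions~\ref{Prop: Red_W=Red_(W_g)} and~\ref{Prop. strong=weak} apply to them; this is precisely what keeps the parabolic-closure bookkeeping honest and is the point at which the rewritten complex versions of the supporting propositions are essential.
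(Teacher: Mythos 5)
Your proposal is correct and follows essentially the same route as the paper: existence and the ``moreover'' clause are read off the irreducible decomposition $W_g = W_1 \times \cdots \times W_s$ exactly as in the paper (via Corollary~\ref{cor:finer description reduced refl fact}, Proposition~\ref{Prop: X=V^g and l_R(g)=codim(V^g)}, and Proposition~\ref{Prop. strong=weak}), and your key lemma's dimension count is precisely the paper's chain
\[
\codim(V^{ab}) = \lR(ab) = \lR(a)+\lR(b) \geq \codim(V^a)+\codim(V^b) \geq \codim(V^a\cap V^b) \geq \codim(V^{ab}),
\]
generalized to $\ell$ commuting factors. The local differences are worth noting. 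For the ``orthogonality upgrade'' the paper avoids simultaneous diagonalization: it takes $u = b(v)-v \in \Mov(b)$ and uses $ab=ba$ to get $a(u)-u \in \Mov(a)\cap\Mov(b) = \{0\}$, whence $\Mov(b)\subseteq V^a$ directly — a one-line computation equivalent to your eigenspace argument but more elementary. Your packaging of the count into a single multi-factor lemma that serves both indecomposability~(3) and uniqueness is tidier than the paper, whose uniqueness step is a terser component-matching argument (each $g'_i$ lies in $W_g$ by Proposition~\ref{Prop: Red_W=Red_(W_g)}, must lie in a single irreducible component or else~(3) fails, and no component can absorb two factors without violating~(3) for the canonical decomposition); your isotypic/multiplicity-one argument makes explicit what the paper leaves implicit. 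One hygiene point: as stated, your lemma's final conclusion $W_h = \prod_j W_{x_j}$ invokes Proposition~\ref{Prop. strong=weak}, which requires $h$ to be parabolic quasi-Coxeter, not merely $\lR(h) = \codim(V^h)$; this is harmless because in both of your applications $h$ is either $g$ itself or a $g_i$ (quasi-Coxeter in the parabolic $W_i$, hence parabolic quasi-Coxeter in $W$ by Corollary~\ref{cor: p-q-Cox in W_X implies in W}), but the hypothesis should be added to the lemma statement.
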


\begin{proof}[Proof sketch]
Let us start with the decomposition $W_g = W_1 \times \cdots \times W_s$ of $W_g$ into irreducible parabolic subgroups, as in \S\ref{Sec: Par. subgrps}, and consider the resulting factorization
\begin{equation}
\label{EQ: g=g1..gs}
g=g_1\cdots g_s
\end{equation}
with $g_i \in W_i$ for each $i$.  Part (1) follows from the fact that the decomposition of $W_g$ is a direct product, while part (2) follows from \eqref{eq: sum of lengths} in the proof of Corollary~\ref{cor:finer description reduced refl fact}. We now consider part (3).

Any reduced reflection factorization of $g$ determines for each $i$ a $W_i$-factorization of $g_i$, and therefore each $g_i$ is a quasi-Coxeter element of $W_i$ according to Definition~\ref{Defn: p-q-Cox elts}.  Thus, it is sufficient to deal with the case that $g$ is quasi-Coxeter in an irreducible group $W$. We assume that the factorization $g=ab$ satisfies parts (1) and (2) and show that this contradicts the irreducibility of $W$. By condition (2), there is a reduced factorization $g=t_1\cdots t_n$ and an index $k$ such that $a=t_1\cdots t_k$ and $b=t_{k+1}\cdots t_n$. Consider the two reflection subgroups $W_1:=\langle t_1,\ldots,t_k\rangle$ and $W_2:=\langle t_{k+1},\ldots,t_n\rangle$. 
Then
\begin{multline*}
    \codim(V^{ab}) \overset{\text{ Prop.~\ref{Prop: X=V^g and l_R(g)=codim(V^g)} }}{=} 
    \lR(ab) \overset{\text{(2)}}{=} 
    \lR(a) + \lR(b) \geq \\ \codim(V^a)+\codim(V^b) \geq \codim(V^a\cap V^b) \geq \codim(V^{ab}).
\end{multline*}
This forces the inequalities to be equalities, which implies that the move spaces (defined in \S\ref{Sec: Par. subgrps}) of $a$ and $b$ intersect trivially. Pick an arbitrary vector $u\in \Mov(b)$, say $u = b(v) - v$.  Since $ab=ba$ we have $a(u) = b(a(v)) - a(v) \in \Mov(b)$.  Hence $a(u)-u \in \Mov(a)\cap\Mov(b) = \{0\}$. This means that $u\in V^a$ and thus that $\Mov(b)\subseteq V^a$.  By Proposition~\ref{Prop: Red_W=Red_(W_g)}~(1), for every reflection $t\leq_{\RRR}a$ we have $\Mov(b)\subseteq V^a\subseteq V^t$. Taking orthogonal complements, $\Mov(t) \subseteq V^b$.  Repeating the argument with the roles of $a$ and $b$ reversed, we have for any $t' \leq_{\RRR} b$ that $\Mov(t') \subseteq V^a$.  This implies that all reflections $t_1,\ldots,t_k$ commute with all reflections $t_{k+1},\ldots,t_n$, and therefore that $W=W_1\times W_2$, which contradicts the irreducibility of $W$.

The only remaining point is to show that  \eqref{EQ: g=g1..gs} is the unique factorization satisfying the three conditions. For any decomposition $g=g'_1\cdots g'_r$ as in the statement, any reflection $t\leq_{\RRR} g'_i$ must belong to $W_g$ (by Proposition~\ref{Prop: Red_W=Red_(W_g)}), and so we have $g'_i\in W_g$.  Each $g'_i$ must belong to a single irreducible component of $W_g$ since otherwise, the decomposition would fail (3). Finally, if there is a component -- without loss of generality $W_1$ -- that contains more than one of the factors $g'_i$, then $g_1$ from \eqref{EQ: g=g1..gs} must equal their product; but this contradicts part (3) for the factorization \eqref{EQ: g=g1..gs}. This concludes the argument. 
\end{proof}

\begin{remark}\label{Rem: what is a cycle}

The concept of a ``generalized cycle" in Proposition~\ref{Prop: gob-cycle-decomp} is best captured by a \emph{pair} $(g,G)$ of an irreducible (well generated) reflection group $G$ and a quasi-Coxeter element $g$ of $G$. A priori, this suggests that the cycle decomposition should depend on the choice of ambient group.  For similar decompositions of non-quasi-Coxeter elements, this is indeed the case; for example, consider the  element $p:=[(12)(34);(0,1,0,1)]$, which belongs to both $W = G(2, 2, 4) \cong D_4$ and $W' = G(2, 1, 2) \times G(2, 1, 2) \cong B_2 \times B_2$.  In $W$, the decomposition of $p$ coming from the parabolic closure is just $p = p$ (the closure $W_p$ is the whole group), while in $W'$ the natural decomposition is
\[
p=[(12);(0,1,0,0)]\cdot [(34);(0,0,0,1)].
\]

However, the decompositions described in Proposition~\ref{Prop: gob-cycle-decomp} are hereditary in some settings. For example, if we have real reflection groups\footnote{It is unclear to what extent this holds in the complex case; see \S\ref{sec: gen_quest} for more on this question.} $W'\leq W$ with $w\in W'$ a parabolic quasi-Coxeter element of $W$, then $w$ is parabolic quasi-Coxeter also in $W'$, and its two generalized cycle decompositions (in $W$ and $W'$) are identical.  (This comes down to the transitivity of the Hurwitz action and the fact that reduced factorizations in $W'$ are also reduced in $W$.)
\end{remark}

\begin{remark}
We will not make use of the strong uniqueness properties of the generalized cycle decomposition in this paper or the conclusion \cite{DLM3}. For us, the most important consequence of Proposition~\ref{Prop: gob-cycle-decomp} is its last statement: that for parabolic quasi-Coxeter elements $g\in W$, the factors in this natural decomposition are all quasi-Coxeter elements of the irreducible components of $W_g$.
\end{remark}

\subsection{Parabolic quasi-Coxeter elements in the infinite family}
\label{subsec:pqC in the infinite family}

In \cite[Cor.~5.7]{LW}, the following characterization was given for the quasi-Coxeter elements in the well generated groups in the infinite family:\footnote{
When comparing the present work with \cite{LW}, one should be aware of an important difference in terminology: in \cite[Def.~5.1]{LW}, quasi-Coxeter elements are not required to have reflection length equal to the rank of $W$ (unlike our Definition~\ref{Defn: p-q-Cox elts}), but may also have larger reflection lengths.  The statement of Corollary~\ref{cor:pqC elements in m1n and mmn} has been adjusted to our more restrictive definition.}
in $G(m, 1, n)$, an element $w$ is quasi-Coxeter if and only if its underlying permutation is an $n$-cycle and its color generates $\ZZ/m\ZZ$, while in $G(m, m, n)$, an element is quasi-Coxeter if and only if its underlying permutation has exactly two cycles and their colors (which necessarily sum to $0$) generate $\ZZ/m\ZZ$.  Combining this result with Theorem~\ref{thm:parabolic subgroups of G(m, p, n)} and Proposition~\ref{Prop: gob-cycle-decomp} immediately yields the following characterization of parabolic quasi-Coxeter elements in $G(m, 1, n)$ and $G(m, m, n)$.

\begin{corollary}
\label{cor:pqC elements in m1n and mmn}
Suppose that $g$ is an element of $G(m, 1, n)$.  Then
\begin{enumerate}[(i)]
\item $g$ is parabolic quasi-Coxeter for a subgroup of type $\Symm_{\lambda_1} \times \cdots \times \Symm_{\lambda_k}$ if and only if $g$ has cycles of lengths $\lambda_1, \ldots, \lambda_k$, all of color~$0$, and
\item $g$ is parabolic quasi-Coxeter for a subgroup of type $G(m, 1, \lambda_0) \times \Symm_{\lambda_1} \times \cdots \times \Symm_{\lambda_k}$ if and only if $g$ has one cycle of length $\lambda_0$ whose color generates $\ZZ/m\ZZ$ and $k$ cycles of lengths $\lambda_1, \ldots, \lambda_k$ and color $0$. 
\end{enumerate}
Suppose instead that $g$ is an element of $G(m, m, n)$.  Then 
\begin{enumerate}[(i)]
\addtocounter{enumi}{2}
\item $g$ is parabolic quasi-Coxeter for a subgroup of type $\Symm_{\lambda_1} \times \cdots \times \Symm_{\lambda_k}$ if and only if $g$ has cycles of lengths $\lambda_1, \ldots, \lambda_k$, all of color~$0$, and
\item $g$ is parabolic quasi-Coxeter for a subgroup of type $G(m, m, \lambda_0) \times \Symm_{\lambda_1} \times \cdots \times \Symm_{\lambda_k}$ if and only if $g$ has two cycles whose colors generate $\ZZ/m\ZZ$ and sum to $0$ and whose lengths add to $\lambda_0$, and has cycles of lengths $\lambda_1, \ldots, \lambda_k$ of color $0$.
\end{enumerate}

In all cases except the last, the generalized cycles of $g$ are the cycles of $g$; in the case that $W_g \cong G(m, m, \lambda_0) \times \Symm_{\lambda_1} \times \cdots \times \lambda_k$, the two cycles of nonzero color in $g$ together form a generalized cycle.
\end{corollary}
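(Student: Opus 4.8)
The plan is to combine three ingredients recalled above: the characterization of (non-parabolic) quasi-Coxeter elements in $G(m,1,n)$ and $G(m,m,n)$ from \cite[Cor.~5.7]{LW}; the classification of parabolic subgroups in Theorem~\ref{thm:parabolic subgroups of G(m, p, n)}; and the generalized cycle decomposition of Proposition~\ref{Prop: gob-cycle-decomp}, by which a parabolic quasi-Coxeter element $g$ factors as $g = g_1 \cdots g_s$, where each $g_i$ is a quasi-Coxeter element of the $i$th irreducible component $W_i$ of its parabolic closure $W_g$. The whole argument is a translation back and forth between the group-theoretic data (the irreducible type of each $W_i$) and the combinatorial data (the cycle lengths and colors of $g$).

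For the forward direction, I would start from the fact that $g$ is a quasi-Coxeter element of $W_g$, so that Theorem~\ref{thm:parabolic subgroups of G(m, p, n)} pins $W_g$ down to one of the listed product forms, producing the four cases. Using Proposition~\ref{Prop: gob-cycle-decomp} to write $g = g_1 \cdots g_s$, I would then read off each factor separately: a component $\Symm_{\lambda_i}$ sits inside $W$ via color-$0$ transpositions, so its quasi-Coxeter element $g_i$ is a single $\lambda_i$-cycle of color $0$; a component $G(m,1,\lambda_0)$ contributes a single $\lambda_0$-cycle whose color generates $\ZZ/m\ZZ$; and a component $G(m,m,\lambda_0)$ contributes an element whose underlying permutation has exactly two cycles, of lengths summing to $\lambda_0$ and with colors that sum to $0$ and generate $\ZZ/m\ZZ$. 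Assembling these over all components (with the fixed points of $g$ accounting for the trivial factors) gives exactly the cycle-and-color descriptions (i)--(iv).

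For the converse, given $g$ with a prescribed cycle structure I would build a candidate subgroup $W'$ as the product of the groups acting on the supports of the distinguished cycles --- a symmetric group on each color-$0$ cycle and a $G(m,1,\lambda_0)$ or $G(m,m,\lambda_0)$ on the support of the distinguished nonzero-color cycle(s). This $W'$ is parabolic by Theorem~\ref{thm:parabolic subgroups of G(m, p, n)} and contains $g$, and by \cite[Cor.~5.7]{LW} the restriction of $g$ to each irreducible factor is quasi-Coxeter. Since reflection length is additive over the direct product and a union of good generating sets for the factors is again a good generating set for the product, concatenating reduced factorizations of the factors realizes $g$ as a quasi-Coxeter element of $W'$; Proposition~\ref{Prop: X=V^g and l_R(g)=codim(V^g)} then identifies $W'$ with $W_g$. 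The closing statement about generalized cycles is immediate from Proposition~\ref{Prop: gob-cycle-decomp}: each irreducible factor of $W_g$ is one generalized cycle, and such a factor carries a single permutation cycle when it has type $\Symm_{\lambda_i}$ or $G(m,1,\lambda_0)$, but two permutation cycles when it has type $G(m,m,\lambda_0)$, as in Remark~\ref{rem: Shi's delta}.

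The main difficulty I anticipate is the bookkeeping special to $G(m,m,n)$, where one irreducible parabolic factor $G(m,m,\lambda_0)$ corresponds to \emph{two} permutation cycles rather than one: I must keep track that their colors are forced to sum to $0$ (since the factor lies in $G(m,m,\lambda_0)$) while jointly generating $\ZZ/m\ZZ$, and invoke Remark~\ref{rem: Shi's delta} to certify that this pair behaves as a single generalized cycle. Everything else is routine verification that colors add correctly as the factors are assembled and that the supports partition the moved points of $g$.
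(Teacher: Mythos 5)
Your proposal is correct and matches the paper exactly: the paper derives this corollary by combining \cite[Cor.~5.7]{LW} with Theorem~\ref{thm:parabolic subgroups of G(m, p, n)} and Proposition~\ref{Prop: gob-cycle-decomp}, which is precisely your three ingredients. The paper states that this combination ``immediately yields'' the result and gives no further detail, so your write-up simply makes explicit the bookkeeping (conjugation-invariance of cycle colors, rank/reflection-length additivity, and the identification of the constructed parabolic with $W_g$ via Proposition~\ref{Prop: X=V^g and l_R(g)=codim(V^g)}) that the authors leave implicit.
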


\begin{remark}
Parabolic quasi-Coxeter classes make up a small percentage of all conjugacy classes in the combinatorial family.  In the exceptional types, by contrast, there are often more classes that are parabolic quasi-Coxeter than the other way around. In the following table we give the relevant numbers for the real case.  (These are easy to count by a computer, but for Weyl groups all necessary information is also available in the literature: the quasi-Coxeter classes are listed in Carter's work \cite{carter}, the parabolic subgroups up to conjugacy are given in \cite[Appendix~C]{OT-book}, and these may be combined using Proposition~\ref{Prop: gob-cycle-decomp}.)
\begin{table}[H]
\[
\begin{array}{|c|| c| c| c| c| c| c| |c|} 
 \hline
 W & H_3 & H_4 & F_4 & E_6 & E_7 & E_8  \\ 
 \hline
 \text{number of parabolic Coxeter classes} & 6 & 10  & 12 & 17 & 32 & 41 \\
 \hline
 \text{number of parabolic quasi-Coxeter classes} & 9 & 24 & 13 & 21 & 44 &  67 \\ 
 \hline
 \text{number of all conjugacy classes} & 10 & 34 & 25 & 25 & 60 & 112 \\
 \hline
\end{array}
\]
\caption{Comparison of types of conjugacy classes in exceptional real reflection groups}
\label{Table: pqCox classes ratios}
\end{table}

\end{remark}

\section{Characterization via relative reflection generating sets}
\label{Sec: relative gen sets}

In this section, we give a characterization of parabolic quasi-Coxeter elements in terms of the existence of certain sets of reflections that we call \emph{relative reflection generating sets}. These new objects are combinatorially appealing on their own (see \S \ref{sec: counting rrgs}) but they will also become a fundamental ingredient of the uniformly-stated formulas for $W$-Hurwitz numbers in the third instalment \cite{DLM3} of this series of papers.  Our definitions are analogous to the notion of good generation in Definition~\ref{Defn: good gen'g sets}, but \emph{relative} to a given element or subgroup. 

\begin{definition}[relative generating sets for reflection subgroups]
\label{Defn: relative generating set (W,W')}
Let $W$ be a well generated complex reflection group of rank $n$ and let $W'$ be a reflection subgroup of $W$ of rank $k$.  We say that a set $\{t_1, \ldots, t_{n - k}\}$ of $n - k$ reflections is a \defn{generating set for $W$ relative to $W'$} (or \defn{relative generating set} for short) if $W = \langle W', t_1, \ldots, t_{n - k}\rangle$. We denote by \defn{$\RGS(W,W')$} the set of all generating sets for $W$ relative to $W'$.
\end{definition}

\begin{example}
For any $W$, one has that $\RGS(W, \{\id\})$ is the set of good generating sets for $W$, while $\RGS(W, W) = \{\varnothing\}$.
\end{example}

\begin{definition}[relative generating sets for elements]
\label{Defn: relative generating set (W,g)}
Let $W$ be a well generated complex reflection group of rank $n$ and let $g$ be an element of $W$ of reflection length $\lR(g) = k$.  We say that a set $\{t_1, \ldots, t_{n - k}\}$ of $n - k$ reflections is a \defn{generating set for $W$ relative to $g$} (or \defn{relative generating set} for short) if there exists a reduced reflection factorization $g = t_{n - k + 1} \cdots t_{n}$ such that $\{t_1, \ldots, t_n\}$ is a generating set for $W$.  We denote by \defn{$\RGS(W, g)$} the set of all generating sets for $W$ relative to $g$.
\end{definition}

\begin{example}
For any $W$, one has that $\RGS(W, \id)$ is equal to the set of good generating sets for $W$.  If $g$ is a quasi-Coxeter element for $W$, then $\RGS(W, g) = \{\varnothing\}$.
\end{example}

\subsection{Compatibility with parabolic subgroups and parabolic quasi-Coxeter elements.}

Not every element or reflection subgroup of a reflection group has relative generating sets.  Indeed, the next several results (culminating with Corollary~\ref{Prop: RGS(W,W') exist iff parabolic} and Theorem~\ref{Prop: RGS(W,g) exist iff p-q-Cox}) show that relative generating sets exist precisely for parabolic subgroups and for parabolic quasi-Coxeter elements.

\begin{proposition} \label{Prop. extensions <W',t_i>=W}
Let $W$ be a well generated complex reflection group of rank $n$ and $W_X\leq W$ a parabolic subgroup of rank $k$. Then there exist reflections $\{t_1,\ldots,t_{n-k}\}$ in $W$ such that $\langle W_X,t_1,\ldots,t_{n-k}\rangle=W$.
\end{proposition}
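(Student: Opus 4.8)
The plan is to produce the required reflections as the \emph{complementary factors} of a Coxeter element of $W$ lying above a Coxeter element of the parabolic subgroup. First, since $W_X$ is a parabolic subgroup of the well generated group $W$, it is itself well generated and hence possesses a Coxeter element $c_X$. By Proposition~\ref{Prop: X=V^g and l_R(g)=codim(V^g)} its parabolic closure is $W_X$ and $\lR(c_X)=\codim(V^{c_X})=k$, so $c_X$ is a parabolic quasi-Coxeter element of $W$ in the sense of Definition~\ref{Defn: p-q-Cox elts}. The heart of the argument is then to locate a genuine Coxeter element $c$ of $W$ with $c_X\leq_{\RRR}c$.

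Granting such a $c$, the conclusion follows quickly. Using the characterization of the absolute order recalled in \S\ref{sec:reflen}, a reduced factorization of $c_X$ extends to a reduced factorization of $c$: write $c=t_1\cdots t_n$ with $t_1\cdots t_k=c_X$ and $\lR(c)=n$. Applying Proposition~\ref{Prop. strong=weak} to the parabolic quasi-Coxeter element $c_X$ shows that its reduced factors generate its closure, so $\langle t_1,\ldots,t_k\rangle=W_X$; applying the same proposition to $c$ (which is quasi-Coxeter, with closure $W$) shows $\langle t_1,\ldots,t_n\rangle=W$. Hence
\[
\langle W_X,\,t_{k+1},\ldots,t_n\rangle=\langle t_1,\ldots,t_n\rangle=W,
\]
and $\{t_{k+1},\ldots,t_n\}$ is a set of exactly $n-k$ reflections of the required kind. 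Everything here is bookkeeping with Propositions~\ref{Prop: X=V^g and l_R(g)=codim(V^g)} and~\ref{Prop. strong=weak}.

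The main obstacle is therefore the existence of a Coxeter element $c$ of $W$ above $c_X$, equivalently the assertion that every parabolic subgroup of $W$ is the closure of some element below a Coxeter element. I would reduce this to the corank-one case: intersecting the flat $X$ repeatedly with reflection hyperplanes that do not contain the current flat (one always exists while the rank is $<n$, since otherwise the flat would lie in $V^W$) produces, via Steinberg's theorem~\ref{Thm: steinberg}, a flag of parabolic subgroups $W_X=P_k\subsetneq P_{k+1}\subsetneq\cdots\subsetneq P_n=W$ with $\rank(P_j)=j$, so that it suffices to extend a corank-one parabolic to the whole group by a single reflection at each step (telescoping the resulting $\langle P_j,t_{j+1}\rangle=P_{j+1}$ gives the $n-k$ reflections directly, bypassing a single global $c$). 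This corank-one extension is classical for real reflection groups (conjugate the parabolic to a standard $W_{S\setminus\{s\}}$, take the prefix of a simple-system Coxeter element, and use that all Coxeter elements are conjugate), and it is immediate for the combinatorial families from Shi's description in Lemma~\ref{lem: good generating set G(m,1,n) and G(m,m,n)}: a good generating set of a parabolic of $G(m,1,n)$ or $G(m,m,n)$ is a rooted forest (resp.\ a forest together with a partial unicycle), and one completes it to a spanning rooted tree (resp.\ unicycle) by adjoining $n-k$ connecting transposition-like reflections, matching the cycle description of Corollary~\ref{cor:pqC elements in m1n and mmn}. The remaining exceptional well generated types can be settled from the structure of the lattice of noncrossing parabolic subgroups or by a finite computation; I expect this corank-one extension step to be the only non-formal ingredient of the proof.
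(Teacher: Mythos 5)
Your proposal is correct in substance, but it takes a genuinely different route from the paper, which offers two proofs: a short computational one (conjugate $W_X$ to a standard parabolic and adjoin the missing distinguished generators, with the complex cases handled via the tables of \cite{broue-book} and \cite[Fact~1.7]{BMR-Hecke}), and a case-free topological one (Bessis's braid-group generators: choosing the generic line $L$ near the stratum $X/W$, so that $k$ of the $n$ punctures lie near $X/W$ and generate $W_X$ through the embedding $B(W_X)\hookrightarrow B(W)$, while the loops around the remaining $n-k$ punctures supply exactly the desired reflections). Your opening gambit --- producing a Coxeter element $c$ of $W$ with $c_X\leq_{\RRR}c$ --- is wisely abandoned in favor of the telescoped corank-one argument: note that invoking Theorem~\ref{Cor: p-q-cox means below q-cox} to get such a $c$ would have been circular, since that theorem rests on Theorem~\ref{Prop: RGS(W,g) exist iff p-q-Cox}, which in turn rests on the very proposition at hand. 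Your flag construction is sound (intersecting with a hyperplane not containing the current flat raises codimension by exactly one, and a parabolic's rank equals the codimension of its flat), and your treatment of the infinite family via Shi's Lemma~\ref{lem: good generating set G(m,1,n) and G(m,m,n)} is a genuine alternative to the paper's citation-based first proof: it is explicit and self-contained there, shrinking the non-conceptual residue to the exceptional well generated complex types, where your ``finite computation'' does the same work as the paper's first proof (so neither your argument nor the paper's first one is case-free at that point; only the topological proof is). One small repair is needed in the combinatorial family: when $W=G(m,1,n)$ and $W_X$ is a Young subgroup $\Symm_{\lambda_1}\times\cdots\times\Symm_{\lambda_r}$, the completing set cannot consist solely of transposition-like reflections, since Lemma~\ref{lem: good generating set G(m,1,n) and G(m,m,n)}(i) forces a loop; you must adjoin one \emph{diagonal} reflection of primitive color together with $r-1$ connecting transpositions (the count $n-k=r$ still matches), and similarly in $G(m,m,n)$ the colors of the added edges must be chosen so that $\delta$ of the resulting cycle generates $\ZZ/m\ZZ$ --- both trivial fixes that your appeal to Corollary~\ref{cor:pqC elements in m1n and mmn} implicitly anticipates.
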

We give two proofs of this result.  The first is computational.
\begin{proof}[First proof.]
In the real case, any parabolic subgroup is conjugate (by some element $g$) to a standard parabolic, generated by a subset $T$ of the simple reflections $S$.  Then the conjugates (by $g^{-1}$) of the simple reflections in $S \smallsetminus T$ are the required $t_i$. In the complex case, the result can be derived on a case-by-case basis using a similar argument, relying on the tables of \cite[App.~A]{broue-book} for generators and on \cite[Fact~1.7]{BMR-Hecke} for the relationship between arbitrary and ``standard" parabolic subgroups with respect to these generators.
\end{proof}
Since Proposition~\ref{Prop. extensions <W',t_i>=W} is a very natural statement, we also briefly sketch a case-free\footnote{
   This argument is case-free in the context of \emph{duality groups}, which are known (by a case-by-case argument  \cite[Thm.~5.5]{OrlikSolomon1980}) to coincide with well generated groups.
} topological argument.
\begin{proof}[Second proof.]
In \cite{bessis-zariski}, Bessis constructed geometric generators for every complex reflection group $W$, as follows.  The Shephard--Todd--Chevalley theorem \cite{ShephardTodd, Chevalley} identifies the space of orbits $V/W$ with a complex affine space $\CC^n$. When $W$ is well generated, there is a line $L_0$ through the origin that is the image in $V/W\cong\CC^n$ of the $e^{2\pi i/h}$-eigenspace of every Coxeter element. A generic affine line $L$, parallel to $L_0$, intersects the discriminant hypersurface $\mathcal{H}$ of $W$ at $n$ points.  Loops around these points define generators of the \defn{generalized braid group} $\defn{B(W)}:=\pi_1(\CC^n-\mathcal{H})$ that, under a canonical surjection $B(W) \twoheadrightarrow W$, map to a set of reflections that generates $W$.  

If we choose the line $L$ to be close to the stratum $X/W$ of the discriminant, the intersection $L\cap\mathcal{H}$ will consist of $\rank(W_X)=k$ points near $X/W$ and $n-k$ more away from it. This is because the multiplicity of $X/W$ as a stratum of $\mathcal{H}$ is equal to $\rank(W_X)$ \cite[Lem.~5.4]{Bessis_Annals} and because the direction of $L$ is always transversal to $\mathcal{H}$ \cite[Prop.~37]{theo_thesis}. These $n$ points define a generating set of reflections for $W$; moreover, the first $k$ of them generate $W_X$ because there is a local embedding of braid groups $B(W_X)\hookrightarrow B(W)$. The remaining $n-k$ reflections are exactly what is asked for in the statement. 
\end{proof}

The next two propositions provide complementary information to Proposition~\ref{Prop. extensions <W',t_i>=W}. Their original proofs by Taylor \cite{taylor} rely on case-by-case arguments, but Proposition~\ref{Prop: Taylor_subset_good} does have a case-free proof (easily deducible from \cite[Thm.~4.3.9]{Wegener_thesis}) for Weyl groups.

\begin{proposition}[{\cite[Thm.~4.2]{taylor}}]
\label{Prop: Taylor_subset_good}
Fix a well generated complex reflection group $W$ of rank $n$ and a good generating set $\mathcal{G}:=\{t_1,\ldots,t_n\}$ for $W$. Then any $k$-subset $\{t_{i_1},\ldots,t_{i_k}\}\subseteq \mathcal{G}$ is a good generating set for some parabolic subgroup $W'\leq W$ of rank $k$.
\end{proposition}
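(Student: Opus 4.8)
The plan is to first pin down the rank of $W':=\langle t_{i_1},\ldots,t_{i_k}\rangle$ uniformly, and only then to identify $W'$ with the parabolic subgroup fixing $X:=\bigcap_{j=1}^k V^{t_{i_j}}$. Passing to $(V^W)^\perp$, I may assume $W$ acts essentially, so that $\dim V=n$ and $V^W=\{0\}$. Since the $t_i$ generate $W$, we have $\bigcap_{i=1}^n V^{t_i}=V^W=\{0\}$, so the $n$ hyperplanes $V^{t_i}$ are in general position (their $n$ defining functionals are linearly independent). Consequently any $k$ of them meet in codimension exactly $k$, so $X$ has codimension $k$; and as $V^{W'}$ is the common fixed space $\bigcap_j V^{t_{i_j}}=X$ of the generators, the subgroup $W'$ has rank $k$. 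Being generated by the $k$ reflections $t_{i_j}$, it is then automatically a good generating set for itself, and the whole content of the proposition reduces to the single assertion that $W'$ is \emph{parabolic}, i.e.\ that $W'=W_X$. Note that $W'\leq W_X$ holds automatically, since each $t_{i_j}$ fixes $X$ pointwise.

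For Weyl groups I would establish $W'=W_X$ case-freely by a lattice-saturation argument. By Proposition~\ref{Prop: BW charn of gen sets}, the full roots $\{\rho_{t_i}\}_{i=1}^n$ and coroots $\{\widecheck{\rho}_{t_i}\}_{i=1}^n$ are $\ZZ$-bases of the root and coroot lattices $\mathcal{Q}$ and $\widecheck{\mathcal{Q}}$. Since $\{\rho_{t_{i_j}}\}_{j=1}^k$ is part of a $\ZZ$-basis of $\mathcal{Q}$, the sublattice $L:=\ZZ\langle\rho_{t_{i_1}},\ldots,\rho_{t_{i_k}}\rangle$ is a direct summand and hence saturated: $L=\mathcal{Q}\cap X^\perp$, where $X^\perp=\Span_\RR\{\rho_{t_{i_j}}\}$. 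The parabolic $W_X$ is itself a Weyl group whose root system is $\Phi\cap X^\perp$, so its root lattice is $\ZZ\langle\Phi\cap X^\perp\rangle\subseteq\mathcal{Q}\cap X^\perp=L$, while conversely $L\subseteq\ZZ\langle\Phi\cap X^\perp\rangle$ because each $\rho_{t_{i_j}}$ is one of those roots. Thus $\{\rho_{t_{i_j}}\}$ is a $\ZZ$-basis of the root lattice of $W_X$, and the identical argument shows $\{\widecheck{\rho}_{t_{i_j}}\}$ is a $\ZZ$-basis of its coroot lattice. Applying Proposition~\ref{Prop: BW charn of gen sets} now to the Weyl group $W_X$ shows that the $t_{i_j}$ generate $W_X$, whence $W'=W_X$ is parabolic of rank $k$. (Equivalently, one could compute $I(W')$ through Corollary~\ref{Corol: gen set gdet=I(W)}, check that it equals $I(W_X)$, and then invoke Corollary~\ref{Cor: W'<W means I(W')>I(W)} to force $W'=W_X$.)

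The main obstacle is the general well generated complex case, where no analogue of the root/coroot-lattice criterion of Proposition~\ref{Prop: BW charn of gen sets} is available and the connection-index machinery breaks down (the same gap affects the non-crystallographic real groups). A natural structural simplification is to reduce to $k=n-1$: deleting the reflections outside $\{t_{i_1},\ldots,t_{i_k}\}$ one at a time, each stage removes a single reflection from a good generating set of a (well generated) parabolic subgroup, and since a parabolic subgroup of a parabolic subgroup is again parabolic — an easy consequence of Steinberg's Theorem~\ref{Thm: steinberg} — it would suffice to show that deleting one reflection from a good generating set of $W$ produces a good generating set for a rank-$(n-1)$ parabolic. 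I would like to deduce this from the strong form of Hurwitz transitivity (Proposition~\ref{Prop. strong=weak}), but that result presupposes the parabolic quasi-Coxeter property I am trying to establish, so the argument is circular without additional input. Absent a uniform substitute for Proposition~\ref{Prop: BW charn of gen sets}, the cleanest route in this generality is the case-by-case verification of Taylor \cite{taylor}, carried out using the explicit generators and the classification of parabolic subgroups in each Shephard--Todd type; this is precisely where the difficulty, and the dependence on the classification, resides.
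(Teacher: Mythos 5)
Your proposal is essentially in line with what the paper does: the paper offers no independent proof of this statement, citing Taylor's case-by-case Theorem~4.2 directly, and merely remarks that a case-free proof exists for Weyl groups (deducible from Wegener's thesis). Your treatment matches this structure, and your contributions check out. The general-position step is correct: since $\bigcap_{i=1}^n V^{t_i}=V^W$, the $n$ defining functionals (equivalently the roots) are linearly independent in the essential space, so $\codim X=k$, $\operatorname{rank}(W')=k$, and the only content is $W'=W_X$. Your lattice-saturation argument for Weyl groups is also correct and is in the same spirit as the Wegener route the paper alludes to: linear independence of the full root set makes $L=\ZZ\langle\rho_{t_{i_1}},\ldots,\rho_{t_{i_k}}\rangle$ a saturated summand of $\mathcal{Q}$, hence equal to the root lattice $\ZZ\langle\Phi\cap X^\perp\rangle$ of $W_X$ (whose reflections are, by Steinberg, exactly those with roots in $X^\perp$), and Proposition~\ref{Prop: BW charn of gen sets} applied to the Weyl group $W_X$ closes the argument; the coroot half is identical.

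One remark on the circularity you flag at the end: you do not need Proposition~\ref{Prop. strong=weak} to carry out the reduction to $k=n-1$. The paper's very next statement, Proposition~\ref{Prop: Taylor_K=<H,t>} (Taylor's Thm.~4.1), does exactly this job: with $K=W$ (parabolic in itself) and $H=\langle \mathcal{G}\smallsetminus\{t\}\rangle$, your general-position count gives $\operatorname{rank}(H)=n-1<\operatorname{rank}(K)$, so $H$ is parabolic in $W$; iterating (each deletion drops the rank by exactly one, again by independence of the functionals, and each stage applies the proposition with $K$ the previously obtained parabolic) yields the full statement by downward induction — indeed this is essentially how Taylor derives his Thm.~4.2 from Thm.~4.1. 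Of course, Proposition~\ref{Prop: Taylor_K=<H,t>} is itself proved case-by-case, so this resolves the logical circularity without removing the dependence on the classification outside the Weyl case, which is exactly the state of affairs in the paper.
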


\begin{proposition}[{\cite[Thm.~4.1]{taylor}}]
\label{Prop: Taylor_K=<H,t>}
Assume that $W$ is a complex reflection group and that $K$ and $H$ are reflection subgroups of $W$ such that $K=\langle H,t\rangle$ for some reflection $t\in W$. If $K$ is parabolic and its rank is greater than that of $H$, then $H$ is also parabolic.
\end{proposition}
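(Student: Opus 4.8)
The plan is to reduce to the case where $K$ is the whole ambient group and then exhibit $H$ as generated by a subset of a good generating set of $K$, so that Proposition~\ref{Prop: Taylor_subset_good} applies directly.

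First, I would reduce the ambient group. Since $K=\langle H,t\rangle$, both $H$ and $t$ live inside $K$, so the entire statement takes place in $K$. As noted in the proof of Corollary~\ref{cor: p-q-Cox in W_X implies in W}, a parabolic subgroup of a parabolic subgroup is again parabolic (an easy consequence of Steinberg's theorem, Theorem~\ref{Thm: steinberg}); hence it suffices to prove that $H$ is parabolic in $K$, and we may assume $W=K$. Set $n:=\rank(K)$. Adjoining a single reflection raises the rank by at most one: since $V^K=V^H\cap V^t$, we have $n=\codim(V^K)\le \codim(V^H)+\codim(V^t)=\rank(H)+1$. Combined with the hypothesis $\rank(K)>\rank(H)$, this forces $\rank(H)=n-1$.

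The main step is the following. Suppose that $K$ and $H$ are both well generated. Because $\rank(H)=n-1$ and $H$ is well generated, Definition~\ref{Defn: good gen'g sets} provides reflections $s_1,\ldots,s_{n-1}$ with $\langle s_1,\ldots,s_{n-1}\rangle=H$. Since $\langle H,t\rangle=K$ and $t\notin H$ (as $\rank(H)<\rank(K)$), the set $\{s_1,\ldots,s_{n-1},t\}$ generates $K$ and consists of exactly $n=\rank(K)$ distinct reflections, so it is a good generating set for $K$. Applying Proposition~\ref{Prop: Taylor_subset_good} to the $(n-1)$-element subset $\{s_1,\ldots,s_{n-1}\}$, I would conclude that this subset generates a parabolic subgroup of $K$ of rank $n-1$; but that subgroup is exactly $\langle s_1,\ldots,s_{n-1}\rangle=H$, so $H$ is parabolic, as desired. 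This argument is completely case-free whenever both groups in sight are well generated, and so it already settles the entire real case: every reflection subgroup of a real reflection group is itself a finite real reflection group, hence a finite Coxeter group, and is therefore generated by $\rank$-many simple reflections -- that is, it is well generated. It likewise covers the complex case whenever the relevant reflection subgroups happen to be well generated, for instance when $W$ is well generated (so that its parabolic $K$ is well generated) and $H$ is well generated.

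The hard part will be the genuinely complex phenomenon that a rank-$(n-1)$ reflection subgroup $H$ need not be well generated: it may require $n$ reflections to generate. In that case a generating set of the form $\{r_1,\ldots,r_n,t\}$ has $n+1>\rank(K)$ elements and fails to be a good generating set, so Proposition~\ref{Prop: Taylor_subset_good} no longer applies and the clean argument above breaks down. The crux therefore becomes showing that, under the hypothesis $\langle H,t\rangle=K$ with $K$ parabolic, the subgroup $H$ is forced to be well generated -- equivalently, that one cannot complete a genuinely non-well-generated $H$ to all of a parabolic $K$ by adjoining a single reflection. I expect this to be the main obstacle, and it is exactly the point at which Taylor's original proof resorts to the Shephard--Todd classification and explicit generators. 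A uniform resolution would presumably analyze the reflections of $K$ whose fixed hyperplane contains the flat $V^H$ and show directly that each such reflection lies in $H$, but I do not see how to carry this out without case analysis.
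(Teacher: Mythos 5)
First, a point of comparison: the paper does not prove this proposition at all. It is imported verbatim as Taylor's Theorem~4.1, and the authors explicitly note that Taylor's original proofs ``rely on case-by-case arguments.'' So there is no internal proof to measure you against; your attempt has to stand on its own. Your reduction to $W=K$ (via the parabolic-of-parabolic fact from the proof of Corollary~\ref{cor: p-q-Cox in W_X implies in W}) and the rank computation $\rank(H)=n-1$ are correct, and your main step is sound as far as it goes: if $H$ is well generated, then $\{s_1,\ldots,s_{n-1},t\}$ is a good generating set for $K$ (note $K$ is then automatically well generated, so Proposition~\ref{Prop: Taylor_subset_good} applies), and $H$ is parabolic. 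This does settle the real case uniformly, modulo the caveat that Proposition~\ref{Prop: Taylor_subset_good} is itself a case-by-case result of Taylor's in general.

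The genuine gap is the non-well-generated case, and here your proposed ``crux'' is not merely hard -- it is false. You claim the remaining task is to show that the hypotheses force $H$ to be well generated, ``equivalently, that one cannot complete a genuinely non-well-generated $H$ to all of a parabolic $K$ by adjoining a single reflection.'' Counterexample: take $W=K=G(4,2,3)$ (the whole group is parabolic, being the pointwise stabilizer of $\{0\}$), let $H=G(4,2,2)$ embedded in the first two coordinates, and let $t=[(23);0]$. Then $H$ is the pointwise stabilizer of $e_3$, hence parabolic of rank $2<3=\rank(K)$, and $\langle H,t\rangle=K$: indeed $[(12);0]\cdot[(12);1]=[\id;(1,-1,0)]$, whose conjugates under the $\Symm_3$ generated by the underlying permutations, together with the diagonal reflection $[\id;(2,0,0)]\in H$, yield the full group of diagonal elements of even color (order $32$), which together with $\Symm_3$ gives all $192$ elements of $G(4,2,3)$. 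But $H=G(4,2,2)$ is \emph{not} well generated: all six of its reflections are involutions, so any two of them generate a dihedral group, while $G(4,2,2)$ (order $16$, with center of order $4$) is not dihedral. So all hypotheses of the proposition hold, the conclusion holds ($H$ is parabolic), yet $H$ is not well generated -- any attempt to prove your claimed reduction must fail, and your main step simply cannot be invoked for such $H$.

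This matters for the paper's use of the proposition as well: in the downward induction proving Corollary~\ref{Prop: RGS(W,W') exist iff parabolic}, the proposition is applied with $H=W_i=\langle W',t_1,\ldots,t_i\rangle$, and these intermediate subgroups are not known in advance to be well generated, so your partial result would not drive that induction. What remains needed is exactly what you gesture at in your last sentence -- showing directly that every reflection of $K$ whose hyperplane contains $V^H$ already lies in $H$, so that $H$ is the full pointwise stabilizer of $V^H$ (Steinberg's theorem, Theorem~\ref{Thm: steinberg}) -- and that is precisely where Taylor's classification-based case analysis lives. Your write-up is honest about not closing this, but as a proof of the stated proposition it is incomplete, and the completion strategy you propose is ruled out by the example above.
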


We are now ready for two corollaries that characterize the existence of relative generating sets. 

\begin{corollary}
\label{Prop: RGS(W,W') exist iff parabolic}
For a well generated complex reflection group $W$, a reflection subgroup $W'\leq W$ has relative generating sets if and only if it is parabolic.
\end{corollary}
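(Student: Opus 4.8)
The plan is to prove the two implications separately, drawing on the Taylor-type results established just above. The forward direction---that a parabolic subgroup admits relative generating sets---is immediate. If $W' = W_X$ is parabolic of rank $k$, then Proposition~\ref{Prop. extensions <W',t_i>=W} supplies reflections $t_1, \ldots, t_{n-k}$ in $W$ with $\langle W_X, t_1, \ldots, t_{n-k}\rangle = W$, and by Definition~\ref{Defn: relative generating set (W,W')} this exhibits a member of $\RGS(W, W_X)$, so the collection is nonempty.

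For the reverse direction, I would argue by a telescoping chain. Suppose $W'$ has rank $k$ and $\{t_1, \ldots, t_{n-k}\}$ is a relative generating set, so that $\langle W', t_1, \ldots, t_{n-k}\rangle = W$. Set $H_0 := W'$ and $H_i := \langle W', t_1, \ldots, t_i\rangle$ for $1 \le i \le n-k$, giving a chain $H_0 \le H_1 \le \cdots \le H_{n-k} = W$. Since $V^{H_i} = V^{H_{i-1}} \cap V^{t_i}$ and $V^{t_i}$ is a hyperplane, the rank $\codim(V^{H_i})$ increases by at most $1$ at each step. As the total increase is $\rank(W) - \rank(W') = n - k$ spread over exactly $n-k$ steps, every step must increase the rank by exactly $1$; in particular $\rank(H_i) = \rank(H_{i-1}) + 1$ for all $i$.

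Now I would apply Proposition~\ref{Prop: Taylor_K=<H,t>} downward along this chain, starting from the observation that $H_{n-k} = W$ is parabolic (the whole group is trivially a parabolic subgroup of itself). At the $i$th link we have $H_i = \langle H_{i-1}, t_i\rangle$ with $H_i$ parabolic and $\rank(H_i) = \rank(H_{i-1}) + 1 > \rank(H_{i-1})$, so Proposition~\ref{Prop: Taylor_K=<H,t>} forces $H_{i-1}$ to be parabolic as well. Descending from $i = n-k$ down to $i = 1$ then yields that $H_0 = W'$ is parabolic, which completes the proof.

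Neither implication presents a serious obstacle once the cited propositions are in hand. The only point requiring genuine care is the exact-rank-increment observation in the reverse direction: it is what guarantees the strict inequality $\rank(H_i) > \rank(H_{i-1})$ needed to invoke Proposition~\ref{Prop: Taylor_K=<H,t>} at \emph{every} link of the chain. I would emphasize that this step rests on a pigeonhole count---the ranks cannot stall anywhere because they must climb from $k$ to $n$ in precisely $n-k$ moves---rather than on any special structure of the particular reflections $t_i$, which are an arbitrary relative generating set.
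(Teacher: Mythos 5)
Your proposal is correct and follows essentially the same route as the paper: the forward direction via Proposition~\ref{Prop. extensions <W',t_i>=W}, and the converse via the identical chain $W_i = \langle W', t_1, \ldots, t_i\rangle$, the pigeonhole argument forcing $\rank(W_i) = k + i$ at every step, and downward induction using Proposition~\ref{Prop: Taylor_K=<H,t>} from the fact that $W$ is parabolic in itself. Your explicit justification of the rank increment via $V^{H_i} = V^{H_{i-1}} \cap V^{t_i}$ is a slightly more detailed version of the same observation the paper makes.
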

\begin{proof}
If $W'\leq W$ is a parabolic subgroup, then a relative generating set is furnished by Proposition~\ref{Prop. extensions <W',t_i>=W}. 

Conversely, assume that $\rank(W)=n$, $\rank(W')=k$, and that $\{t_1,\ldots,t_{n-k}\}$ is a relative generating set for $W'$. Consider the chain of groups 
\[
W_i:=\langle W',t_1,\ldots,t_i\rangle \quad  \text{ for } i = 0, \ldots, n - k.
\]
Recall that the rank of a reflection group $G$ is the codimension of its fixed space $V^G$. Since $W_{i + 1}$ is generated over $W_i$ by a single reflection, we have $\rank(W_{i+1}) \leq \rank(W_i)+1$.  Since further $\rank(W_0) = \rank(W') = k$ and $\rank(W_{n - k}) = \rank(W) = n$, the equality $\rank(W_i)=k+i$ is forced for all $i$. Finally, since $W=W_{n-k}$ itself is parabolic, Proposition~\ref{Prop: Taylor_K=<H,t>} implies by induction that all $W_i$, including $W_0 = W'$, must be parabolic as well.
\end{proof}

\begin{theorem}[first characterization of parabolic quasi-Coxeter elements] \label{Prop: RGS(W,g) exist iff p-q-Cox}
Let $W$ be a well generated complex reflection group and $g \in W$.  Then there exists a generating set for $W$ relative to $g$ if and only if $g$ is a parabolic quasi-Coxeter element.
\end{theorem}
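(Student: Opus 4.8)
The plan is to prove both implications directly from the preparatory results, since the definition of a relative generating set for $g$ is engineered to dovetail with the definition of a parabolic quasi-Coxeter element. I write $n=\rank(W)$ and $k=\lR(g)$ throughout.

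For the implication that a parabolic quasi-Coxeter element admits a relative generating set, I would start from the definition: $g$ has a reduced reflection factorization $g=t_{n-k+1}\cdots t_n$ whose factors generate a parabolic subgroup $W_X$. By Proposition~\ref{Prop: X=V^g and l_R(g)=codim(V^g)}, this subgroup is the parabolic closure $W_g$, it has rank exactly $k=\codim(V^g)$, and $\lR(g)=k$. I would then invoke Proposition~\ref{Prop. extensions <W',t_i>=W} applied to the rank-$k$ parabolic subgroup $W_g$: it furnishes $n-k$ reflections $t_1,\ldots,t_{n-k}$ with $\langle W_g, t_1,\ldots,t_{n-k}\rangle=W$. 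Since the factors $t_{n-k+1},\ldots,t_n$ already generate $W_g$, the full collection $\{t_1,\ldots,t_n\}$ generates $W$; as it consists of $n=\rank(W)$ reflections, it is automatically a good generating set (a rank-$n$ reflection group cannot be generated by fewer than $n$ reflections, since the intersection of $j$ reflection hyperplanes has codimension at most $j$, so the $t_i$ are forced to be distinct). By Definition~\ref{Defn: relative generating set (W,g)}, $\{t_1,\ldots,t_{n-k}\}$ is then a generating set for $W$ relative to $g$.

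For the converse, I would unwind the definition of a relative generating set. Supposing $\{t_1,\ldots,t_{n-k}\}$ lies in $\RGS(W,g)$, there is a reduced reflection factorization $g=t_{n-k+1}\cdots t_n$ making $\{t_1,\ldots,t_n\}$ a good generating set for $W$. The key step is to apply Proposition~\ref{Prop: Taylor_subset_good} to the $k$-element subset $\{t_{n-k+1},\ldots,t_n\}$ of this good generating set: it shows that these $k$ factors themselves form a good generating set for some rank-$k$ parabolic subgroup of $W$. But these are exactly the factors of the reduced reflection factorization $g=t_{n-k+1}\cdots t_n$, so $g$ has a reduced factorization whose factors generate a parabolic subgroup, which is precisely the defining condition (Definition~\ref{Defn: p-q-Cox elts}) for $g$ to be parabolic quasi-Coxeter.

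I expect no serious obstacle here, as the two definitions are essentially two sides of the same coin once Taylor's results (Propositions~\ref{Prop. extensions <W',t_i>=W} and~\ref{Prop: Taylor_subset_good}) are in hand. The only point requiring care is the bookkeeping of cardinalities, namely ensuring that the $n$ reflections produced in the first implication genuinely form a good generating set (of cardinality exactly $n$) rather than a smaller generating set; this is handled by the rank lower bound on the number of generators noted above.
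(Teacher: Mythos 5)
Your proposal is correct and takes essentially the same route as the paper: the converse direction is verbatim the paper's argument via Proposition~\ref{Prop: Taylor_subset_good}, and the other direction likewise rests on Proposition~\ref{Prop. extensions <W',t_i>=W} applied to the rank-$k$ parabolic closure. The only (harmless) variation is that you work directly with the defining factorization and identify the generated subgroup as $W_g$ via Proposition~\ref{Prop: X=V^g and l_R(g)=codim(V^g)}, whereas the paper starts from an \emph{arbitrary} reduced factorization and invokes Proposition~\ref{Prop. strong=weak}; your explicit cardinality check (that the $n$ reflections must be distinct, by the codimension bound on intersections of hyperplanes) is a sound point that the paper leaves implicit.
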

\begin{proof}
Let's first fix the notation, writing $n$ for the rank of $W$ and $k$ for the reflection length of $g$. For the forward direction, assume that $\{t_1,\ldots,t_n\}$ is a good generating set for $W$ such that the tuple $(t_1,\ldots,t_k)$ is a reduced  reflection factorization of $g$. By Proposition~\ref{Prop: Taylor_subset_good}, the set of reflections $\{t_1,\ldots,t_k\}$ forms a good generating set for a parabolic subgroup $W'\leq W$ of rank $k$. This means, by definition, that $g$ is parabolic quasi-Coxeter.

For the reverse direction, suppose $g$ is parabolic quasi-Coxeter and choose a reduced reflection factorization $g=t_1\cdots t_k$. By Proposition~\ref{Prop. strong=weak}, the reflections $t_1, \ldots, t_k$ generate the parabolic closure $W_g$ of $g$, and by Proposition~\ref{Prop: X=V^g and l_R(g)=codim(V^g)}, we have $\rank(W_g) = k$. Then Proposition~\ref{Prop. extensions <W',t_i>=W} guarantees the existence of a relative generating set for $W_g$, which is also a relative generating set for $g$.
\end{proof}

Finally, we show that the two notions of relative generating sets can be used interchangeably.

\begin{proposition}
\label{Prop: RGS(W,g)=RGS(W,W_g)}
Let $W$ be a well generated complex reflection group, $g\in W$ a parabolic quasi-Coxeter element, and $W_g$ the parabolic closure of $g$. Then we have a concordance of relative generating sets: \[
\RGS(W,g)=\RGS(W,W_g).
\]
\end{proposition}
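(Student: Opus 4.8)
The plan is to prove the two set inclusions separately, with Proposition~\ref{Prop. strong=weak} as the main engine: because $g$ is parabolic quasi-Coxeter, \emph{every} reduced reflection factorization $g=t_{n-k+1}\cdots t_n$ has its factor set equal to a good generating set for the parabolic closure $W_g$. First I would set up the bookkeeping that makes the two collections comparable. By Proposition~\ref{Prop: X=V^g and l_R(g)=codim(V^g)} we have $k:=\lR(g)=\codim(V^g)=\rank(W_g)$, so, writing $n=\rank(W)$, both $\RGS(W,g)$ and $\RGS(W,W_g)$ consist of sets of exactly $n-k$ reflections, and the comparison is at least well posed.

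For the inclusion $\RGS(W,g)\subseteq\RGS(W,W_g)$ I would take a set $\{t_1,\ldots,t_{n-k}\}\in\RGS(W,g)$ together with a witnessing reduced factorization $g=t_{n-k+1}\cdots t_n$ for which $\{t_1,\ldots,t_n\}$ generates $W$. Proposition~\ref{Prop. strong=weak} gives $\langle t_{n-k+1},\ldots,t_n\rangle=W_g$, so
\[
W=\langle t_1,\ldots,t_n\rangle=\langle W_g,\,t_1,\ldots,t_{n-k}\rangle,
\]
which is precisely the condition defining $\RGS(W,W_g)$ (and $\{t_1,\ldots,t_{n-k}\}$, being a subset of a good generating set, is genuinely a set of $n-k$ distinct reflections). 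This direction is essentially immediate.

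For the reverse inclusion I would start from $\{t_1,\ldots,t_{n-k}\}\in\RGS(W,W_g)$ and fix \emph{any} reduced factorization $g=t_{n-k+1}\cdots t_n$. By Proposition~\ref{Prop. strong=weak} its factors again generate $W_g$, so $\langle t_1,\ldots,t_n\rangle=\langle W_g,t_1,\ldots,t_{n-k}\rangle=W$ and the generation is free. The one point I expect to require care -- and the main obstacle -- is \emph{cardinality}: membership in $\RGS(W,g)$ demands that the combined collection $\{t_1,\ldots,t_n\}$ be an honest good generating set, that is, consist of $n$ \emph{distinct} reflections. I would establish this by separating the two blocks. The factors $t_{n-k+1},\ldots,t_n$ are distinct and lie in $W_g$ (Proposition~\ref{Prop. strong=weak} makes them a good generating set for $W_g$), while I claim none of $t_1,\ldots,t_{n-k}$ lies in $W_g$. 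For the latter I would reuse the rank-chain argument from the proof of Corollary~\ref{Prop: RGS(W,W') exist iff parabolic}: adjoining a single reflection raises the rank of a reflection subgroup by at most one, so producing $W$ (rank $n$) from $W_g$ (rank $k$) by adjoining the $n-k$ reflections $t_i$ forces each adjunction to raise the rank by exactly one; placing any prescribed $t_i$ first in this chain shows $t_i\notin W_g$. Thus the two blocks are disjoint -- the $t_i$ with $i\le n-k$ lying outside $W_g$, the factors lying inside -- and each block is internally distinct, so the union has $n$ elements and is a good generating set for $W$. This realizes $\{t_1,\ldots,t_{n-k}\}$ as a relative generating set for $g$ and finishes the proof.
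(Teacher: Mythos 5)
Your proof is correct and takes essentially the same route as the paper, whose entire proof is the observation that, by Proposition~\ref{Prop. strong=weak}, the factor set of \emph{every} reduced reflection factorization of $g$ is a good generating set for $W_g$, so the defining conditions of $\RGS(W,g)$ and $\RGS(W,W_g)$ coincide verbatim. The one place you diverge --- the distinctness argument in the reverse inclusion --- is sound but unnecessary: a reflection group of rank $n$ cannot be generated by fewer than $n$ reflections (the common fixed space of $j<n$ reflections has codimension at most $j$), so any $n$ reflections generating $W$ are automatically distinct and form a good generating set.
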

\begin{proof}
This is immediate from the definition (Definition~\ref{Defn: p-q-Cox elts}) of parabolic quasi-Coxeter elements and Proposition~\ref{Prop. strong=weak}.
\end{proof}

\subsection{Structure of relative generating sets for the combinatorial family}
\label{sec:characterize RGS}

Next, we give a combinatorial description for relative generating sets when the group $W$ belongs to the infinite family $G(m, p, n)$.  We begin with some necessary terminology.

\begin{definition}
Given a partition $\Pi$ of the set $K$, we say that a graph $\Gamma$ with vertex set $K$ is a \defn{tree relative to $\Pi$} if no edge of $\Gamma$ connects two vertices in the same block of $\Pi$ and, furthermore, contracting each block of $\Pi$ to a single point leaves a tree.  We say that $\Gamma$ is a \defn{rooted tree relative to $\Pi$} if $\Gamma$ is the union of a tree with respect to $\Pi$ and a single loop (at any vertex of $K$).  Finally, we say that $\Gamma$ is a \defn{unicycle relative to $\Pi$} if $\Gamma$ is the union of a tree with respect to $\Pi$ and a single non-loop edge.
\end{definition}

The following properties are straightforward; we omit the proofs.
\begin{proposition}
\label{prop:relative trees}
Suppose that $\Pi$ is a partition of a set $K$, and $\Gamma$ is a graph with vertex set $K$.
\begin{enumerate}[(i)]
\item If $\Gamma$ is a tree relative to $\Pi$, then taking the union of $\Gamma$ with a spanning tree on each block of $\Pi$ gives a tree on $K$.
\item If $\Gamma$ is a rooted tree relative to $\Pi$, then contracting each component of $\Pi$ to a single vertex leaves a rooted tree.  Moreover, in this case, the union of $\Gamma$ with a spanning tree on each block of $\Pi$ gives a rooted tree on $K$.
\item If $\Gamma$ is a unicycle relative to $\Pi$, then contracting each component of $\Pi$ to a single vertex leaves either a unicycle (if no edge in $\Gamma$ connects two vertices in the same component of $\Pi$) or a rooted tree (otherwise).  Moreover, in this case, the union of $\Gamma$ with a spanning tree on each block of $\Pi$ gives a unicycle on $K$.
\end{enumerate}
\end{proposition}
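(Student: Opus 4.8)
The plan is to reduce everything to part~(i) together with two standard facts: a graph on $N$ vertices is a tree if and only if it is connected with exactly $N-1$ edges, and adjoining a single edge to a tree creates exactly one cycle (a loop if the edge is a loop, and a cycle of length at least $2$ if the edge is a non-loop edge). First I would fix notation: write $B_1,\ldots,B_p$ for the blocks of $\Pi$, set $N=|K|$, and let $S=\bigcup_i S_i$ denote a choice of spanning tree $S_i$ on each block $B_i$, so that $S$ contributes $\sum_i(|B_i|-1)=N-p$ edges.

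For part~(i), I would argue by edge-counting and connectivity. Since $\Gamma$ is a tree relative to $\Pi$, its contraction $\Gamma/\Pi$ is a tree on the $p$ blocks and hence has $p-1$ edges; because no edge of $\Gamma$ joins two vertices of the same block, every edge of $\Gamma$ descends to a genuine (non-loop) edge of $\Gamma/\Pi$ and no two of them are identified (a tree is simple), so $\Gamma$ itself has exactly $p-1$ edges. Thus the union $\Gamma\cup S$ has $(p-1)+(N-p)=N-1$ edges. It is connected because each block is internally connected by its spanning tree while the edges of $\Gamma$ realize the tree $\Gamma/\Pi$ on the blocks, so any two vertices of $K$ are joined by a path. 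A connected graph on $N$ vertices with $N-1$ edges is a tree, which proves~(i).

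Parts~(ii) and~(iii) then follow by peeling off the single extra feature. For~(ii), write $\Gamma=T\cup\{\ell\}$ with $T$ a tree relative to $\Pi$ and $\ell$ a loop; contracting sends $\ell$ to a loop, so $\Gamma/\Pi=(T/\Pi)\cup\{\ell'\}$ is a tree with one added loop, i.e.\ a rooted tree, and the union $\Gamma\cup S=(T\cup S)\cup\{\ell\}$ is the tree of part~(i) with a loop adjoined, i.e.\ a rooted tree on $K$. For~(iii), write $\Gamma=T\cup\{e\}$ with $e=\{u,v\}$ a non-loop edge. The contraction dichotomy is exactly the case split on whether $u,v$ lie in different blocks (then $e$ descends to a non-loop edge, and $T/\Pi$ plus one such edge is a connected graph on $p$ vertices with $p$ edges whose unique cycle has length at least $2$, a unicycle) or in the same block (then $e$ descends to a loop, giving a rooted tree); since $T$ has no intra-block edge, this is precisely the alternative in the statement. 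For the union, $\Gamma\cup S=(T\cup S)\cup\{e\}$ is the tree of part~(i) with a non-loop edge between the distinct vertices $u,v$ adjoined, which creates exactly one cycle of length at least $2$, so $\Gamma\cup S$ is a unicycle on $K$ in either case.

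There is no serious obstacle here; the content is entirely elementary. The only points that require a little care—and which I expect to be the sole source of minor bookkeeping—are tracking the distinction between loops and genuine cycles of length at least $2$ under contraction, and noting that parallel edges are permitted (so a \emph{unicycle} may carry a cycle of length $2$ coming from two parallel edges). Both are controlled by the defining hypothesis that the underlying relative tree $T$ contains no edge inside a block, which is presumably why the authors deem the proposition straightforward and omit the proof.
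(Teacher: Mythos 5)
Your proof is correct, and there is nothing in the paper to compare it against: the authors state that these properties ``are straightforward; we omit the proofs.'' Your argument---establishing (i) by the edge count $(p-1)+(N-p)=N-1$ plus connectivity, then deriving (ii) and (iii) by adjoining the single loop or non-loop edge to the tree from (i)---is exactly the elementary verification the authors presumably had in mind, and you correctly attend to the two genuine subtleties (that contraction must be taken in the multigraph sense, so an inter-block edge $e$ parallel to an edge of $T/\Pi$ yields a $2$-cycle, which still qualifies as a unicycle under the paper's definition, and that the intra-block/inter-block dichotomy for $e$ is precisely the case split in (iii) because the underlying relative tree has no intra-block edges).
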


\begin{definition}
Suppose that $W = G(m, 1, n)$ or $G(m, m, n)$ and that $g \in W$ is parabolic quasi-Coxeter.  Let \defn{$\Pi_g$} be the partition of $\{1, \ldots, n\}$ whose blocks are the support of the \emph{generalized} cycles of $g$ (as described in Corollary~\ref{cor:pqC elements in m1n and mmn}).  We say that $\Pi_g$ is the partition \defn{induced} by $g$.
\end{definition}

We are now prepared to describe the relative generating sets of parabolic quasi-Coxeter elements in $G(m, 1, n)$ and $G(m, m, n)$.

\begin{proposition}
\label{prop:RGS characterization}
Suppose that $W$ is either $G(m, 1, n)$ or $G(m, m, n)$ and that $g$ is a parabolic quasi-Coxeter element for $W$.  Let $\Pi_g$ be the partition of $\{1, \ldots, n\}$ induced by $g$.
\begin{enumerate}[(i)]
\item If either $W = G(m, 1, n)$ and $g$ is a quasi-Coxeter element for the subgroup $G(m, 1, \lambda_0) \times \Symm_{\lambda_1} \times \cdots \times \Symm_{\lambda_k}$, or $W = G(m, m, n)$ and $g$ is a quasi-Coxeter element for the subgroup $G(m, m, \lambda_0) \times \Symm_{\lambda_1} \times \cdots \times \Symm_{\lambda_k}$, then a set $S$  of reflections is a relative generating set for $g$ if and only if the graph associated to $S$ is a tree relative to $\Pi_g$.
\item If $W = G(m, 1, n)$ and $g$ is a quasi-Coxeter element for the subgroup $\Symm_{\lambda_1} \times \cdots \times \Symm_{\lambda_k}$, then a set $S$ of reflections is a relative generating set for $g$ if and only if the graph associated to $S$ is a rooted tree relative to $\Pi_g$ and the color of the unique diagonal reflection in $S$ generates $\ZZ/m\ZZ$.
\item  If $W = G(m, m, n)$ and $g$ is a quasi-Coxeter element for the subgroup $\Symm_{\lambda_1} \times \cdots \times \Symm_{\lambda_k}$, then a set of reflections is a relative generating set for $g$ if and only if the graph associated to $S$ is a unicycle relative to $\Pi_g$ and a certain color $c$ is a primitive generator for $\ZZ/m\ZZ$, where $c$ is defined as follows: if contracting the cycles of $g$ leaves a rooted tree, then $c$ is the color of the loop edge, whereas if contracting the cycles of $g$ leaves a unicycle, then $c$ is the value of the statistic $\delta$ on the cycle in the contracted graph.
\end{enumerate}
\end{proposition}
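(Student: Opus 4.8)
The plan is to collapse the entire proposition to a single application of Shi's characterization of good generating sets (Lemma~\ref{lem: good generating set G(m,1,n) and G(m,m,n)}). The enabling observation is that, by Proposition~\ref{Prop: RGS(W,g)=RGS(W,W_g)} together with Proposition~\ref{Prop. strong=weak}, \emph{every} reduced reflection factorization $F$ of $g$ generates the parabolic closure $W_g$; hence $\langle F\cup S\rangle=\langle W_g,S\rangle$ for any such $F$, and therefore
\[
S\in\RGS(W,g)\iff \langle F\cup S\rangle=W.
\]
Thus I may fix one convenient $F$ and simply test when the set $F\cup S$ generates $W$. A short length computation, using Theorem~\ref{thm:reflection length in G(m, 1, n) and G(m, p, n)} and the cycle descriptions of Corollary~\ref{cor:pqC elements in m1n and mmn}, gives $\lR(g)=n-k$ in all four cases, so $\#S=k$; this is exactly the number of edges in the relevant relative (rooted) tree or unicycle, which is the first consistency check.

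For the graph bookkeeping I would take $F$ with its natural block structure coming from the decomposition of $W_g$: on each color-$0$ block of $\Pi_g$ a color-$0$ spanning tree (a reduced factorization of a type-$A$ Coxeter element), and on the distinguished block $B_0$ (present only in case (i)) a rooted tree whose loop-color generates $\ZZ/m\ZZ$ when $W=G(m,1,n)$, or a unicycle whose $\delta$ generates $\ZZ/m\ZZ$ when $W=G(m,m,n)$ --- the existence of such an $F$ is Shi's lemma applied to the quasi-Coxeter factors. Now $F\cup S$ is a graph on $n$ vertices with $n$ edges (any coincidence of edges would already violate the rooted-tree/unicycle condition below, so no separate disjointness argument is needed). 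Since $F$ already connects each block internally and, in case (i), carries the unique loop or cycle, $F\cup S$ is a rooted tree (resp.\ a unicycle) precisely when $S$ adds no new loop/cycle and knits the blocks into a tree, i.e.\ precisely when $S$ is a tree relative to $\Pi_g$. In cases (ii) and (iii) there is no block $B_0$, so $F$ is a cycle-free color-$0$ forest and the single loop (resp.\ cycle) demanded by Shi's lemma must come from $S$, forcing $S$ to be a rooted tree (resp.\ unicycle) relative to $\Pi_g$.

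The color condition is then read off directly from Shi's lemma. For $G(m,1,n)$ the lemma requires the color of the unique diagonal (loop) factor of $F\cup S$ to generate $\ZZ/m\ZZ$: in case (i) that loop sits in $F$ on $B_0$ and already has generating color, so no condition is placed on $S$, whereas in case (ii) the unique loop \emph{is} the diagonal reflection of $S$, which yields exactly the stated requirement. (The same thing is visible through the total-color homomorphism $\wt\colon G(m,1,n)\to\ZZ/m\ZZ$, which kills every transposition-like reflection and, in case (ii), all of $W_g$, leaving the loop as the sole source of color.) For $G(m,m,n)$ the map $\wt$ is trivial, and Shi's lemma instead asks that $\delta(F\cup S)$ generate $\ZZ/m\ZZ$; in case (i) this $\delta$ equals the generating value already carried by the unicycle of $F$ on $B_0$, so once more no condition falls on $S$.

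The main obstacle is the remaining identification, in case (iii), of $\delta(F\cup S)$ with the quantity $c$ in the statement. The unique cycle of $F\cup S$ is formed by closing the extra (non-tree) edge of $S$ through color-$0$ edges (the block spanning trees of $F$) and the tree part of $S$; since color-$0$ edges contribute nothing to $\delta$, the value $\delta(F\cup S)$ depends only on the $S$-edges lying on that cycle. Splitting along the two geometric possibilities of Proposition~\ref{prop:relative trees}(iii) --- the extra edge joining two distinct blocks (contracting the cycles of $g$ leaves a unicycle, and $c$ is the $\delta$ of its cycle) versus joining two vertices of one block (contraction leaves a rooted tree, and $c$ is the color of the resulting loop) --- I would check in each that $\delta(F\cup S)=c$, up to the sign introduced by the cyclic-orientation choice in the definition of $\delta$ (Remark~\ref{rem: Shi's delta}). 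Because ``$c$ generates $\ZZ/m\ZZ$'' is insensitive to that sign, the stated criterion then follows immediately from Shi's lemma.
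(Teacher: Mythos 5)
Your proposal is correct and follows essentially the same route as the paper: both arguments reduce to applying Shi's characterization (Lemma~\ref{lem: good generating set G(m,1,n) and G(m,m,n)}) to the union of $S$ with a reduced reflection factorization of $g$ whose graph is a block-structured forest plus (in case (i)) the distinguished loop or cycle, and both handle case (iii) by tracking $\delta$ under contraction of the color-$0$ block edges, including the degenerate within-block (loop) case and the sign ambiguity of Remark~\ref{rem: Shi's delta}. Your only organizational difference is fixing a single convenient factorization $F$ at the outset via Proposition~\ref{Prop. strong=weak}, which is precisely the reduction recorded in Proposition~\ref{Prop: RGS(W,g)=RGS(W,W_g)} and is implicitly what the paper does through Proposition~\ref{Prop: Red_W=Red_(W_g)}.
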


Note that in case (iii), the color condition is \emph{not} preserved under conjugacy; therefore, if one wishes to consider relative generating sets for parabolic subgroups of $G(m, m, n)$ conjugate in $G(m, 1, n)$ to $\Symm_{\lambda_1} \times \cdots \times \Symm_{\lambda_k}$, it is necessary to take the conjugation into account when applying Proposition~\ref{prop:RGS characterization}.

\begin{proof}
In all three cases, a simple calculation (using the facts $\rank(\Symm_i) = i - 1$ and $\rank(G(m, 1, j)) = \rank(G(m, m, j)) = j$ for $m > 1$) establishes that $\rank(W_g) = n - k$, and therefore that each relative generating set must have exactly $k$ reflections.
We consider the three parts in order.  

In case (i), consider a set $S$ of $k$ reflections, and let $\Gamma$ be the associated graph.  Contracting the generalized cycles of $g$ to a single point leaves a graph on $k + 1$ vertices with $k$ edges.  It is a standard result from graph theory (e.g., \cite[Lem.~5.3]{Bona}) that such a graph is either a tree or is disconnected.  If the graph is disconnected, then both the group $W_g$ and the reflections in $S$ respect the partition of $\{1, \ldots, n\}$ into components.  But the group $W$ does not respect any nontrivial partition of $\{1, \ldots, n\}$.  Thus, it is necessary that $S$ be a tree relative to $\Pi_g$.

Conversely, suppose that $S$ is a tree relative to $\Pi_g$; we must show that it is a relative generating set.  By Proposition~\ref{Prop: Red_W=Red_(W_g)}, Lemma~\ref{lem: good generating set G(m,1,n) and G(m,m,n)}, and the fact that the good generating sets for the symmetric group correspond to trees, we have that any reduced factorization of $g$ corresponds to a forest whose components are the blocks of $\Pi_g$, together with an additional edge in the first ($\lambda_0$) component.  Taking the union of such a graph with a tree relative to $\Pi_g$ produces a tree together with an additional edge.  Moreover, the unique graph cycle in this graph (which is a loop if $W = G(m, 1, n)$ and a cycle with at least two vertices if $W = G(m, m, n)$) belongs entirely to the reduced factorization.  Because that component of the graph corresponds to a generating set for either $G(m, 1, \lambda_0)$ or $G(m, m, \lambda_0)$ (whichever is relevant), the graph cycle satisfies the color condition of Lemma~\ref{lem: good generating set G(m,1,n) and G(m,m,n)}.  Thus, the union meets the hypotheses of Lemma~\ref{lem: good generating set G(m,1,n) and G(m,m,n)} and so generates the full group $W$. 

We omit the proof of (ii), which is similar to (but less technical than) case (iii).

In case (iii), consider a set $S$ of $k$ reflections, and let $\Gamma$ be the associated graph and $\overline{\Gamma}$ be the result of contracting $\Gamma$ so that each of the cycles of $g$ contracts to a single point.  As in case (i), in order for $S$ to be a relative generating set, it is necessary that $\overline{\Gamma}$ be connected.  Assume that $S$ meets this condition.  The contracted graph $\overline{\Gamma}$ has $k$ vertices and $k$ edges, so it is either a rooted tree or a unicycle.  Fix a reflection that corresponds to an edge in the unique graph cycle in $\overline{\Gamma}$ (the loop, if $\overline{\Gamma}$ is a rooted tree), and remove it.  By definition, the remaining reflections in $S$ correspond to a tree relative to the cycle partition $\Pi_g$ induced by $g$.  Since in this case $W = G(m, m, n)$, the removed reflection is transposition-like, so does not correspond to a loop in $\Gamma$.  Therefore, by definition, $\Gamma$ is a unicycle relative to $\Pi_g$.

We now consider which relative unicycles $\Gamma$ with respect to $\Pi_g$ are actually relative generating sets.  Since $W_g \subseteq \Symm_n$, we have by Proposition~\ref{Prop: Red_W=Red_(W_g)} that every reduced $W$-factorization of $g$ consists of a set $T$ of true transpositions (i.e., reflections of the form $(i j) = [(i j); 0]$) whose associated graph is a forest with components $\Pi_g$.  Taking the union of this forest associated with $T$ and a relative unicycle $\Gamma$ produces a unicycle.  By Lemma~\ref{lem: good generating set G(m,1,n) and G(m,m,n)}, the set $S\cup T$ of corresponding reflections is a good generating set for $W$ if and only if $\delta(S \cup T)$ is a primitive generator for $\ZZ/m\ZZ$.

Next, observe that the contraction of $\Gamma$ that produces $\overline{\Gamma}$ (collapsing each cycle of $g$ to a single point) may be performed step-wise, starting with the larger graph associated to $S \cup T$ and successively contracting the edges that correspond to $T$ (since these form a spanning forest with the correct components).  Moreover, it follows immediately from the definition of $\delta$ that if a set $S'$ of reflections corresponds to a unicycle, then contracting a unicycle along an edge $[(ij); 0]$ produces a new unicycle with the same $\delta$-value, unless the unique graph cycle is a $2$-cycle and containing the edge to be contracted.  If the second case never arises while contracting away the edges that correspond to $T$, then we have that $\delta(S \cup T)$ is a primitive generator for $\ZZ/m\ZZ$ (and so $S$ is a relative generating set) if and only if the value of $\delta$ on the contracted unicycle is a primitive generator, as needed; if the second case does arise, then the cycle in the graph associated to $S \cup T$ contains only a single edge $[(ij); c]$ from $S$, whose image under contraction is a loop, so $\delta(S \cup T) = c$, and so $S$ is a relative generating set if and only if the color $c$ of the loop in the contraction is a primitive generator, as needed.  This completes the proof.
\end{proof}

\section{Characterization via full reflection factorizations}
\label{Sec: full factorizations and p-q-Cox}

In this section we give two more characterizations of parabolic quasi-Coxeter elements $g$.  One of these (Thm.~\ref{Cor: p-q-cox means below q-cox}) is in terms of the absolute order $\leq_{\RRR}$.  The other (Thm.~\ref{Prop: g p-q-Cox iff ltr(g)=2n-lR(g)}) is given in terms of the \emph{full reflection length} of $g$, whose definition we recall now.  As in the first part \cite{DLM1} of this series, we say that a reflection factorization $t_1 \cdots t_k = g$ of an element $g \in W$ is \defn{full} (relative to $W$) if the factors generate the full group, i.e., if $W = \langle t_1, \ldots, t_k \rangle$.  
The \defn{full reflection length} \defn{$\ltr(g)$} of $g$ is the minimum length of a full reflection factorization:
\[
\ltr(g) := \min \Big\{ k : \exists t_1, \ldots, t_k \in \RRR \text{ such that } t_1 \cdots t_k = g \text{ and } \langle t_1, \ldots, t_k \rangle = W \Big\}.
\]

In Part I, we gave the following formula for the full reflection length of an arbitrary element $g$ in the group $G(m, p, n)$. Notice that the \emph{cycles} here are the cycles of the underlying permutation of $g$ and should not be confused with the \emph{generalized cycles} of \S \ref{sec: cycle decomposition}.
\begin{proposition}[{\cite[Cor.~5.2]{DLM1}}]
\label{cor:full length}
Let $W = G(m, p, n)$.
For an element $g\in W$ with $k$ cycles, of colors $a_1, \ldots, a_k$, let $d = \gcd(a_1, \ldots, a_k, p)$. If $m=p$, we have 
    \[
\ltr(g) = \begin{cases}
    n + k - 2, & \text{ if } d = 1 \\
    n + k , & \text{ if } d \neq 1,
    \end{cases}
    \]
while if $m\neq p$, we have
    \[
    \ltr(g) = \begin{cases}
    n + k - 1, & \text{ if } \gcd(\wt(g),m) = p \text{ and } d = 1 \\
    n + k, & \text{ if } \gcd(\wt(g),m)\neq p \text{ and }  d = 1\\
    n + k + 1, & \text{ if } \gcd(\wt(g),m) = p \text{ and }  d \neq 1\\
    n + k + 2, & \text{ if } \gcd(\wt(g),m) \neq p \text{ and }  d \neq 1.
    \end{cases}
    \]
\end{proposition}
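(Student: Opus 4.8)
The plan is to work entirely in the combinatorial model, representing a reflection factorization $g = t_1 \cdots t_r$ by its associated \emph{colored graph} $\Gamma$ on the vertex set $\{1, \dots, n\}$: each transposition-like factor $[(ij); c]$ contributes an edge between $i$ and $j$ carrying color $c$, and each diagonal factor (which exists only when $p < m$) contributes a loop carrying its color, a multiple of $p$. A full factorization is then exactly a product-$g$ collection of colored edges and loops whose graph generates $G(m,p,n)$, and the generation condition decouples into (a) connectivity of the underlying graph, needed so that the underlying permutations generate $\Symm_n$, and (b) a color condition, governed by Lemma~\ref{lem: good generating set G(m,1,n) and G(m,m,n)} and its analogue for intermediate $p$, requiring the cycle-colors $\delta$ and the loop-colors to generate $p\ZZ/m\ZZ$ appropriately. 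I would prove the formula by establishing matching lower and upper bounds on $r$.

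For the lower bound I would extract three numerical constraints. First, since the underlying permutations (transpositions from edges, trivial from loops) multiply to the underlying permutation of $g$, which has $k$ cycles, the number $e$ of edges satisfies $e \ge n-k$ with $e \equiv n-k \pmod 2$, while connectivity forces $e \ge n-1$. Second, because every edge has total color $0$, the total color equals the sum of the loop-colors, which must be $\equiv \wt(g) \pmod m$; the value of $\gcd(\wt(g), m)$ relative to $p$ then controls how many loops are unavoidable (this is vacuous when $m=p$, where $\wt(g)=0$ and there are no loops). Third, and most delicately, the color-generation condition (b) forces either an independent graph cycle to carry a $\delta$-value generating $\ZZ/m$ or enough loops to do so, and this cannot be achieved using only the factors that realize the cycles of $g$ precisely when those cycle-colors fail to generate, i.e.\ when $d = \gcd(a_1, \dots, a_k, p) \ne 1$. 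Combining these three constraints, split into $m=p$ versus $m\ne p$ and refined by $d$ and $\gcd(\wt(g),m)$, should reproduce the lower bounds $n+k-2, \dots, n+k+2$.

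For the upper bound I would give explicit constructions: realize a single length-$\ell$ cycle of color $a$ as a colored path on $\ell$ vertices using $\ell - 1$ edges, join the $k$ resulting components into a spanning tree or unicycle using $k-1$ (or $k$) connecting edges, and then append a minimal number of loops and/or tune the color of one connecting edge so that the generation condition of Lemma~\ref{lem: good generating set G(m,1,n) and G(m,m,n)} holds while the product remains $g$ (inserting a canceling pair $s, s^{-1}$ wherever a connection would otherwise disturb the product). A direct count of edges plus loops in each case yields the claimed value, matching the lower bound.

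The main obstacle is the color bookkeeping in the lower bound, namely condition (b): showing that when $d \ne 1$ (or, for $m\ne p$, when $\gcd(\wt(g),m)\ne p$) one genuinely cannot generate $G(m,p,n)$ without spending the extra reflections, rather than smuggling the needed color generation into factors that already realize $g$. The extreme case $g = \id$, where the formula gives $2n$, far above the connectivity and parity bounds, shows that it is this color constraint, not connectivity or permutation parity, that is binding. I expect the cleanest route to be to phrase (b) as the requirement that the colors of the chosen edges and loops generate a fixed sublattice of $(\ZZ/m\ZZ)^n$ modulo the $\Symm_n$-action of the connected graph, and then to bound below the number of color-carrying factors needed to enlarge the lattice spanned by a reduced factorization of $g$ up to this full sublattice, using where helpful the Hurwitz transitivity of Proposition~\ref{Prop: LW G(m, p, n) transitivity} to normalize an arbitrary full factorization into a standard shape before counting.
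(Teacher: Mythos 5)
You should note first that this paper contains no proof of this statement to compare against: it is imported verbatim from Part I \cite{DLM1}, where it is extracted from the exact enumeration of full factorizations by length (the $d=1$ leading-term count from that enumeration is quoted in this paper as Theorem~\ref{thm:main lead coeff part 1}), so the series' actual route is enumerative rather than the direct extremal argument you propose. Your upper bound sketch is essentially workable once the connecting edges are doubled, as your canceling-pair parenthetical implicitly requires --- a literal spanning structure of $k-1$ single connecting edges would merge the $k$ cycles of $g$ into one and destroy the product, and the correct count is $2(k-1)$ joining factors on top of the $n-k$ cycle-building ones. The genuine gap is in the lower bound. The three constraints you list --- $e \ge n-k$ with $e \equiv n-k \pmod 2$ for the underlying permutation, $e \ge n-1$ for connectivity, plus the loop/color counts --- are strictly weaker than the formula. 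Test them in the base case $W=\Symm_n$ (so $m=p=1$), where all color conditions are vacuous: they yield only $e \ge n-1$ or $e \ge n$ (depending on the parity of $k$), whereas the proposition asserts $\ltr(g)=n+k-2$; for $n=10$, $k=4$ your constraints give $10$ against the true $12$. The missing ingredient is the classical genus-zero (transitive Hurwitz) bound: if $e$ transpositions form a connected graph on $n$ vertices, their product has at most $e-n+2$ cycles, so $e \ge n+k-2$ with even excess. It is this inequality, not connectivity plus parity, that produces the ``$+k$'' in every branch of the formula, and without it none of the case analysis closes, for any $m$ and $p$.

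The second gap is the color-deficiency lower bound when $d \ne 1$ (and when $\gcd(\wt(g),m)\ne p$), which you correctly single out as the crux but do not execute, and the tool you propose for it does not apply: Proposition~\ref{Prop: LW G(m, p, n) transitivity} concerns the Hurwitz action on \emph{reduced} factorizations of elements whose reduced factorizations generate the full group, and says nothing about Hurwitz orbits of longer factorizations, which is exactly what you would need in order to normalize a hypothetical full factorization of length $n+k-2$ (or $n+k-1$, $n+k+1$) into a standard shape before counting. What is actually needed is a quantitative statement along the lines of: in any connected factorization of $g$ attaining the genus-zero bound, the $\delta$-values of the $k-1$ independent graph cycles are confined to the subgroup of $\ZZ/m\ZZ$ generated by $a_1,\ldots,a_k$, and any unavoidable loop colors are pinned down by $\wt(g)$ and $p$ --- together with the precise ``intermediate $p$'' analogue of Lemma~\ref{lem: good generating set G(m,1,n) and G(m,m,n)} identifying \emph{which} subgroup a connected colored graph generates inside $G(m,p,n)$ (this is available from Shi's work, but it must be stated and proved, since fullness here means generating $G(m,p,n)$ rather than $G(m,1,n)$ or $G(m,m,n)$). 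Absent these two pieces the lower bound, and hence the proof, is incomplete; this difficulty is plausibly why Part I obtains the proposition as a corollary of exact counts, where the vanishing of the number of full factorizations below the threshold is read off from closed formulas instead of being argued extremally.
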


We also need the following formula for the full reflection length of the identity in any complex reflection group.

\begin{lemma}
\label{lem:identity full reflection length}
In any complex reflection group $W$, the full reflection length $\ltr(\id)$ of the identity element is equal to twice the size of a minimum reflection generating set for $W$.
\end{lemma}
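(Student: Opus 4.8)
The plan is to prove the two inequalities $\ltr(\id) \le 2\mu$ and $\ltr(\id) \ge 2\mu$ separately, where $\mu$ denotes the minimum size of a reflection generating set for $W$ (so $\mu = \rank(W)$ when $W$ is well generated and $\mu = \rank(W)+1$ otherwise). The upper bound is the easy, uniform half: starting from a minimum reflection generating set $\{r_1,\ldots,r_\mu\}$, I would note that each inverse $r_i^{-1}$ is again a reflection, since it has the same fixed hyperplane as $r_i$ and the inverse (still nontrivial) eigenvalue. The tuple $(r_1, r_1^{-1}, r_2, r_2^{-1}, \ldots, r_\mu, r_\mu^{-1})$ then multiplies to $\id$, generates $\langle r_1,\ldots,r_\mu\rangle = W$, and has length $2\mu$, so $\ltr(\id) \le 2\mu$.

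For the lower bound I would first reduce to the irreducible essential case. Replacing $V$ by $(V^W)^\perp$ leaves both $\ltr(\id)$ and $\mu$ unchanged and makes $W$ act essentially, and both quantities are additive over a direct product decomposition $W = W_1 \times \cdots \times W_s$: a reflection of a product lies in a single factor, so projecting a full factorization of $\id$ onto each factor yields a full factorization of the identity there, and a minimum reflection generating set splits likewise. It therefore suffices to treat irreducible $W$, which is either a member $G(m,p,n)$ of the infinite family or one of the $34$ exceptional groups.

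For the infinite family I would apply the Part~I formula, Proposition~\ref{cor:full length}, to $g = \id$. The identity has $n$ cycles, all of color $0$, so the gcd parameter appearing there (there also called $d$) equals $\gcd(0,\ldots,0,p) = p$, while $\gcd(\wt(\id),m) = m$. Feeding these into the case split gives $\ltr(\id) = 2n$ in the well generated cases $G(m,1,n)$ and $G(m,m,n)$ (where $p\in\{1,m\}$ and $\mu = n$), $\ltr(\id) = 2n+2$ in the non-well-generated case $1 < p < m$ (where $\mu = n+1$), and $\ltr(\id) = 2n-2$ in the degenerate case $m=p=1$, i.e.\ $\Symm_n$ (where $\mu = n-1$). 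In every case this equals $2\mu$. The finitely many exceptional irreducibles can then be handled by a direct computation, comparing the computed value of $\ltr(\id)$ against the known minimum reflection generating numbers.

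The main obstacle is a case-free proof of the lower bound, that is, explaining the factor of $2$ uniformly. The natural attempt is a rank filtration: for a minimal full factorization $t_1\cdots t_k = \id$, set $r_i = \rank\langle t_1,\ldots,t_i\rangle$ and $s_i = \rank\langle t_{i+1},\ldots,t_k\rangle$; since these two subgroups jointly generate $W$, their move spaces satisfy $\Mov\langle t_1,\ldots,t_i\rangle + \Mov\langle t_{i+1},\ldots,t_k\rangle = V$, forcing $r_i + s_i \ge n$ for every $i$. However, this prefix/suffix data alone is already satisfiable with $k = n$ (take $r_i = i$ and $s_i = n-i$), so it cannot by itself produce the factor of $2$; the product-one condition must be used essentially to exclude such ``independent-root'' configurations. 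This is precisely where a genus-type or Riemann--Hurwitz nonnegativity input is needed, and the gap between $2\rank(W)$ and $2\mu$ for non-well-generated groups (arising from a determinant/color obstruction to generating with only $\rank(W)$ reflections) is what makes a purely geometric argument delicate. Absent such a uniform argument, I would let the lower bound rest on Proposition~\ref{cor:full length} for the infinite family together with a finite computation in the exceptional types.
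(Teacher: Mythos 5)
Your proof is correct, and its skeleton (paired upper bound $r_1 r_1^{-1} \cdots r_\mu r_\mu^{-1}$, additivity reduction to irreducibles, evaluation of Proposition~\ref{cor:full length} at $g = \id$) matches the paper's; your case split of that formula is accurate in all four regimes, including the degenerate $m = p = 1$ case giving $2n-2$ for $\Symm_n$. The genuine divergence is in how the remaining cases are organized, and it answers the obstacle you flagged. The paper splits along real versus non-real rather than infinite family versus exceptional: for \emph{every} real $W$ it invokes Lemma~\ref{lem: LR lemma}, by which any reflection factorization of $\id$ is Hurwitz-equivalent to one of the paired form $t_1 t_1 \, t_2 t_2 \cdots$; since Hurwitz moves preserve both the product and the subgroup generated by the factors, a full factorization of length $\ell$ forces the $\ell/2$ reflections $t_1, t_3, \ldots$ to generate $W$, whence $\ell \geq 2\mu$. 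This is exactly the uniform explanation of the factor of $2$ that your rank-filtration attempt could not produce -- available, however, only over $\RR$ (and case-free there after Remark~\ref{Rem: WY lemma}; see Remark~\ref{Rem: uniform proof of lfull(id)}). Consequently the paper's computer check is confined to the non-real exceptional groups, whereas your fallback also sends $H_3$, $H_4$, $F_4$, $E_6$, $E_7$, $E_8$ to the computer. That is legitimate in principle, and not a mathematical gap, but note that for $E_7$ and $E_8$ the representation-theoretic computation of full reflection lengths is the very kind of calculation the paper's introduction flags as computationally problematic, so the Hurwitz-theoretic route for real groups is not mere elegance there. Conversely, your treatment of the whole infinite family $G(m,p,n)$ by the Part~I formula -- absorbing $\Symm_n$ and the dihedral groups $I_2(m) = G(m,m,2)$, which the paper instead handles under the real umbrella -- is self-contained and slightly more uniform on that side of the classification.
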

\begin{proof}
One inequality is straightforward: if $\{t_1, \ldots, t_k\}$ is a minimum reflection generating set for $W$ then 
\begin{equation}
\label{eq:dumb factorizations of id}
\id = t_1 \cdot t_1^{-1} \cdot t_2 \cdot t_2^{-1} \cdots t_k \cdot t_k^{-1}
\end{equation}
is a reflection factorization of $\id$ that generates $W$.  Therefore, it is left to show that there is no shorter reflection factorization of $\id$ that generates $W$.

Both full reflection length and the size of a minimum reflection generating set are obviously additive over direct products.  Therefore, the claim is valid for all reflection groups if and only if it is valid for irreducible groups, so it suffices to consider irreducible $W$.  We proceed in various cases.

First, suppose that $W$ is real.  By Lemma~\ref{lem: LR lemma}, every reflection factorization of the identity is Hurwitz-equivalent to a factorization of the form \eqref{eq:dumb factorizations of id}.  Such a factorization is full if and only if the set $\{t_1, \ldots, t_k\}$ is a generating set for $W$.  Since the Hurwitz action preserves the group generated by the factors, we conclude that in real reflection groups, the full reflection length of the identity is at least twice the minimum size of a reflection generating set.

Second, suppose that $W = G(m, m, n)$ for $m > 1$.  Taking $g = \id$ in Proposition~\ref{cor:full length}, we have $k = n$, $a_1 = \ldots = a_n = 0$, and $d = m \neq 1$, and therefore $\ltr(\id) = 2n$ in this case, as claimed.  

Third, suppose that $W = G(m, p, n)$ for some $p < m$.  Taking $g = \id$ in Proposition~\ref{cor:full length}, we have $k = n$, $a_1 = \ldots = a_n = 0$, $d = p$, and $\gcd(\wt(g), m) = m \neq p$.  Therefore, if $p = 1$, we have $d = 1$ and consequently $\ltr(\id) = 2n$, as claimed.  On the other hand, if $p > 1$ then $d \neq 1$ and so $\ltr(\id) = 2n + 2$, as claimed.

Finally, for the non-real exceptional groups, the claim has been verified by computer calculations following the representation-theoretic approach described in \cite[Rem.~3.2]{DLM1}.
\end{proof}

\begin{remark}
\label{Rem: uniform proof of lfull(id)}
For real reflection groups, the proof of Lemma~\ref{lem:identity full reflection length} is case-free after Remark~\ref{Rem: WY lemma}.
\end{remark}

We give now several statements that are of use at various points in the rest of the paper; they are all corollaries of Lemma~\ref{lem:identity full reflection length} and previous discussions. 

\begin{corollary}
\label{Corol: lr(g)+ltr(g)>=2m}
If $W$ is a complex reflection group and $r$ denotes the minimum size of a reflection generating set for $W$, then for any element $g\in W$, we have 
\[
\lR(g)+\ltr(g)\geq 2r.
\]
\end{corollary}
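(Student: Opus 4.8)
The plan is to reduce the statement directly to Lemma~\ref{lem:identity full reflection length}, which asserts that $\ltr(\id) = 2r$. The key idea is that a minimal-length \emph{full} factorization of $g$ can be spliced together with a \emph{reduced} factorization of $g^{-1}$ to produce a full factorization of the identity, whose length is then bounded below by $\ltr(\id) = 2r$.

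Concretely, I would first fix a full reflection factorization $g = t_1 \cdots t_{\ltr(g)}$ realizing the full reflection length, so that $\langle t_1, \ldots, t_{\ltr(g)} \rangle = W$. Separately, I would choose a reduced reflection factorization $g = s_1 \cdots s_{\lR(g)}$, and note that each inverse $s_i^{-1}$ is again a reflection, being a unitary map whose fixed space is the same hyperplane $V^{s_i}$. Concatenating,
\[
\id = g\, g^{-1} = t_1 \cdots t_{\ltr(g)} \cdot s_{\lR(g)}^{-1} \cdots s_1^{-1}
\]
is a reflection factorization of the identity of length $\ltr(g) + \lR(g)$.

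Next I would observe that this factorization is full: its factors include $t_1, \ldots, t_{\ltr(g)}$, which already generate $W$, so a fortiori the whole tuple generates $W$. Hence $\ltr(\id) \leq \ltr(g) + \lR(g)$, and combining this with $\ltr(\id) = 2r$ from Lemma~\ref{lem:identity full reflection length} yields the claimed inequality $\lR(g) + \ltr(g) \geq 2r$.

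I expect no real obstacle here: the entire arithmetic content of the bound is already absorbed into Lemma~\ref{lem:identity full reflection length}, and the only point needing a word of care is the verification that inverses of reflections are reflections, so that the spliced word is genuinely a reflection factorization. This is immediate in the unitary setting, so the argument is a short corollary rather than an independent computation.
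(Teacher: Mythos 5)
Your proposal is correct and matches the paper's own proof essentially verbatim: both splice a minimum-length full factorization of $g$ with the inverse of a reduced factorization to obtain a full reflection factorization of $\id$ of length $\ltr(g)+\lR(g)$, then invoke Lemma~\ref{lem:identity full reflection length} to conclude $\ltr(g)+\lR(g)\geq \ltr(\id)=2r$. Your extra remark that inverses of reflections are reflections is a sound point of care that the paper leaves implicit.
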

\begin{proof}
Consider two reflection factorizations of $g$: one that is reduced $ t_1\cdots t_k=g$ (with $k=\lR(g)$), and one that is full $ t'_1\cdots t'_l=g$ (with $l=\ltr(g)$). By combining them, we get a full factorization
\[
\id =  t'_1\cdots t'_l\cdot t_k^{-1}\cdots t_1^{-1}
\]
of the identity, having length $l+k=\lR(g)+\ltr(g)$. It follows from Lemma~\ref{lem:identity full reflection length} that $l + k \geq 2r$.
\end{proof}

\begin{corollary}
\label{Cor: prod_t_i is q-Cox}
In a well generated complex reflection group $W$ of rank $n$ with a given good generating set $\{ t_1,\ldots, t_n\}$, the product of the generators $ t_i$ in any order is a quasi-Coxeter element.
\end{corollary}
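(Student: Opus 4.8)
The goal is to show that if $\{t_1,\ldots,t_n\}$ is a good generating set for $W$, then for any permutation $\sigma$ the product $g := t_{\sigma(1)}\cdots t_{\sigma(n)}$ is quasi-Coxeter. By Definition~\ref{Defn: p-q-Cox elts}, it suffices to exhibit a reduced reflection factorization of $g$ whose factors form a good generating set for $W$. The factorization $g = t_{\sigma(1)}\cdots t_{\sigma(n)}$ already has the right set of factors (namely the given good generating set, which is unchanged as a set by the ordering $\sigma$), so the entire content of the statement reduces to the single claim that this factorization is \emph{reduced}, i.e.\ that $\lR(g)=n$.

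The upper bound $\lR(g)\le n$ is immediate, since $g$ is exhibited as a product of $n$ reflections. For the lower bound the plan is to invoke Corollary~\ref{Corol: lr(g)+ltr(g)>=2m}, which gives $\lR(g)+\ltr(g)\ge 2r$, where $r$ is the minimum size of a reflection generating set for $W$. First I would record that $r=n$: any reflection generating set $S$ satisfies $V^W=\bigcap_{s\in S}V^s$, so $\#S\ge\codim(V^W)=n$, while a well generated group of rank $n$ attains this bound by definition (see \S\ref{Sec: well-gend grps}); in particular the mere existence of the good generating set already forces $r=n$. Next, since the factors $\{t_i\}$ generate $W$, the factorization $g=t_{\sigma(1)}\cdots t_{\sigma(n)}$ is itself a \emph{full} factorization of length $n$, so $\ltr(g)\le n$. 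Combining these inequalities yields $\lR(g)\ge 2r-\ltr(g)\ge 2n-n=n$, and together with the upper bound this gives $\lR(g)=n$.

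With $\lR(g)=n$ established, the given factorization is reduced, and its factors are the good generating set $\{t_1,\ldots,t_n\}$ for $W$; thus Definition~\ref{Defn: p-q-Cox elts} shows that $g$ is a quasi-Coxeter element. The argument is uniform in $\sigma$, since neither the value $r=n$ nor the bound $\ltr(g)\le n$ depends on the order of multiplication. I do not anticipate a serious obstacle here: the only substantive input is Corollary~\ref{Corol: lr(g)+ltr(g)>=2m} (which in turn rests on Lemma~\ref{lem:identity full reflection length}), and the sole point requiring care is the identification $r=n$, which is precisely where the hypothesis that $W$ is well generated (rather than an arbitrary complex reflection group) is used. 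It is worth emphasizing that one should \emph{not} attempt the lower bound through the geometric estimate $\lR(g)\ge\codim(V^g)$, since $\codim(V^g)$ need not equal $n$ a priori and, more importantly, the full-length inequality is exactly the case-free tool that makes the bound $\lR(g)\ge n$ transparent.
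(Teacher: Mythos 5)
Your proposal is correct and follows essentially the same route as the paper's proof: both bound $\lR(g)\le\ltr(g)\le n$ from the fullness of the given length-$n$ factorization, invoke Corollary~\ref{Corol: lr(g)+ltr(g)>=2m} for the reverse inequality, conclude the factorization is reduced, and apply Definition~\ref{Defn: p-q-Cox elts}. Your only addition is spelling out the identification $r=n$ for well generated groups, which the paper leaves implicit in the phrase ``Since $W$ is well generated.''
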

\begin{proof}
Let $g$ be the product of the reflections $ t_i$ in some order. This factorization is clearly full, which implies that $\lR(g) \leq \ltr(g) \leq n$. Since $W$ is well generated, we have by Corollary~\ref{Corol: lr(g)+ltr(g)>=2m} that $\lR(g) + \ltr(g) \geq 2n$.  Combining the inequalities gives $\lR(g) = \ltr(g) = n$, and so the given factorization of $g$ is reduced.  Then it follows from Definition~\ref{Defn: p-q-Cox elts} that $g$ is quasi-Coxeter.
\end{proof}

\begin{remark}
In the infinite family, Corollary~\ref{Cor: prod_t_i is q-Cox} could be alternatively checked by comparing the descriptions of the good generating sets in Lemma~\ref{lem: good generating set G(m,1,n) and G(m,m,n)} with the description of the quasi-Coxeter elements in Section~\ref{subsec:pqC in the infinite family}, using the observation of Remark~\ref{rem: Shi's delta} in the case of $G(m, m, n)$.
\end{remark}

One characterization of parabolic \emph{Coxeter} elements is that they are precisely those which are below some Coxeter element in the absolute order $\leq_{\RRR}$ (for instance, see \cite[Lem.~1.4.3]{bessis-dual-braid} for the real case).
The following statement generalizes this property to the \emph{quasi-Coxeter} setting, extending \cite[Cor.~6.11]{BGRW} (which covers the case of real $W$).

\begin{theorem}[second characterization of parabolic quasi-Coxeter elements]
\label{Cor: p-q-cox means below q-cox}
Let $W$ be a well generated complex reflection group and let $g\in W$. Then $g$ is a parabolic quasi-Coxeter element if and only if there exists a quasi-Coxeter element $w\in W$ such that $g\leq_{\RRR}w$.
\end{theorem}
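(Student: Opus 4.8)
The plan is to prove the two implications separately, using the characterization of parabolic quasi-Coxeter elements via relative generating sets (Theorem~\ref{Prop: RGS(W,g) exist iff p-q-Cox}) for the forward direction, and the ``strong equals weak'' phenomenon (Proposition~\ref{Prop. strong=weak}) together with Taylor's subset lemma (Proposition~\ref{Prop: Taylor_subset_good}) for the reverse direction. Throughout, write $n = \rank(W)$ and $k = \lR(g)$.

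For the forward direction, suppose $g$ is parabolic quasi-Coxeter. By Theorem~\ref{Prop: RGS(W,g) exist iff p-q-Cox} the collection $\RGS(W, g)$ is nonempty, so I fix a relative generating set $\{t_1, \ldots, t_{n - k}\}$; by Definition~\ref{Defn: relative generating set (W,g)} there is an accompanying reduced factorization $g = s_1 \cdots s_k$ such that $\{s_1, \ldots, s_k, t_1, \ldots, t_{n - k}\}$ is a good generating set for $W$. I then set $w := g \cdot t_1 \cdots t_{n - k} = s_1 \cdots s_k t_1 \cdots t_{n - k}$, which is a product of this good generating set in some order and hence quasi-Coxeter by Corollary~\ref{Cor: prod_t_i is q-Cox}, so that $\lR(w) = n$. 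To see that $g \leq_{\RRR} w$ I use the length definition of the absolute order: since $g^{-1} w = t_1 \cdots t_{n - k}$ we have $\lR(g^{-1} w) \leq n - k$, while subadditivity applied to $w = g \cdot (g^{-1} w)$ forces $\lR(g^{-1} w) \geq \lR(w) - \lR(g) = n - k$; thus $\lR(g) + \lR(g^{-1} w) = n = \lR(w)$, which is exactly the relation $g \leq_{\RRR} w$.

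For the reverse direction, suppose $w$ is quasi-Coxeter with $g \leq_{\RRR} w$. By definition $w$ has a reduced factorization whose $n$ factors generate $W$, so $\lR(w) = n$ and, by Proposition~\ref{Prop: X=V^g and l_R(g)=codim(V^g)}, the parabolic closure of $w$ is $W_w = W$. Fixing any reduced factorization $g = s_1 \cdots s_k$, the relation $g \leq_{\RRR} w$ lets me extend it to a reduced factorization $w = s_1 \cdots s_k t_{k + 1} \cdots t_n$. The key point is then to invoke Proposition~\ref{Prop. strong=weak}, which guarantees that \emph{every} reduced factorization of the quasi-Coxeter element $w$ generates $W_w = W$; in particular $\{s_1, \ldots, s_k, t_{k + 1}, \ldots, t_n\}$ is a good generating set for $W$. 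Taylor's Proposition~\ref{Prop: Taylor_subset_good} then shows that the $k$-subset $\{s_1, \ldots, s_k\}$ is a good generating set for some rank-$k$ parabolic subgroup of $W$, and since $g = s_1 \cdots s_k$ is reduced, this is precisely the statement (Definition~\ref{Defn: p-q-Cox elts}) that $g$ is parabolic quasi-Coxeter.

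I expect the only genuine subtlety to lie in the reverse direction: the extension of a reduced factorization of $g$ to one of $w$ need not a priori be the particular good generating factorization promised by the bare definition of ``quasi-Coxeter,'' so one cannot conclude directly that its factors generate $W$. This gap is exactly what Proposition~\ref{Prop. strong=weak} closes, by upgrading ``some reduced factorization generates $W$'' to ``every reduced factorization generates $W$''; everything else reduces to routine bookkeeping of reflection lengths.
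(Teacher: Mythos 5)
Your proof is correct and follows essentially the same route as the paper: the forward direction builds $w$ as the product of a reduced factorization of $g$ with a relative generating set (via Theorem~\ref{Prop: RGS(W,g) exist iff p-q-Cox} and Corollary~\ref{Cor: prod_t_i is q-Cox}), and the reverse direction combines Proposition~\ref{Prop. strong=weak} with Proposition~\ref{Prop: Taylor_subset_good} exactly as in the paper. Your explicit length computation verifying $g \leq_{\RRR} w$ is a slightly more careful rendering of what the paper dispatches with ``by definition,'' but it is the same argument.
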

\begin{proof}
Let $n = \rank(W)$.  First, choose a parabolic quasi-Coxeter element $g$ and a reduced reflection factorization $g=t_1\cdots t_k$ (with $k=\lR(g)$).  By Theorem~\ref{Prop: RGS(W,g) exist iff p-q-Cox}, there exists a relative generating set $\{t_{k+1},\ldots,t_n\}$ with respect to $g$, that is, one which satisfies $\langle t_1,\ldots,t_n\rangle=W$. By Corollary~\ref{Cor: prod_t_i is q-Cox}, the product $w:=t_1\cdots t_n$ must be a quasi-Coxeter element; in particular, $\lR(w)=n$. We have now by definition that $g\leq_{\RRR}w$, and the forward direction is proven.

For the reverse direction, suppose that $w\in W$ is a quasi-Coxeter element for which $g\leq_{\RRR}w$.  By definition of $\leq_{\RRR}$, there is a reduced reflection factorization $w=t_1\cdots t_n$ that starts with a reduced factorization of $g$, that is, such that $g=t_1\cdots t_k$ for $k=\lR(g)$. Since $w$ is quasi-Coxeter, the set $\{t_1,\ldots, t_n\}$ is a good generating set for $W$  (Proposition~\ref{Prop. strong=weak}), and then, by Proposition~\ref{Prop: Taylor_subset_good}, the set $\{t_1,\ldots,t_k\}$ must be a good generating set for some parabolic subgroup of $W$. By definition, this means that $g$ is a parabolic quasi-Coxeter element.
\end{proof}

We now extend Lemma~\ref{lem:identity full reflection length} to give another characterization of parabolic quasi-Coxeter elements in well generated groups $W$. 

\begin{theorem}[third characterization of parabolic quasi-Coxeter elements]
\label{Prop: g p-q-Cox iff ltr(g)=2n-lR(g)}
In a well generated complex reflection group $W$ of rank $n$, an element $g\in W$ is a parabolic quasi-Coxeter element if and only if 
\[
\ltr(g)=2n-\lR(g).
\]
\end{theorem}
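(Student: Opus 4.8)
The plan is to treat the theorem as a statement about when a known inequality becomes an equality, and to convert everything into the language of relative generating sets so that Theorem~\ref{Prop: RGS(W,g) exist iff p-q-Cox} does the final bookkeeping. The starting point is that $\ltr(g) \geq 2n - \lR(g)$ holds for \emph{every} $g$: this is exactly Corollary~\ref{Corol: lr(g)+ltr(g)>=2m} with $r = n$, since a rank-$n$ well generated group has minimum reflection generating set of size $n$ (and $\ltr(\id) = 2n$ by Lemma~\ref{lem:identity full reflection length}). Thus the content is to characterize equality.

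For the forward direction, suppose $g$ is parabolic quasi-Coxeter and write $k = \lR(g)$. By Theorem~\ref{Prop: RGS(W,g) exist iff p-q-Cox} there is a relative generating set for $g$, that is, a reduced factorization $g = t_1 \cdots t_k$ together with reflections $t_{k+1}, \ldots, t_n$ such that $\langle t_1, \ldots, t_n \rangle = W$. Padding with inverse pairs produces the full factorization
\[
g = t_1 \cdots t_k \cdot (t_{k+1}\, t_{k+1}^{-1}) \cdots (t_n\, t_n^{-1}),
\]
whose $2n - k$ factors still generate $W$, because each $t_i^{-1}$ is again a reflection with the same fixed hyperplane as $t_i$ and so $\langle t_1,\ldots,t_k,t_{k+1},t_{k+1}^{-1},\ldots,t_n,t_n^{-1}\rangle = \langle t_1,\ldots,t_n\rangle = W$. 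Hence $\ltr(g) \leq 2n - k$, and together with the inequality above this forces $\ltr(g) = 2n - \lR(g)$.

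The reverse direction is where I expect the real work to be, and it amounts to extracting a relative generating set from a \emph{minimal} full factorization $g = u_1 \cdots u_{2n-k}$. One observation is available uniformly: following the subgroup chain $H_j := \langle u_1, \ldots, u_j \rangle$, the rank can rise by at most $1$ at each step, so exactly $n$ of the factors strictly increase the rank; an easy induction shows $H_j \subseteq \langle\, u_i : i \leq j,\ u_i \text{ rank-increasing}\,\rangle$, so these $n$ "new" reflections already form a good generating set for $W$. The obstacle is that this extracted good generating set need not interact with the prescribed product $g$: to finish I genuinely need the $n-k$ redundant factors to be arrangeable as cancelling pairs placed in front of a reduced factorization of $g$.

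In the real case this is precisely what Lemma~\ref{lem: LR lemma} supplies: the factorization is Hurwitz-equivalent to one of the shape $(a_1, a_1, \ldots, a_{n-k}, a_{n-k}, b_1, \ldots, b_k)$ with $(b_1, \ldots, b_k)$ reduced for $g$; since the Hurwitz action preserves the generated subgroup, $\{a_1, \ldots, a_{n-k}, b_1, \ldots, b_k\}$ generates $W$, so $\{a_1, \ldots, a_{n-k}\}$ is a relative generating set and Theorem~\ref{Prop: RGS(W,g) exist iff p-q-Cox} applies. For the combinatorial families $G(m,1,n)$ and $G(m,m,n)$ I would run the identical scheme using the transitivity of Proposition~\ref{Prop: LW G(m, p, n) transitivity}, and for the non-real exceptional groups I expect to fall back on the representation-theoretic computer verification, exactly mirroring the case split in the proof of Lemma~\ref{lem:identity full reflection length}. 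The crux, and the reason the clean real-case argument does not extend verbatim, is the failure of uniform Hurwitz transitivity in the complex setting, which is what blocks the normalization of a minimal full factorization into the pair-plus-reduced form.
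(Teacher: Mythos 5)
Your forward direction and the real case of the reverse direction coincide with the paper's proof: pad an element of $\RGS(W,g)$ with inverse pairs to get $\ltr(g)\leq 2n-\lR(g)$, then combine with Corollary~\ref{Corol: lr(g)+ltr(g)>=2m}; and in the real case normalize a minimal full factorization via Lemma~\ref{lem: LR lemma}, use Hurwitz-invariance of the generated subgroup, and close with Theorem~\ref{Prop: RGS(W,g) exist iff p-q-Cox}. Deferring the non-real exceptional groups to computer verification is also what the paper does. The genuine gap is your treatment of the combinatorial family. Proposition~\ref{Prop: LW G(m, p, n) transitivity} is a transitivity statement for the Hurwitz action on $\op{Red}_W(g)$, i.e.\ on tuples of length exactly $\lR(g)$; it says nothing about factorizations of length $2n-\lR(g)>\lR(g)$, and in particular it does not supply the analogue of Lemma~\ref{lem: LR lemma} that your scheme requires, namely the normalization of a minimal full factorization into inverse pairs followed by a reduced factorization of $g$. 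No such normal-form result is available (or proved anywhere in the paper) for $G(m,1,n)$ or $G(m,m,n)$, and your own closing sentence concedes that precisely this normalization is what breaks in the complex setting -- so the plan for the infinite family, as written, rests on a proposition that does not do the job.

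Your auxiliary ``uniform observation'' is also false: the rank-increasing factors of a factorization need not generate the full group. In $B_2=G(2,1,2)$ take $u_1=[\id;(1,0)]$, $u_2=[\id;(0,1)]$, $u_3=[(12);0]$: the partial subgroups $H_1,H_2,H_3$ have ranks $1,2,2$, so only $u_1,u_2$ are rank-increasing, yet $\langle u_1,u_2\rangle$ is the diagonal subgroup, which is strictly smaller than $H_3=B_2$; the rank can stall while the subgroup strictly grows, so the induction $H_j\subseteq\langle u_i : i\leq j,\ u_i \text{ rank-increasing}\rangle$ fails. What the paper actually does for the infinite family is purely numerical: it compares Shi's reflection-length formula (Theorem~\ref{thm:reflection length in G(m, 1, n) and G(m, p, n)}) with the full-length formula from Part~I (Proposition~\ref{cor:full length}). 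The hypothesis $\lR(g)+\ltr(g)=2n$ then forces, in $G(m,1,n)$, that at most one cycle of $g$ has nonzero color (and that color is primitive mod $m$ when it exists), and in $G(m,m,n)$ that $v_m(g)\geq k-1$ with the corresponding primitivity constraint; these conditions match exactly the classification of parabolic quasi-Coxeter elements in Corollary~\ref{cor:pqC elements in m1n and mmn}. If you want to repair your write-up with minimal change, substitute this computation for the Hurwitz argument in the combinatorial cases; also note that the reduction to irreducible $W$ needs not only additivity of $\lR$, $\ltr$, and rank, but Corollary~\ref{cor: p-q-Cox in W_X implies in W} to reassemble the parabolic quasi-Coxeter conclusion from the factors.
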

\begin{proof}
Let us start with the forward implication and assume that $g\in W$ is a parabolic quasi-Coxeter element.
Consider a reduced reflection factorization $ t_1\cdots t_k=g$ (so $k=\lR(g)$).  By Proposition~\ref{Prop. strong=weak}, the $t_i$ generate the parabolic closure $W_g$ of $g$ and $\rank(V^g) = k$. By Proposition~\ref{Prop. extensions <W',t_i>=W}, there are reflections $ t_{k+1},\ldots, t_n$ such that $\langle W_g, t_{k+1},\ldots, t_n\rangle=W$. Therefore
\[
g= t_1\cdots t_k\cdot t_{k+1} \cdot  t_{k+1}^{-1}\cdots t_n \cdot  t_n^{-1}
\]
is a \emph{full} factorization of $g$ in $W$, of length $2n - k$, which in turn implies that $\ltr(g)\leq 2n-\lR(g)$. By Corollary~\ref{Corol: lr(g)+ltr(g)>=2m}, actually $\ltr(g) = 2n-\lR(g)$, as needed.

Conversely, suppose that $W$ is a well generated complex reflection group and the element $g$ in $W$ satisfies $\lR(g) + \ltr(g) = 2n$.  Since rank, reflection length, and full reflection length are all additive over direct products, using Corollary~\ref{Corol: lr(g)+ltr(g)>=2m}, we have that equality holds for each irreducible component of $W$.  Since (by Corollary~\ref{cor: p-q-Cox in W_X implies in W}) the product of parabolic quasi-Coxeter elements from the irreducible factors is a parabolic quasi-Coxeter element for the direct product, it suffices to consider irreducible $W$.  We proceed in various cases.

First, suppose that $W$ is real of rank $n$ and let $k = \lR(g)$.  By hypothesis, $\ltr(g) = 2n - k$.  Consider a minimum-length full reflection factorization $\ttt = (t_1, \ldots, t_{2n - k})$ of $g$.  By Lemma~\ref{lem: LR lemma}, $\ttt$ has in its Hurwitz orbit a factorization of the form $\ttt' = (t'_1, t'_1, \ldots, t'_{n - k}, t'_{n - k}, t'_{n - k + 1}, \ldots, t'_n)$ such that $(t'_{n - k + 1}, \ldots, t'_n)$ is a reduced reflection factorization of $g$.  Since $\ttt$ is full, $\ttt'$ is also full, and therefore $\{ t'_1, \ldots, t'_{n - k}\}$ is a relative generating set for $g$.  By Theorem~\ref{Prop: RGS(W,g) exist iff p-q-Cox}, it follows that $g$ is parabolic quasi-Coxeter.

Second, suppose that $W = G(m, 1, n)$ for some $m > 1$ and that $g$ has $k$ cycles, of which $j$ have nonzero color.  By Theorem~\ref{thm:reflection length in G(m, 1, n) and G(m, p, n)}~(i), we have $\lR(g) = n - k + j$.  By Proposition~\ref{cor:full length}, we have $\ltr(g) \geq n + k - 1$, and consequently $j \leq 1$.  If $j = 0$ then all cycles of $g$ have color $0$, and therefore by Proposition~\ref{cor:pqC elements in m1n and mmn}~(i) $g$ is parabolic quasi-Coxeter for a conjugate of a subgroup of type $\Symm_{\lambda_1} \times \cdots \times \Symm_{\lambda_k}$ for some partition $\lambda$ of $n$.  On the other hand, if $j = 1$ then $\ltr(g) = n + k - 1$.  Therefore, by Proposition~\ref{cor:full length}, we have that $\gcd(\wt(g), m) = 1$, and so the color of $g$ is primitive modulo $m$.  Since $g$ has only one cycle of nonzero color, the color of that cycle generates $\ZZ/m\ZZ$ and we have by Proposition~\ref{cor:pqC elements in m1n and mmn}~(ii) that $g$ is parabolic quasi-Coxeter for a conjugate of a subgroup of type $G(m, 1, \lambda_1) \times \Symm_{\lambda_2} \times \cdots \times \Symm_{\lambda_k}$ for some partition $\lambda$ of $n$.

Third, suppose that $W = G(m, m, n)$ for some $m > 1$ and that $g$ has $k$ cycles.  By Theorem~\ref{thm:reflection length in G(m, 1, n) and G(m, p, n)}, we have $\lR(g) = n + k - 2v_m(g)$, where $v_m(g)$ is the largest number of parts into which one can partition the cycles of $g$ so that all parts have color $0$.  By Proposition~\ref{cor:full length}, we have $\ltr(g) \geq n + k - 2$.  Consequently, $v_m(g) \geq k - 1$.  If $v_m(g) = k$ then the associated partition is the trivial partition, with every cycle in its own part, and so all cycles are of color $0$.  Therefore, by Proposition~\ref{cor:pqC elements in m1n and mmn}~(iii), $g$ is parabolic quasi-Coxeter a subgroup of type $\Symm_{\lambda_1} \times \cdots \times \Symm_{\lambda_k}$ for some partition $\lambda$ of $n$. On the other hand, if $v_m(g) = k - 1$, then the partition must have one part of size $2$ (with two cycles of nonzero colors that sum to $0$) and $k - 2$ parts containing a single cycle (necessarily of color $0$).  Moreover, in this case we have $\ltr(g) = n + k - 2$, so $d = 1$, and consequently the nontrivial cycle colors must be primitive modulo $m$.  Therefore, by Proposition~\ref{cor:pqC elements in m1n and mmn}~(iv), $g$ is parabolic quasi-Coxeter a subgroup of type $G(m, m, \lambda_1) \times \Symm_{\lambda_2} \times \cdots \times \Symm_{\lambda_{k - 2}}$ for some partition $\lambda$ of $n$.

Finally, suppose that $W$ is of non-real exceptional type.  In these cases, the statement has been confirmed by computer calculations, as follows.  On one hand, the property of being parabolic quasi-Coxeter can be verified most easily by the characterization of Theorem~\ref{Prop: RGS(W,g) exist iff p-q-Cox}: for each conjugacy class representative $g$, we pair every $(n-\lR(g))$-element subset of $\RRR$ with the reflections in a fixed reduced factorization of $g$, and check whether the whole collection ever generates $W$. On the other hand, the full reflection length can be computed by representation-theoretic techniques, as in \cite[Rem.~3.2]{DLM1}.
\end{proof}

\begin{remark}
\label{Rem: uniform proof lfull(g)}
For real reflection groups, the proof of Theorem~\ref{Prop: g p-q-Cox iff ltr(g)=2n-lR(g)} is case-free after Remark~\ref{Rem: WY lemma}.
\end{remark}

The previous theorem is the final component of our main structural 
theorem announced in the introduction. Its proof is a direct combination of Theorems~\ref{Prop: RGS(W,g) exist iff p-q-Cox}, \ref{Cor: p-q-cox means below q-cox}, and~\ref{Prop: g p-q-Cox iff ltr(g)=2n-lR(g)}.

\pqCoxChar

\section{Reduced reflection factorizations of parabolic quasi-Coxeter elements}
\label{sec: Fred numbers of qCox}

In this section, we study the number $\Fred(g)$ of reduced reflection factorizations of a quasi-Coxeter element $g$ in a well generated complex reflection group $W$.  It is well known \cite{Denes, looijenga, J88} that, in the case of a long cycle $c$ in the symmetric group $\Symm_n$, the number of reduced reflection factorizations has a simple product formula: $\Fred[\Symm_n](c) = n^{n - 2}$.  As we see below, it is also the case for every quasi-Coxeter element $c$ in every well generated group $W$ that the number $\Fred(c)$ factors as a product of small primes.  Our discussion proceeds in several stages: first, in \S\ref{ssec: Fred of qcox_Coxeter case}, we review the case of Coxeter elements, including the connection with the \emph{discriminant hypersurface} of $W$ and the \emph{Lyashko--Looijenga morphism}.  Second, in \S\ref{Sec: Frobenius mflds}, we consider the case of a general quasi-Coxeter element $g$ in a \emph{real} reflection group $W$.  We see in Table~\ref{Table: Fred(g)} that in all cases, $\Fred(g)$ is a product of small primes.  While we do not have a complete understanding this numerological phenomenon, we present a conjectural geometric explanation (see Conjecture~\ref{conj alg frob mfld}) that generalizes the situation of Coxeter elements discussed in \S\ref{ssec: Fred of qcox_Coxeter case}.  In \S\ref{ssec: Fred of qcox_infinite family}, we give explicit formulas for $\Fred(g)$ when $g$ is a quasi-Coxeter element and $W$ belongs to the infinite family of complex reflection groups (Corollary~\ref{cor: Fred for infinite families}), and discuss the case of the exceptional groups (Remark~\ref{remark:Freds for exceptionals}).  Finally, in \S\ref{ssec:reduction from pqc to qc}, we show that $\Fred(g)$ is a product of small primes whenever $g$ is a \emph{parabolic} quasi-Coxeter element, by reducing to the case of quasi-Coxeter elements.

Apart from being of substantial interest on its own, the study of the numbers $\Fred(g)$ of reduced reflection factorizations of parabolic quasi-Coxeter elements $g$ will also play an important role for our main result in \cite{DLM3}, which gives a formula for the numbers $\Ftr_W(g)$ of minimum-length \emph{full} reflection factorizations of $g$. The connection is two-fold: first, for a quasi-Coxeter element $g \in W$, we have that $\Ftr_W(g)=\Fred(g)$ (see Proposition~\ref{Prop: Fred=Ffull for qCox} below; in the case of the symmetric group, this is a direct computation using the Hurwitz formula \eqref{eq: formula H_0} that $H_0((n)) = n^{n - 2}$).  Second, we will show in \cite{DLM3} that the numbers $\Fred(g)$ are always (i.e., for any \emph{parabolic} quasi-Coxeter element $g\in W$) factors in the formula for $\Ftr_W(g)$ (see also \S\ref{sec:intro:towards_uniform_formulas}). We end this introductory discussion with the proof that $\Ftr_W(g)=\Fred(g)$ for quasi-Coxeter elements $g$.

\begin{proposition}\label{Prop: Fred=Ffull for qCox}
For a quasi-Coxeter element $g$ of a well generated group $W$, the number of full reflection factorizations of $g$ equals the number of reduced reflection factorizations of $g$.
\end{proposition}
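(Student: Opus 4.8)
The plan is to show that the two sets being counted are literally identical, so that the equality of their cardinalities is immediate. Write $n := \rank(W)$. Since $g$ is quasi-Coxeter, its parabolic closure is all of $W$ and $\lR(g) = n$. The first step is to pin down the full reflection length. The defining reduced factorization $g = t_1 \cdots t_n$ generates $W$, so it is already a full factorization of length $n$, giving $\ltr(g) \le n$; conversely, every full factorization is in particular a reflection factorization of $g$, hence has length at least $\lR(g) = n$, so $\ltr(g) \ge n$. (This also follows directly from Theorem~\ref{Prop: g p-q-Cox iff ltr(g)=2n-lR(g)} with $\lR(g)=n$, or from Corollary~\ref{Corol: lr(g)+ltr(g)>=2m}.) Thus $\ltr(g) = \lR(g) = n$, and both the minimum-length full factorizations and the reduced factorizations of $g$ are reflection factorizations of length exactly $n$.

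The second step is to verify that the two notions coincide on length-$n$ factorizations. On one hand, any minimum-length full factorization has length $\ltr(g) = n = \lR(g)$, hence is by definition reduced, so it lies in $\op{Red}_W(g)$. On the other hand, let $g = t_1 \cdots t_n$ be any reduced factorization. By Proposition~\ref{Prop. strong=weak}, the factors $\{t_1, \ldots, t_n\}$ form a good generating set for the parabolic closure $W_g$; but $g$ is quasi-Coxeter, so $W_g = W$, and therefore this factorization generates $W$, i.e.\ it is full, and its length $n = \ltr(g)$ is minimal among full factorizations. Hence the set of minimum-length full factorizations of $g$ equals $\op{Red}_W(g)$, and in particular their cardinalities agree, which is exactly $\Ftr_W(g) = \Fred(g)$.

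I do not expect a genuine obstacle here: all the content is packaged in the two inputs, namely the value $\ltr(g) = n$ (which forces minimum-length full and reduced factorizations to share the common length $n$) and Proposition~\ref{Prop. strong=weak} (which upgrades ``reduced'' to ``full'' by guaranteeing that the factors always generate $W_g = W$). The one point deserving care is the reading of the statement: ``full reflection factorization'' must be understood as a \emph{minimum-length} full factorization, so that $\Ftr_W(g)$ counts full factorizations of length $\ltr(g)$. Once the two relevant lengths are seen to agree, the identification of the two sets — and hence the equality of counts — is forced.
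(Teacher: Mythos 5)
Your proof is correct and follows essentially the same route as the paper's: the paper likewise invokes Proposition~\ref{Prop. strong=weak} (together with Proposition~\ref{Prop: X=V^g and l_R(g)=codim(V^g)} to identify $W_g = W$) to conclude that every reduced factorization of $g$ is full, so that the two sets being counted coincide. Your version merely makes explicit the length bookkeeping ($\ltr(g) = \lR(g) = n$) and the reading of $\Ftr_W(g)$ as counting \emph{minimum-length} full factorizations, both of which the paper leaves implicit.
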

\begin{proof}
Let $g$ be a quasi-Coxeter element of $W$.  By Definition~\ref{Defn: p-q-Cox elts}, $g$ has a reduced reflection factorization that is also full. Then Proposition~\ref{Prop. strong=weak} implies that all reduced factorizations of $g$ must be full for the parabolic closure $W_g$, which is equal to $W$ by Proposition~\ref{Prop: X=V^g and l_R(g)=codim(V^g)}.
\end{proof}

\subsection{Reflection discriminants and reduced factorizations of Coxeter elements.}
\label{ssec: Fred of qcox_Coxeter case}

We deal briefly in this section with the case of a Coxeter element $c$ in an irreducible well generated group $W$. For such elements, the numbers $\Fred(c)$ have a well known product structure: if $h$ is the Coxeter number of $W$ and $n$ is its rank, we have that
\begin{equation}
\Fred(c)=\dfrac{h^n \cdot n!}{\#W}.\label{eq: deligne-arnold-bessis formula}    
\end{equation}
This formula has many proofs \cite{chapoton,chapuy_Stump,michel-deligne-lusztig-derivation,chapuy_theo_lapl}, each indicating connections to different areas and each explaining the product structure in a different way. In the next few paragraphs, we review a geometric approach, which we extend in the next section (\S\ref{Sec: Frobenius mflds}) to the case of quasi-Coxeter elements. For any terminology that is not explained, or for proofs of statements that are only claimed here, consult the common references \cite{Kane,LehrerTaylor,broue-book}.

Recall first that, as in the proof of Proposition~\ref{Prop. extensions <W',t_i>=W}, for a complex reflection group $W\leq \GL(V)$ acting on some space $V\cong \CC^n$, the Shephard--Todd--Chevalley theorem identifies the quotient space $V/W$ with the complex affine space $\CC^n$ whose coordinates are given by the fundamental invariants $\bm f:=(f_i)_{i=1\ldots n}$ of $W$. The $f_i$ are homogeneous polynomials and we order them according to their degrees $d_i:=\deg f_i$, so that $d_i\leq d_{i+1}$. The \defn{discriminant hypersurface $\mathcal{H}$} of $W$ is defined as the quotient variety $\mathcal{H} := \mathcal{A}_W/W \subset V/W\cong \CC^n$ of the reflection arrangement $\mathcal{A}_W$. 

The highest-degree invariant $f_n$ plays a special role in this setting.  The hypersurface $\mathcal{H}$ is the zero set of a polynomial $\Delta(W;\bm f)$ in $\CC[f_1,\ldots, f_n]$ that is monic and of degree $n$ in $f_n$. In fact, there is always a suitable choice of fundamental invariants such that the discriminant polynomial can be written as
\begin{equation}
    \Delta(W;\bm f)= f_n^n+a_2(\bm f)\cdot f_n^{n-2}+\cdots+a_n(\bm f),\label{eq: Delta(W;f)}
\end{equation}
where the $a_i$ are weighted-homogeneous polynomials in $f_1,\ldots,f_{n-1}$ of weighted degrees $\op{wt}(a_i)=h\cdot i$ (where the weight of $f_i$ is $d_i$). Moreover, the \emph{braid monodromy} of the polynomial $\Delta(W;\bm f)$ in the $f_n$-direction encodes reduced factorizations of Coxeter elements, as we explain next.

The braid monodromy of an algebraic function such as $\Delta(W;\bm f)$ (viewed as a polynomial in $f_n$ with coefficients $a_i$) is a refinement of its usual monodromy group: it keeps track of \emph{how} the function values move around each other when we vary the coefficients $a_i$, as opposed to just recording their final permutation (see \cite{arnold_braids}). The braid monodromy of an algebraic function is encoded via its \emph{coefficient map} (as in \cite{hansen}, \cite[\S2]{cohen-suciu}), which in the case of the discriminant polynomial $\Delta(W;\bm f)$ is known also as the \defn{Lyashko--Looijenga morphism $LL$} (see \cite{looijenga,Bessis_Annals}):
\begin{equation}
   \CC^{n-1}\ni \big(f_1,\ldots,f_{n-1}\big)\overset{LL}{\longmapsto}\big(a_2(f_1,\ldots,f_{n-1}),\ldots,a_n(f_1,\ldots,f_{n-1})\big)\in\CC^{n-1}.\label{eq: LL map}
\end{equation}
The $LL$ map is weighted-homogeneous and its degree may be calculated by a version of Bezout's theorem (for instance \cite[Thm.~5.1.5]{LandoZvonkin}) in terms of the weights of the coordinates in its image ($\op{wt}(a_i)=h\cdot i$) and its domain ($\op{wt}(f_i)=d_i$):
\begin{equation}
    \deg(LL)=\dfrac{\prod_{i=2}^n\op{wt}(a_i)}{\prod_{i=1}^{n-1}\op{wt}(f_i)}=\dfrac{h^{n-1} \cdot n!}{d_1\cdots d_{n-1}}=\dfrac{h^n \cdot n!}{\#W},\label{eq: deg LL calculation}
\end{equation}
where the last equality is due to the facts that $d_n=h$ for well generated $W$ and $\#W=d_1\cdots d_n$. 

A natural geometric construction \cite[\S7]{Bessis_Annals}, which goes back to Looijenga's paper \cite[\S3]{looijenga} but is also part of the general theory of braid monodromies \cite[\S3]{ignacio_braid}, associates a reduced factorization of the Coxeter element to each point in a generic fiber of the $LL$ map. Bessis was able to rely on certain properties of the $LL$ map (essentially, that it is a finite morphism) and on  the transitivity of the Hurwitz action on $\op{Red}_W(c)$ to prove the following lemma. His proof is conceptual and case-free (assuming the Hurwitz transitivity) and, in combination with Equation \eqref{eq: deg LL calculation}, it gives an a priori justification of the numerological properties of the numbers $\Fred(c)$ (that they are products of small primes) discussed at the beginning of \S\ref{sec: Fred numbers of qCox}.

\begin{lemma}\label{lemma deg LL k-1 covering}
For a well-generated complex reflection group $W$ and a Coxeter element $c\in W$, there is for some $k$ a $k$-to-$1$ correspondence between elements in a generic fiber of the $LL$ map and the set $\op{Red}_W(c)$ of reduced reflection factorizations of $c$. In particular, we have
\[
\deg(LL)=k\cdot \Fred(c)
\]
for some positive integer $k$.
\end{lemma}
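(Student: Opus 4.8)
The plan is to produce the asserted correspondence from the geometric machinery of \cite{looijenga,Bessis_Annals} and then to deduce that it has constant fiber size via the transitivity of the Hurwitz action. I will write $Y\cong\CC^{n-1}$ for the source of $LL$ (coordinates $f_1,\ldots,f_{n-1}$) and $E_n\cong\CC^{n-1}$ for its target, viewed as the space of coefficient tuples $(a_2,\ldots,a_n)$ or, equivalently, as the space of unordered $n$-point configurations on the $f_n$-line obtained from the roots of the discriminant polynomial \eqref{eq: Delta(W;f)}. I would fix a configuration $\omega\in E_n$ that is \emph{generic} in the sense that its $n$ points are distinct (so $\omega$ lies in the regular locus $E_n^{\mathrm{reg}}$) and that $\omega$ avoids the (hypersurface) branch locus $\mathrm{Br}(LL)$ of the finite morphism $LL$; over such $\omega$ the fiber $LL^{-1}(\omega)$ has exactly $\deg(LL)$ points and $LL$ is locally an unramified covering.

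First I would recall the factorization map. To each $y\in LL^{-1}(\omega)$, the braid monodromy of $\Delta(W;\bm f)$ along the vertical line over $y$ -- equivalently, a fixed standard system of non-crossing paths (``tunnels'') from a basepoint to the $n$ points of $\omega$ -- produces loops in $\CC^n\smallsetminus\mathcal{H}$ whose images under the canonical surjection $B(W)\twoheadrightarrow W$ are reflections $t_1,\ldots,t_n$ with product $t_1\cdots t_n=c$. This is precisely the construction of \cite[\S3]{looijenga} and \cite[\S7]{Bessis_Annals}, and it defines a map
\[
\phi\colon LL^{-1}(\omega)\longrightarrow \op{Red}_W(c).
\]

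The core of the argument is an equivariance property. The fundamental group $G:=\pi_1\big(E_n^{\mathrm{reg}}\smallsetminus\mathrm{Br}(LL),\omega\big)$ acts on $LL^{-1}(\omega)$ by monodromy, while the inclusion $E_n^{\mathrm{reg}}\smallsetminus\mathrm{Br}(LL)\hookrightarrow E_n^{\mathrm{reg}}$ induces a surjection $G\twoheadrightarrow\pi_1(E_n^{\mathrm{reg}})=\BBB_n$ (removing a complex hypersurface does not destroy surjectivity on $\pi_1$), through which $G$ acts on $\op{Red}_W(c)$ by the Hurwitz action. The key point -- which is essentially the definition of how braid monodromy records factorizations, and which I expect to require the most care -- is that $\phi$ is $G$-equivariant: transporting a point of the fiber around a loop in the base permutes the associated tunnels by exactly the corresponding Hurwitz move on the factorization. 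Making this matching precise is the principal obstacle, and it is exactly the content supplied by the cited work of Looijenga and Bessis.

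Granting the equivariance, the conclusion follows by a standard orbit-transport argument. By Theorem~\ref{Thm: Bessis Hurwitz trans pCox} (a Coxeter element is in particular a parabolic Coxeter element, of length $n$), the Hurwitz action of $\BBB_n$ on $\op{Red}_W(c)$ is transitive, hence so is the $G$-action through the surjection. Since $\phi$ is $G$-equivariant with nonempty image and the target is a single $G$-orbit, $\phi$ is surjective; and for any two factorizations $F,F'$ a group element $g\in G$ with $g\cdot F=F'$ restricts to a bijection $\phi^{-1}(F)\xrightarrow{\ \sim\ }\phi^{-1}(F')$, so every fiber of $\phi$ has the same cardinality $k$. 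Therefore $\deg(LL)=\#LL^{-1}(\omega)=k\cdot\#\op{Red}_W(c)=k\cdot\Fred(c)$, as claimed.
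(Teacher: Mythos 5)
Your proposal is correct and follows essentially the same route as the paper, which does not spell out a proof but explicitly attributes the lemma to Bessis's argument combining the factorization map on a generic fiber of the finite morphism $LL$ (the Looijenga--Bessis braid-monodromy construction you describe) with the transitivity of the Hurwitz action on $\op{Red}_W(c)$ from Theorem~\ref{Thm: Bessis Hurwitz trans pCox}. Your orbit-transport argument making the constant fiber size explicit is exactly the mechanism the paper's cited proof relies on, with the genuinely hard content (the $G$-equivariance of the factorization map) correctly identified and delegated to \cite[\S3]{looijenga} and \cite[\S7]{Bessis_Annals}, just as the paper does.
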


\begin{remark}
Comparing the formulas in \eqref{eq: deligne-arnold-bessis formula} and \eqref{eq: deg LL calculation}, it is evident that $k=1$ in the Lemma~\ref{lemma deg LL k-1 covering}.  
Many of the constructions in Bessis' work \cite{Bessis_Annals} rely on the numerological coincidence of the formulas \eqref{eq: deligne-arnold-bessis formula} and \eqref{eq: deg LL calculation}; in particular, the proof of the dual braid presentation of the generalized braid group $B(W)$ (see the exposition in \cite{chapuy_theo_lapl} for more). For those applications, the statement of Lemma~\ref{lemma deg LL k-1 covering} is not sufficient; one really needs to know that $k=1$. A conceptual proof of this, relying on the geometry of the $LL$ map, is highly desirable but currently seems out of reach. A very promising approach is in the work of Hertling and Roucairol, who do the case of the simply laced Weyl groups (see the proof of \cite[Thm.~7.1]{hertling_roucairol}).
\end{remark}

\subsection{Frobenius manifolds and reduced factorizations of quasi-Coxeter elements.}
\label{Sec: Frobenius mflds}

That the counts $\Fred(g)$ are always products of small prime numbers when $g$ is a quasi-Coxeter element (not just a Coxeter element) 
was first observed by Kluitmann and Voigt in the case of the simply laced Weyl groups \cite{KV}. It was rediscovered by Christian Stump, who further confirmed it in the broader set of real reflection groups $W$ 
(personal communication) and asked for an explanation or even just a uniformly stated formula for the numbers $\Fred(g)$. 

We give in Table~\ref{Table: Fred(g)} the numbers $\Fred(g)$ for every quasi-Coxeter element $g$ and irreducible real reflection group $W$. The conjugacy classes of quasi-Coxeter elements in Weyl groups are indexed according to Carter's notation \cite{carter}, apart from $D_n$ where we write $D_n(a,b)$ for Carter's $D_n(a_{b-1})$ to have a compatible notation with the classification of Corollary~\ref{cor:pqC elements in m1n and mmn} and with Corollary~\ref{cor: Fred for infinite families}~(iii). The conjugacy classes of quasi-Coxeter elements of the groups $H_3$ and $H_4$ are indexed according to increasing Coxeter length (the smallest length of a factorization of a representative $g$ of the class as a product of \emph{simple} reflections). The groups $\Symm_{n+1}=A_n$ and $B_n$ only have a single quasi-Coxeter class, while in the dihedral group $I_2(m)$ all $\frac{\varphi(m)}{2}$ of the quasi-Coxeter classes have the same number $\Fred[I_2(m)](g) = m$ of reduced reflection factorizations (recall that $\varphi$ denotes Euler's totient function).

\begin{table}[H]
\[
\begin{array}{|l l | l l | l l| l l |} 
 \hline
 & & & & & & & \\[-0.3cm]
 g\phantom{\quad\quad\quad} & \Fred(g) & g\phantom{\quad\quad\quad} & \Fred(g) & g\phantom{\quad\quad\quad} & \Fred(g) & g\phantom{\quad\quad\quad} & \Fred(g) \\[0.1cm] 
 \hline
 & & & & & & & \\[-0.3cm]
 A_n & (n+1)^{n-1} & E_7(a_3) & 2\cdot 3^4\cdot 5^6 & E_8(a_7) & 2^{13}\cdot 3^6\cdot 5\cdot 7 & H_4(2) & 3^4\cdot 5^2 \\
 B_n & n^n & E_7(a_4) & 2^4\cdot 3^8\cdot 5\cdot 7 & E_8(a_8) & 2^7\cdot 3^9\cdot 5^2\cdot 7 & H_4(3) & 2^6\cdot 3^3 \\
 I_2(m) & m & E_8 & 2\cdot3^5\cdot 5^7 & F_4 & 2^4\cdot 3^3 & H_4(4) & 2^3\cdot 3\cdot 5^3 \\
 E_6 & 2^9\cdot 3^4 & E_8(a_1) & 2^{18}\cdot 3^5 & F_4(a_1) & 2^3\cdot 3^4 & H_4(5) & 2\cdot 3^2\cdot 5^3 \\
 E_6(a_1) & 3^{10} & E_8(a_2) & 2^{10}\cdot 5^7 & H_3 & 2\cdot 5^2 & H_4(6) & 3^4\cdot 5^2 \\
 E_6(a_2) & 2^6\cdot 3^5\cdot 5 & E_8(a_3) & 2^{12}\cdot 3^6\cdot 5\cdot 7 & H_3(1) & 2\cdot 3^3 & H_4(7) & 2^6\cdot 5^2 \\
 E_7 & 2\cdot 3^{12} & E_8(a_4) & 2\cdot 3^{13}\cdot 5\cdot 7 & H_3(2) & 2\cdot 5^2 & H_4(8) & 2^3\cdot 3^4\cdot 5 \\
 E_7(a_1) & 2\cdot 7^7 & E_8(a_5) & 3^5\cdot 5^7\cdot 7 & H_4 & 2\cdot 3^3\cdot 5^2 & H_4(9) & 2\cdot 3^3\cdot 5^2 \\
 E_7(a_2) & 2^9\cdot 3^6\cdot 5 & E_8(a_6) & 2^3\cdot 3^2\cdot 5^8\cdot 7 & H_4(1) & 2^6\cdot 5^2 & H_4(10) & 2^3\cdot 3\cdot 5^3 \\
 \hline
 \multicolumn{8}{|c|}{ }\\[-0.3cm] 
 \multicolumn{8}{|c|}{ \Fred[D_n](D_n(a,b))=2\cdot (n-1)\cdot\binom{n-2}{a-1,b-1}\cdot a^a\cdot b^b \text{ with } a + b = n}\\[0.2cm]
 \hline
\end{array}
\]
\caption{The numbers $\Fred(g)$ for quasi-Coxeter elements $g$ of real reflection groups $W$.}
\label{Table: Fred(g)}
\end{table}

\subsubsection{A conjecture in terms of geometric invariants of Frobenius manifolds}\label{sssec:conj.frob}

We do not have a complete explanation for the numerological phenomenon illustrated in Table~\ref{Table: Fred(g)}, but we present here a conjectural interpretation, generalizing the geometric approach for Coxeter elements in Section~\ref{ssec: Fred of qcox_Coxeter case}.  This interpretation relies on the theory of \emph{Frobenius manifolds}, pioneered by Dubrovin \cite{dubrovin_ICM}, for which we give below a (very) brief introduction. 

Dubrovin's original goal was to give a coordinate-free formulation of the Witten--Dijkgraaf--Verlinde--Verlinde (WDVV) equations from 2D topological field theory (see \cite{dubrovin_encyclopedia}). He showed that solutions of the WDVV equations, which he called \emph{prepotentials} or \emph{free energies} \cite[Lec.~1]{dubrovin_2dtfts}, are encoded by the structure coefficients of   
smoothly varying Frobenius algebra structures on the tangent planes $T_xM$ of a manifold $M$. Dubrovin called manifolds with such structures \defn{Frobenius manifolds}.

In his study \cite[Lect.~4]{dubrovin-frobenius} of massive Frobenius manifolds, Dubrovin encoded the local algebra structure in a Stokes matrix, or equivalently a tuple of Euclidean reflections $\bm t:=(t_1,\cdots,t_n)$, while he described its analytic continuation via the Hurwitz action (as in \S\ref{sec: hurwitz action}) of the braid group $\BBB_n$ on $\bm t$ (see also the detailed presentation in \cite[\S1.4]{dub_maz} for the rank-$3$ case). Under this interpretation, {\it algebraic} prepotentials correspond to tuples $\bm t$ with finite Hurwitz orbits, and Dubrovin asked for the construction of the corresponding Frobenius manifolds. By work of Michel \cite{michel-hurwitz-tuples}, $\bm t$ has a finite Hurwitz orbit if and only if it generates a finite reflection group, so that the problem of algebraic Frobenius manifolds in some sense lives entirely in the world of finite Coxeter groups and their quasi-Coxeter elements. Indeed, a reduced tuple $\bm t$ always determines a quasi-Coxeter element $g:=\prod_{i=1}^n t_i$ of the group $W' :=\langle\bm t\rangle$, so that we can index the possible corresponding Frobenius manifold by the conjugacy class of $g$ and denote it $\mathcal{F}_g$.

In the case of a \emph{Coxeter} element $c$ in a real reflection group $W\leq \GL(V)$, Dubrovin constructed the manifold $\mathcal{F}_c$ by adding a Frobenius algebra structure on the quotient variety $V/W$ itself \cite{dubrovin_coxeter}. This algebra structure was defined via a special choice of fundamental invariants, known as {\it Saito flat coordinates}, that provide a Euclidean metric for the orbit space $V/W$. Dubrovin conjectured \cite{dubrovin-frobenius} and Hertling later proved \cite[\S5.4]{hertling_book}  that, in fact, these are the only examples of Frobenius manifolds with associated {\it polynomial} prepotentials.

On the other hand, we saw in \S\ref{ssec: Fred of qcox_Coxeter case} that for a Coxeter element $c\in W$, one can relate the number $\Fred(c)$ with the degree of the weighted-homogeneous Lyashko--Looijenga map; in fact, they are equal. Many of the relevant geometric objects of this approach also exist in the world of Frobenius manifolds. In particular, there is an analogue of the $LL$ map \cite[\S3.5]{hertling_book}, which relates two natural coordinate systems of $\mathcal{F}_g$. It sends the flat coordinates, on which the prepotential is given, to (the elementary symmetric polynomials of) the canonical coordinates, which are the eigenvalues in the algebra structure of the multiplication by the Euler field. The $LL$ map is also weighted-homogeneous and one can easily calculate its degree, as in \eqref{eq: deg LL calculation}, from the prepotential associated to $\mathcal{F}_g$. The degree calculations in already constructed algebraic Frobenius manifolds (see  Example~\ref{example:frob_mfld_H3}) and Dubrovin's analysis of their local structures suggest the following conjecture. 

\begin{conjecture}
\label{conj alg frob mfld}
Let $g$ be a quasi-Coxeter element in an irreducible real reflection group $W$. Assuming the Frobenius manifold $\mathcal{F}_g$ exists, then the degree of the map $LL(\mathcal{F}_g)$ equals $\Fred(g)$.
\end{conjecture}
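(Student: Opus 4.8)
The plan is to mirror, in the setting of the (conjectural) Frobenius manifold $\mathcal{F}_g$, the geometric argument that establishes the Coxeter case $\deg(LL)=\Fred(c)$ reviewed in \S\ref{ssec: Fred of qcox_Coxeter case}. For a Coxeter element the relevant $LL$ map is the coefficient map of the discriminant of $W$, and the count comes from Lemma~\ref{lemma deg LL k-1 covering} together with the numerical coincidence forcing $k=1$. Here the orbit space $V/W$ is replaced by $\mathcal{F}_g$, whose $LL$ map $LL(\mathcal{F}_g)$ sends the flat coordinates (on which the prepotential is expressed) to the elementary symmetric functions of the canonical coordinates $u_1,\ldots,u_n$ (the eigenvalues of Euler-field multiplication). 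The goal is again a degree count: to exhibit a finite, generically unramified $LL(\mathcal{F}_g)$ together with a correspondence between points of a generic fiber and the set $\op{Red}_W(g)$.

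First I would set up the fiber-to-factorization map. Over a generic (massive, i.e.\ semisimple) point of $\mathcal{F}_g$ the canonical coordinates $u_i$ are pairwise distinct, and Dubrovin's local analysis \cite[Lect.~4]{dubrovin-frobenius} attaches to such a point a Stokes matrix, equivalently an ordered tuple of Euclidean reflections $(t_1,\ldots,t_n)$ that generate $W$ and whose product is conjugate to $g$ --- in other words, a reduced reflection factorization in $\op{Red}_W(g)$. The essential structural input is the dictionary ``monodromy $=$ Hurwitz action'': transporting the base point of $LL(\mathcal{F}_g)$ around a loop in the complement of its branch locus permutes the sheets of the covering and acts on the associated reflection tuple exactly by an element of the Hurwitz action of $\BBB_n$, which is precisely the analytic-continuation mechanism encoding the monodromy of $\mathcal{F}_g$.

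Granting this, I would run the counting argument of Lemma~\ref{lemma deg LL k-1 covering}. The degree of the weighted-homogeneous map $LL(\mathcal{F}_g)$ is computed by weighted B\'ezout, as in \eqref{eq: deg LL calculation}, from the charges (spectrum) of $\mathcal{F}_g$ read off its prepotential. The fiber-to-factorization map takes values in a single Hurwitz orbit, which by the transitivity of the Hurwitz action on reduced factorizations of a quasi-Coxeter element in a \emph{real} group (Proposition~\ref{Prop: Hurw. trans for par.q-Cox}) is all of $\op{Red}_W(g)$; and because monodromy realizes the Hurwitz action, every fiber of this map has the same cardinality $k$. This yields the exact analogue of Lemma~\ref{lemma deg LL k-1 covering}, namely $\deg\big(LL(\mathcal{F}_g)\big)=k\cdot\Fred(g)$ for some positive integer $k$.

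The whole conjecture then reduces to proving $k=1$, and this is the main obstacle. In the Coxeter case $k=1$ was forced a posteriori by comparing two independently known numbers, \eqref{eq: deligne-arnold-bessis formula} and \eqref{eq: deg LL calculation}; no such independent evaluation of $\deg\big(LL(\mathcal{F}_g)\big)$ is available in general. A conceptual proof would require showing that the fiber-to-factorization map is injective, i.e.\ that two distinct semisimple points of $\mathcal{F}_g$ lying over a common generic value of $LL(\mathcal{F}_g)$ never carry the same Stokes data; this is exactly the kind of ramification statement that remains open even for Coxeter elements. There is moreover a prior obstacle: $\mathcal{F}_g$ is only conjectured to exist, and its prepotential is not known to be unique, so the very quantity $\deg\big(LL(\mathcal{F}_g)\big)$ is contingent. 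In practice I would therefore verify $k=1$ case-by-case, computing $\deg\big(LL(\mathcal{F}_g)\big)$ directly from the explicit prepotentials where these are known and matching it against Table~\ref{Table: Fred(g)}, following the strategy of Hertling--Roucairol for the simply-laced types \cite[Thm.~7.1]{hertling_roucairol}.
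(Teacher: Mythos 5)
The statement you were asked to address is Conjecture~\ref{conj alg frob mfld}: the paper itself offers \emph{no proof} of it, only supporting evidence, so there is no ``paper proof'' to match yours against. What the paper actually does is (a) motivate the statement by exactly the heuristic you describe -- Dubrovin's encoding of semisimple Frobenius structures via tuples of Euclidean reflections, with analytic continuation acting by the Hurwitz action, generalizing the Coxeter-case picture of \S\ref{ssec: Fred of qcox_Coxeter case}; (b) compute $\deg\big(LL(\mathcal{F}_g)\big)$ by weighted B\'ezout from the known weights (Dinar's, for regular classes, giving \eqref{EQ: deg(LL) Frob} with the unknown algebraicity degree $d_g$ as a factor); and (c) verify $\delta_g=d_g$, i.e.\ the conjecture, case by case in the handful of instances where explicit prepotentials exist ($H_3$ as in Example~\ref{example:frob_mfld_H3}, and $D_4(2,2)$, $F_4(a_1)$, $E_6(a_1)$, $E_7(a_1)$, $E_8(a_1)$ per Remark~\ref{Rem. numbers dg are hard}). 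Your proposal is honest about all of this and lands on the same strategy, so as an assessment of the state of the art it is accurate rather than a proof.

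One point worth flagging: your intermediate claim $\deg\big(LL(\mathcal{F}_g)\big)=k\cdot\Fred(g)$, modeled on Lemma~\ref{lemma deg LL k-1 covering}, is itself not established in the paper for quasi-Coxeter $g$ -- that lemma is stated only for Coxeter elements, where $LL$ is the coefficient map of the actual discriminant of $W$. To extend it you need several unproven analytic inputs about the conjectural $\mathcal{F}_g$: that $LL(\mathcal{F}_g)$ is a finite morphism; that the Stokes tuple at a generic semisimple point is a \emph{reduced} factorization lying in $\op{Red}_W(g)$ (not merely a tuple generating $W$ with the right product up to conjugacy); and that the monodromy of $LL(\mathcal{F}_g)$ on fibers is intertwined with the $\BBB_n$-action on Stokes data in the algebraic (as opposed to local-analytic) setting. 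Granting these, your use of Proposition~\ref{Prop: Hurw. trans for par.q-Cox} (Hurwitz transitivity in real groups) to get a constant fiber size $k$ is the right move, and you correctly identify both genuine obstructions: the conditional existence (and possible non-uniqueness) of $\mathcal{F}_g$, and the absence of any independent evaluation of $\deg\big(LL(\mathcal{F}_g)\big)$ that would force $k=1$, which even in the Coxeter case is known only a posteriori by comparing \eqref{eq: deligne-arnold-bessis formula} with \eqref{eq: deg LL calculation} (with a conceptual ramification argument available only in the simply laced types, via Hertling--Roucairol). In short: your write-up reproduces the paper's position faithfully and adds a plausible (but conditional) covering-space skeleton; it neither closes the conjecture nor misrepresents what would be needed to do so.
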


Because of the weighted-homogeneity of the $LL$ map, Conjecture~\ref{conj alg frob mfld} may be seen as a \emph{justification} of the nice numerological properties of the numbers $\Fred(g)$ in Table~\ref{Table: Fred(g)}. Indeed, assuming the conjecture, Bezout's theorem (as in \eqref{eq: deg LL calculation}) would imply that $\deg\big(LL(\mathcal{F}_g)\big)$ is a product of small primes (those involved in the weights and algebraicity degree of the prepotential).

\begin{example}[the algebraic prepotential associated to $g=H_3(1)$]\label{example:frob_mfld_H3} 
We illustrate here the the calculations described in the previous discussion for a special case in $H_3$. Dubrovin and Mazocco \cite{dub_maz} constructed Frobenius manifolds associated to all quasi-Coxeter elements of $H_3$ and Kato--Mano--Sekiguchi \cite[\S7]{KMS_chapter} gave the corresponding prepotentials. In terms of the flat coordinates $t_1,t_2,t_3$, the prepotential for the element $g=H_3(1)$ (denoted in these references by $(H_3)''$) is
\begin{equation}\label{eq:example:prepotential}
F(t_1,t_2,t_3)=\dfrac{4063}{1701}t_1^7+\dfrac{1}{2}(t_2^2t_3+t_1t_3^2)+\dfrac{19}{135}t_1^5z^2-\dfrac{73}{27}t_1^3z^4+\dfrac{11}{9}t_1z^6-\dfrac{16}{35}z^7,
\end{equation}
where $z$ is given in terms of the $t_i$ via the \emph{algebraic} equation
\begin{equation}\label{eq:example:z_eqn}
z^2-t_1^2+t_2=0.
\end{equation}
It is easy to see that the algebraic function $F(t_1,t_2,t_3)$ is weighted-homogeneous with weights
\[
\op{wt}(t_1)=\frac{1}{3}, \qquad
\op{wt}(t_2)=\frac{2}{3}, \qquad
\op{wt}(t_3)=1.
\]
From this data, we calculate the degree of the corresponding $LL$ map as in \eqref{eq: deg LL calculation}: 
\[
\deg\big(LL(\mathcal{F}_g)\big)=\dfrac{3!}{\prod_{i=1}^3\op{wt}(t_i)}\cdot 2=\dfrac{6}{2/9}\cdot 2=54,
\]
where the ``extra" factor of $2$ is the algebraicity degree (i.e., the number of branches) of \eqref{eq:example:z_eqn}. This agrees with the value $\Fred[H_3](g)$ in Table~\ref{Table: Fred(g)} and thus confirms Conjecture~\ref{conj alg frob mfld} for the case $H_3(1)$.
\end{example}

\subsubsection{The case of regular quasi-Coxeter elements in Weyl groups}\label{sssec.reg_qcox_freds}

An obstacle to using the geometry of Frobenius manifolds to study the combinatorics of the sets $\op{Red}_W(g)$ for quasi-Coxeter elements $g$ is that in most cases the corresponding manifolds $\mathcal{F}_g$ have not been constructed yet. In fact their very existence is generally conjectural (this is implicit in Dubrovin's work, see for example the remark above Example~5.5 in \cite[pp.~402--404]{dubrovin-frobenius}). However, for the subclass of \emph{regular} (as in \S\ref{Sec: well-gend grps}) quasi-Coxeter elements, the manifolds $\mathcal{F}_g$ are known to exist and enough data about their structure is available so that we can give an \emph{almost} explicit formula for the counts $\Fred(g)$ (in Proposition~\ref{Obs. Fred regular qCox} and Remark~\ref{Rem. numbers dg are hard} below). 

The most important work towards constructing algebraic Frobenius manifolds has been carried out by Dinar \cite{dinar_first,dinar_polynomial,yassir-subregular,dinar_regular_classes}. He also seems to have been the first to explicitly state Dubrovin's refined conjecture, namely, that there exists a Frobenius manifold for every conjugacy class of quasi-Coxeter elements and that this family constitutes the totality of Frobenius manifolds with algebraic prepotentials.

Dinar focused on Weyl groups, where quasi-Coxeter classes correspond in some sense to certain nilpotent orbits in the Lie algebra. In \cite{dinar_polynomial}, he gave a construction of polynomial Frobenius manifolds associated to the Coxeter class (corresponding to the regular nilpotent orbit) that was different from Dubrovin's but that produced equivalent structures. In \cite{dinar_regular_classes}, he constructed Frobenius manifold structures on Slodowy-type slices associated to regular quasi-Coxeter classes. Dinar's work not only produced new algebraic Frobenius manifolds (even a single new example is considered valuable in the area), but his approach is a conceptual one and his proofs are case-free.

For the case of regular quasi-Coxeter elements $g$ in a Weyl group $W$, Dinar gave an explicit description of the weights of the flat coordinates of $\mathcal{F}_g$. The \defn{exponents} $\{e_j(g)\}_{j=1}^n$ of an element $g$ are the integers in the interval $\big[0,|g|-1\big]$ such that the eigenvalues of $g$ (counted with multiplicity) are $\big\{e^{2\pi i e_j(g)/|g|}\big\}_{j = 1}^n$. Dinar showed\footnote{When comparing \cite[Table~1]{dinar_regular_classes} with Table~\ref{Table: Fred(g)}, the reader should be aware that Dinar uses the Bala--Carter notation for nilpotent orbits in the Lie algebra, not the Carter notation for the corresponding classes in the Weyl group; see also Remark~\ref{rem:ToB}.} \cite[Thm.~1.1]{dinar_regular_classes} that the weights of $\mathcal{F}_g$ are given by the numbers $(e_j(g)+1)/|g|$ (see also the prior work of Pavlyk \cite{oleksandr}). Then, analogously to \eqref{eq: deg LL calculation}, the weighted-homogeneity of the $LL$ map would give its degree as 
\begin{equation}
\deg\big(LL(\mathcal{F}_g)\big)\,=\,\left(\prod_{j=1}^n\dfrac{|g|\cdot j}{e_j(g)+1}\right)\cdot d_g\,=\,\dfrac{|g|^n\cdot n!}{\prod_{j=1}^n(e_j(g)+1)}\cdot d_g,\label{EQ: deg(LL) Frob}
\end{equation}
where $d_g$ is the algebraicity degree of the Frobenius prepotential (i.e., the number of branches it has as an algebraic function on the flat coordinates). 

Dinar's work is not sufficient to prove Conjecture~\ref{conj alg frob mfld}, even for regular elements, because it does not allow for a direct computation of the numbers $d_g$.  However, when taken with the next result (a natural generalization of the Arnold--Bessis--Chapoton formula \eqref{eq: deligne-arnold-bessis formula}), it should be viewed as significant evidence in favor of the conjecture.  

\FrobFredNums

\begin{proof}
In the case that $g$ is a Coxeter element, since $\#W = \prod_{j=1}^n (e_j(g) + 1)$ \cite[Thm.~3.19]{Humphreys}, we have $|g|^n \cdot n!/\prod_{j=1}^n(e_j(g)+1) = h^n \cdot n!/\#W$ and the result is immediate by \eqref{eq: deligne-arnold-bessis formula}.  

In types $A_n$, $B_n$, and $G_2$, all quasi-Coxeter elements are Coxeter elements, so there is nothing more to check.  It is easy (e.g., by using the classification in Corollary~\ref{cor:pqC elements in m1n and mmn} and explicitly computing the eigenspaces) to check that in type $D_{2n + 1}$, the only regular quasi-Coxeter elements are the Coxeter elements, and that in type $D_{2n}$, there is a second class of regular quasi-Coxeter elements; in terms of the notation in Table~\ref{Table: Fred(g)}, these are the elements in conjugacy class $D_{2n}(n, n)$.  Treating $D_{2n}$ as a subgroup of $B_{2n}$, these elements are Coxeter elements for the subgroup $B_n \times B_n$ and so they have order $|g|=2n$ and exponents $\{1, 1, 3, 3, \ldots, 2n -1, 2n - 1\}$.  Therefore, using Table~\ref{Table: Fred(g)}, we can calculate the number $\delta_g$ for these elements as
\[
\delta_g = \frac{2 \cdot (2n - 1) \cdot \binom{2n - 2}{n - 1} \cdot n^n \cdot n^n}{(2n)^{2n} \cdot (2n)! / (2 \cdot 4 \cdots 2n)^2} = n.
\]
Finally, for the remaining types $F_4$, $E_6$, $E_7$, and $E_8$, it is a straightforward computer calculation to test regularity, compute exponents, and compare the results with the data in Table~\ref{Table: Fred(g)}. 
\end{proof}

\begin{table}[H]
\[
\begin{array}{|l||c|c|c|c|c|c|}
\hline
g\in W & D_{2n}(n,n) & F_4(a_1) & E_6(a_1) & E_6(a_2) & E_7(a_1) & E_7(a_4) \\ \hline
 & & & & & &  \\[-0.35cm]
\delta_g & n & 3 & 2 & 5 & 2 & 2\cdot 3^2 \\ \hline\hline
 & & & & & &  \\[-0.35cm]
g\in W& E_8(a_1) & E_8(a_2) & E_8(a_3) & E_8(a_5) & E_8(a_6) & E_8(a_8)  \\ \hline
 & & & & & &  \\[-0.35cm]
\delta_g & 2 & 3 & 2^3 & 7 & 2^2\cdot 5 & 3^3\cdot 5 \\
\hline
\end{array}
\]
\caption{The numbers $\delta_g$ for regular quasi-Coxeter elements $g$ of Weyl groups $W$,  using Carter's notation as in Table~\ref{Table: Fred(g)}.}
\label{Table: the numbers d_g}
\end{table}

\begin{remark}[interpretation of the numbers $\delta_g$]\label{Rem. numbers dg are hard}
In all the cases that the prepotentials of the Frobenius manifolds $\mathcal{F}_g$ have been explicitly given ($D_4(2,2)$ in \cite[\S6]{yassir-subregular}; $F_4(a_1)$ in \cite[Ex.~5.4]{dinar_first}; $E_8(a_1)$ in \cite{dinar_sekiguchi}; and $E_6(a_1)$, $E_7(a_1)$ in \cite[\S7]{seki_constr}), Conjecture~\ref{conj alg frob mfld} is confirmed (i.e., $\delta_g=d_g$). This includes the case of Coxeter elements $c$, when the prepotential is polynomial and hence $d_c=1$. Applied in the opposite direction, this enumerative data can be exploited to guess solutions to the WDVV equations; see \S\ref{sec:8:wdvv} for more. 
\end{remark}

\begin{remark}\label{remark:Fred_regulars_fail}
Proposition~\ref{Obs. Fred regular qCox} does not extend verbatim to non-Weyl groups. For example, in $H_3$, all quasi-Coxeter elements $g\in H_3$ are regular, but the numbers $\Fred[H_3](g)$ are not all integer multiples of the expression 
\begin{equation}
\frac{|g|^n \cdot n!}{\prod_{j=1}^n(e_j(g)+1)}.\label{Eq: g^nn!/prod}
\end{equation}
See Remark~\ref{remark:Fred_regulars_extend} and \S\ref{sec:8:wdvv} for more about this.
\end{remark}

\begin{remark}\label{Rem: Frobenius for non Weyl}
In this Section~\ref{Sec: Frobenius mflds} we have only discussed \emph{real} reflection groups, although, as we will see in Section~\ref{ssec: Fred of qcox_infinite family}, the property that the numbers $\Fred(g)$ are products of small primes remains true in the complex setting as well. The theory of Frobenius manifolds does have extensions in the setting of well generated groups. We mention especially the work \cite{KMS}, where the authors describe an extension of the WDVV equations and construct solutions for it, one associated to each well generated complex reflection group. The solutions are certain potential vector fields with polynomial entries (but they do not always integrate to a prepotential, as is the case in real types).  Moreover, the analytic behavior of these differential-geometric structures is encoded by tuples of unitary reflections \cite[Rem.~2.1]{KMS}, generalizing the case of Euclidean reflections in Frobenius manifolds. The potential vector fields constructed in \cite{KMS} seem to be related to Coxeter elements, but there are also \emph{algebraic} solutions of these extended WDVV equations, and the corresponding tuples of reflections determine \emph{quasi-Coxeter} elements; see for example \cite[\S8]{boalch} for the group $G_{27}$. We hope that the development of these theories will allow in the future a uniform geometric interpretation for the structure and enumeration of reduced factorizations of quasi-Coxeter elements in every well generated group, along the lines of Conjecture~\ref{conj alg frob mfld}.
\end{remark}

\subsection{The non-real types}
\label{ssec: Fred of qcox_infinite family}

In this section we discuss the case of non-real reflection groups, and in particular give formulas for the number of reduced reflection factorizations of quasi-Coxeter elements in the infinite families $G(1,1,n)$, $G(m,1,n)$ and $G(m,m,n)$. Our main tool for the latter is a special case of a result in the first part of this series \cite{DLM1} that gives a formula for the number of minimum-length full factorizations of an arbitrary element in $G(m,p,n)$. In this special case, the answer is a multiple of the number of full factorizations of the underlying permutation, which are counted by the  classical Hurwitz formula \eqref{eq: formula H_0}. 

\begin{theorem}[{\cite[Thm.~5.4 for $d=1$]{DLM1}}]
\label{thm:main lead coeff part 1}
Fix an element $g\in G(m,p,n)$ with $k$ cycles of colors $a_1, \ldots, a_k$ such that $\gcd(a_1, \ldots, a_k, p)=1$, and let $a=\gcd(\col(g),m)/p$. If $m=p$, we have 
    \[
\Ftr_{m,m,n}(g) = 
    m^{k-1} \cdot H_0(\lambda),
    \]
while if $m\neq p$, we have
    \[
    \Ftr_{m,p,n}(g) = \begin{cases}
    n(n+k-1)\cdot m^{k-1}  \cdot H_0(\lambda), & \text{ if } a=1  \\[6pt]
    \dfrac{n^2 (n+k)(n+k-1)m^{k} }{2} \cdot \dfrac{\varphi(a)}{pa} \cdot H_0(\lambda), & \text{ if } a\neq 1,
    \end{cases}
    \]
where  $\lambda$ is the cycle type of the underlying permutation of $g$ and $\varphi$ is Euler's totient function.
\end{theorem}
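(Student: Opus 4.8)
The plan is to count minimum-length full factorizations of $g$ by projecting to the symmetric group and lifting the color data. Let $u\in\Symm_n$ be the underlying permutation of $g$, of cycle type $\lambda$ with $k$ cycles. The homomorphism $[w;b]\mapsto w$ sends each transposition-like reflection to a transposition and each diagonal reflection to the identity, so any reflection factorization $g=t_1\cdots t_N$ projects to a factorization of $u$ whose non-identity factors are transpositions. Since the $t_i$ generate $G(m,p,n)$, these transpositions generate $\Symm_n$ and hence form a \emph{transitive} factorization of $u$, which uses at least $n+k-2$ transpositions and, when that minimum is attained, is counted by $H_0(\lambda)$ from \eqref{eq: formula H_0}. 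The first step is to pin down, case by case, how $\ltr(g)$ (read off from Proposition~\ref{cor:full length}) is distributed between transposition-like and diagonal factors. The key observation is that transposition-like reflections all have weight $0$ and therefore lie in $G(m,m,n)$, so they alone can never generate the full group when $p<m$; fullness forces the \emph{diagonal} factors to generate the cyclic quotient $p\ZZ/m\ZZ\cong\ZZ/(m/p)\ZZ$. Comparing with Proposition~\ref{cor:full length} shows that under the hypothesis $d=\gcd(a_1,\dots,a_k,p)=1$ the transposition-like part is always a \emph{minimal} transitive factorization of $u$ (of length $n+k-2$, contributing the factor $H_0(\lambda)$), accompanied by exactly $e$ diagonal reflections, where $e=0$ when $m=p$, $e=1$ when $m\neq p$ and $a=1$, and $e=2$ when $a\neq1$.

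With the underlying minimal transitive factorization fixed, the second step is a purely abelian counting problem over $\ZZ/m\ZZ$. Assigning a color to each transposition-like factor, the group multiplication of $G(m,p,n)$ makes the color data of the product an affine function of these colors, and I would count the colorings for which the product has exactly the color data of $g$. The fibers of the linear part all have the same size, equal to $m$ raised to its corank, and the connectedness of the genus-$0$ factorization graph forces the image to have dimension $n-1$, so each nonempty fiber has size $m^{(n+k-2)-(n-1)}=m^{k-1}$. The hypothesis $d=1$ guarantees both that the color data of $g$ lies in this image (so the fiber is nonempty) and that the resulting factorization is full rather than confined to a proper reflection subgroup; this is where the generation criterion of Lemma~\ref{lem: good generating set G(m,1,n) and G(m,m,n)} and the two-cycle description of Remark~\ref{rem: Shi's delta} enter. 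This already settles the case $m=p$, where there are no diagonal factors and the answer is $m^{k-1}\cdot H_0(\lambda)$.

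The third step inserts the $e$ diagonal reflections and produces the case-dependent prefactors. For each diagonal factor one chooses a vertex (one of $n$) and a position in the word; inserting the diagonals into a word of $n+k-2$ transposition-like factors offers $n+k-1$ positions for the first and $n+k$ for the second. The diagonal colors must be nonzero multiples of $p$ summing to $\col(g)$ and together generating $p\ZZ/m\ZZ$. When $a=1$, $\col(g)$ already generates $p\ZZ/m\ZZ$, so a single diagonal suffices and its color is forced to equal $\col(g)$; multiplying the $m^{k-1}$ colorings by the $n(n+k-1)$ placements yields $n(n+k-1)\,m^{k-1}H_0(\lambda)$. When $a\neq1$, two diagonals of colors $px_1,px_2$ are needed with $x_1+x_2=\col(g)/p$ in $\ZZ/(m/p)\ZZ$ and $\gcd(x_1,x_2,m/p)=1$; since $\gcd(\col(g)/p,\,m/p)=a$, this last condition reduces to $\gcd(x_1,a)=1$, and as $a\mid m/p$ the number of admissible $x_1$ (which determines $x_2$) is $\tfrac{m}{p}\cdot\tfrac{\varphi(a)}{a}$. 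Multiplying the $m^{k-1}$ colorings by the $n^2(n+k)(n+k-1)$ ordered placements and these $\tfrac{m\varphi(a)}{pa}$ ordered color pairs, then dividing by $2$ to pass from ordered to unordered pairs of diagonal factors, reproduces exactly $\tfrac{n^2(n+k)(n+k-1)m^{k}}{2}\cdot\tfrac{\varphi(a)}{pa}\cdot H_0(\lambda)$.

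The main obstacle I anticipate is the fullness bookkeeping in the second and third steps rather than any single computation: one must verify that the $m^{k-1}$ colorings producing $g$ are \emph{exactly} the full ones (and in particular that this count is genuinely independent of the chosen transitive factorization), and that the generation condition on the diagonal colors is met precisely once per configuration. The $a\neq1$ case is the most delicate, since isolating the correct power of $m$, the totient factor $\varphi(a)$, and the symmetry corrections $\tfrac12$ and $\tfrac{1}{pa}$ requires care. I expect the cleanest route is to establish the count first in $G(m,1,n)$, where the diagonal reflections make the generation condition of Lemma~\ref{lem: good generating set G(m,1,n) and G(m,m,n)} most transparent, and then to descend to $G(m,m,n)$ and the intermediate groups $G(m,p,n)$ by restricting to the appropriate sublattice of admissible colors.
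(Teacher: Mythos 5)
You should first note that the paper contains no internal proof of this statement: it is imported wholesale from Part~I of the series, where \cite{DLM1} obtains it (as the $d=1$ case of its Theorem~5.4) by the generating-function and character-theoretic machinery announced in that paper's title --- counting factorizations of every length and extracting the leading coefficient --- rather than by a direct bijective count. Your proposal is therefore a genuinely different route, and in outline it is sound. The structural analysis is correct: projecting to $\Symm_n$ and using that transposition-like reflections have color $0$, fullness forces the transposition-like part to be a minimal transitive factorization of the underlying permutation $u$ (length $n+k-2$, counted by $H_0(\lambda)$ from \eqref{eq: formula H_0}) together with exactly $e=0,1,2$ diagonal factors in the three cases, since the diagonal colors alone must generate $p\ZZ/m\ZZ$ and Proposition~\ref{cor:full length} fixes the total length. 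The arithmetic of the diagonal colors is also right: with $s=\col(g)/p$ one has $\gcd(x_1,s-x_1,m/p)=\gcd(x_1,a)$, giving $\tfrac{m}{p}\cdot\tfrac{\varphi(a)}{a}$ ordered pairs with $x_1,x_2\neq 0$ automatically excluded, and your placement factors $n(n+k-1)$ and $n^2(n+k)(n+k-1)/2$ reproduce the stated prefactors exactly. What the paper's (cited) approach buys is uniformity over all $d$ and all lengths at once; what yours buys is a transparent explanation of each factor in the formula.

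Two steps need more care than you give them. First, the fiber-size claim: when you expand the color vector of the product, the color $c_j$ of the $j$th transposition-like factor enters through a \emph{conjugated} edge vector $c_j(e_{\alpha_j}-e_{\beta_j})$, where $\{\alpha_j,\beta_j\}$ is the image of the factor's support under the partial product of the later underlying transpositions --- not through the raw edge of the factorization graph. So ``connectedness of the genus-$0$ factorization graph'' does not directly yield surjectivity onto the sum-zero sublattice; you must show the relabeled multigraph is connected. This is true and fillable (the tuple of conjugated transpositions lies in the Hurwitz orbit of the original tuple, and the Hurwitz action preserves the generated subgroup, hence transitivity), and once surjectivity holds, the constant fiber size $m^{k-1}$ is automatic because nonempty fibers are kernel cosets. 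Second, fullness: Lemma~\ref{lem: good generating set G(m,1,n) and G(m,m,n)} characterizes generating sets of exactly $n$ reflections, whereas your factorizations involve $n+k-2+e$ reflections with diagonals interleaved; you need the classification of the reflection subgroup generated by an arbitrary such set (available in \cite{Shi2005} and \cite{LW}) together with the observation that cycle colors are invariant under diagonal conjugation, so that the hypothesis $\gcd(a_1,\ldots,a_k,p)=1$, combined with the diagonal-color condition, rules out every proper reflection subgroup containing the factors. You flag this bookkeeping yourself as the main obstacle, and indeed it is exactly the part that must be written out; with these two points supplied, your count goes through and recovers all three formulas.
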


The cases in the next result use the characterization of quasi-Coxeter elements in the infinite families (see \S\ref{subsec:pqC in the infinite family}). 

\begin{corollary}
\label{cor: Fred for infinite families}
Let $g$ be a quasi-Coxeter element in the complex reflection group $W$.
\begin{enumerate}[(i)]

\item For $W=\Symm_n = G(1,1,n)$, we have that 
\[
\Fred(g) = \Ftr_W(g) = n^{n-2}.
\]
\item For $W=G(m,1,n)$ with $m > 1$, we have that 
\[
\Fred(g) = \Ftr_W(g) = n^n.
\]
\item For $W=G(m,m,n)$ with $m > 1$, if $g$ consists of two cycles of lengths $a$ and $b$ with colors that generate $\ZZ/m\ZZ$, then we have that
\[
\Fred(g) = \Ftr_W(g) = m(n-1)\cdot \binom{n-2}{a-1,b-1}\cdot a^a \cdot b^b.
\]
\end{enumerate}
\end{corollary}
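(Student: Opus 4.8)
The plan is to reduce everything to two results already in hand. First, Proposition~\ref{Prop: Fred=Ffull for qCox} gives $\Fred(g)=\Ftr_W(g)$ for every quasi-Coxeter element $g$, so in all three parts it suffices to compute the full-factorization count $\Ftr_W(g)$. Second, Theorem~\ref{thm:main lead coeff part 1} evaluates this count in $G(m,p,n)$ in terms of the classical Hurwitz number $H_0(\lambda)$ of~\eqref{eq: formula H_0}. Thus each part becomes a matter of (a) reading off the cycle type of $g$ from the characterization of quasi-Coxeter elements in \S\ref{subsec:pqC in the infinite family}, (b) verifying the gcd hypothesis of Theorem~\ref{thm:main lead coeff part 1}, and (c) simplifying $H_0$.

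For part (i), with $W=\Symm_n=G(1,1,n)$, a quasi-Coxeter element is an $n$-cycle, so $\lambda=(n)$ and the number of cycles is $k=1$; substituting into~\eqref{eq: formula H_0} gives $H_0((n))=n^{n-2}$. (This is the classical count of D\'enes~\cite{Denes}, but it also falls out of the $m=p$ branch of Theorem~\ref{thm:main lead coeff part 1} with $m=p=1$, where the gcd hypothesis is automatic.) For part (ii), with $W=G(m,1,n)$ and $m>1$, the quasi-Coxeter element has underlying permutation a single $n$-cycle whose color generates $\ZZ/m\ZZ$; hence $k=1$, $\lambda=(n)$, and $\gcd(\col(g),m)=1$. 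Here $p=1<m$, so the parameter $a=\gcd(\col(g),m)/p$ of Theorem~\ref{thm:main lead coeff part 1} equals $1$ and we land in the first ($a=1$) branch of the $m\neq p$ case, which gives $\Ftr_W(g)=n(n+k-1)\,m^{k-1}\,H_0((n))=n\cdot n\cdot 1\cdot n^{n-2}=n^n$.

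For part (iii), with $W=G(m,m,n)$ and $m>1$ we have $m=p$, and the quasi-Coxeter element has exactly two cycles of lengths $a$ and $b$ (so $k=2$ and $a+b=n$) whose colors sum to $0$ and generate $\ZZ/m\ZZ$. Writing these colors as $c$ and $-c$, generation forces $\gcd(c,m)=1$, so the hypothesis $\gcd(a_1,\ldots,a_k,p)=\gcd(c,-c,m)=1$ of Theorem~\ref{thm:main lead coeff part 1} is satisfied. The $m=p$ branch then yields $\Ftr_W(g)=m^{k-1}H_0((a,b))=m\,H_0((a,b))$. Expanding $H_0((a,b))$ from~\eqref{eq: formula H_0} with $k=2$ gives $(n-1)!\,a^a b^b/\big((a-1)!(b-1)!\big)$, and rewriting $(n-1)!/\big((a-1)!(b-1)!\big)=(n-1)\binom{n-2}{a-1,b-1}$ produces the claimed formula.

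Since each step is a direct substitution into previously established results, I do not expect a genuine obstacle. The only points requiring care are confirming the color/gcd hypotheses of Theorem~\ref{thm:main lead coeff part 1} (which follow at once from the generation conditions recorded in \S\ref{subsec:pqC in the infinite family}) and the elementary factorial manipulation converting $H_0((a,b))$ into a multinomial coefficient; neither is deep.
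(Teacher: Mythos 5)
Your proposal is correct and follows essentially the same route as the paper: both reduce $\Fred(g)$ to $\Ftr_W(g)$ via Proposition~\ref{Prop: Fred=Ffull for qCox}, then evaluate $\Ftr_W(g)$ by the appropriate branch of Theorem~\ref{thm:main lead coeff part 1} (with D\'enes' $H_0((n))=n^{n-2}$ for the symmetric-group input) and simplify the Hurwitz formula. Your gcd verifications and the factorial manipulation in part (iii) match the paper's computation exactly.
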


\begin{proof}
By Proposition~\ref{Prop: Fred=Ffull for qCox}, for a quasi-Coxeter element $g$, $\Fred(g)=\Ftr_W(g) $. We now compute the latter for each of the cases.

We start with the case (i) for $W=G(1,1,n)=\Symm_n$. The element $g$ is a long cycle and the statement is precisely D{\'{e}}nes' result \cite{Denes} $\Ftr_{1,1,n}(g)=H_0(n)=n^{n-2}$.

For $W=G(m, 1, n)$, the underlying permutation of $g$ is an $n$-cycle and its color generates $\ZZ/m\ZZ$. By Theorem~\ref{thm:main lead coeff part 1} (case $a=\gcd(\col(g),m)/1=1$) we have that $\Ftr_{m,1,n}(g)=n^2\cdot H_0(\lambda)$, where $\lambda=(n)$. The result then follows by D{\'{e}}nes' result  $H_0(n)=n^{n-2}$.

For $W=G(m, m, n)$, the colors $c, -c$ of the cycles of $g$ generate $\ZZ/m\ZZ$ (i.e., $\gcd(c, -c, m)=1$).  Therefore, by Theorem~\ref{thm:main lead coeff part 1} (case $p=m$) we have that $\Ftr_{m, m, n}(g)=m\cdot H_0(\lambda)$ where $\lambda = (a, b)$. The result follows by applying the formula in \eqref{eq: formula H_0} for this $\lambda$.
\end{proof}

\begin{remark}[exceptional complex reflection groups]
\label{remark:Freds for exceptionals}
Using SageMath \cite{sagemath} and CHEVIE \cite{chevie}, we have computed the numbers $\Fred(g)$ for every quasi-Coxeter element $g$ in every non-real well generated exceptional complex reflection group, using Proposition~\ref{Prop. strong=weak} to identify the quasi-Coxeter elements and applying the representation-theoretic techniques described in \cite[\S3]{DLM1}. The resulting output is attached to the arXiv version of this submission as an auxiliary file.  The numbers $\Fred(g)$ all factor as products of small primes.  For example, in the group $G_{34}$ (the largest of the exceptional non-real complex reflection groups), there are $16$ conjugacy classes of quasi-Coxeter elements.  Of these classes, two consist of Coxeter elements, with multiplicative order $h = 42$ and $2 \cdot 3 \cdot 7^5$ reduced reflection factorizations; two consist of the squares $c^2$ of Coxeter elements $c$, with $3 \cdot 5 \cdot 7^5$ factorizations; one consists of the cubes $c^3$ of Coxeter elements, with $2^2 \cdot 5 \cdot 7^5$ factorizations; the remaining classes consist of non-regular elements, of orders
\begin{itemize}
\item $30$, with $2^2 \cdot 3 \cdot 5^6$ factorizations (two classes);
\item $24$, with $2^{16} \cdot 3$ factorizations (two classes);
\item $18$, with $2^2 \cdot 3^9 \cdot 5$ factorizations (two classes);
\item $18$, with $2 \cdot 3^9 \cdot 5$ factorizations (one class);
\item $12$, with $2^{13} \cdot 3^2 \cdot 5$ factorizations (two classes);
\item $12$, with $2^{10} \cdot 3^4 \cdot 5$ factorizations (one class); and
\item $6$, with $2^5 \cdot 3^8 \cdot 5$ factorizations (one class).
\end{itemize}
\end{remark}

\begin{remark}[regular quasi-Coxeter elements in complex types]\label{remark:Fred_regulars_extend}

As we mentioned in Remark~\ref{remark:Fred_regulars_fail}, the formula of Proposition~\ref{Obs. Fred regular qCox} does not directly extend to non-Weyl groups. However, there is a variation of it that does extend. 

Let $W$ be a well generated complex reflection group of rank $n$, let $g\in W$ be a \emph{regular} quasi-Coxeter element, and write $|g|$ for the order of $g$. Then, if we denote by $(e_i)_{i=1}^n$  the \emph{exponents} of $W$ (in the terminology of Proposition~\ref{Obs. Fred regular qCox}, these are the exponents of the inverse $c^{-1}$ of some Coxeter element $c\in W$; equivalently \cite[Def.~10.24 and Thm.~11.56(iii)]{LehrerTaylor}, they are given by $e_i = d_i-1$ where the $d_i$'s are the fundamental degrees \cite[Ch.~3, \S5]{LehrerTaylor} of $W$), we always have that
\begin{equation}
    \Fred(g)=\dfrac{|g|^n\cdot n!}{\prod_{i=1}^n\Big(\big(e_i\!\!\!\!\mod |g|\big)+1\Big)}\cdot\delta_g,\label{EQ:reg_freds}
\end{equation}
where $\delta_g$ is a small integer.  (For the exceptional non-Weyl groups, the numbers $\delta_g$ are listed in an auxiliary file attached to the arXiv version of this paper.) The proof of this formula is completely analogous to the proof of Proposition~\ref{Obs. Fred regular qCox}, but the underlying geometric theory (as in Remark~\ref{Rem: Frobenius for non Weyl}) is not sufficiently built-up to support a geometric interpretation for them (see \S\ref{sec:8:wdvv} for more).

On the other hand, it is easy to see that \eqref{EQ:reg_freds} is a direct generalization of Proposition~\ref{Obs. Fred regular qCox}. Indeed for any regular quasi-Coxeter element $g$ in a Weyl group $W$, the multiset of exponents $\{e_i(g)\}_{i=1}^n$ agrees precisely with the multiset $\{e_i\!\!\mod |g|\}_{i=1}^n$ (this is essentially \cite[Thm.~11.56(iii)]{LehrerTaylor}).
\end{remark}

\subsection{Reduction to the parabolic quasi-Coxeter case}
\label{ssec:reduction from pqc to qc}

The enumerative results in the preceding subsections concern quasi-Coxeter elements.  The following result extends this to the enumeration of reduced reflection factorizations of all parabolic quasi-Coxeter elements.  In particular, it 
implies that for any parabolic quasi-Coxeter element $g\in W$, the number $\Fred(g)$ is always a product of 
small prime numbers.

\begin{corollary}
\label{Prop: Fred of pqcox to Fred of qcox}
Let $W$ be a well generated complex reflection group, $g\in W$ a parabolic quasi-Coxeter element, and $g = g_1\cdots g_s$ the generalized cycle decomposition of $g$ in the sense of Proposition~\ref{Prop: gob-cycle-decomp}.  Then the number $\Fred(g)$ of reduced reflection factorizations of $g$ is given by
\begin{equation}
    \label{eq:fred of pqcox}
    \Fred(g)=\binom{\lR(g)}{\lR(g_1),\ldots,\lR(g_s)}\cdot \prod_{i=1}^s\Fred(g_i).
\end{equation}
\end{corollary}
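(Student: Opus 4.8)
The plan is to reduce to the parabolic closure and then exploit the direct-product structure through a shuffle bijection. First I would invoke Proposition~\ref{Prop: gob-cycle-decomp} to write the parabolic closure as a product of irreducibles $W_g = W_1 \times \cdots \times W_s$ with each $g_i \in W_i$ a quasi-Coxeter element, and Proposition~\ref{Prop: X=V^g and l_R(g)=codim(V^g)} to record that $\lR(g) = \codim(V^g) = \rank(W_g)$. By Proposition~\ref{Prop: Red_W=Red_(W_g)}~(2) the reduced factorizations of $g$ in $W$ coincide with those in $W_g$, and the same proposition (applicable since $\lR(g_i) = \codim(V^{g_i})$ by Corollary~\ref{cor:finer description reduced refl fact}) gives $\op{Red}_W(g_i) = \op{Red}_{W_i}(g_i)$, so the numbers $\Fred(g_i)$ on the right-hand side are unambiguous. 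Writing $k := \lR(g)$ and $k_i := \lR(g_i)$, part (2) of Proposition~\ref{Prop: gob-cycle-decomp} yields $k = k_1 + \cdots + k_s$.

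Next I would set up the bijection. As recalled in \S\ref{Sec: real refl grps}, every reflection of the product $W_g$ acts nontrivially on a single summand of $V = V_1 \oplus \cdots \oplus V_s$, hence lies in exactly one factor $W_i$. Thus a reduced factorization $(t_1, \ldots, t_k) \in \op{Red}_{W_g}(g)$ is equivalent to the data of a \emph{coloring} $c : \{1, \ldots, k\} \to \{1, \ldots, s\}$ recording the component $c(j)$ of each $t_j$, together with, for each $i$, the subsequence of factors with $c(j) = i$. Because factors in distinct components commute, projecting the relation $t_1 \cdots t_k = g$ onto $W_i$ shows that the $i$th subsequence multiplies (in its induced order) to $g_i$; in particular it is a reflection factorization of $g_i$ of length $m_i := |c^{-1}(i)|$.

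The key step is to see that each of these subsequences is automatically reduced. Since the $i$th subsequence factors $g_i$, we have $m_i \geq \lR(g_i) = k_i$; summing over $i$ gives $k = \sum_i m_i \geq \sum_i k_i = k$, forcing $m_i = k_i$ for all $i$. Consequently, a reduced factorization of $g$ is exactly a choice of the fibers $c^{-1}(i)$ --- an ordered set partition of $\{1, \ldots, k\}$ into blocks of sizes $k_1, \ldots, k_s$, of which there are $\binom{k}{k_1, \ldots, k_s}$ --- together with an element of $\op{Red}_{W_i}(g_i)$ for each $i$. Conversely, any such data reassembles into a reduced factorization of $g$ by inserting the chosen reduced factorization of each $g_i$ into the positions $c^{-1}(i)$ in order; the product equals $g_1 \cdots g_s = g$ \emph{independently of the interleaving}, precisely because the components commute. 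This bijection gives $\Fred(g) = \binom{\lR(g)}{\lR(g_1), \ldots, \lR(g_s)} \prod_{i=1}^s \Fred(g_i)$.

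I expect the only genuinely delicate point to be verifying that the commuting-components shuffle is a clean bijection, with no overcounting and no interleaving collisions, and that the length bookkeeping $m_i = k_i$ is forced; both follow from the additivity of reflection length in Proposition~\ref{Prop: gob-cycle-decomp}~(2) together with the fact that reflections of a direct product are supported on a single factor. Everything else is formal.
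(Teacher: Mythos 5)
Your proposal is correct and takes essentially the same approach as the paper's own proof: reduce to the parabolic closure via Proposition~\ref{Prop: Red_W=Red_(W_g)}, decompose $W_g$ into irreducibles with each $g_i$ quasi-Coxeter in $W_i$ (Proposition~\ref{Prop: gob-cycle-decomp} together with Corollary~\ref{cor:finer description reduced refl fact}), and count shuffles of reduced component factorizations via the multinomial coefficient. The only difference is one of exposition --- you spell out the length bookkeeping $m_i = k_i$ that forces each subsequence to be reduced, a point the paper compresses into the parenthetical ``(necessarily reduced)''.
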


\begin{proof}
By Proposition~\ref{Prop: Red_W=Red_(W_g)}, all reduced reflection factorizations of $g$ in $W$ consist of reflections that belong to $W_g$.  Let $W_g$ decompose into irreducibles as $W_g = W_1 \times \cdots W_s$, where for $i = 1, \ldots, s$ we have (possibly after a reordering) that $g_i$ is quasi-Coxeter for $W_i$ by Proposition~\ref{Prop: gob-cycle-decomp}.  Since reflections in different $W_i$ commute, all such factorizations are shuffles of (necessarily reduced) reflection factorizations of the $g_i$ in the components $W_i$.  By Proposition~\ref{Prop: Red_W=Red_(W_g)}, the number of reduced $W_i$-factorizations of $g_i$ is $\Fred(g_i)$.  Since, by Corollary~\ref{cor:finer description reduced refl fact}, the length of the reduced $W_i$-factorization of $g_i$ is $\lR(g_i)$, the number of ways to shuffle the factorizations of the $g_i$ is $\binom{\lR(g)}{\lR(g_1),\ldots,\lR(g_s)}$, and the result follows immediately.
\end{proof}

\begin{remark}
Since $\lR(g_i) = \lR[W_i](g_i)$ and $\Fred(g_i) = \Fred[W_i](g_i)$ for all $i$, one could rewrite \eqref{eq:fred of pqcox} using the ``local'' lengths and factorization counts (i.e., with respect to the parabolic subgroup $W_i$) rather than their ``global'' counterparts in $W$.
\end{remark}

\begin{remark}
\label{rem:pC in terms of Cox num}
In the case that $c$ is a parabolic \emph{Coxeter} element in a well generated group $W$, with $W_X$ the parabolic subgroup of $W$ in which $c$ is a Coxeter element and $W_X=W_1\times\cdots\times W_s$ the decomposition of $W_X$ into irreducibles, we have by Proposition~\ref{Prop: Fred of pqcox to Fred of qcox} and \eqref{eq: deligne-arnold-bessis formula} that
\[
\Fred(c)=\binom{n}{n_1,\ldots,n_s}\cdot\prod_{i=1}^s\dfrac{h_i^{n_i} \cdot n_i!}{\#W_i}=\dfrac{n!}{\#W_X}\cdot\prod_{i=1}^sh_i^{n_i}
\]
where $n$ and $n_i$ are respectively the ranks of $W$ and $W_i$ and where $h_i$ is the Coxeter number of $W_i$.
\end{remark}

\section{Counting relative generating sets}
\label{sec: counting rrgs}

In the previous section, we showed that the number $\Fred(g)$ of reduced reflection factorizations of a parabolic quasi-Coxeter element $g$ in a well generated complex reflection group $W$ are well behaved (they are always products of small primes) and are often given by nice formulas (as in Proposition~\ref{Obs. Fred regular qCox} and  Corollaries~\ref{cor: Fred for infinite families},~\ref{Prop: Fred of pqcox to Fred of qcox}). In this section, we show that there are also nice formulas for the numbers of (relative) generating sets in the combinatorial subfamily of well generated complex reflection groups.  In particular, we see in \S\ref{sec: rrgs combinatorial family} that the relative generating sets preserve the tree-like enumeration from the case of the symmetric group (\S\ref{sec: couting rrgs in S_n}). Leaving the combinatorial family, in \S\ref{sec: counting RGS with laplacian} we see that the number of (relative) generating sets does not seem to have nice uniform formulas, except for a special case in Weyl groups via the $W$-Laplacian of \cite{chapuy_theo}. Lastly, in \S\ref{sec: comparison DPS cacti}, we relate the relative generating sets in the symmetric group and certain cacti used by Duchi--Poulalhon--Schaeffer \cite{DPS_II} to compute Hurwitz numbers bijectively.

\subsection{Counting relative generating sets in the symmetric group} \label{sec: couting rrgs in S_n}

In the case of the symmetric group, the relative generating sets have a tree-like structure. Thus, it is unsurprising that their enumeration will be aided by the following weighted version of Cayley's theorem.

\begin{theorem}[{weighted Cayley theorem \cite[Thm.~5.13]{Bona}}] \label{thm: weighted cayley}
We have
\[
 \sum_{\substack{T \textrm{ a tree on vertex} \\ \textrm{set } \{0, 1, \ldots, k\}}} \ \prod_{i = 0}^{k} x_i^{\deg_T(i)} = x_0x_1\cdots x_k(x_0+x_1+\cdots + x_k)^{k-1}.
\]
 \end{theorem}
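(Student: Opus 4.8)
The plan is to prove this by a generating-function / differentiation argument, or more cleanly via a direct bijective/recursive counting of trees with prescribed degree sequence. First I would recall the combinatorial content: a labeled tree on the vertex set $\{0,1,\ldots,k\}$ has exactly $k$ edges, so the sum of degrees $\sum_i \deg_T(i) = 2k$, and the monomial $\prod_i x_i^{\deg_T(i)}$ therefore has total degree $2k$ in each term. The right-hand side, upon expansion, will be a polynomial of total degree $(k+1) + (k-1) = 2k$, so the degrees match, which is a useful sanity check to carry along.

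\emph{Approach via degree sequences (Prüfer).} The key step I would use is the Prüfer correspondence, which gives a bijection between labeled trees on $\{0,1,\ldots,k\}$ and sequences in $\{0,1,\ldots,k\}^{k-1}$ (of length $k-1$), under which the degree of vertex $i$ in the tree equals one plus the number of occurrences of $i$ in its Prüfer sequence. Under this bijection one has $\deg_T(i) = 1 + c_i$, where $c_i$ counts the appearances of $i$ in the Prüfer word, and $\sum_i c_i = k-1$. Substituting, the left-hand side becomes
\[
\sum_{\substack{T}} \prod_{i=0}^k x_i^{\deg_T(i)}
= \Big(\prod_{i=0}^k x_i\Big)\sum_{\substack{(p_1,\ldots,p_{k-1})\in\{0,\ldots,k\}^{k-1}}} \prod_{i=0}^k x_i^{c_i},
\]
and the inner sum is exactly the multinomial expansion of $(x_0+x_1+\cdots+x_k)^{k-1}$, since summing over all words of length $k-1$ in the alphabet $\{0,\ldots,k\}$ and recording the letter-multiplicities $c_i$ reproduces every monomial of total degree $k-1$ with its multinomial coefficient. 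This immediately yields the right-hand side $x_0x_1\cdots x_k\,(x_0+\cdots+x_k)^{k-1}$.

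\emph{Alternative if one wishes to avoid Prüfer.} One could instead prove the identity by induction on $k$, deleting a leaf: classify trees by which vertex is adjacent to a fixed leaf, or differentiate a known generating identity. But the Prüfer route is cleanest and makes the degree-tracking transparent, so I would present that.

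\emph{Main obstacle.} The only genuine content is verifying the degree-versus-occurrence statement of the Prüfer bijection (that $\deg_T(i)$ equals one plus the multiplicity of $i$ in the Prüfer sequence); everything after that is the purely formal multinomial expansion. Since the weighted Cayley theorem is quoted here as a known result from \cite{Bona}, in the actual paper I would simply cite it rather than reprove it; if a proof is wanted, the expected difficulty is entirely in recalling/stating the Prüfer correspondence carefully, after which the algebra is routine.
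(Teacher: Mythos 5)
Your proof is correct: the Pr\"ufer correspondence (trees on $\{0,\ldots,k\}$ biject with words of length $k-1$ over that alphabet, with $\deg_T(i)=1+$ the multiplicity of $i$) immediately reduces the identity to the multinomial expansion of $(x_0+\cdots+x_k)^{k-1}$, and this is precisely the standard argument behind the result cited from \cite{Bona}. The paper itself offers no proof—it quotes the weighted Cayley theorem as known—so your treatment matches the intended source, and your closing remark that one would simply cite it in the paper is exactly what the authors do.
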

 
\begin{proposition} \label{prop: RGS for symmetric group}
Let $g$ in $W=G(1,1,n)=\mathfrak{S}_n$ have cycle type $\lambda=(\lambda_1,\ldots,\lambda_k)$.  Then  
\begin{equation} \label{eq:num rgs in Sn}
\# \RGS(W, g) = n^{k - 2} \cdot \prod_{i = 1}^k \lambda_i.
\end{equation}
\end{proposition}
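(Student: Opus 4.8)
The plan is to combine the combinatorial characterization of relative generating sets in the symmetric group with the weighted Cayley theorem. First I would record the basic numerology: since reflection length in $\Symm_n$ is $\lR(g) = n - k$ and the rank of $\Symm_n$ is $n-1$, every relative generating set for $g$ consists of exactly $(n-1) - (n-k) = k-1$ transpositions. The parabolic closure of $g$ is the Young subgroup $\Symm_{\lambda_1} \times \cdots \times \Symm_{\lambda_k}$, and the induced partition $\Pi_g$ of $\{1, \ldots, n\}$ has $k$ blocks, namely the supports of the $k$ cycles of $g$. Specializing Proposition~\ref{prop:RGS characterization}(i) to $m = 1$ (where the color conditions become vacuous, since $\ZZ/1\ZZ$ is trivial), a set $S$ of transpositions is a relative generating set for $g$ if and only if its associated graph $\Gamma(S)$ is a tree relative to $\Pi_g$.

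Next I would set up a bijective count of the relative trees. A tree relative to $\Pi_g$ is, by definition, a graph on $\{1,\ldots,n\}$ with no edge inside a block and whose block-contraction is a tree on the $k$ super-vertices indexed by the blocks. Such a graph is determined uniquely by the pair consisting of (a) the contracted tree $T$ on the vertex set $\{1,\ldots,k\}$, which records which pairs of blocks are joined, together with (b) for each edge $\{i,j\}$ of $T$, a choice of one concrete endpoint in block $i$ and one in block $j$. The endpoint choices for distinct edges of $T$ are independent, so the number of relative trees contracting to a fixed $T$ is
\[
\prod_{\{i,j\} \in E(T)} \lambda_i \lambda_j = \prod_{i=1}^{k} \lambda_i^{\deg_T(i)},
\]
since each block $i$ contributes a factor $\lambda_i$ once for every edge of $T$ incident to it. Summing over all trees on the blocks gives
\[
\# \RGS(W, g) = \sum_{T \text{ a tree on } \{1, \ldots, k\}} \ \prod_{i=1}^{k} \lambda_i^{\deg_T(i)}.
\]

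Finally I would evaluate this sum using the weighted Cayley theorem (Theorem~\ref{thm: weighted cayley}) with the substitution $x_i = \lambda_i$ on the $k$-element vertex set; there the exponent on the linear factor is $k-2$. Since $\lambda_1 + \cdots + \lambda_k = n$, the theorem yields $\sum_T \prod_i \lambda_i^{\deg_T(i)} = \big(\prod_{i=1}^k \lambda_i\big) \cdot n^{k-2}$, which is exactly \eqref{eq:num rgs in Sn}. The only point requiring care is the bijection in the middle step: I must confirm that each tree relative to $\Pi_g$ arises from a unique pair consisting of a contracted tree $T$ together with endpoint choices, and that the union of a relative tree with a reduced factorization of $g$ (which supplies a spanning tree inside each block, by Proposition~\ref{Prop: Red_W=Red_(W_g)} and Cayley's theorem) indeed produces a good generating set for $\Symm_n$. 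This is precisely what Proposition~\ref{prop:RGS characterization}(i) guarantees, via Proposition~\ref{prop:relative trees}(i), so the weighted Cayley theorem then does all the enumerative work and no delicate estimates are needed. As a sanity check, the formula correctly returns $1$ when $k=1$ (the $n$-cycle, where $g$ is quasi-Coxeter and $\RGS(W,g) = \{\varnothing\}$) and $\lambda_1 \lambda_2$ when $k=2$ (a single transposition joining the two blocks).
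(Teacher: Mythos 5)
Your proof is correct and follows essentially the same route as the paper's: invoke Proposition~\ref{prop:RGS characterization}(i) to identify relative generating sets with trees relative to $\Pi_g$, observe that each edge of the contracted tree joining blocks $i$ and $j$ can be realized in $\lambda_i\lambda_j$ ways so the count is $\sum_T \prod_i \lambda_i^{\deg_T(i)}$, and evaluate via the weighted Cayley theorem. Your version is if anything slightly more careful -- you correctly use the exponent $k-2$ for a $k$-vertex set (the paper's displayed intermediate expression has a typo showing $(\lambda_1+\cdots+\lambda_k)^{k-1}$, though its final answer $n^{k-2}\prod_i\lambda_i$ is right), and you spell out the bijection between relative trees and pairs (contracted tree, endpoint choices) that the paper leaves implicit.
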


\begin{proof}
By Proposition~\ref{prop:RGS characterization}~(i), the elements  in the relative generating set of $g$ correspond to a tree relative to the partition $\Pi_g$ of $\{1, \ldots, n\}$ into the $k$ cycles of $g$. In such a tree, there are $ \lambda_i \cdot \lambda_j$ choices of an edge joining cycle $i$ to cycle $j$ for $i, j$ in $\{1, \ldots, k\}$, and thus 
\[
\# \RGS(G(1,1,n), g) = \sum_{\substack{T \text{ a tree } \\ \text{on } \{1, \ldots, k\}}} \prod_{i = 1}^{k} \lambda_i ^{\deg(i)} \overset{\left(\substack{\text{Thm.} \\ \ref{thm: weighted cayley}}\right)}{=}(\lambda_1 + \ldots + \lambda_{k})^{k - 1}
\cdot \prod_{i = 1}^k \lambda_i  = n^{k - 2} \cdot \prod_{i = 1}^k \lambda_i.  \qedhere
\]
\end{proof}

\subsection{Counting relative generating sets in the combinatorial family} 

\label{sec: rrgs combinatorial family}

In the combinatorial family, the relative generating sets retain the tree-like structure apparent in the case of symmetric groups.  The cases in the following theorem correspond to those of Proposition~\ref{prop:RGS characterization}.

\thmcountrgs

\begin{proof}
In case (i), we have by Proposition~\ref{prop:RGS characterization}~(i) that the elements in the relative generating set correspond to a tree relative to the partition $\Pi_g$ of $\{1, \ldots, n\}$ into the $k + 1$ generalized cycles of $g$. In such a tree, there are $m \cdot \lambda_i\cdot  \lambda_j$ choices of an edge joining cycle $i$ to cycle $j$ (for $i, j$ in $\{0, 1, \ldots, k\}$), and thus 
\[
\# \RGS(W, g) = m^{k} \sum_{\substack{T \text{ a tree on } \\\{0, 1, \ldots, k\}}} \prod_{i = 0}^{k} \lambda_i ^{\deg(i)}.
\]
By the weighted Cayley theorem (Theorem~\ref{thm: weighted cayley}), this simplifies to
\[
\# \RGS(G(m, 1, n), g) = m^{k} \cdot (\lambda_0 + \ldots + \lambda_{k})^{k - 1}
\cdot \prod_{i = 0}^k \lambda_i  = m^{k} \cdot n^{k - 1} \cdot \prod_{i = 0}^k \lambda_i,
\]
as claimed.

In case (ii), we have by Proposition~\ref{prop:RGS characterization}~(ii) that the elements in the relative generating set correspond to a rooted tree relative to the partition $\Pi_g$ of $\{1, \ldots, n\}$ into the $k$ (generalized) cycles of $g$.  There are $\varphi(m) \cdot n$ choices for the diagonal reflection, as its nontrivial eigenvalue may be any of $\varphi(m)$ primitive roots of unity and may be located in any of $n$ diagonal positions.  The remaining elements form a tree relative to $\Pi_g$, so may be chosen in $ m^{k - 1} \cdot n^{k - 2} \cdot \prod_{i = 1}^k \lambda_i$ ways (as in case (i)).  Multiplying these independent choices together gives the result.

Finally, in case (iii), we have by Proposition~\ref{prop:RGS characterization}~(iii) that the elements in the relative generating set correspond to a unicycle relative to the partition $\Pi_g$ of $\{1, \ldots, n\}$ into the $k$ (generalized) cycles of $g$.  Therefore, by Proposition~\ref{prop:relative trees}~(iii),  given any relative generating set of reflections for $g$, taking the associated graph $\Gamma$ and contracting the cycles of $g$ to single points leaves either a rooted tree or a unicycle.

Suppose first that the contraction is a rooted tree; we compute the number of relative generating sets that map to this particular graph, as follows: if (after contraction) the loop is on the $i$th cycle, its endpoints (before contraction) may be chosen in $\binom{\lambda_i}{2}$ ways, and its color may be chosen in $\varphi(m)$ ways (to make $\delta$ a primitive generator).  The remaining $k - 1$ edges form a tree relative to the cycle partition $\Pi_g$, so (as in cases~(i) and~(ii)) may be chosen in $m^{k - 1} \cdot n^{k - 2} \cdot \prod_{i = 1}^k \lambda_i$ ways.  Thus the total contribution to $\#\RGS(W, g)$ in this case is 
\[
\varphi(m)\cdot m^{k - 1} n^{k - 2} \prod_{i = 1}^k \lambda_i \cdot \sum_{i = 1}^k \binom{\lambda_i}{2} 
=                                 
\frac{1}{2}\varphi(m) \cdot m^{k - 1}n^{k - 2}\left(-n + \sum_{i = 1}^k \lambda_i^2\right) \prod_{i = 1}^k \lambda_i.
\] 
Now suppose instead that the contraction of $\Gamma$ is a unicycle.  In this case, the length $j$ of the contracted graph cycle is at least $2$.  Let the vertices in the cycle be indexed by $S = \{s_1, \ldots, s_j \} \in \binom{[k]}{j}$.  Performing a second contraction on this set leaves a tree on $k - j + 1$ vertices, with weights $\{\lambda_i \colon i \not \in S\} \cup \{\lambda_{s_1} + \ldots + \lambda_{s_j}\}$. 
We now consider how many relative generating sets contract to such trees.  By the weighted Cayley theorem (Theorem~\ref{thm: weighted cayley}), the number of ways to choose the reflections corresponding to the edges of the tree is
\[
m^{k - j} \cdot (\lambda_{s_1} + \ldots + \lambda_{s_j}) \cdot \prod_{i \not \in S} \lambda_i \cdot (\lambda_1 + \ldots + \lambda_k)^{k - j - 1} 
=
m^{k - j} \cdot n^{k - j - 1} \cdot (\lambda_{s_1} + \ldots + \lambda_{s_j}) \cdot \prod_{i \not \in S} \lambda_i,
\]
where the factor $m^{k - j}$ corresponds to the choice of colors.  Independently, the number of ways to choose the reflections that correspond to the graph cycle is 
\[
\frac{\varphi(m) \cdot m^{j - 1} (j - 1)!}{2} \cdot (\lambda_{s_1} \cdots \lambda_{s_j})^2,
\]
where the first factor accounts for the number of different ways of inserting a cycle (including colors) while reversing the second contraction, and the second factor account for the number of ways to reverse the first contraction (since the contracted vertices have weights $\lambda_{s_1}, \ldots, \lambda_{s_j}$ and a graph cycle is $2$-regular).
Combining all the different values of $j$ (including the $j=1$ loop case), we conclude that
\begin{align}
\notag
\frac{\# \RGS(W, g)}{\varphi(m)} &=  
\frac{m^{k - 1}}{2}  \prod_{i = 1}^k \lambda_i  \cdot
\Big(-n^{k - 1}  +
\sum_{j = 1}^k (j - 1)! \cdot n^{k - j - 1} \sum_{S \in \binom{[k]}{j}} (\lambda_{s_1} + \ldots + \lambda_{s_j}) \lambda_{s_1} \cdots \lambda_{s_j}
\Big)\\
\label{eq:RGS 2(b)}
&=  \frac{m^{k - 1}}{2} \prod_{i = 1}^k \lambda_i \cdot  
\Big(-n^{k - 1}  + \sum_{j = 1}^k (j - 1)! \cdot n^{k - j - 1} m_{(2,1^{j-1})}(\lambda)\Big),
\end{align}
where $m_{\mu}(\lambda)$ is the monomial symmetric polynomial in the variables $\lambda=(\lambda_1,\ldots,\lambda_k)$. We may rewrite this symmetric polynomial when $\mu=(2,1^{j-1})$ in terms of the elementary symmetric polynomials as $m_{(2,1^{j-1})}= e_{j,1}-(j+1) e_{j+1}$. Since $e_1(\lambda)=n$, \eqref{eq:RGS 2(b)} becomes
\[
 \frac{\# \RGS(W, g)}{\varphi(m)} =  \frac{m^{k - 1}}{2} \prod_{i = 1}^k \lambda_i  \cdot
\Big(-n^{k - 1}  +  \sum_{j=1}^k (j-1)! \cdot n^{k-j} e_j(\lambda) - \sum_{j=1}^k (j-1)! \cdot (j+1)
  n^{k-j-1} e_{j+1}(\lambda)\Big).
\]
The first term of the first sum on the right side is
$n^{k-1}e_1(\lambda)=n^k$. Separating this term and combining the
remaining terms with the second sum gives
\begin{align*}
\frac{\# \RGS(W, g)}{\varphi(m)}  &= \frac{m^{k - 1}}{2} \prod_{i = 1}^k \lambda_i  \cdot
\Big(n^k - n^{k-1} -\sum_{j=2}^k  (j-2)! \cdot n^{k-j} e_j(\lambda)\Big),
\end{align*}
as claimed.
\end{proof}

\begin{remark}
The RHS of case (iii) of Theorem~\ref{thm:count rgs} is related to the formula for genus-$1$ Hurwitz numbers in the symmetric group (see \cites{GJ99,V01}). This will play a role in \cite{DLM3}.
\end{remark}

\begin{remark}
It will be important in the sequel \cite{DLM3} that the factor $\varphi(m)$ in cases (ii) and (iii) of Theorem~\ref{thm:count rgs} corresponds to an equidistribution: in case (ii), each primitive $m$th root of unity occurs equally often as an eigenvalue for the diagonal reflection in the elements of $\RGS(W, g)$, while in case (iii), each primitive $m$th root of unity occurs equally often as the value of $\delta(F \cup S)$ where $F$ is the set of factors in a fixed reflection factorization of $g$ and $S$ varies over $\RGS(W, g)$. 
\end{remark}

\subsection{Counting good generating sets via the $W$-Laplacian} \label{sec: counting RGS with laplacian}

Outside the combinatorial family, the collections $\RGS(W)$ and $\RGS(W,g)$ of (relative) generating sets do not seem to have good numerological properties. There is however a closed formula, at least when $W$ is a Weyl group and $g=\id$. If $W$ is an irreducible Weyl group of rank $n$ and $h$ is its Coxeter number, it was shown in \cite[proof of Thm.~8.11]{chapuy_theo} that 
\[
h^n=\sum_{W'\leq W} \# \RGS(W')\cdot I(W'),
\]
where the sum is over all rank-$n$ reflection subgroups $W'$ of $W$. M\"obius inversion in the poset of reflection subgroups (ordered by reverse inclusion) gives
\begin{equation}
\# \RGS(W) =\dfrac{1}{I(W)} \cdot \sum_{W'\leq W} \mu(W,W') \prod_{i=1}^nh_i(W'),\label{eq: RGS(W) via laplacia}
\end{equation} 
where again the sum is over all rank-$n$ reflection subgroups of $W$, and where $\{h_i(W')\}$ denotes the \defn{multiset of Coxeter numbers} of $W'$. This is defined as the multiset where each irreducible component $W_i$ of $W'$ contributes $\rank(W_i)$-many times its Coxeter number $h(W_i)$ (as in Remark~\ref{rem:pC in terms of Cox num}); in particular, for an irreducible $W$ we have $\{h_i(W)\}=\{\underbrace{h,h,\dots,h}_{n\text{ times}}\}$.

\begin{example}
The case of $E_6$ is a particularly nice example where the calculation above can be rendered by hand (see also \cite[Ex.~8.13]{chapuy_theo}). In \cite{DPR}, the authors have compiled tables with the reflection subgroups of finite Coxeter groups. In particular, for $E_6$, the poset of \emph{maximum rank} reflection subgroups contains a minimal
element ($E_6$ itself) and only atoms: $36$ groups of type $A_1\times A_5$ and $40$ groups of type $A_2\times A_2 \times A_2$. The multisets of Coxeter numbers of the groups $E_6$, $A_1A_5$, and $A_2^3$ are respectively $\{12^6\}$, $\{2, 6^5\}$, and $\{3^6\}$ (where exponents denote multiplicities).  Then Equation~\eqref{eq: RGS(W) via laplacia} for the cardinality of the family $\RGS(E_6)$ evaluates to
\[
\#\RGS(E_6)=\dfrac{1}{3}\cdot \left( 1\cdot 12^6 -1\cdot 36\cdot (2\cdot 6^5)-1\cdot 40\cdot(3^6)\right)=798984.
\]
\end{example}

The results of performing the same calculation for the other exceptional Weyl groups are given in Table~\ref{table:RGS for exceptional Weyl groups}.

\begin{table}
\[
    \begin{array}{|c||l|}
    \hline
        W & \hspace{.7in} \#\RGS(W) \\\hline
        G_2 & \phantom{12345678910}6 = 2 \cdot 3 \\
        F_4 & \phantom{12345678}2160 = 2^4 \cdot 3^3 \cdot 5\\
        E_6 & \phantom{123456}798984 = 2^3 \cdot 3^6 \cdot 137 \\
        E_7 & \phantom{123}196800768 = 2^8 \cdot 3^2 \cdot 229 \cdot  373 \\
        E_8 & 273643237440 = 2^6 \cdot 3^3 \cdot 5 \cdot 31671671 \\\hline
    \end{array}
\]
    \caption{Number of good generating sets of reflections for the exceptional Weyl groups.}
    \label{table:RGS for exceptional Weyl groups}
\end{table}

\subsection{Comparison with the cacti of Duchi--Poulalhon--Schaeffer} \label{sec: comparison DPS cacti}

The relative trees appearing in the proof of Proposition~\ref{prop: RGS for symmetric group} are similar to certain objects called  {\em Cayley cacti} that were introduced by Duchi--Poulalhon--Schaeffer \cite[\S 3]{DPS_II}. The latter were part of a construction to bijectively compute Hurwitz numbers in the symmetric group (a special case of a more general construction for \emph{double Hurwitz numbers}). This is arguably the only bijective proof of the formula for genus-$0$ Hurwitz numbers, although there are other combinatorial proofs \cite{strehl,GJ97,B-MS}.

Cayley cacti are defined as follows. Given positive integers $k\leq n$ and a weak composition ${\bf m}=(m_1,\ldots,m_n)$ of $k$ with $\sum_i i\cdot m_i=n$, a \defn{Cayley cactus} of type ${\bf m}$ is a tree consisting of $m_i$ oriented $i$-polygons connected by $k-1$ labeled edges.  Let $DPS(m_1,\ldots,m_n)$ be the number of Cayley cacti of type ${\bf m}$.  The authors in \cite{DPS_II} gave the following formula for this number:
\begin{equation} \label{eq:CayleyCacti}
DPS(m_1,\ldots,m_n) = \frac{1}{k} \binom{k}{m_1,\ldots,m_n} \cdot n^{k-2}.
\end{equation}
Comparing this formula with the formula \eqref{eq:num rgs in Sn} for the number $RT(\lambda)$ of relative trees, we see that 
\begin{equation} \label{eq:rel 2 formulas}
\binom{k}{m_1,\ldots,m_n} \cdot RT(\lambda) \,=\, k\cdot \prod_i \lambda_i \cdot DPS(m_1,\ldots,m_n),
\end{equation}
where $\lambda=(1^{m_1},2^{m_2},\ldots,n^{m_n})$.
This relation can be explained as follows. From the proof of Proposition~\ref{prop:RGS characterization}~(i), underlying each relative tree there is a tree $T$ with labeled vertices $\{1,\ldots,k\}$, and so (as in the proof of Theorem~\ref{thm:count rgs}~(i)) we have
\begin{equation} \label{eq: expression RT}
   RT(\lambda) =  \sum_{\substack{T \text{ a tree on vertex} \\ \text{set } \{1, \ldots, k\}}} \ \prod_{i = 1}^{k} \lambda_i ^{\deg(i)}.
\end{equation}
Similarly, underlying each Cayley cactus there is a tree $T'$ with labeled edges $\{1,\ldots,k-1\}$;  see Figure~\ref{fig:comparing cacti}.  Thus
\begin{equation}
\label{eq: expression DPS}   
DPS(m_1,\ldots,m_n) = \sum_{\Pi} \sum_{\substack{T' \text{ a tree with } \\ \text{edges } \{1, \ldots, k-1\}}} \prod_{i = 1}^{k} \lambda_i ^{\deg(i)-1},
\end{equation}
where $\Pi$ is a multiset in $\binom{[k]}{m_1,\ldots,m_n}$ encoding the distribution of oriented $i$-polygons, and the power $\lambda_i^{\deg(i)-1}$  is due the polygons in the cacti being oriented. Next, there is a standard $k$-to-$1$ map from trees with edges labeled $\{1,\ldots,k-1\}$ to trees with vertices labeled $\{1,\ldots,k\}$: choose a vertex of the tree to be labeled $1$ and {\em push} the label $i$ of an edge to a vertex label $i+1$ away from the chosen vertex; see Figure~\ref{fig:comparing cacti}. Using this $k$-to-$1$ map on \eqref{eq: expression RT} gives
\[
 \binom{k}{m_1,\ldots,m_n} \cdot RT(\lambda) 
 =  \sum_{\Pi} \sum_{\substack{T \text{ a tree on vertex } \\ \text{set } \{1, \ldots, k\}}} \ \prod_{i = 1}^{k} \lambda_i ^{\deg(i)} 
 = k\sum_{\Pi} \sum_{\substack{T' \text{ a tree with} \\ \text{edges } \{1, \ldots, k-1\}}} \prod_{i = 1}^{k} \lambda_i ^{\deg(i)}.
\]
Lastly, by \eqref{eq: expression DPS}, this is just $k\cdot \prod_i \lambda_i \cdot DPS(m_1,\ldots,m_n)$, verifying \eqref{eq:CayleyCacti}.

\begin{figure}
    \centering
    \includegraphics{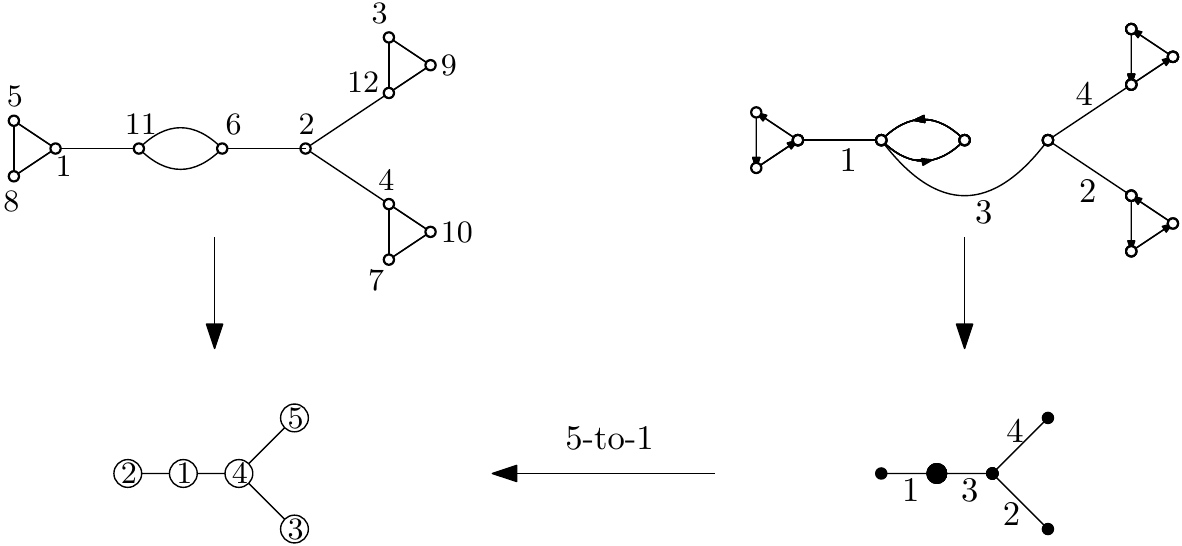}
    \caption{The relative tree representing the relative generating set $\{(1,11),(2,6),(2,4),(2,12)\}$ of $\sigma=(1\, 5\, 8)(2)(3\, 12\,9)(4\,7\,10)(6\,11)$ and its underlying vertex-labeled tree (left) and the edge-labeled cactus appearing in the work Duchi--Poulhalhon--Schaeffer and its underlying  edge-labeled tree (right). The underlying labeled trees are in correspondence by selecting a vertex in the edge-labeled tree (illustrated with a larger vertex) to be labeled $1$ and pushing away the label $i$ of an edges to a vertex label $i+1$.}
    \label{fig:comparing cacti}
\end{figure}

\section{Further remarks}
\label{sec: gen_quest}

We end with a number of remarks and open questions.

\subsection{Elements that do not belong to proper reflection subgroups}
\label{sec:8:1}

As discussed in \S\ref{sec.origins}, the quasi-Coxeter elements in a \emph{real} reflection group $W$ are precisely the elements that do not belong to any proper reflection subgroup of $W$.  In the complex case, this is \emph{not} a characterization; for example, the element $[\id; (1, 1, 1)] \in W = G(3, 3, 3)$ does not belong to any proper reflection subgroup of $W$ but is not quasi-Coxeter (e.g., because it has reflection length $4$).  It is not even clear whether all quasi-Coxeter elements have this property: it seems possible in principle that a quasi-Coxeter element $g \in W$ could belong to a reflection subgroup $W' < W$ for which $\lR[W'](g) > \lR(g)$ (so that its reduced $W$-factorizations all generate $W$, but some longer factorization generates only the subgroup $W'$).  This raises several questions.

\begin{question}
Is there an alternate characterization of the elements in a complex reflection group that do not belong to any proper reflection subgroup?  What about those for which $\lR(g) = \codim(V^g)$?
\end{question}
\begin{question}
\label{q:not in proper refn subgroup}
Is it true for every well generated complex reflection group $W$ that every quasi-Coxeter element in $W$ does not belong to any proper reflection subgroup of $W$? 
\end{question}
\begin{question}
\label{q:more about proper refn subgroups}
Is it true for every well generated complex reflection group $W$ that \emph{among the elements $g\in W$ that satisfy $\lR(g) = \codim(V^g)$}, the property of not belonging to any proper reflection subgroup $W' < W$ is equivalent to being a quasi-Coxeter element? 
\end{question}

\begin{remark} \label{rem: open questions}
In the exceptional groups, Questions~\ref{q:not in proper refn subgroup} and~\ref{q:more about proper refn subgroups} should be approachable by an exhaustive computer search using SageMath \cite{sagemath} and CHEVIE \cite{chevie}.
\end{remark}

\subsection{Heredity of reflection factorizations to parabolic subgroups}
\label{sec:8:2}

Proposition~\ref{Prop: Red_W=Red_(W_g)}~(2) establishes a sort of heredity property for reflection factorizations and parabolic subgroups, under the hypothesis $\lR(g) = \codim(V^g)$.  Remarks~\ref{rem_par_1} and~\ref{rem_par_2} show that the condition that the subgroup is parabolic is essential.  We do not know if the other hypothesis can be relaxed.

\begin{question}
Is it true for every parabolic subgroup $W_X$ of a complex reflection group $W$ and every $g \in W_X$ that $\op{Red}_{W_X}(g) = \op{Red}_W(g)$, even if $\lR(g) > \codim(V^g)$?
\end{question}

In the other direction, Proposition~\ref{Prop: X=V^g and l_R(g)=codim(V^g)} has as a hypothesis that a reduced factorization of $g$ is a \emph{good} generating set for a parabolic subgroup $W_X$; attempting to weaken the hypothesis leads to the next question.

\begin{question}
Does there exist an element $g$ in a complex reflection group $W$ such that the factors of a reduced reflection factorization of $g$ generate a parabolic subgroup $W_X$ different from $W_g$?  (Necessarily $\lR(g) > \codim(V^g)$ in this case.)
\end{question}

\subsection{Properties that do not extend from the real to the complex setting}
\label{sec:8:3}

In \S\S\ref{sec:8:1}--\ref{sec:8:2} we have asked about various properties that hold in real reflection groups but fail in general in the complex setting, whether they are still true in the collection of elements that satisfy $\lR(g)=\codim(W^g)$. However, not everything extends \emph{even if we restrict to this subfamily}, as the examples in this section illustrate.

Proposition~\ref{Prop:gob.char.hurw.orb} (from \cite[Cor.~3.11]{gobet-cycle}) shows that for any element in a real reflection group, the number of Hurwitz orbits on the reduced factorizations of $g$ equals the number of minimal reflection subgroups containing $g$. This is not true for complex reflection groups: a counter-example is furnished by the element $g:=[\id; (1,1,1)]\in W:= G(3,1,3)$. This is a central element with reflection length $\lR(g)=\codim(W^g)=3$ and with a single Hurwitz orbit for its reduced reflection factorizations ($[\id; (1,0,0)]\cdot [\id; (0,1,0)]\cdot [\id; (0,0,1)]$ and the permutations of these three factors). The group generated by such a factorization is the normal subgroup $G:=C_3\times C_3\times C_3 \lhd W$ and $G$ is a minimal reflection subgroup containing $g$. As we saw in \S\ref{sec:8:1}, the group $G(3,3,3)$ is also a \emph{minimal} reflection subgroup containing $g$ but it does not correspond to any new Hurwitz orbit.

As another example, Carter \cite[Lem.~2]{carter} showed that for elements $g$ in finite real reflection groups $W$,  the two intervals $[1,g]_{\leq_{\codim}}$ and $[1,g]_{\leq_{\RRR}}$  coincide (an easy consequence of the fact that $\lR(g)=\codim(V^g)$ for \emph{all} elements $g$ in real reflection groups). These intervals also coincide for Coxeter elements in well generated complex reflection groups \cite[Cor. 6.6]{LM}, but for $g=[\id;(1,2,3,4,5,6)]$ in $G(7,7,6)$ -- which does indeed satisfy $\codim(V^g)=\lR(g)=6$ -- we have $[1,g]_{\leq_{\codim}}\neq [1,g]_{\leq_{\RRR}}$ \cite[\S8.6]{LM}.

\begin{question}
Is there a natural way to extend either Proposition~\ref{Prop:gob.char.hurw.orb} or Carter's result to complex reflection groups, notwithstanding  the examples above?
\end{question}

\subsection{Root and coroot lattices for complex reflection groups}\label{sec:8:4} 
One of the most important properties of good generating sets for Weyl groups $W$ is their relation to bases of the root and coroot lattices of $W$, as given in Proposition~\ref{Prop: BW charn of gen sets} (from \cite[Cor.~1.2]{BW}). In particular, we rely on this statement for our characterization of parabolic quasi-Coxeter elements $g\in W$ in Theorem~\ref{Prop: pqCox characterization for Weyl W}. Recently Brou\'e--Corran--Michel \cite{BCM} defined (co)root systems and (co)root lattices (in fact, also a connection index) for complex reflection groups.  In the case of a well generated group $G$, they gave a complete generalization of Proposition~\ref{Prop: BW charn of gen sets}: they proved \cite[Prop.~3.44 + Thm.~6.6]{BCM} that $G$ always has a special (\emph{distinguished, principal}) root system $\mathfrak{R}$ for which any set of $\rank(G)$-many reflections of $G$ will generate the full group $G$ if and only if their corresponding roots and coroots form bases of the root and coroot lattices associated to $\mathfrak{R}$.
Moreover, as we discussed in \S\S\ref{sec.parabolic qce}--\ref{Sec: full factorizations and p-q-Cox}, a big part of the arguments needed in the complete proof of Theorem~\ref{Prop: characterization of pqCox} can be made uniform in the subfamily of Weyl groups, largely relying on the characterization of parabolic quasi-Coxeter elements given in Theorem~\ref{Prop: pqCox characterization for Weyl W}. We may ask then:

\begin{question}
Is there a way to apply the constructions in \cite{BCM} to give a more conceptual explanation for the equivalences in Theorem~\ref{Prop: characterization of pqCox}?
\end{question}

\subsection{Algebraic solutions for the WDVV equations}\label{sec:8:wdvv}

In Remark~\ref{remark:Fred_regulars_extend}, we gave a variation of Proposition~\ref{Obs. Fred regular qCox} that holds for regular elements in non-Weyl groups. It is then reasonable to ask whether there is an interpretation of the numbers $\delta_g$ appearing in \eqref{EQ:reg_freds} analogous to the one in Remark~\ref{Rem. numbers dg are hard}. We pursue this question here in some detail for the group $W=H_4$.

In the case of Weyl groups, Dinar's constructions of Frobenius manifolds described in \S\ref{sssec.reg_qcox_freds} make use of the geometry of the associated Lie algebra in a highly non-trivial way; there is a priori no reason to expect that for the non-Weyl groups $H_3, H_4$, the exponents of a regular element $g$ might be related to the weights of the corresponding Frobenius manifold $\mathcal{F}_g$ (assuming that $\mathcal{F}_g$ exists!). Nevertheless, for all quasi-Coxeter classes in $H_3$, the Frobenius manifolds $\mathcal{F}_g$ have been constructed and there is a connection between their weights and the exponents of $g$; we describe this relationship next.

In a complex reflection group $W$, any number that is the order of a regular element $g \in W$ is called a \defn{regular number} for $W$.  For a regular number $\gamma$, there always exists a regular element $g_0\in W$ that has $\gamma-1$ as an exponent, and by \cite[Thm.~11.56(iii)]{LehrerTaylor}, all the exponents of this element are given by $e_i(g_0)=(d_i-1) \mod \gamma$. Moreover, \emph{any} regular element $g$ of order $\gamma$ must be $W$-conjugate to $g_0^m$ for some power $m$ relatively prime to $\gamma$ (compare with \cite[Thm.~1.3(ii)]{RRS}); there may be many options for $m$ but we always choose the smallest positive one. It is easy to see from \cite[\S6.1]{KMS} that for all regular quasi-Coxeter elements $g\in H_3$, the weights of the corresponding manifolds $\mathcal{F}_g$ are precisely the numbers
\begin{equation}
\dfrac{\big(e_i(g)+m\big)\!\!\!\!\mod|g|}{|g|},\label{eq:Frob-weights-real}
\end{equation}
where the $e_i(g)$ are the exponents of $g$. If we assume that this procedure also produces the weights associated to Frobenius manifolds $\mathcal{F}_g$ for regular elements $g\in H_4$, then, after comparing  Conjecture~\ref{conj alg frob mfld} with Table~\ref{Table: Fred(g)}, we can compute the algebraicity degrees $d_g$ of the corresponding prepotentials of $\mathcal{F}_g$; this data is displayed in Table~\ref{Table: H_4-conj}.

\begin{table}
\[
\begin{array}{|l|c|c||l|c|c|}
\hline
g\phantom{\big)} & \text{weights} & d_g & g\phantom{\big)} & \text{weights} & d_g \\\hline
H_4\phantom{\Bigg)} &  \left(\dfrac{2}{30},\dfrac{12}{30},\dfrac{20}{30},\dfrac{30}{30}\right) & 1 & H_4(9)\phantom{\Bigg)} & \left(\dfrac{14}{30},\dfrac{20}{30},\dfrac{24}{30},\dfrac{30}{30}\right) & 14 \\\hline 
H_4(1)\phantom{\Bigg)} &  \left(\dfrac{2}{20},\dfrac{10}{20},\dfrac{12}{20},\dfrac{20}{20}\right) & 2 & H_4(7)\phantom{\Bigg)} & \left(\dfrac{6}{20},\dfrac{10}{20},\dfrac{16}{20},\dfrac{20}{20}\right) & 8 \\\hline 
H_4(2)\phantom{\Bigg)} &  \left(\dfrac{2}{15},\dfrac{5}{15},\dfrac{12}{15},\dfrac{15}{15}\right) & 3 & H_4(6)\phantom{\Bigg)} & \left(\dfrac{4}{15},\dfrac{9}{15},\dfrac{10}{15},\dfrac{15}{15}\right) & 9 \\\hline
H_4(3)\phantom{\Bigg)} &  \left(\dfrac{2}{12},\dfrac{6}{12},\dfrac{8}{12},\dfrac{12}{12}\right) & 4 & H_4(8)\phantom{\Bigg)} & \left(\dfrac{2}{6},\dfrac{2}{6},\dfrac{6}{6},\dfrac{6}{6}\right) & 15 \\\hline 
H_4(4)\phantom{\Bigg)} &  \left(\dfrac{2}{10},\dfrac{2}{10},\dfrac{10}{10},\dfrac{10}{10}\right) & 5 & H_4(10)\phantom{\Bigg)} & \left(\dfrac{6}{10},\dfrac{6}{10},\dfrac{10}{10},\dfrac{10}{10}\right) & 45 \\\hline 
\end{array}
\]
\caption{Conjectural weights and algebraicity degrees $d_g$ for  Frobenius manifolds $\mathcal{F}_g$ associated with regular quasi-Coxeter elements $g\in H_4$.  Weights are given over the common denominator $|g|$.
\label{Table: H_4-conj}}
\end{table} 

Any algebraic Frobenius manifold associated to a quasi-Coxeter element of $H_4$ would be $4$-dimensional, and the data in Table~\ref{Table: H_4-conj} is often sufficient to allow for a computer-assisted construction of the corresponding prepotential. Sekiguchi \cite{sekiguchi-talk,seki_constr} was successful in doing this for the cases $H_4(2),H_4(3),H_4(7),H_4(9)$. We then ask:

\begin{question}
Is there a conceptual construction of algebraic Frobenius manifolds associated to the regular quasi-Coxeter elements of $H_4$ that explains the data of Table~\ref{Table: H_4-conj}?
\end{question}

The complex case is more problematic. Boalche's icosahedral solution $37$ (see \cite[\S6.3]{KMS}) corresponds to a regular element in $G_{27}$ but its weights are incompatible with the formula \eqref{eq:Frob-weights-real}. 

\subsection{Dual braid presentations and quasi-Coxeter elements}

As mentioned in the second proof of Proposition~\ref{Prop. extensions <W',t_i>=W}, the generalized braid group $B(W)$ of a well generated complex reflection group $W$ is defined to be the fundamental group $\pi_1(V^{\op{reg}}/W)$ of the space of free orbits of $W$, or equivalently of the complement $V/W - \mathcal{H}$ of the discriminant hypersurface $\mathcal{H}$ of $W$.  One of the most important contributions of Bessis' work in \cite{bessis-dual-braid} and \cite{Bessis_Annals} was a new presentation for $B(W)$. This \defn{dual braid presentation} is given as follows, in terms of the set $\op{Red}_W(c)$ of reduced reflection factorizations of a Coxeter element $c\in W$. For each reflection $t_i\in W$, $i=1,\ldots, N$ consider a formal generator $\bm t_{i}$. Then, after \cite[Thm.~2.2.5]{bessis-dual-braid} and \cite[Rem.~8.9]{Bessis_Annals} we have that
\begin{equation}
    B(W)=\langle \bm t_{1},\ldots, \bm t_{N}\, |\,  \underbrace{\bm{t}_{i_1}\cdots \bm{t}_{i_n}=\bm{t}_{i'_1}\cdots\bm{t}_{i'_n}=\ldots}_{h^n n!/\#W\text{-many words}}\ \rangle,\label{eq:dual_braid_pres}
\end{equation}
where each word $\bm{t}_{j_1}\cdots \bm{t}_{j_n}$ that appears in the relations corresponds to a reduced reflection factorization $t_{j_1}\cdots t_{j_n} = c$ in $\op{Red}_W(c)$.

Starting with \emph{any} element $g\in W$, one may consider groups presented analogously to \eqref{eq:dual_braid_pres} but with the relations coming instead from the set $\op{Red}_W(g)$ of reduced reflection factorizations of $g$. Such constructions were called \emph{generated groups} by Michel (see \cite[Thm.~0.5.2]{bessis-dual-braid}) and \emph{interval groups} by McCammond (see \cite[Def.~2.4]{jon_failure}). In the case that $g$ is a quasi-Coxeter element, Baumeister et al.\ made an extensive study of the corresponding interval groups and they asked \cite[\S4: Ques.~(f)]{neaime} for a geometric interpretation for them. The main ingredient in Bessis' proof was that the $LL$ map encoded both the set $\op{Red}_W(c)$ and the braid monodromy of $\mathcal{H}$ (see the exposition in \cite[\S6]{chapuy_theo_lapl}). These objects exist also for the Frobenius manifolds $\mathcal{F}_g$ and they remain related in the same way, see \S\ref{Sec: Frobenius mflds}. In particular, the discriminant hypersurface \defn{$\mathcal{D}_{\mathcal{F}_g}$} is defined as the subset of points in $\mathcal{F}_g$ where the Frobenius algebra on their tangent planes is  not semi-simple, see \cite[Def.~3.18]{hertling_book}.  It is therefore natural to ask: 
\begin{question}
Let $W$ be a real reflection group and $g$ a quasi-Coxeter element in $W$, and assume that the corresponding Frobenius manifold $\mathcal{F}_g$ exists. Is it true that the interval group associated to the set of reflection factorizations $\op{Red}_W(g)$ is isomorphic to the fundamental group $\pi_1(\CC^n-\mathcal{D}_{\mathcal{F}_g})$ of the complement of the discriminant of $\mathcal{F}_g$? 
\end{question}

\section*{Acknowledgements}

We thank David Jackson and Jiayuan Wang  for helpful comments and suggestions.
This work was facilitated by computer experiments using Sage \cite{sagemath}, its algebraic combinatorics features developed by the Sage-Combinat community \cite{Sage-Combinat}, and CHEVIE \cite{chevie}. 

The second author was partially supported by an ORAU Powe award and by a grant from the Simons Foundation (634530). The third author  was partially supported by NSF grants DMS-1855536 and DMS-22030407.  

An extended abstract of this work appeared as \cite{DLM-extended-abstract}.

\printbibliography

\end{document}